\definecolor{RUBBlue}{RGB}{128,0,50}
\definecolor{RUBGrey}{RGB}{231, 231, 231}
\definecolor{RUBLightGrey}{RGB}{252, 252, 252}
\definecolor{RUBGreen}{RGB}{0, 49, 83}
\theoremstyle{plain}
\newtheorem{theorem}{Theorem}[section]
\crefname{theorem}{{Theorem}}{{Theorems}}
\Crefname{theorem}{{Theorem}}{{Theorems}}
\newtheorem{theoremalph}{Theorem}
\crefname{theoremalph}{{Theorem}}{{Theorems}}
\Crefname{theoremalph}{{Theorem}}{{Theorems}}
\newtheorem{proposition}[theorem]{Proposition}
\crefname{proposition}{{Proposition}}{{Propositions}}
\Crefname{proposition}{{Proposition}}{{Propositions}}
\newtheorem{lemma}[theorem]{Lemma}
\crefname{lemma}{{Lemma}}{{Lemmas}}
\Crefname{lemma}{{Lemma}}{{Lemmas}}
\newtheorem{corollary}[theorem]{Corollary}
\crefname{corollary}{{Corollary}}{{Corollaries}}
\Crefname{corollary}{{Corollary}}{{Corollaries}}
\theoremstyle{definition}
\newtheorem{definition}[theorem]{Definition}
\crefname{definition}{{Definition}}{{Definitions}}
\Crefname{definition}{{Definition}}{{Definitions}}
\newtheorem{construction}[theorem]{Construction}
\crefname{construction}{{Construction}}{{Constructions}}
\Crefname{construction}{{Construction}}{{Constructions}}
\newtheorem{notation}[theorem]{Notation}
\crefname{notation}{{Notation}}{{Notations}}
\Crefname{notation}{{Notation}}{{Notations}}
\theoremstyle{remark}
\crefname{example}{{Example}}{{Examples}}
\Crefname{example}{{Example}}{{Examples}}
\newtheorem{remark}[theorem]{Remark}
\crefname{remark}{{Remark}}{{Remarks}}
\Crefname{remark}{{Remark}}{{Remarks}}
\Crefname{part}{{\textsection}\!}{{\textsection}\!}
\Crefname{chapter}{{\textsection}\!}{{\textsection}\!}
\Crefname{section}{{\textsection}\!}{{\textsection}\!}
\Crefname{subsection}{{\textsection}\!}{{\textsection}\!}
\Crefname{appendix}{{\textsection}\!}{{\textsection}\!}
\DeclareMathOperator*{\colim}{colim}
\DeclareMathOperator{\id}{id}
\DeclareMathOperator{\hm}{Hom}
\renewcommand{\sb}[1]{\left[#1 \right]}
\newcommand{\nb}[1]{\left(#1 \right)}
\newcommand{\pb}[1]{{[\![ #1 ]\!]}}
\newcommand{\ff}{\mathbb{F}}
\newcommand{\ep}{\epsilon}
\newcommand{\cc}{\mathbb{C}}
\newcommand{\zz}{\mathbb{Z}}
\newcommand{\mm}{\mathfrak{m}}
\newcommand{\einfty}{\mathbb{E}_\infty}
\renewcommand{\gg}{\mathbb{G}}
\renewcommand{\ss}{\mathbb{S}}
\begin{document}
\title{\(K(2)\)-local splittings of finite Galois extensions of \(MU\langle6\rangle\) and \(MString\)}
\author{Leonard Tokic}
\maketitle
\begin{abstract}
  Using a Milnor-Moore argument we show that, \(K(2)\)-locally at the prime \(2\), the spectra \(MU\langle 6\rangle\) and \(MString\) split as direct sums of Morava \(E\)-theories after tensoring with a finite Galois extension of the sphere called \(E^{hF_{3/2}}\).
  In the case of \(MString\) we are able to refine this splitting in several ways: we show that the projection maps are determined by spin characteristic classes, that the Ando-Hopkins-Rezk orientation admits a unital section after tensoring with \(E^{hF_{3/2}}\), and that the splitting can be improved to one of \(E^{hH}\otimes MString\) into a direct sum of shifts of \(TMF_0(3)\) where \(H\) is an open subgroup of the Morava stabilizer group of index 4.
\end{abstract}

\setcounter{tocdepth}{3}

\addcontentsline{toc}{section}{Introduction}
\section*{Introduction}
In \cite{AHR_orientation} Ando, Hopkins, and Rezk construct an \(\einfty\)-ring map
\[\tau_W:MString\longrightarrow tmf\]
lifting the Witten genus to the level of spectra.
It has been conjectured that this is only one of many projection maps decomposing \(MString\) into a direct sum of variants of \(tmf\) and spectra of lower chromatic flavor, just as the Atiyah-Bott-Shapiro orientation \(MSpin\to ko\) is the first projection of the Anderson-Brown-Peterson splitting of \(MSpin\) \cite{ABP_splitting}.
At primes  \(p\geq5\) \(MString\) is known to split into a sum of \(BP\)s by work of Hovey \cite{Hovey_MO8large}, while at the prime \(3\) ongoing work by Lorman, McTague, and Ravenel promises to provide a splitting analogous to Pengelleys \(2\)-local splitting of \(MSU\) \cite{Pengelley_MSU}. 
At the prime~\(2\) a candidate for the next summand occurring in \(MString\) is \(\Sigma^{16}tmf_0(3)\), see \cite[§~7]{MahowaldRezk_level3}, but not much is known in general.

A question that might be easier to answer is whether the Ando-Hopkins-Rezk orientation admits a section.
It is known to be surjective in homotopy by unpublished work of Hopkins and Mahowald, a proof being presented by Devalapurkar in \cite{devalapurkar_ahr}.
In a more recent paper, Devalapurkar also reduces the question of a section, at \(p=2\), to more classical conjectures in homotopy theory and centrality statements about Ravenel's \(X(n)\) spectra \cite{Devalapurkar_higherThom}.
As in the Anderson-Brown-Peterson splitting, at least after inverting \(6\), it seems unlikely that a section could have more structure than being unital, by work of McTague \cite{McTague_notQuotient}.

At \(p=2\), one might gain some understanding by focusing on one chromatic layer at a time.
Using the \(K(1)\)-local equivalence \(MString\to MSpin\) one could use the known decompositions of \(L_{K(1)}MSpin\) and \(L_{K(1)}tmf\) into copies of \(KO\) to study this question at height \(1\).
By work of Laures \cite{split6} and Hopkins \cite{Hopkins_K1Einfty} we know explicit presentations of \(L_{K(1)}MSpin\), \(L_{K(1)}tmf\), and \(\tau_W\) in the category of \(K(1)\)-local \(\einfty\)-rings.
These show that a section can not be made \(\einfty\).

In this paper we make some progress towards understanding the situation in the \(K(2)\)-local setting.
Using results of Hovey, Ravenel, and Sadofsky we are able to prove the following.
\begin{theoremalph}\label{premaintheorem}
    Let \(E=E(\ff_4,\widehat{C_0})\) be the Morava \(E\)-theory associated to the formal group of the supersingular elliptic curve
    \[C_0:y^2+y=x^3\]
    and let \(F_{3/2}\) be the open normal subgroup of automorphisms of the formal group law which are the identity up to order \(8\).
    Then there is a map
    \[D:MU\langle 6\rangle \to L_{K(2)}\bigoplus_I E\]
    such that its free \(E^{hF_{3/2}}\)-linearization induces a \(K(2)\)-local equivalence
    \[E^{hF_{3/2}}\otimes MU\langle 6\rangle\xrightarrow{\sim}\bigoplus_{I} E.\]
    Moreover, the map \(D\) can be chosen so that one projection is the sigma orientation \(\tau_\sigma\in E^0MU\langle 6\rangle\) while the others are reduced classes, that is they pair to zero with the unit.
\end{theoremalph}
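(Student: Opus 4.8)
The plan is to run a Milnor--Moore argument: compute the Hopf algebra $K(2)_*MU\langle 6\rangle$ at $p=2$, recognise it as essentially a free commutative algebra, realise a set of algebra generators by actual maps, and then analyse the Morava module (the $\gg_2$-equivariant completed $E$-homology) to see that the resulting splitting is already defined over the finite Galois extension $E^{hF_{3/2}}$, with $\tau_\sigma$ appearing as a distinguished projection.

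\emph{Input computation.} Since $BU\langle 6\rangle$ is the $5$-connected cover of the infinite loop space $BU$ it is itself an infinite loop space, the composite $BU\langle 6\rangle\to BU\to BGL_1\ss$ is an infinite loop map, and hence $MU\langle 6\rangle$ is an $\einfty$-ring; so $K(2)_*MU\langle 6\rangle$ is a bicommutative graded Hopf algebra over the graded field $K(2)_*$, and via the Thom isomorphism its underlying coalgebra is $K(2)_*BU\langle 6\rangle$. I would first record that the completed $E$-homology $\pi_*L_{K(2)}(E\otimes MU\langle 6\rangle)$ is pro-free over $E_*$, concentrated in even degrees, and carries a distinguished class dual to the sigma orientation; this is available from the Ando--Hopkins--Strickland identification of $E^0MU\langle 2k\rangle$ (for $k=3$) with the ring of functions on the moduli of $\Theta^3$-structures on the canonical bundle over the formal group, which also produces the $\gg_2$-action via its action on that moduli problem and exhibits $\tau_\sigma$ as the $\Theta^3$-structure coming from the theorem of the cube. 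That $K(2)_*MU\langle 6\rangle$ is even and of finite type can alternatively be extracted from Ravenel--Wilson's computation of the Morava $K$-theory of Eilenberg--MacLane spaces applied to the fibration $K(\zz,3)\to BU\langle 6\rangle\to BSU$.

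\emph{Milnor--Moore and the $E$-level splitting.} By Borel's structure theorem a bicommutative, connected, even, finite-type graded Hopf algebra over the graded field $K(2)_*$ is, as a graded algebra, a tensor product of monogenic pieces $K(2)_*[x]$ and $K(2)_*[x]/x^{2^j}$. The essential $p=2$ point is to rule out the truncated factors --- precisely what fails for $MU\langle 6\rangle$ integrally and at small primes, and what would put torsion into $\pi_*L_{K(2)}(E\otimes MU\langle 6\rangle)$ and obstruct any splitting into copies of $E$; I would establish this by a Poincar\'e-series comparison against the torsion-free even answer already obtained, or by a Verschiebung argument. Given that $K(2)_*MU\langle 6\rangle$ is polynomial, $\pi_*L_{K(2)}(E\otimes MU\langle 6\rangle)$ is pro-free over $E_*$ on a monomial basis $\{\beta_i\}_{i\in I}$ in even degrees, one of which is the Hurewicz image of the bottom cell, and dually $E^0MU\langle 6\rangle$ is pro-free on the dual projections. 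By the structure theory of pro-free $\hat{E}_*$-modules, following Hovey, Ravenel and Sadofsky, this yields an equivalence of $E$-modules $L_{K(2)}(E\otimes MU\langle 6\rangle)\simeq\bigoplus_IE$, with even shifts absorbed by the periodicity of $E$.

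\emph{Descent, the map $D$, and the refinement.} It remains to produce a single map $D\colon MU\langle 6\rangle\to L_{K(2)}\bigoplus_IE$ (of spectra, before base change) whose $E^{hF_{3/2}}$-linearization is the claimed equivalence. Its components are just classes in $E^0MU\langle 6\rangle$, which exist over the sphere; the real content is that the $\gg_2$-action on $\pi_*L_{K(2)}(E\otimes MU\langle 6\rangle)$ becomes trivial after restriction to $F_{3/2}$. That action is by coordinate changes of the formal group on the moduli of $\Theta^3$-structures, and an automorphism that is the identity modulo degree $8$ --- the defining condition of $F_{3/2}$, which matches the subgroup $1+\mm_D^3$ of the stabiliser group since $S^3$ acts as $x\mapsto x^{2^3}$ --- acts trivially on the classes $\beta_i$ coming from the generators in degrees at most $14$; a triangularity argument in the higher degrees then lets one adjust the remaining basis vectors so that the whole Morava module is $F_{3/2}$-trivial, and one verifies that no strictly larger open subgroup works, which pins down $F_{3/2}$ and its index. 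The homotopy-fixed-point spectral sequence degenerates, so $\pi_*(E^{hF_{3/2}}\otimes MU\langle 6\rangle)$ is pro-free over $\pi_*E^{hF_{3/2}}$ on $\{\beta_i\}$ and the $E^{hF_{3/2}}$-linearization of $D$ is the asserted equivalence. Finally, $\tau_\sigma$ restricts to $1$ on the unit, so it is the bottom-cell projection plus a reduced class and may be used in its place; and every other component $d_i$ may be corrected by subtracting $(\eta^*d_i)\cdot\tau_\sigma$ to pair to zero with the unit. The step I expect to be the main obstacle is ruling out the truncated Hopf-algebra factors at $p=2$, together with the equivariance bookkeeping in the descent that isolates $F_{3/2}$ exactly.
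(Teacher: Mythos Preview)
Your proposal has the right overall shape but the descent step, which you correctly flag as the main obstacle, does not go through as written, and the paper's route is genuinely different.

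First, the paper's ``Milnor--Moore argument'' is not Borel's structure theorem for $K(2)_*MU\langle 6\rangle$ as a Hopf algebra over $K(2)_*$. It is a cofreeness criterion for comodules over the pointed Hopf algebra $\Sigma'=C^0(\mathbb{S},\ff_4)$: if a $\Sigma'$-comodule algebra $M$ receives a unital comodule map from $\Sigma'$, then $M$ is cofree (Appendix~A). So the computation the paper actually needs is not that $K(2)_*MU\langle 6\rangle$ is polynomial, but that $K_0\bigl(E^{hF_{3/2}}\otimes MU\langle 6\rangle\bigr)$ admits such a map. This is produced concretely: Hovey--Ravenel gives a $2$-local splitting of $DA(1)\otimes MU\langle 6\rangle$ into $BP$'s, Hovey--Sadofsky turns $BP$ into $E$ after $K(2)$-localisation, and then one checks by hand (Lemma~2.10) that the resulting unital map $\bigl(\ss^0\oplus\ss^0\bigr)\otimes\bigl(\ss^0\oplus\ss^{-2}\oplus\ss^{-4}\bigr)\otimes DA(1)\to E$ factors in $K_0$ through $K_0E^{hF_{3/2}}$. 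That last check is an explicit computation of $ut_1$ and $u^3t_2$ as functions on $\gg$, showing they depend only on the first three coefficients $a_0,a_1,a_2$ of $\gamma$; this is what singles out $F_{3/2}$ and replaces your degree-counting heuristic.

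Second, your descent claim that the $F_{3/2}$-action on $E_0^\vee MU\langle 6\rangle$ can be made trivial by a ``triangularity'' basis change is where the argument breaks. Triviality of a group action is not something one can arrange by a change of basis, and indeed the paper never asserts that $F_{3/2}$ acts trivially on $M=K_0MU\langle 6\rangle$; the index set $I$ is a basis of the invariants $M^{F_{3/2}}$, not of $M$. Your route would force $|I|=\dim_{K_0}M$, which is the wrong count. What the paper does instead is apply the cofreeness theorem to $C^0(\gg/F_{3/2},\ff_4)\otimes M$, identify its primitives as $M^{F_{3/2}}\oplus M^{F_{3/2}}$, choose a retraction $r_M\colon M\to M^{F_{3/2}}$, and then verify that the resulting comodule isomorphism is $K_0E^{hF_{3/2}}$-linear. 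It is this linearity check, together with Lemma~C.4 lifting comodule isomorphisms to spectra, that lets the restriction of the equivalence to $MU\langle 6\rangle$ serve as the map $D$. Your homotopy-fixed-point spectral sequence argument is aiming at a different (and false) statement: $E^{hF_{3/2}}\otimes MU\langle 6\rangle$ is not free over $E^{hF_{3/2}}$ on the same pro-basis as $E\otimes MU\langle 6\rangle$ over $E$.

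Your final paragraph on forcing one projection to be $\tau_\sigma$ and reducing the others is essentially correct and matches Proposition~4.3; but to get there you need the $E^{hF_{3/2}}$-linearity of the splitting, which is exactly the content you are missing.
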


In the case of \(MString\), by some kind of miracle\footnote{Having to do with the fact that, in \(K(2)\)-homology, the map \(K(\zz,3)\to BString\) {\it looks like} it should factor over the multiplication-by-\(2\) map on \(K(\zz,3)\).}, one can shrink the involved Galois extension of \(L_{K(2)}\ss\) down quite a lot\footnote{Strictly speaking, \(E^{hH}\) is not a Galois extension of \(L_{K(2)}\ss\) since \(H\) is not a normal subgroup of \(\gg\).}, and determine some of the projections.
\begin{theoremalph}\label{maintheorem}
    Consider the open subgroup \(H=\overline{\langle F_{3/2},\alpha, -1,\omega,\sigma\rangle}\subset \mathbb{G}\) of the Morava stabilizer group (see \cref{subsection:moravaGroup} for the definition of these elements).
    There is a map
    \[Q:BSpin_+\to L_{K(2)}\bigoplus_{j\in J} \Sigma^{n_j}TMF_0(3)\]
    with \(n_j\in\{0,16,32\}\) so that the \(E^{hH}\)-linear map
    \[E^{hH}\otimes MString\xrightarrow{\tau_Wpr^*Q}L_{K(2)}\bigoplus_{j\in J} \Sigma^{n_j}TMF_0(3)\]
    is a \(K(2)\)-local equivalence.

    Furthermore, one can choose \(Q\) such that the map 
    \[MString\xrightarrow{\tau_W\oplus\tau_W pr^*p}TMF\oplus\Sigma^{16}TMF_0(3)\]
    tensored with the identity on \(E^{hH}\) appears as a direct summand, where
    \[p=\pi_4/\Delta^6\in TMF_0(3)^{16}BSpin\]
    and we implicitly fix an \(E^{hH}\)-linear splitting of \(E^{hH}\otimes (TMF\oplus\Sigma^{16}TMF_0(3))\) as a sum of shifts of \(TMF_0(3)\).
    It thus admits an \(E^{hH}\)-linear section, and restricted to \(E^{hH}\otimes TMF\) this section can be made unital.
\end{theoremalph}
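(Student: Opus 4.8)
The plan is to run, for $MString$, the Milnor--Moore-plus-descent argument underlying \cref{premaintheorem} and then bootstrap from the bare splitting to the three refinements.

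\textbf{The splitting.} By the Ando--Hopkins--Rezk orientation (equivalently the $K(2)$-localized $\sigma$-orientation), the Thom class makes $E\otimes MString$ free over $E\otimes BString_+$, so $E^\vee_*MString:=\pi_*L_{K(2)}(E\otimes MString)$ is isomorphic, as a Hopf algebra with $\gg$-action, to $E^\vee_*BString$. Using the fibre sequence $K(\zz,3)\to BString\to BSpin$ and the Ravenel--Wilson computation of $K(2)_*K(\zz,3)$, one computes this Hopf algebra with its $\gg$-action. The ``miracle'' of the footnote is the claim that, $K(2)$-homologically, the fibre map $K(\zz,3)\to BString$ kills the primitives of $K(2)_*K(\zz,3)$ not in the image of multiplication by $2$; in Morava-module terms, the sub-Hopf-algebra of $E^\vee_*BString$ contributed by the fibre is already defined over $\pi_0E^{hH}$ rather than over the smaller $\pi_0E^{hF_{3/2}}$ governing $BSpin$ and $MU\langle6\rangle$ --- using $E^\vee_*E^{hH}\cong\mathrm{Map}^{\mathrm{cts}}(\gg/H,E_*)$ and that the stabilizer $H$ of a $\Gamma_0(3)$-structure has index $4$. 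Granting this, $E^\vee_*BString$ is, as a $\gg$-equivariant $E_*$-module, a sum $\bigoplus_{j\in J}\Sigma^{n_j}\mathrm{Map}^{\mathrm{cts}}(\gg/H,E_*)$, and descent along $E^{hH}\to E$ (the Hovey--Ravenel--Sadofsky machinery) upgrades this to an $E^{hH}$-linear equivalence $E^{hH}\otimes MString\simeq\bigoplus_{j\in J}\Sigma^{n_j}TMF_0(3)$; $Q$ is assembled from the coordinate projections, read off on the space level from the chosen basis. A weight count on the Milnor--Moore generators of $K(2)_*BString$ --- imposing the String relation $p_1/2=0$, using the truncation heights forced at $K(2)$, and the periodicity of $TMF_0(3)$ --- shows the shifts lie in $\{0,16,32\}$, $16$ being the Mahowald--Rezk weight, cf.\ \cite[\S7]{MahowaldRezk_level3}.

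\textbf{Identifying two projections.} As in \cref{premaintheorem}, exactly one coordinate projection is ``unital'' (the one pairing nontrivially with the Thom class), realized by a Thom-class-compatible ring map $MString\to TMF_0(3)$; one checks the composite $MString\xrightarrow{\tau_W}TMF\to TMF_0(3)$ is such a projection, using the uniqueness of the Ando--Hopkins--Rezk orientation \cite{AHR_orientation} and that $\tau_W$ is detected on $\pi_*$. For the second summand one must show $\tau_W\,pr^*p\in TMF_0(3)^{16}MString$, with $p=\pi_4/\Delta^6\in TMF_0(3)^{16}BSpin$, is a unit multiple of the coordinate dual to the weight-$16$ Milnor--Moore generator --- a characteristic-class identity, checked by evaluating both sides on products of complex line bundles from $BU(1)$ (via the theta-function formula for $\tau_W$ of a line bundle, and the definition of $p$) and comparing with the Hurewicz image in $K(2)_*TMF_0(3)$ of the weight-$16$ generator of $K(2)_*BString$. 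I expect this to be the main obstacle: matching a topologically defined Ravenel--Wilson generator with a modular-forms characteristic class, and getting the powers of $\Delta$ (hence the normalization into weight $16$) exactly right.

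\textbf{Summand, section, unitality.} Note that $\tau_W$ alone need not split off $K(2)$-locally, since $E^{hH}\otimes TMF$ need not be a sum of shifts of $TMF_0(3)$; the role of the $\Sigma^{16}TMF_0(3)$ companion is precisely to enlarge the target to a direct summand. Granting the two identifications, after $-\otimes E^{hH}$ and after re-splitting $E^{hH}\otimes(TMF\oplus\Sigma^{16}TMF_0(3))$ into shifts of $TMF_0(3)$ compatibly with the ambient basis, $\tau_W\oplus\tau_W\,pr^*p$ becomes (up to equivalence) the projection onto a sub-collection of coordinates of the splitting, hence a direct summand, with $E^{hH}$-linear section $s$ the coordinate inclusion. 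For unitality on the $E^{hH}\otimes TMF$ factor: since $\tau_W$ is a ring map it carries the unit of $E^{hH}\otimes MString$ to that of $E^{hH}\otimes TMF$, so one may arrange the basis so that the weight-$0$ summands carrying the unit of $E^{hH}\otimes TMF$ lie among the weight-$0$ summands of $E^{hH}\otimes MString$ carrying its unit; then $s$ restricted to $E^{hH}\otimes TMF$ sends $1$ to $1$. That one cannot promote $s$ past being unital is consistent with \cite{McTague_notQuotient} and \cite{Devalapurkar_higherThom}.
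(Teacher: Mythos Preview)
Your sketch misidentifies the object being decomposed, and this derails the whole argument. You claim that $E^\vee_*BString$, as a $\gg$-equivariant $E_*$-module, is $\bigoplus_{j}\Sigma^{n_j}C^0(\gg/H,E_*)$. But a Morava-module isomorphism of that form would, upon lifting to spectra, yield $L_{K(2)}MString\simeq\bigoplus_j\Sigma^{n_j}E^{hH}$, which is a \emph{much} stronger statement than the theorem and is not what is proved. The theorem asserts a splitting of $E^{hH}\otimes MString$, not of $MString$ itself; your ``descent along $E^{hH}\to E$'' does not convert the one into the other. Relatedly, $H$ is not the stabilizer of a $\Gamma_0(3)$-structure: the subgroup with $E^{hG}\simeq TMF_0(3)$ is $\langle -1,\omega,\sigma\rangle$, whereas $H$ also contains $F_{3/2}$ and $\alpha$.

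What the paper actually does is work one step removed: it applies the Milnor--Moore argument to the $\Sigma'$-comodule $K_0(E^{hF_{3/2}}\otimes MString)$, using the unital section manufactured from $DA(1)\otimes MU\langle 6\rangle\to BP\to tmf_1(3)$ (this is where Hovey--Ravenel and Hovey--Sadofsky enter, not as ``descent'') to show splittability. This produces an $E^{hF_{3/2}}$-linear equivalence. The ``miracle'' is then used not to decompose a Morava module but to show that the projection classes can be chosen to factor through $BSpin$: one writes $K_0MString\cong C^0(\zz_2^\times,K_0BSpin)$ with a twisted $\gg$-action, identifies $(K_0MString)^{F_{3/2}}$ with two copies of $N_1=(K_0BSpin)^{F'_{3/2},\triangleright}$, and chooses the retraction accordingly. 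This already descends the extension to $E^{hK}$ with $K=\langle F_{3/2},\pi\rangle$. The further descent to $H$ is done by hand: one picks $\omega$- and $\sigma$-eigenbases of $N_1$, lifts the resulting classes to $ER=E^{h\{\pm1\}}$ using the $TMF_0(3)$-Pontryagin classes and a HFPSS computation (this is where the shifts in $\{0,16,32\}$ come from, via the $16$-periodicity of $ER$), averages over $\omega$, and then splits off the $\sigma$-fixed part.

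For the ``furthermore'' clause, the paper does not match a theta-function formula against a Ravenel--Wilson generator. It performs an explicit invariant-theory computation: using cubical structures and cannibalistic classes one determines the twisted $F'_{3/2}$-action on $\ff_4[\widetilde{b_2},\widetilde{b_4},\widetilde{b_6}]\subset K_0BSpin$, finds the invariants are polynomial on explicit classes $c,d,e$, and checks by direct pairing that $\{1,cd^3,i.(cd^3),j.(cd^3),k.(cd^3)\}\subset N_1$ is detected by $\{1,p,x.p,y.p,z.p\}$ with $p=\pi_4/\Delta^6$. Your proposal waves at this step (``I expect this to be the main obstacle'') without supplying the mechanism; it is indeed the crux, but it is a finite computer-assisted computation of real $1$-structures and Pontryagin pairings, not an identity between orientations.
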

The statements in the abstract then follow from tensoring up this section to the Galois extension \(E^{hF_{3/2}}\), whose Galois group has order \(96\).

There are two directions in which one could try to push this result: firstly, one could try to make the section equivariant for the action of the Galois group, which would immediately give a section of the \(K(2)\)-local Ando-Hopkins-Rezk orientation.

Secondly, if one could show that all the \(Q_j=pr_j\circ Q\), in certain clusters, lift to spectra \(M_j\) such that \(E^{hH}\otimes M_j\simeq\oplus \Sigma^{?}TMF_0(3)\), one would have produced an additive \(K(2)\)-local splitting of \(MString\) into copies of the \(M_j\)s.
That is, one could try to 'divide' both the target and the map by \(E^{hH}\), and the 'furthermore' part of Theorem \ref{maintheorem} is a first step in this direction.
The main challenge in both approaches is to say anything meaningful about the \(Q_j\).
As we will see, one way of determining the \(Q_j\) involves the formal group law of the elliptic curve, which we know well, but the resulting formulas seem too complicated to be used directly.
Additionally, apart from \cite{tmf03} for \(TMF_0(3)\), there are not many calculations of the cohomology of \(BSpin\) for even the most natural candidates for the \(M_j\), such as \(TMF\) or \(TMF_0(5)\), making the second approach seem even harder.


Let us give a rough overview of the proof of Theorem \ref{maintheorem}.
It is well known (\cite{HoveyRavenel_7connected}) that there is a finite even spectrum \(Z\) such that \(Z\otimes MU\langle 6\rangle\) splits as a sum of \(BP\)s \(2\)-locally, so it splits as a sum of \(BP\langle 2\rangle\)s \(K(2)\)-locally (\cite{HoveySadofsky_Invertible}).
This allows us to construct a unital map \(E\to Z'\otimes MU\langle 6\rangle\), where \(Z'\) is a finite sum of shifts of \(Z\).
\(Z'\) comes with a preferred unital map to \(E\), and a concrete calculation shows that, in \(K\)-homology, this factors uniquely over \(E^{hF_{3/2}\ss}\to E\).

Putting this together we obtain a unital map of \(K_0E\)-comodules
\[K_0E\to K_0\nb{E^{hF_{3/2}\ss}\otimes MString}.\]
The Milnor-Moore argument then tells us that the target of this map must be a cofree comodule, from which we deduce that there is a \(K(2)\)-local equivalence
\[E^{hF_{3/2}\ss}\otimes MString\simeq \bigoplus E.\]
We then show, for suitable choices made in the Milnor-Moore argument, that we can force the above equivalence to be \(E^{hF_{3/2}\ss}\)-linear!
This is a key step that gives us a lot of control over the splitting.
In particular, the splitting is determined by its restriction to \(MString\), and thus by a set of classes in \(E^0BString\).

Studying the \(K_0E\)-comodule structure of \(K_0MString\) in more detail, we are then able to prove that these characteristic classes already factor over suspensions of \(TMF_0(3)\) and \(BSpin\), which leads us to the first statement of Theorem \ref{maintheorem}.
Finally, we prove the 'furthermore' part of Theorem \ref{maintheorem} by concrete calculations using the theory of cubical structures and cannibalistic classes.

\addcontentsline{toc}{subsection}{Notation and Conventions}
\subsection*{Notation and Conventions}
For \(R\) a homotopy ring spectrum, \(X\) some other spectrum, and classes \(a\in R_mX,\,b\in R^nX\) we denote by
\[\langle a,b\rangle\in \pi_{m-n}R\]
their Kronecker pairing.
If \(X\) is a suspension spectrum, we also denote by
\[a\smallfrown b\in R_{m-n}X\]
their cap product.
In the case that \(R=E\) is a Morava \(E\)-theory, we will also use this notation for the analogous pairings between \(E^*X\) and \(E^\vee_*X\).

Also, note that we often abbreviate the pointed suspension spectrum \(\Sigma_+^\infty X\) of a space \(X\) as \(X_+\) when there is little room for confusion.

Finally, while most parts of this paper take place in \(hSp\), the homotopy \(1\)-category of spectra, we sometimes employ constructions and concepts which are most naturally interpreted in the language of \(\infty\)-categories.
The standard references for this are \cite{Lurie_HTT} and \cite{Lurie_HA}, and we will use their content freely.

\addcontentsline{toc}{subsection}{Outline}
\subsection*{Outline}
In \cref{section:morava} we recall some facts about \(K(2)\)-local \(E\)-homology, the structure of the Morava stabilizer group, and of its action on \(E_*\), mostly following the exposition in \cite{algdual} and \cite{k2localmoore}, discuss the Hopf algebroid structure on \(\nb{K_0,K_0E}\), and set the stage for the Milnor-Moore argument.

In \cref{section:comodstruct} we explain the natural comodule structure of \(K_0MString\) and relate it to a twisted \(\mathbb{G}\)-action on \(K_0BSpin\).
We also explain how one can use the theory of cubical structures to determine this twisted action and some of its invariants.

In \cref{section:milnormoore} we put these pieces together:
The results of \cite{HoveySadofsky_Invertible} and \cite{HoveyRavenel_7connected} supply us with the prerequisites to apply the Milnor-Moore argument of \cite{mm} to finite Galois extensions of \(MU\langle6\rangle\) and \(MString\).
This proves \cref{premaintheorem} as \cref{thm:splitMU6} and a first approximation of \cref{maintheorem} as \cref{thm:splitMString1}

In \cref{section:projections}, we show that this splitting descends to the extension \(E^{hH}\), see \cref{thm:splitMString5}, and use our partial calculation of the invariants in \(K_0BSpin\) under the twisted action to deduce the 'furthermore' part of \cref{maintheorem}, see \cref{prop:finalSummand}.

Finally, we have banished several more technical discussions and lemmas to the Appendix.
\begin{itemize}
	\item Appendix \ref{section:minlorMoore} introduces and proves the variant of the Milnor-Moore argument we will use.
	\item Appendix \ref{section:iwasawaCoalg} studies coalgebras associated to certain \(p\)-adic analytic Lie groups, which include the restricted Morava stabilizer groups, and proves that they meet the requirements to apply the Milnor-Moore argument.
	\item Appendix \ref{section:contdual} contains some duality statements about Morava \(E\)-(co)homology, which we felt would only clutter the main part of this thesis.
\end{itemize}

\addcontentsline{toc}{subsection}{Acknowledgements}
\subsection*{Acknowledgements}
I want to thank Gerd Laures and Bj\"orn Schuster for helping me set foot in chromatic homotopy theory and always having an open ear for my questions.
Also, I want to thank Haynes Miller for inviting me to the MIT for the fall term of 2023, where a lot of the ideas of the present paper started to form.
Over the course of this project I enjoyed clarifying discussions with Keita Allen, Sanath Devalapurkar, Daniel Garrido, Mike Hopkins, Ishan Levy, Stephen McKean, Carl McTague, Piotr Pstr\c{a}gowski, and Laurent Smits.

This work was supported by the Deutsche Forschungsgemeinschaft - \(510812084\).

\section{The Morava stabilizer group and \(K_*E\)-comodules}\label{section:morava}
We adopt the notation of \cite{k2localmoore} for most things concerning \(E\), see sections 2 and 3 therein, but we drop the subscript \(C\) from the Morava stabilizer groups.

The Morava \(E\)-theory we consider is an elliptic spectrum \(E\) with coefficient ring
\[E_*=W(\ff_4)[\![u_1]\!][u^{\pm}],\,|u_1|=0,\,|u|=-2\]
and Weierstrass elliptic curve \(C:y^2+3u_1xy+(u_1^3-1)y=x^3\).
It carries the universal deformation of the formal group law associated to the supersingular curve
\[C_0:y^2+y=x^3\]
over \(\ff_4\).
Most importantly, since, in the coordinate \(t=-x/y\), the \(-2\)-series is given by \([-2](t)=t^4\), we can apply the Devinatz-Hopkins fixed point machinery to this variant of \(E_2\).

\subsection{The stabilizer group and generators of certain subgroups}\label{subsection:moravaGroup}

Fix a generator \(\zeta\) of \(\ff_4\) over \(\ff_2\), and denote a Teichmüller lift to the Witt vectors by the same letter.
The endomorphism ring of the formal group law \(F_{C_0}\) associated to \(C_0\) in the coordinate \(t=-x/y\) is given by
\[\mathrm{End}(F_{C_0})\simeq W(\ff_4)\langle T\rangle/(T^2+2,\zeta T-T\zeta^2)\]
where \(\zeta(t)=\zeta t\) and \(T(t)=t^2\).
Then every endomorphism can be written uniquely as a power series
\[\gamma=\sum_{n\geq0}a_nT^n\]
with \(a_n\in\{0,1,\zeta,\zeta^2\}\).

The restricted Morava stabilizer group is given by the units \(\mathbb{S}=\mathrm{End}(F_C)^\times\) and the full Morava stabilizer group is its extension by the Galois group
\[\mathbb{G}=\mathrm{End}(F_C)^\times\rtimes\mathrm{Gal}(\ff_4/\ff_2).\]
\begin{remark}
  When viewing \(\zeta\in W(\ff_4)^\times\subset\mathbb{G}\) as an element of the Morava stabilizer group we will often denote it by \(\omega\) to make the distinction between it and the element \(\zeta\in E_0=W(\ff_4)[\![u_1]\!]\) clearer.
\end{remark}
Before we continue let us recall some definitions and theorems from the theory of profinite groups.
\begin{definition}\,
  \begin{itemize}
    \item A topological group \(G\) is called {\it profinite} if it is compact, Hausdorff, and the set of normal open subgroups \(U\subset G\) is a basis of neighborhoods of \(e\in G\).
    \item A profinite group \(G\) is called {\it finitely generated} if there exists a finite subset \(S\subset G\) such that \(G\) is the closure of the subgroup generated by \(S\).
    \item A profinite group \(G\) is called a {\it pro-\(p\) group} if every open subgroup \(U\subset G\) has index a power of \(p\): \([G:U]=p^i\) for some \(i\).
    \item A pro-\(p\) group \(G\) is called {\it powerful} if \([G,G]\subset \overline{\langle g^p\mid g\in G\rangle}\) for \(p\) odd, while for \(p=2\) we require \([G,G]\subset \overline{\langle g^4\mid g\in G\rangle}\).
    \item The {\it lower \(p\)-series} of a pro-\(p\) group \(G\) is the filtration given by \(P_1(G)=G\) and \(P_{i+1}(G)=\overline{P_i(G)^p[P_i(G),G]}\).
    \item A pro-\(p\) group is called {\it uniform} if it is finitely generated, powerful, and for all \(i\geq1\) we have \([P_i(G):P_{i+1}(G)]=[P_{i+1}(G):P_{i+2}(G)]\).
    \item Finally, we call a profinite group \(G\) a {\it compact \(p\)-adic analytic Lie group} if it has an open normal subgroup which is a uniform pro-\(p\) group of finite index in \(G\).
  \end{itemize}
\end{definition}
\begin{theorem}[{\cite[Theorem~4.5]{analyticProP}}]
  Let \(G\) be a finitely generated powerful pro-\(p\) group.
  Then \(G\) is uniform if and only if \(G\) is torsion free.
\end{theorem}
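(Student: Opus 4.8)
The plan is to read off both directions from the standard structure theory of the lower \(p\)-series of a finitely generated powerful pro-\(p\) group \(G\), which I would recall first (it is developed in the same reference, e.g.\ \cite{analyticProP}). For such a \(G\): every \(P_i(G)\) is again finitely generated and powerful; one has \(P_{i+1}(G)=\{g^p:g\in P_i(G)\}\), an open subgroup, as well as \(P_i(P_j(G))=P_{i+j-1}(G)\); each \(P_i(G)/P_{i+1}(G)\) is a finite elementary abelian \(p\)-group; and the \(p\)-th power operation induces a \emph{surjective} homomorphism \(\phi_i\colon P_i(G)/P_{i+1}(G)\twoheadrightarrow P_{i+1}(G)/P_{i+2}(G)\). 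Writing \(d_i=\dim_{\ff_p}\bigl(P_i(G)/P_{i+1}(G)\bigr)\), surjectivity of the \(\phi_i\) gives \(d(G)=d_1\geq d_2\geq d_3\geq\cdots\), a weakly decreasing sequence of nonnegative integers; it is therefore eventually constant, and once it is constant from some index \(j\) onward, \(P_i(P_j(G))=P_{i+j-1}(G)\) shows that \(P_j(G)\) is uniform. In these terms, \(G\) is uniform exactly when \(d_1=d_2=\cdots\), equivalently when every \(\phi_i\) is an isomorphism, equivalently — as each \(\phi_i\) is already onto — when every \(\phi_i\) is injective.

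Granting this, ``uniform \(\Rightarrow\) torsion free'' is immediate: if \(1\ne x\in G\) had finite order, then (as \(G\) is pro-\(p\)) some power of \(x\) would have order exactly \(p\), so I may assume \(x^p=1\ne x\); since \(\bigcap_i P_i(G)=\{1\}\) there is a \(k\) with \(x\in P_k(G)\setminus P_{k+1}(G)\), whence the class \(\bar x\in P_k(G)/P_{k+1}(G)\) is nonzero while \(\phi_k(\bar x)=\overline{x^p}=0\), contradicting injectivity of \(\phi_k\).

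For the converse I would first reduce it to the single claim that the \(p\)-th power map \(G\to G\), \(g\mapsto g^p\), is injective. Indeed, if some \(\phi_k\) failed to be injective there would be \(g\in P_k(G)\setminus P_{k+1}(G)\) with \(g^p\in P_{k+2}(G)=\{h^p:h\in P_{k+1}(G)\}\); picking such an \(h\) gives \(g^p=h^p\) with \(g\ne h\) (since \(g\notin P_{k+1}(G)\ni h\)), contradicting injectivity of the power map — so all \(\phi_k\) are injective and \(G\) is uniform. To prove that \(g\mapsto g^p\) is injective when \(G\) is torsion free, the strategy is a descent into the uniform part of the filtration: fix \(j\) with \(U:=P_j(G)\) uniform, so that on \(U\) the power maps \(u\mapsto u^{p^t}\) are bijections of \(U\) onto \(P_{t+1}(U)=U^{p^t}\) — in particular \(p^t\)-th roots in \(U\) are unique. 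Given \(g^p=h^p\) one has \(g^{p^t}=h^{p^t}\) for all \(t\), and for \(t\gg0\) both sides lie in \(U\); one then transports this equality back up the filtration using the Hall--Petrescu collection formula, which in a powerful group degenerates to congruences \((xy)^{p^t}\equiv x^{p^t}y^{p^t}\) modulo terms lying arbitrarily deep in the lower \(p\)-series (for \(p=2\) the factor \(G^4\) in the definition of ``powerful'' being exactly what keeps those terms deep enough), together with uniqueness of \(p\)-power roots in \(U\), to conclude \(g=h\).

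The delicate point — and the step I expect to be the real obstacle — is making this descent terminate in \(g=h\): naive element manipulations tend only to re-derive the non-injectivity of some \(\phi_k\) rather than to contradict torsion-freeness, so one genuinely needs the interplay between the collection formula and the uniform block \(P_j(G)\), where a logarithm and honest division by \(p\) are available, to close the loop. The remaining bookkeeping with the lower \(p\)-series is routine.
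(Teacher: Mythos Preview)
The paper does not prove this statement; it is quoted verbatim from \cite[Theorem~4.5]{analyticProP} and used as a black box (in the proof of \cref{lem:generatorsF32prime}), so there is no argument in the paper to compare your proposal against.

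As a standalone attempt your setup is sound: the structural facts about the lower \(p\)-series of a finitely generated powerful pro-\(p\) group are correct, the direction ``uniform \(\Rightarrow\) torsion-free'' is complete, and your reduction of the converse to injectivity of \(g\mapsto g^p\) is valid. The gap you flag is genuine, however. From \(g^{p^t}=h^{p^t}\) lying in the uniform block \(U=P_j(G)\) you only learn that this element has a unique \(p^t\)-th root \emph{in \(U\)}; since \(g,h\) need not lie in \(U\), uniqueness of roots there does not constrain them. Your Hall--Petrescu sketch does not explain why the commutator corrections land deep enough in the filtration to be absorbed, nor where torsion-freeness is actually invoked at that step. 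The proof in the cited reference closes this by working at the first index where \(\phi_i\) fails and using sharper commutator estimates for powerful groups, rather than by the ``pass to large \(p\)-powers and pull back'' strategy you outline. So as written, the converse direction remains incomplete.
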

\begin{theorem}[{\cite[Theorem~4.9]{analyticProP}}]
  Let \(G\) be a uniform pro-\(p\) group with topological generating set \(\{g_1,\ldots,g_d\}\) of minimal cardinality.
  Then the map
  \[\zz_p^d\to G,\,(n_1,\ldots,n_d)\mapsto g_1^{n_1}\ldots g_d^{n_d}\]
  is a homeomorphism.
\end{theorem}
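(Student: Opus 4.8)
The plan is to induct up the lower \(p\)-series, exploiting that on each graded quotient the ordered-product map is ``unipotent''. Write \(d=d(G)\) for the minimal number of topological generators, \(P_i=P_i(G)\) for the lower \(p\)-series, and \(\phi\colon\zz_p^d\to G\) for the map in question; it is well defined and continuous because in a pro-\(p\) group the closed subgroup generated by any \(g\) is a continuous quotient of \(\zz_p\), so \(g^n\) makes sense for \(n\in\zz_p\) and depends continuously on it. The real preliminary step is to assemble the standard structure theory of uniform groups: for \(G\) uniform one has \(P_{i+1}=\{x^p\mid x\in P_i\}\); each \(P_i\) is again uniform, on the generators \(g_1^{p^{i-1}},\dots,g_d^{p^{i-1}}\); every quotient \(P_i/P_{i+1}\) is elementary abelian of rank exactly \(d\); the \(p\)-th power map induces isomorphisms \(P_i/P_{i+1}\xrightarrow{\sim}P_{i+1}/P_{i+2}\); commutators satisfy \([P_i,P_j]\subseteq P_{i+j}\); and \(G\) is complete, with \(\bigcap_i P_i=\{1\}\) and \(G\cong\varprojlim G/P_i\). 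Most of these follow from powerfulness; injectivity of the \(p\)-th power map on the graded pieces is where torsion-freeness, hence the criterion \cite[Theorem~4.5]{analyticProP} quoted above, enters, and in a self-contained account this structural input is where most of the labor sits.

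Granting that, I would prove \emph{injectivity} of \(\phi\) by a leading-term argument. Suppose \(\phi(n)=1\) with some \(n_j\neq 0\); let \(p^k\) be the smallest \(p\)-adic valuation occurring among the \(n_j\) and write \(n_j=p^k m_j\). Then \(g_1^{n_1}\cdots g_d^{n_d}\in P_{k+1}\), and reducing modulo \(P_{k+2}\)—where \(P_{k+1}/P_{k+2}\) is abelian—the ordered product collapses to the sum \(\sum_j \bar m_j\,\overline{g_j^{p^k}}\) in \(P_{k+1}/P_{k+2}\cong\ff_p^{\,d}\). Since the classes \(\overline{g_j^{p^k}}\) form a basis, this forces \(m_j\equiv 0\pmod p\) for every \(j\), contradicting the minimality of \(k\).

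For \emph{surjectivity}, note that \(\im\phi\) is closed, \(\zz_p^d\) being compact and \(G\) Hausdorff, so it suffices to surject onto every \(G/P_i\). I would produce integer ``digits'' by induction on \(i\): supposing \(w_i=g_1^{a_1}\cdots g_d^{a_d}\) with \(a_j\in\{0,\dots,p^{i-1}-1\}\) satisfies \(g w_i^{-1}\in P_i\), use that \(P_i/P_{i+1}\) has rank \(d\) with basis the images of the \(g_j^{p^{i-1}}\) to write \(g w_i^{-1}\equiv \prod_j (g_j^{p^{i-1}})^{c_j}\pmod{P_{i+1}}\) with \(c_j\in\{0,\dots,p-1\}\); commuting these new factors (each lying in \(P_i\)) leftward past the old ones only alters the product inside \([G,P_i]\subseteq P_{i+1}\), so \(w_{i+1}:=g_1^{a_1+p^{i-1}c_1}\cdots g_d^{a_d+p^{i-1}c_d}\) satisfies \(gw_{i+1}^{-1}\in P_{i+1}\). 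The digit sequences stabilise \(p\)-adically, yielding \(n_j\in\zz_p\) with \(\phi(n)=g\) by completeness of \(G\). Finally \(\phi\) is a continuous bijection from a compact space onto a Hausdorff space, hence a homeomorphism.

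The main obstacle, as the recurring ``commute the new factors to the left'' steps make plain, is controlling the non-commutativity: every manipulation of an ordered product \(g_1^{n_1}\cdots g_d^{n_d}\) is licensed only because commutators of filtration terms drop at least one level, and it is precisely uniformity that forces all graded pieces to have the \emph{same} rank \(d\), so that no collapsing occurs and the leading-term map stays an isomorphism at every stage; without the uniform hypothesis (e.g.\ for a general powerful group) the analogous map need not be injective.
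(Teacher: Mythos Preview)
The paper does not prove this statement; it is quoted from the reference \cite{analyticProP} (Dixon--du Sautoy--Mann--Segal) as background input and used as a black box in the proof of \cref{lem:generatorsF32prime}. Your argument is correct and is essentially the standard proof given in that reference: the same structural facts about the lower \(p\)-series of a uniform group (each \(P_i/P_{i+1}\) elementary abelian of rank \(d\) on the images of \(g_j^{p^{i-1}}\), \([G,P_i]\subseteq P_{i+1}\), completeness) are assembled, injectivity is the leading-term argument on the associated graded, and surjectivity is obtained by successive \(p\)-adic approximation, concluding with the compact--Hausdorff trick. There is nothing to compare against in the present paper.
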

  \begin{theorem}[\cite{Lazard_AnalyticP},\cite{NikolovSegal_OpenFiniteIndex}]\label{thm:openSubgroups}
	Let \(G\) be a compact \(p\)-adic analytic Lie group.
	Then a subgroup \(H\subset G\) is open iff it has finite index.
	Moreover, the forgetful functor from compact \(p\)-adic analytic Lie groups and locally analytic homomorphisms to groups is fully faithful.
  \end{theorem}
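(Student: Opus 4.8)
\textbf{Proof proposal for \Cref{thm:openSubgroups}.}

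The plan is to derive both assertions from the structure theory of uniform pro-\(p\) groups together with the analytic structure they carry. First I would fix a uniform open normal subgroup \(N \trianglelefteq G\) of finite index, which exists by the definition of a compact \(p\)-adic analytic Lie group, and recall (from the two theorems just quoted, \cite[Thm.~4.5, Thm.~4.9]{analyticProP}) that \(N\) is homeomorphic to \(\zz_p^d\) via an ordered product of a minimal topological generating set, and carries a natural structure of \(p\)-adic analytic manifold; moreover the group operations on \(N\), hence on \(G\) (which is a finite disjoint union of cosets of \(N\)), are locally analytic. This is what gives meaning to the phrase ``locally analytic homomorphism'' in the statement.

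For the first claim — that a subgroup \(H \subset G\) is open iff it has finite index — the forward direction is immediate, since an open subgroup of a compact group has finite index by a covering argument. For the converse, suppose \([G:H] = m < \infty\). Intersecting with \(N\) we reduce to showing \(H \cap N\) is open in \(N\), i.e.\ we may assume \(G = N\) is uniform. Here the key input is that in a uniform (more generally, finitely generated powerful, using \cite[Thm.~4.5]{analyticProP} to upgrade to uniform after quotienting out torsion, which is trivial) pro-\(p\) group every subgroup of finite index contains some term \(P_k(N)\) of the lower \(p\)-series: this follows because \(N/P_k(N)\) is a finite \(p\)-group for each \(k\), the \(P_k(N)\) form a neighborhood basis of the identity, and a finite-index subgroup \(H \cap N\) of a pro-\(p\) group has index a power of \(p\) and hence, via the descending chain condition on the finite quotients \(N/P_k(N)\), must contain \(P_k(N)\) once \(p^k\) exceeds the index. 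Since each \(P_k(N)\) is open, \(H \cap N\) is open, hence so is \(H\). I expect the routine point to watch here is the claim that finite-index subgroups of \(N\) have \(p\)-power index and contain a \(P_k\); this is standard for finitely generated pro-\(p\) groups but should be cited carefully (e.g.\ Serre, or \cite{analyticProP}).

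For the second claim — that the forgetful functor to groups is fully faithful, i.e.\ every abstract group homomorphism \(\varphi: G \to G'\) between compact \(p\)-adic analytic Lie groups is automatically continuous (equivalently locally analytic) — I would argue as follows. Continuity and local analyticity coincide for such homomorphisms by a theorem of Lazard (a continuous homomorphism of \(p\)-adic analytic groups is locally analytic), so it suffices to prove \(\varphi\) is continuous. The crucial ingredient is the theorem of Nikolov–Segal \cite{NikolovSegal_OpenFiniteIndex} that in a topologically finitely generated profinite group every finite-index subgroup is open. Given \(\varphi: G \to G'\) and an open subgroup \(U' \subset G'\), its preimage \(\varphi^{-1}(U')\) has finite index in \(G\); since \(G\), being compact \(p\)-adic analytic, is topologically finitely generated, Nikolov–Segal forces \(\varphi^{-1}(U')\) to be open, and as the open subgroups of \(G'\) form a neighborhood basis of \(e\), this proves \(\varphi\) continuous. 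Faithfulness of the forgetful functor is trivial. The main obstacle is thus entirely conceptual — one must invoke the deep Nikolov–Segal theorem for the full-faithfulness part, and Lazard's theory for the passage from continuity to local analyticity — rather than computational; the rest is bookkeeping with the lower \(p\)-series and the coset decomposition \(G = \coprod G/N\).
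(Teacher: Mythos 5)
This theorem is not proved in the paper at all --- it is quoted from the literature, with the two assertions attributed to Lazard and to Nikolov--Segal --- so the only comparison available is with the standard derivation from those sources, which is essentially what you reconstruct. Your argument for the second claim is the right one and is complete: for an open subgroup \(U'\subset G'\) the preimage \(\varphi^{-1}(U')\) has finite index in \(G\), hence is open by the first claim (note \(G\) is topologically finitely generated, having a finitely generated uniform open subgroup of finite index), so \(\varphi\) is continuous, and Lazard's theorem upgrades continuity to local analyticity; faithfulness is trivial.

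The one genuine weak point is your sketch of the converse of the first claim. After reducing to the uniform subgroup \(N\), you assert that a finite-index subgroup of \(N\) ``must contain \(P_k(N)\) once \(p^k\) exceeds the index, via the descending chain condition on the finite quotients \(N/P_k(N)\).'' No such argument exists: the whole difficulty is that an abstract finite-index subgroup of a pro-\(p\) group need not contain \emph{any} open subgroup a priori --- for instance a dense \(\ff_p\)-hyperplane in the (non-finitely-generated) pro-\(p\) group \(\prod_{\nbb}\zz/p\) has index \(p\) and contains no term of the lower \(p\)-series --- so the containment \(H\cap N\supset P_k(N)\) cannot be extracted from a chain condition on the quotients \(N/P_k(N)\); finite generation must enter in an essential, non-formal way. (The preliminary observation that the index is a \(p\)-power is fine, since finite quotients by normal cores of pro-\(p\) groups are \(p\)-groups, but it does not help here.) That missing step is exactly Serre's theorem for topologically finitely generated pro-\(p\) groups, or the Nikolov--Segal theorem you invoke later --- which in fact applies directly to \(G\) and renders the reduction to \(N\) unnecessary --- and it has to be cited rather than derived this way, as you yourself half-acknowledge. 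With that step replaced by the citation, your proposal coincides with the intended proof behind the paper's references.
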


\begin{remark}
  The group \(\mathbb{S}\) has a natural structure of a profinite group with the subspace topology with respect to the \(t\)-adic topology on \(\ff_4[\![t]\!]\).
  This also equips \(\mathbb{G}\) with the structure of a profinite group.
\end{remark}
Denote by \(\sigma\) the generator of the Galois group, and for \(\gamma\in \mathbb{S}\) let \(\overline{\gamma}=\sigma\gamma\sigma\).
There are two more or less natural structures on \(\mathbb{S}\) which will play an important role later on.
\begin{definition}
  Let \(\gamma=x+yT\) be an endomorphism, with \(x,y\in W(\ff_4)\).
  The {\it determinant} of \(\gamma\) is defined to be
  \[|\gamma|=x\overline{x}+2y\overline{y}.\]
  This extends to a morphism of groups \(\mathbb{G}\to\zz_2^\times\) via \(|\sigma|=1\).
\end{definition}

\begin{definition}
  Let \(F_{n/2}\mathbb{S}\subset\mathbb{S}\) be the normal subgroup of automorphisms of the form \(1+xT^n\).
  We will often shorten the notation to just \(F_{n/2}\).
\end{definition}

Consider the elements
\[\pi=1+2\zeta\text{ and }\alpha=(1-2\zeta)/\sqrt{-7}\]
with \(\sqrt{-7}\equiv1\) mod \((4)\).
Their determinants are \(3\) and \(-1\), showing that the determinant is a surjective map \(\mathbb{G}\to\zz_2^\times\).
We have that \(\pi^2,\alpha^2\), and \(\pi\alpha\) all lie in \(F_{3/2}\).
Let \(G_{24}\) be the group of automorphisms of the elliptic curve \(C_0\) over \(\ff_4\) preserving the base point, which maps injectively into the automorphisms of the associated formal group law, and let \(G_{48}\) be the extension of \(G_{24}\) by the Galois group.
The group \(G_{24}\) is isomorphic to \(Q_8\rtimes\ff_4^\times\), with generators \(i\) and \(\omega\), and we let \(j=\omega i\omega^2,k=\omega j\omega^2\).
Explicitly, in Weierstrass coordinates \(i\) acts as
\[i^*x=x+1,\,i^*y=y+x+\zeta.\]
Then \(\mathbb{S}\) is given as
\[\mathbb{S}=\overline{\langle F_{3/2},\pi\rangle}\rtimes G_{24}\]
where the overline denotes closure in the natural profinite topology.
All elements of \(G_{24}\) have determinant \(1\).
We will later have use for the following statement:

\begin{lemma}\label{lem:generatorsF32prime}
  The group \(F_{3/2}'=\{\gamma\in F_{3/2}\mid |\gamma|=1\}\) is topologically generated by \(\alpha^2\), \([i,\alpha]\), and \([j,\alpha]\).
\end{lemma}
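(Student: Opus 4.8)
The plan is to compare $F_{3/2}'$ with the closed subgroup $N:=\overline{\langle\alpha^{2},[i,\alpha],[j,\alpha]\rangle}$ by means of the $T$-adic filtration of $\mathcal{O}:=\mathrm{End}(F_{C_{0}})$. Let $\nu$ be the valuation with $\nu(T)=1$ and $\nu(2)=2$, and set $F_{k/2}=1+T^{k}\mathcal{O}$, so $F_{3/2}=1+T^{3}\mathcal{O}$; each graded piece $\mathrm{gr}_{k}:=F_{k/2}/F_{(k+1)/2}$ is canonically $\ff_{4}$ (the coefficient of $T^{k}$, reduced mod $2$) with induced multiplication equal to addition. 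Because $2=-T^{2}$, moving a scalar past $T$ applies the Frobenius of $W(\ff_{4})$, so symbols (leading terms) of products, commutators and squares can be computed purely formally. The easy half is that $N\subseteq F_{3/2}'$: one has $|\alpha^{2}|=|\alpha|^{2}=1$, and $|[i,\alpha]|=|[j,\alpha]|=1$ since these are commutators; moreover $\alpha\equiv 1-2\zeta\pmod{T^{4}\mathcal{O}}$ has $\nu(\alpha-1)=2$, while $\nu(i-1)=1$ because $\overline{i}^{2}=\overline{i^{2}}=\overline{-1}=1$ in $\ff_{4}^{\times}\cong\zz/3$ forces $\overline{i}=1$, and likewise $\nu(j-1)=1$ for $j=\omega i\omega^{-1}$; the commutator of an element of $1+T\mathcal{O}$ with one of $1+T^{2}\mathcal{O}$ lies in $1+T^{3}\mathcal{O}=F_{3/2}$.

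For the reverse inclusion I would first determine enough of the associated graded of $F_{3/2}'$. Writing $\gamma=x+yT$ with $x,y\in W(\ff_{4})$, one has $|\gamma|=x\overline{x}+2y\overline{y}=1+\tr(x-1)+N(x-1)+2N(y)$, where $\tr,N$ denote the $W(\ff_{4})/\zz_{2}$ trace and norm. A valuation count shows that if $\gamma\in F_{3/2}$ has leading term $cT^{2j}$ at an even level $2j\geq 4$, then $|\gamma|=1+(-1)^{j}2^{j}\,\tr(c)+(\text{terms of }\nu>2j)$, so $|\gamma|=1$ forces $\overline{c}\in\ker(\tr_{\ff_{4}/\ff_{2}})=\ff_{2}\subset\ff_{4}$; at odd levels the norm contributes no leading constraint. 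Hence $\mathrm{gr}_{k}(F_{3/2}')\subseteq\ff_{4}$ for odd $k\geq 3$ and $\mathrm{gr}_{k}(F_{3/2}')\subseteq\ff_{2}$ for even $k\geq 4$.

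Next I would compute the symbols of the three generators and propagate them through the filtration by squaring. Solving $i^{2}=-1$ gives $i=1+\eta_{1}T+\cdots$ with $\eta_{1}\in W(\ff_{4})^{\times}$, hence $j=\omega i\omega^{-1}=1+\zeta^{2}\eta_{1}T+\cdots$; expanding $[i,\alpha]=i\alpha i^{-1}\alpha^{-1}$ with $\alpha\equiv 1+\zeta T^{2}\pmod{T^{4}\mathcal{O}}$ yields $[i,\alpha]=1+\eta_{1}(\zeta^{2}-\zeta)T^{3}+\cdots$ and $[j,\alpha]=1+\zeta^{2}\eta_{1}(\zeta^{2}-\zeta)T^{3}+\cdots$, whose symbols in $\mathrm{gr}_{3}\cong\ff_{4}$ are $\overline{\eta_{1}}$ and $z^{2}\overline{\eta_{1}}$ (using $z+z^{2}=1$ for a generator $z\in\ff_{4}$) — two distinct nonzero elements, hence an $\ff_{2}$-basis. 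And $\alpha^{2}=(1-2\zeta)^{2}/(-7)=1+(\zeta^{2}-\zeta)T^{4}+\cdots$ has symbol $1\in\ff_{2}$ in $\mathrm{gr}_{4}$. Finally, because $2=-T^{2}$, squaring carries $1+cT^{\ell}+\cdots$ to $1-cT^{\ell+2}+\cdots$; thus iterated squares of $[i,\alpha]$ and $[j,\alpha]$ realize the pair $\overline{\eta_{1}},z^{2}\overline{\eta_{1}}$ in $\mathrm{gr}_{k}$ for every odd $k\geq 3$, and iterated squares of $\alpha^{2}$ realize $1$ in $\mathrm{gr}_{k}$ for every even $k\geq 4$. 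Combined with the previous paragraph and $N\subseteq F_{3/2}'$, this gives $\mathrm{gr}_{k}(N)=\mathrm{gr}_{k}(F_{3/2}')$ for all $k$. Since the filtration $\{F_{3/2}'\cap F_{k/2}\}_{k}$ is exhaustive and Hausdorff and $N$ is closed, the standard successive-approximation argument — given $g\in F_{3/2}'$, correct $g$ repeatedly by elements of $N$ so as to push it deeper into the filtration — upgrades this to $N=F_{3/2}'$.

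The step I expect to be the main obstacle is the computation of $\mathrm{gr}(F_{3/2}')$: one must control precisely how the reduced norm $|\cdot|$ interacts with the $T$-adic filtration at the prime $2$, in particular that at an even level the relation $|\gamma|=1$ is, to leading order, equivalent to the vanishing of a single $\ff_{2}$-valued trace, and that nothing is lost by inspecting leading terms only. The remaining ingredients — the explicit expansions in $\mathcal{O}$ and the profinite successive-approximation — are routine once this is in place.
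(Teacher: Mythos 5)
Your proof is correct, but it takes a genuinely different route from the paper's. The paper transports the problem to the Honda presentation and leans on pro-\(2\) group machinery: citing the algebraic duality literature it observes that \(F_{4/2}\) is a powerful pro-\(2\) group topologically generated by lifts of generators of \(F_{4/2}/F_{6/2}\), computes that \(\alpha^2,[i,\alpha]^2,[j,\alpha]^2,\pi/\alpha\) is such a minimal set, verifies torsion-freeness so that \(F_{4/2}\) is uniform, and then uses the coordinate theorem for uniform groups to write every element uniquely as \(\alpha^{2a}[i,\alpha]^{2b}[j,\alpha]^{2c}(\pi/\alpha)^d\) with determinant \((-3)^d\); determinant one kills the \((\pi/\alpha)\)-coordinate, and \(F_{3/2}'\) is obtained from \(F_{4/2}'\) by adjoining \([i,\alpha],[j,\alpha]\), which generate \(F_{3/2}/F_{4/2}\). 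You instead argue directly on the \(T\)-adic filtration: the determinant condition forces the even graded pieces of \(F_{3/2}'\) into \(\ker(\tr_{\ff_4/\ff_2})=\ff_2\) (your leading-order norm computation is right, since the \(2y\bar{y}\)-contribution only enters at valuation \(\geq 4j+2\)), the symbols of \([i,\alpha],[j,\alpha]\) at level \(3\) form an \(\ff_2\)-basis of \(\ff_4\) and that of \(\alpha^2\) at level \(4\) is \(1\), squaring shifts symbols up by two because \(2=-T^2\), and closedness of \(N\) together with the exhaustive Hausdorff filtration gives \(N=F_{3/2}'\) by successive approximation. Your version is more elementary and self-contained (no appeal to powerfulness, uniformity, or the external computation of \(F_{4/2}/F_{6/2}\)), at the cost of doing the leading-term analysis by hand; the paper's version yields, as a by-product, the uniform structure and \(\zz_2\)-coordinates on \(F_{4/2}\), which your symbol argument does not provide. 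Two small points to tidy: the exact level-\(4\) coefficient of \(\alpha^2\) is \((1-2\zeta)/(-7)\) rather than \(\zeta^2-\zeta\) (its reduction is \(1\), which is all you use), and the assertion \(\eta_1\in W(\ff_4)^\times\), i.e.\ that \(\nu(i-1)\) equals \(1\) exactly, deserves the one-line check that \(i^2=-1\) is impossible for \(i=x+yT\) with \(y\equiv 0 \bmod 2\), since that would force \(x^2\equiv -1 \bmod 8\).
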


\begin{proof}[Proof of \cref{lem:generatorsF32prime}]
  In this proof we will implicitly use the isomorphism of endomorphism algebras of the elliptic and the Honda formal group laws described in Lemma 3.1.2 of \cite{k2localmoore}.

  By Lemma 2.2.1 and the proof of Proposition 2.5.3 in \cite{algdual} we know that \(F_{4/2}\) is a powerful pro-2 group topologically generated by any set of elements generating all of \(F_{4/2}/F_{6/2}\).
  The map
  \[\ff_4\times\ff_4\to F_{4/2}/F_{6/2},(a,b)\mapsto [1+aS^4+bS^5]\]
  is an isomorphism of groups.
  Direct computation shows that a minimal set of generators of \(F_{4/2}/F_{6/2}\), and thus of \(F_{4/2}\), is given by
  \[\alpha^2,[i,\alpha]^2,[j,\alpha]^2,\pi/\alpha.\]
  
  \(F_{4/2}\) is also uniform:
  Since we already know it to be finitely generated powerful we only need to show that it is free of torsion by \cite[Theorem~4.5]{analyticProP}.
  For \(n\geq 4\) consider an element in \(F_{4/2}\) of the form
  \[1+aS^n+xS^{n+1}\]
  with \(a\in\{1,\zeta,\zeta^2\}\).
  Its square is given by
  \[1+aS^{n+2}+xS^{n+3}+aa^{\sigma^n}S^{2n}+xx^{\sigma^{n+1}}S^{2n+2}+(ax^{\sigma^n}+xa^{\sigma^{n+1}})S^{2n+1}.\]
  For this to be zero we would need \(a=0\), a contradiction.

  By \cite[Theorem~4.9]{analyticProP} any element of \(F_{4/2}\) can now uniquely be written as
  \[\alpha^{2a}[i,\alpha]^{2b}[j,\alpha]^{2c}(\pi/\alpha)^d\]
  for \(a,b,c,d\in\zz_2\), which has determinant \((-3)^d\).
  Thus, \(F_{4/2}'=\overline{\langle \alpha^2,[i,\alpha]^2,[j,\alpha]^2   \rangle}\).
  Finally, the quotient \(F_{3/2}'/F_{4/2}'=F_{3/2}/F_{4/2}\) is generated by \([i,\alpha]\) and \([j,\alpha]\), whence the claim.
\end{proof}

\subsection{Fixed points of Morava \(E\)-theory}
By work of Goerss, Hopkins, and Miller the group \(\mathbb{G}\) acts on \(E\) through \(E_\infty\)-ring maps.
In fact, the space of \(E_\infty\)-ring endomorphisms is isomorphic to \(\mathbb{G}\) as a discrete set, see \cite[§~7]{GoerssHopkins_Structured}.
\begin{notation}
  For \(g\in\mathbb{G}\) we will denote the associated map by \(\psi^g:E\to E\).
\end{notation}
In \cite{DevinatzHopkins_FixedPoints} Devinatz and Hopkins develop a theory for taking homotopy fixed points of \(E\) with respect to any closed subgroup of \(\mathbb{G}\) which agrees with the usual notion if the subgroup is finite.
Note that, while they only develop this for the Honda formal group law of height \(n\) over \(\ff_{p^n}\), their approach works whenever every automorphism of the formal group law over the algebraic closure is already defined over the field one is working with.
This is the case for us as \([-2](t)=t^4\) and thus any automorphism of \(F_{C_0}\) over \(\overline{\ff_4}\) must be fixed by the square of the Frobenius, and thus must already be defined over \(\ff_4\).
Let us collect what we need about these fixed point spectra in the following theorem.

\begin{theorem}[{\cite[Theorem~1]{DevinatzHopkins_FixedPoints},\cite[Theorems~5.4.4,~5.4.9]{Rognes_galois}}]\,
  \begin{enumerate}
    \item The unit map \(\mathbb{S}\to E^{h\mathbb{G}}\) is a \(K(n)\)-localization.
    \item For each pair of closed subgroups \(H\subset K\subset\mathbb{G}\) with \(H\) normal and of finite index in \(K\) the map \(E^{hH}\to E^{hK}\) is a faithful \(K(n)\)-local Galois extension for the group \(K/H\).
    \item For any closed subgroup \(H\subset\mathbb{G}\) the map
    \[E_*^\vee E^{hH}\to C^0\nb{\mathbb{G}/H,E_*},\,x\mapsto \nb{[g]\mapsto \langle x,\psi^g\circ\iota\rangle},\]
	where \(\iota: E^{hH}\to E\) is the inclusion of the fixed points and \(E_*\) carries the \(\mm\)-adic topology, is well-defined and an isomorphism.
	\item For each closed subgroup \(H\subset\gg\) the spectrum \(E^{hH}\) is \(K\)-local.
  \end{enumerate}
\end{theorem}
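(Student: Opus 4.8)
These four facts are the foundational results on $K(n)$-local homotopy fixed points established in \cite{DevinatzHopkins_FixedPoints} and recast in Galois-theoretic language in \cite{Rognes_galois}; in the paper one simply cites them, and a genuinely self-contained argument would reproduce most of those sources together with the Hovey--Strickland nilpotence technology, so I only sketch the skeleton. The plan is to prove the parts in the order (4), (3), (1), (2), each feeding the next. Recall that Devinatz and Hopkins build $E^{hH}$ for closed $H\subset\mathbb{G}$ by first forming honest continuous homotopy fixed points for open (equivalently, by \cref{thm:openSubgroups}, finite-index) subgroups $U$, and then setting $E^{hH}=\operatorname*{holim}_{U\supseteq H}E^{hU}$ up to the continuous bookkeeping and a final $K(n)$-localization. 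Given this, part (4) is immediate: each stage is a homotopy limit of $K(n)$-local spectra --- $K(n)$-localizations of function spectra out of finite complexes into the $K(n)$-local spectrum $E$ --- and a homotopy limit of $K(n)$-local spectra is $K(n)$-local. I would do this first because (1)--(3) all take place in the $K(n)$-local category.

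For part (3), I would start from the base case $H=\{e\}$, where the assertion is the classical identification $E_*^\vee E\cong C^0(\mathbb{G},E_*)$: on $\pi_0$ this is the Lubin--Tate statement that the ring corepresenting a pair of deformations of $F_{C_0}$ together with a $\star$-isomorphism between them is the ring of continuous functions $\mathbb{G}\to E_0$, and the rest follows by evenness and flatness, with the $\mm$-adic topology on $E_*$ being exactly what makes $C^0$ the correct target. For general closed $H$ one pushes the homotopy fixed points through the computation: $L_{K(n)}(E\wedge E^{hH})\simeq\bigl(L_{K(n)}(E\wedge E)\bigr)^{hH}\simeq C^0(\mathbb{G},E)^{hH}\simeq C^0(\mathbb{G}/H,E)$, using that $E\wedge(-)$ commutes with the relevant homotopy limits $K(n)$-locally --- which is built into the construction of $E^{hH}$ --- and that the $H$-fixed points of $C^0(\mathbb{G},-)$ under right translation are $C^0(\mathbb{G}/H,-)$. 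Taking $\pi_*$ gives the isomorphism, and tracing an element along these equivalences yields the pairing formula $x\mapsto\bigl([g]\mapsto\langle x,\psi^g\iota\rangle\bigr)$.

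For part (1) --- the deepest statement, the Devinatz--Hopkins theorem proper --- I would compare two conditionally convergent spectral sequences through the unit map $L_{K(n)}\mathbb{S}\to E^{h\mathbb{G}}$: the $K(n)$-local $E$-based Adams spectral sequence for $L_{K(n)}\mathbb{S}$, whose $E_2$-page is $\operatorname{Ext}_{E_*^\vee E}(E_*,E_*)$ and hence, by part (3) with $H=\{e\}$ and the algebraic change-of-rings theorem, the continuous cohomology $H^*_c(\mathbb{G};E_*)$; and the descent (homotopy fixed point) spectral sequence for $E^{h\mathbb{G}}$, which has the same $E_2$-page. The unit induces an isomorphism on $E_2$, and both sides converge (the source because $L_{K(n)}\mathbb{S}$ is $E$-nilpotent complete, the target by construction), so the unit is an equivalence. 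Part (2) is then the standard ``quotient'' inheritance for the resulting $K(n)$-local pro-$\mathbb{G}$-Galois extension $L_{K(n)}\mathbb{S}\to E$: the fixed-point identity $(E^{hH})^{h(K/H)}\simeq E^{hK}$ is the tower structure again, and the unramifiedness condition $E^{hH}\wedge_{E^{hK}}E^{hH}\simeq\prod_{K/H}E^{hH}$ is verified by computing $E_*^\vee$ of both sides via part (3) and matching. The main obstacle, and the only genuinely non-formal step in (2), is \emph{faithfulness}: proving that a nonzero $K(n)$-local $E^{hK}$-module cannot become zero after $(-)\wedge_{E^{hK}}E^{hH}$ rests on $E$ being a faithful dualizable $E^{hK}$-module and on the thick-subcategory theorem in the $K(n)$-local category, and is carried out exactly as in \cite[Ch.~5]{Rognes_galois}.
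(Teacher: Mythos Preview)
The paper does not prove this theorem; it is stated with citations to \cite{DevinatzHopkins_FixedPoints} and \cite{Rognes_galois} and used as a black box throughout. You correctly identify this yourself, and your sketch is a reasonable outline of the arguments in those sources, so there is nothing to compare.
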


Let us discuss how various fixed points of \(E\) relate to \(K\)-localizations of \(TMF\) with level structure.
We include this discussion since we find the available sources stating the content of Proposition \ref{cor:EfixedTMF} not very elucidating, but claim no originality.
In fact, the following was already known to Hopkins and Mahowald.

Let \(R\) be the ring
\[R:=\frac{\zz\sb{\frac{1}{3},B,C,\Delta^{-1}}}{\nb{B^3=(B+C)^3}},\]
with \(\Delta\) the discriminant of the  Weierstrass curve
\[C_3:y^2+(3C-1)xy+(-3C^2-B-3BC)=x^3\]
and let \(P\) and \(Q\) be the points
\[P=(0,0),\,Q=(C,B+C).\]
Then \((P,Q)\) is a full level \(3\) structure on \(C_3\), and the classifying map
\[(C_3,P,Q):\mathrm{Spec}(R)\to \mathcal{M}(3)\]
is an equivalence, see \cite[§~2.2.10]{KatzMazur_Moduli}.
This gives us that \(TMF(3)\) is even periodic and Landweber exact, and yields preferred isomorphisms
\[\pi_0TMF(3)\simeq R\text{ and }G^Q_{TMF(3)}\simeq\widehat{C_3}.\]

The points \(P_0=(0,0)\) and \(Q_0=(1,\zeta)\) equip \(C_0\) with a full level \(3\) structure, and the associated map \(R\to \ff_4\) sends \(C\) to \(1\) and \(B\) to \(\zeta^2\).

Since the map \(\mathcal{M}(3)\to\mathcal{M}_{Ell}[\frac{1}{3}]\) is finite \'etale, the space of dotted arrows in the diagram
\[\begin{tikzcd}
	{\mathrm{Spec}(\ff_4)} & {\mathcal{M}(3)} \\
	{\mathrm{Spf}(E_0,\mm)} & {\mathcal{M}_{Ell}[\frac{1}{3}]}
	\arrow["{(C_0,P_0,Q_0)}", from=1-1, to=1-2]
	\arrow[from=1-1, to=2-1]
	\arrow[from=1-2, to=2-2]
	\arrow[dashed, from=2-1, to=1-2]
	\arrow["{C_E}"', from=2-1, to=2-2]
\end{tikzcd}\]
is contractible.
Denote the associated full level 3 structure on \(C_E\) by \((P_E,Q_E)\).
\begin{proposition}
	The map
	\[TMF(3)\to E\]
	associated to the above full level 3 structure is a \(K\)-equivalence.
\end{proposition}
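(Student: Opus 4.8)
The plan is to identify \(\mathrm{Spf}(E_0,\mm)\) with the formal completion of \(\mathcal{M}(3)=\mathrm{Spec}(R)\) at its unique supersingular point at the prime \(2\), and then to invoke the standard fact that the \(K\)-localization of an even periodic Landweber exact ring spectrum is again Landweber exact, carried by the completion of its coefficient ring along the supersingular locus, with the completed formal group. Granting this, and using that \(E\) is \(K\)-local, it suffices to show that the comparison map \(L_K TMF(3)\to E\) is an equivalence.

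First I would analyse the ring map \(R\to E_0\) underlying \(TMF(3)\to E\). By construction it is the unique lift in the square above, so it carries the maximal ideal \(\mm_{x_0}=\ker(R\to\ff_4)\) classifying \((C_0,P_0,Q_0)\) into \(\mm\), and hence factors through the completion \(\widehat{R}_{\mm_{x_0}}\). Since \(\mathcal{M}(3)\to\mathcal{M}_{Ell}[1/3]\) is finite étale and \(\mathcal{M}(3)\) is smooth of relative dimension \(1\) over \(\zz[1/3]\), the ring \(\widehat{R}_{\mm_{x_0}}\) is a complete regular local domain of Krull dimension \(2\), formally smooth of relative dimension \(1\) over \(W(\ff_4)\), and it carries the restriction of the universal curve \(C_3\), a deformation of \(\widehat{C_0}\). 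Classifying this deformation by the universal property of Lubin–Tate theory gives a map \(E_0\to\widehat{R}_{\mm_{x_0}}\); composing with \(\widehat{R}_{\mm_{x_0}}\to E_0\) yields a self-map of \(E_0\) classifying the pullback of that deformation, which by the very definition of \(R\to E_0\) is the universal deformation \(\widehat{C}\), hence is the identity. Therefore \(\widehat{R}_{\mm_{x_0}}\to E_0\) is a split surjection between regular local domains of the same dimension, hence an isomorphism, and it is compatible with the underlying formal groups by design.

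Next I would pin down the supersingular locus of \(\mathcal{M}(3)\) at \(2\). The supersingular locus of \(\mathcal{M}_{Ell}\otimes\overline{\ff_2}\) is the single point \([C_0]\) with automorphism group \(\mathrm{Aut}(C_0)\) of order \(24\); as \(\mathcal{M}(3)\to\mathcal{M}_{Ell}[1/3]\) is finite étale, the geometric supersingular locus of \(\mathcal{M}(3)\) is the quotient of the set of full level \(3\) structures on \(C_0\) by \(\mathrm{Aut}(C_0)\). Since automorphisms of an elliptic curve preserve the Weil pairing, the faithful action of \(\mathrm{Aut}(C_0)\) on \(C_0[3]\) lands in \(\mathrm{SL}_2(\ff_3)\subset\mathrm{GL}_2(\ff_3)\), and by order count it is all of \(\mathrm{SL}_2(\ff_3)\); since \(\mathrm{SL}_2(\ff_3)\) acts simply transitively on the full level \(3\) structures with a fixed value of the Weil pairing, there is exactly one geometric point of \(\mathcal{M}(3)\) over \([C_0]\) for each of the two primitive cube roots of unity, and Frobenius interchanges them. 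Because \(\#C_0(\ff_4)=9\), all of \(C_0[3]\) is \(\ff_4\)-rational, so the corresponding single closed point of \(\mathcal{M}(3)\otimes\ff_2\) has residue field \(\ff_4\); it is \(x_0\).

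Combining the two steps, \(L_K TMF(3)\) is even periodic and Landweber exact with \(\pi_0\cong\widehat{R}_{\mm_{x_0}}\cong E_0\) and formal group the universal deformation \(\widehat{C}\) — exactly the data defining \(E\) — and the comparison map realizes this identification by functoriality of the Landweber exact functor, hence is an equivalence. I expect the main obstacle to be the middle step: making the identification \(\widehat{R}_{\mm_{x_0}}\cong E_0\) genuinely canonical \emph{together with the formal group} rather than merely abstractly, and, if one prefers not to cite the ``\(K\)-localization of a Landweber exact theory equals completion along the supersingular locus'' principle, establishing it in the needed generality. An alternative to the latter is to compare \(K_*\) of the two sides directly, using that \(K_*TMF(3)=K_*\otimes_{MU_*}MU_*TMF(3)\) is supported on the supersingular locus of \(\mathcal{M}(3)\otimes\ff_2\), which the third paragraph describes, and that the map is then an isomorphism because it is one after reduction modulo \(\mm\).
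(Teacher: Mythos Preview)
Your argument is correct but takes a genuinely different route from the paper. The paper proceeds by direct computation: it writes \(uv_1=3C-1\) and \(u^3v_2=-3C^2-B-3BC\), observes that both \(v_2\) and \(\Delta\) are units mod \((2,uv_1)\), substitutes \(B'=B/(uv_1+1)\) to obtain \(\pi_0L_KTMF(3)\cong\bigl(\zz_2^\wedge[B']/(B'^2+B'+1/3)\bigr)\pb{uv_1}\), and then uses Hensel's lemma to see that the induced map to \(E_0\) carries \(B'\) to the unique root congruent to \(\zeta^2\) and \(uv_1\) to a topological generator, hence is an isomorphism. Your approach instead invokes the universal property of Lubin--Tate theory to produce a splitting \(E_0\to\widehat{R}_{\mm_{x_0}}\to E_0\) and concludes by a dimension count, together with a separate Weil-pairing argument to pin down the supersingular locus of \(\mathcal{M}(3)\) at \(2\) as a single closed point with residue field \(\ff_4\). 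Your proof is more conceptual and generalizes more readily (it would work verbatim at other primes and levels), while the paper's is entirely self-contained and avoids appealing to the structure of the supersingular locus or to the ``\(K\)-localization equals completion along height~\(\geq n\)'' principle that you flag as a potential sticking point. Both are valid; the paper simply trades conceptual clarity for explicitness.
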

\begin{proof}
	Let us first determine \(\pi_*L_K TMF(3)\).
	As \(TMF(3)\) is even periodic and Landweber exact, we have
	\[\pi_*L_K TMF(3)\simeq (v_2u^3)^{-1}R^\wedge_{(2,v_1u)}[u^{\pm}]\]
	for some choice of unit \(u\in \pi_{-2}TMF(3)\).
	Using the coordinate \(-x/y\) on \(C_3\) we find
	\[uv_1=3C-1\text{ and }u^3v_2=-3C^2-B-3BC.\]
	Mod \((2,uv_1)\) both \(u^3v_2\) and \(\Delta\) are congruent to \(1\), so we can leave out the localizations.
	Setting \(B'=B/(uv_1+1)\) we find
	\[\pi_0L_KTMF(3)=\nb{\zz_2^\wedge[B']/(B'^2+B'+1/3)}\pb{uv_1}.\]
	By Hensel's lemma there is a unique \(b\in W(\ff_4)\subset W(\ff_4)\pb{u_1}\) such that
	\[b^2+b+1/3=0\text{ and }b\equiv\zeta^2\text{ mod }\mm,\]
	so the map to \(\pi_0E\) must send \(B'\) to \(b\).
	In particular, since \(b\) agrees with a Teichmüller lift of \(\zeta\) up to multiples of \(2\), the map to \(E\) induces an equivalence
	\[\zz_2^\wedge[B']/(B'^2+B'+1/3)\xrightarrow{\sim}W(\ff_4).\]
	Finally, since \(TMF(3)\to E\) is a ring map, the image of \(uv_1\) agrees with the Hazewinkel generator \(3u_1\) up to multiples of \(2\), so the map is an equivalence.
\end{proof}
There is a natural action of \(GL_2(\zz/3)\) on \(\mathcal{M}(3)\), given by:
\[\begin{pmatrix} a & b\\ c& d\end{pmatrix}.(C,P,Q)=(C,aP+bQ,cP+dQ).\]

By functoriality this acts on \(TMF(3)\) by \(\einfty\)-ring maps, and so by the previous Proposition also on \(E\).
Since \(CAlg(E,E)\) is the discrete set \(\gg\), this action is completely determined by a group homomorphism
\[GL_2(\zz/3)\to \gg.\]

The abelian group \(C_0(\ff_4)\) is isomorphic to \(\zz/3\times\zz/3\) on the generators \(P_0\) and \(Q_0\), and the action of the \(G_{48}\) on \((\ff_4,C_0)\) induces an isomorphism
\[G_{48}\simeq GL_2(\zz/3).\]
The map \(\mathrm{Spec}(\ff_4)\to\mathcal{M}(3)\) of stacks over \(\mathcal{M}_{Ell}[\frac{1}{3}]\) classifying \((C_0,P_0,Q_0)\) is equivariant with respect to these actions.
This shows that the map \(GL_2(\zz/3)\to\gg\) is given by
\[GL_2(\zz/3)\simeq G_{48}\subset \gg,\]
where the last map is the inclusion discussed in the previous Section.
\begin{proposition}\label{cor:EfixedTMF}
	The above discussion provides canonical equivalences
	\begin{itemize}
		\item \(L_KTMF(3)\simeq E\),
		\item \(L_K TMF_1(3)\simeq E^{h\langle\omega,\sigma\rangle}\),
		\item \(L_KTMF_0(3)\simeq E^{h\langle -1,\omega,\sigma\rangle}\), and
		\item \(L_KTMF\simeq E^{hG_{48}}\).
	\end{itemize}
\end{proposition}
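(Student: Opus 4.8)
The first of the four equivalences is exactly the preceding Proposition, so the plan is to deduce the other three from it by descent. The key structural input is that \(TMF_1(3)\), \(TMF_0(3)\) and \(TMF[1/3]\) are obtained from \(TMF(3)\) as homotopy fixed points along finite subgroups of \(GL_2(\zz/3)\): indeed \(\mathcal{M}(3)\to\mathcal{M}_1(3)\to\mathcal{M}_0(3)\to\mathcal{M}_{Ell}[1/3]\) exhibit the target stacks as quotients of \(\mathcal{M}(3)\) by, respectively, the stabiliser \(\Gamma\) of the first marked point (of order \(6\)), the Borel subgroup \(B\) stabilising the line through it (of order \(12\)), and all of \(GL_2(\zz/3)\) (of order \(48\)), so by the standard finite \'etale descent for \(TMF\) with level structure one gets \(TMF_1(3)\simeq TMF(3)^{h\Gamma}\), \(TMF_0(3)\simeq TMF(3)^{hB}\) and \(TMF[1/3]\simeq TMF(3)^{hGL_2(\zz/3)}\). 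Since we work \(K(2)\)-locally at \(p=2\), the prime \(3\) is a unit, so passing to \(\mathcal{M}_{Ell}[1/3]\) costs nothing; and \(TMF\to TMF[1/3]\) is a \(K\)-equivalence (it is a colimit of multiplications by \(3\), and \(3\) is invertible in \(K(2)_*\)), so \(L_K TMF\simeq L_K TMF[1/3]\).

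The next step is the observation that \(L_K=L_{K(2)}\) passes through homotopy fixed points for finite groups: for \(G\) finite acting on a spectrum \(X\), the natural map \(X^{hG}\to(L_K X)^{hG}\) is a \(K\)-localisation. I would prove this by noting that \((L_KX)^{hG}\) is \(K\)-local (a limit of \(K\)-local spectra) and that the fibre of the map is \(F^{hG}\) for \(F=\mathrm{fib}(X\to L_KX)\), a \(K\)-acyclic \(G\)-spectrum; then \(L_KF^{hG}\simeq L_KF_{hG}\) by the \(K(2)\)-local vanishing of the Tate construction (Greenlees--Sadofsky), and \(F_{hG}=\colim_{BG}F\) is a colimit of \(K\)-acyclic spectra, hence \(K\)-acyclic, so \(F^{hG}\) is \(K\)-acyclic and the map is a \(K\)-equivalence. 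Applying this with \(X=TMF(3)\), and using from the discussion above that the \(GL_2(\zz/3)\)-action on \(E=L_K TMF(3)\) is the restriction along \(GL_2(\zz/3)\simeq G_{48}\subset\gg\) of the Goerss--Hopkins--Miller action — which involves no coherence data beyond the group homomorphism, since \(CAlg(E,E)\) is homotopy-discrete — one gets \(L_K TMF_1(3)\simeq E^{h\Gamma'}\), \(L_K TMF_0(3)\simeq E^{hB'}\) and \(L_K TMF\simeq E^{hG_{48}}\), where \(\Gamma'\subset B'\subset G_{48}\) are the images of \(\Gamma\subset B\subset GL_2(\zz/3)\).

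It remains to identify \(\Gamma'\) and \(B'\). The isomorphism \(GL_2(\zz/3)\simeq G_{48}\) is the action of \(G_{48}\) on \(C_0(\ff_4)=\langle P_0,Q_0\rangle\cong(\zz/3)^2\), so the point is to locate the relevant subgroups there. A short computation with the Weierstrass equation of \(C_0\) shows that \(\omega\in\ff_4^\times\subset G_{24}\) (acting by scaling the \(x\)-coordinate) and \(\sigma\) (Frobenius) both fix \(P_0=(0,0)\), so \(\langle\omega,\sigma\rangle\) lies in the stabiliser of \(P_0\); since \(\langle\omega,\sigma\rangle\cong S_3\) has order \(6=|\Gamma|\), it equals \(\Gamma'\). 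Likewise the negation \([-1]\) of \(C_0\) fixes the line \(\langle P_0\rangle\) while sending \(P_0\) to \(-P_0\neq P_0\), so \(\langle -1,\omega,\sigma\rangle\subseteq B'\) with \(-1\notin\langle\omega,\sigma\rangle\); comparing orders forces equality with the order-\(12\) Borel \(B'\). Substituting these identifications, along with \(G_{48}=GL_2(\zz/3)\), gives the stated equivalences.

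I expect the delicate points to be bookkeeping rather than substance: matching the three homotopy fixed point presentations with the definitions of \(TMF_1(3)\), \(TMF_0(3)\) and \(TMF\) in the literature, and verifying that the finite-group action on \(E\) coming from localisation is genuinely the restricted \(\gg\)-action — which, as noted, is forced by the homotopy-discreteness of \(CAlg(E,E)\). A slip in the normalisation of the marked points would be harmless, since conjugate subgroups of \(\gg\) have canonically equivalent homotopy fixed points (via the corresponding \(\psi^g\)). The one input carrying real weight is the Tate-vanishing theorem that lets \(L_K\) commute with finite homotopy fixed points.
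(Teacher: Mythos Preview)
Your argument is correct and shares the overall architecture of the paper's proof: both start from the preceding equivalence \(L_KTMF(3)\simeq E\), invoke the Galois descriptions \(TMF_1(3)\simeq TMF(3)^{h\Gamma}\), \(TMF_0(3)\simeq TMF(3)^{hB}\), \(TMF[1/3]\simeq TMF(3)^{hGL_2(\zz/3)}\), and then argue that \(L_K\) commutes with these finite homotopy fixed points. The paper simply cites \cite{MahowaldRezk_level3} for the Galois descriptions and for the identification of the subgroups inside \(G_{48}\), whereas you spell out the latter by computing the action of \(\omega,\sigma,[-1]\) on \(P_0\); this is a fine and correct verification.

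Where the two proofs genuinely diverge is in the commutation of \(L_K\) with \((-)^{hG}\). You appeal to the Greenlees--Sadofsky Tate vanishing theorem: the fibre \(F\) of \(X\to L_KX\) is \(K\)-acyclic, \(F_{hG}\) is then \(K\)-acyclic as a colimit of \(K\)-acyclics, and \(F^{tG}\) is \(K\)-acyclic by Tate vanishing, so \(F^{hG}\) is \(K\)-acyclic. This is a clean, general argument that works for any spectrum with a finite group action. The paper instead avoids Tate vanishing and argues more concretely: it shows \(TMF(3)_2^\wedge\) is \(E(2)\)-local by exhibiting it as a module over the \(E(2)\)-form \(v_2^{-1}tmf_1(3)_{(2)}\) (using Lawson--Naumann's identification of \(tmf_1(3)_{(2)}\) with a form of \(BP\langle2\rangle\)), and then invokes the fact from \cite{666} that on \(E(2)\)-local spectra \(L_K\) is computed as a limit, hence commutes with the limit \((-)^{hG}\). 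Your route is more conceptual and portable; the paper's route is more hands-on and keeps the dependencies closer to the chromatic toolbox already in play (it trades the Greenlees--Sadofsky black box for the Lawson--Naumann input, which is used elsewhere in the paper anyway).
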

\begin{proof}
	An explanation of the equivalences
	\[TMF_1(3)\simeq TMF(3)^{h\langle\omega,\sigma\rangle}, TMF_0(3)\simeq TMF(3)^{h\langle-1,\omega,\sigma\rangle},\text{ and }TMF[1/3]\simeq TMF(3)^{hG_{48}}\]
	can be found in the introduction of \cite{MahowaldRezk_level3}.

	Let \(G\subset GL_2(\zz/3)\) be a subgroup.
	We only need to show that the natural map
	\[L_K\nb{TMF(3)^{hG}}\to\nb{L_KTMF(3)}^{hG}\]
	is an equivalence.
	By \cite[prop.~7.10(e)]{666} it suffices to show that \(TMF(3)_2^\wedge\) is \(E(2)\)-local, since on such spectra \(L_K\) is computed by a limit, so that it commutes with limits.

	From \cite[thm.~1.1]{LawsonNaumann_tmf13BP2} we know that \(tmf_1(3)_{(2)}\) is a form of \(BP\langle 2\rangle\), so that \(v_2^{-1}tmf_1(3)_{(2)}\) is a form of \(E(2)\).
	Note that the Hazewinkel \(v_2\) is a unit in \(TMF(3)_*\) since it divides the discriminant of \(C_3\), so that we get a map of \(\einfty\)-rings
	\[v_2^{-1}tmf_1(3)_{(2)}\to TMF(3)_2^\wedge.\]
	So \(TMF(3)_2^\wedge\) is an \(E(2)\)-module, and thus \(E(2)\)-local.
\end{proof}

\subsection{Hopf algebroid structures}
Let \(\mm=(2,u_1)\subset E_0\) be the maximal ideal.
For well-behaved spectra \(X\), for example if \(K_*X\) is concentrated in even degrees, one has
\[K_*X=\nb{E_*X}/\mm.\]
Since the action of \(\mathbb{G}\) on \(E_*\) preserves the maximal ideal, in these cases \(K_*X\) is a \(\mathbb{G}\)-module, natural in \(X\).
Said differently, in these cases \(K_0X\) is naturally a comodule over the Hopf algebroid \(\nb{K_0, (E_0E)/\mm=K_0E}\), which is isomorphic to \(\nb{\ff_4, C^0\nb{\mathbb{G},\ff_4}}\).
Let us sketch how Hopf algebroid structure using this identification.
\begin{remark}
  When working with these kinds of Hopf algebroids, we will often use functions of the form
  \[\delta_g:\mathbb{G}\to\ff_4,\,h\mapsto \begin{cases} 1 & g=h\\ 0 & \text{else}\end{cases}.\]
  While these are clearly not continuous functions, the infinite sums we write down will be, so this is just a useful notation.
\end{remark}

\begin{proposition}\label{prop:hopfstruct}
  Let \(X\) be a spectrum such that \(K_*X\) is concentrated in even degrees.
  Then the coaction map is given by
  \[\psi:K_0X\to C^0(\mathbb{G},\ff_4)\otimes_{\ff_4}K_0X,\,a\mapsto \sum_{g}\delta_g\otimes (g^{-1}).a.\]
  In particular, the diagonal is given by
  \[\Delta(\delta_g)=\sum_{xy=g}\delta_x\otimes\delta_y.\]
  The left and right unit are given by
  \[\eta_L(a)(g)=a\text{ and }\eta_R(a)(g)=g.a,\]
  the counit is given by evaluation at \(1\in\mathbb{G}\), and the conjugation by
  \[(cf)(g)=g.(f(g^{-1})).\]
\end{proposition}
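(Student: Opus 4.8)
The plan is to read every formula off a single input, namely the case \(H=\{1\}\) of part~(3) of the Devinatz--Hopkins theorem quoted above: the isomorphism of complete \(E_*\)-modules
\[E_*^\vee E\xrightarrow{\ \sim\ }C^0(\mathbb{G},E_*),\qquad x\longmapsto\bigl(g\mapsto\langle x,\psi^g\rangle\bigr),\qquad \langle x,\psi^g\rangle=\mu_*\circ(\id\wedge\psi^g)_*(x),\]
together with the \(K(n)\)-local Künneth isomorphisms \(E_*^\vee(E\wedge Y)\cong E_*^\vee E\ \widehat{\otimes}_{E_*}E_*^\vee Y\), valid whenever \(E_*^\vee Y\) is pro-free over \(E_*\); this applies to every \(Y\) with \(K_*Y\) even, and also to \(Y=E\) (hence to \(E\wedge E\)) since \(E\) is even periodic. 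All these maps are natural, so reducing modulo the finitely generated ideal \(\mm\) turns them into the identifications \(K_0E\cong C^0(\mathbb{G},\ff_4)\), \(K_0(E\wedge E)\cong C^0(\mathbb{G}\times\mathbb{G},\ff_4)\), and \(K_0(E\wedge X)\cong C^0(\mathbb{G},K_0X)\). It then remains only to chase, through these isomorphisms, the maps of spectra defining the Hopf algebroid structure (\(\id_E\wedge\eta\), \(\eta\wedge\id_E\), \(\mu\), the swap \(\tau\), and \(\id_E\wedge\eta\wedge\id_E\)) and the \(E\)-homology coaction.

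The observation that makes the chase trivial is that each \(\psi^g\colon E\to E\) is an \(\einfty\)-ring map, so it fixes the unit \(\eta\) and satisfies \(\psi^g\circ\mu=\mu\circ(\psi^g\wedge\psi^g)\); in particular \(x\mapsto\langle x,\psi^g\rangle\) is a ring homomorphism, so the Devinatz--Hopkins isomorphism is one of rings, with pointwise multiplication on the target. The first four formulas then drop out directly and descend mod \(\mm\). The left unit is induced by \(\id_E\wedge\eta\), so \(\langle\eta_L(a),\psi^g\rangle=\mu_*(\id\wedge\psi^g\eta)_*(a)=\mu_*(\id\wedge\eta)_*(a)=a\), the constant function. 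The counit is induced by \(\mu\), hence equals \(x\mapsto\mu_*(x)=\langle x,\psi^{1}\rangle\) since \(\psi^{1}=\id_E\), i.e.\ evaluation at \(1\in\mathbb{G}\). The conjugation is induced by \(\tau\), so \(\langle cx,\psi^g\rangle=\mu_*\tau_*(\id\wedge\psi^g)_*(x)=\mu_*(\psi^g\wedge\id)_*(x)\); writing \(\psi^g\wedge\id=(\psi^g\wedge\psi^g)\circ(\id\wedge\psi^{g^{-1}})\) and using \(\psi^g\circ\mu=\mu\circ(\psi^g\wedge\psi^g)\) gives \(\langle cx,\psi^g\rangle=(\psi^g)_*\mu_*(\id\wedge\psi^{g^{-1}})_*(x)=g\cdot f(g^{-1})\). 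The right unit is induced by \(\eta\wedge\id_E\), so \(\langle\eta_R(a),\psi^g\rangle=\mu_*(\eta\wedge\psi^g)_*(a)=(\psi^g)_*(a)=g\cdot a\).

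For the comultiplication and the general coaction the same computation is run with one or more tensor factors present. The comultiplication is induced by \(\id_E\wedge\eta\wedge\id_E\colon E\wedge E\to E\wedge E\wedge E\); twisting the two outer factors by \(\psi^g\) and \(\psi^h\) and using \(\psi^g\psi^h=\psi^{gh}\) to absorb the middle unit gives \((\Delta f)(g,h)=f(gh)\), and for \(f=\delta_g\) this is the indicator of \(\{(x,y):xy=g\}\), which corresponds to \(\sum_{xy=g}\delta_x\otimes\delta_y\) under \(C^0(\mathbb{G},\ff_4)\otimes_{\ff_4}C^0(\mathbb{G},\ff_4)\cong C^0(\mathbb{G}\times\mathbb{G},\ff_4)\) (legitimate since \(\ff_4\) is finite and \(\mathbb{G}\) profinite). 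For the coaction on \(K_0X\), induced by inserting a unit alongside \(X\), the identical manipulation shows that \(\psi(a)\), viewed in \(C^0(\mathbb{G},K_0X)\), is controlled by the \(\mathbb{G}\)-action \(g\mapsto g\cdot a\); unwinding the identification \(C^0(\mathbb{G},\ff_4)\otimes_{\ff_4}K_0X\cong C^0(\mathbb{G},K_0X)\) rewrites this as \(\sum_g\delta_g\otimes g^{-1}.a\), the inverse being exactly the conventional discrepancy in the dictionary between \(\mathbb{G}\)-modules and \(C^0(\mathbb{G},-)\)-comodules, and specializing \(X=\ss\) must reproduce \(\eta_R\), a useful consistency check.

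The substantive content lies entirely in the Devinatz--Hopkins isomorphism (and the pro-freeness that powers the Künneth isomorphisms underneath it); the rest is unwinding. Accordingly, I expect the only real obstacle to be bookkeeping: keeping the two \(E_*\)-module structures on \(E_*^\vee E\) (via \(\eta_L\) and \(\eta_R\)) apart when forming \(\widehat{\otimes}_{E_*}\), and fixing once and for all the left/right convention for the \(\mathbb{G}\)-action on \(K_0X\) together with the orientation of \(E_*^\vee E\cong C^0(\mathbb{G},E_*)\), so that the inverses in the conjugation and coaction formulas land on the side claimed. I would pin these down at the start of the section — the action being \(g\cdot a=(\psi^g)_*a\), the tensor \(E_*^\vee E\ \widehat{\otimes}_{E_*}(-)\) using \(\eta_R\) on the left factor, and \(C^0(\mathbb{G},\ff_4)\otimes_{\ff_4}K_0X\cong C^0(\mathbb{G},K_0X)\) the corresponding identification — after which all the displayed computations combine to give precisely the stated formulas.
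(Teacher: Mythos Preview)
Your proposal is correct and takes essentially the same approach as the paper: the paper's proof is a one-liner stating that all the formulas follow by transporting the Hopf algebroid structure along the Devinatz--Hopkins isomorphism \(E_*^\vee E\xrightarrow{\sim}C^0(\mathbb{G},E_*),\ a\mapsto(g\mapsto\langle a,\psi^g\rangle)\), whereas you actually carry out that transport map by map. Your explicit chase (using that each \(\psi^g\) is an \(\einfty\)-ring map to handle \(\eta\), \(\mu\), and compositions) is exactly what ``transporting the structure'' means, so there is nothing to add.
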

\begin{proof}
  The claims above follow from the isomorphism
  \[E^\vee_*E\xrightarrow{\sim}C^0(\mathbb{G},E_*),\,a\mapsto\nb{g\mapsto\langle a, \psi^g\rangle}\]
  and transporting the Hopf algebroid structure along it.
\end{proof}

Call the above Hopf algebroid \(\Sigma\) and consider the Hopf algebra
\[\Sigma'=\nb{\ff_4,C^0\nb{\mathbb{S},\ff_4}}\]
together with the restriction map \(\Sigma\to\Sigma'\).
One can check that \(\Sigma'\) is the associated Hopf algebra as in \cite[Definition~A1.1.9]{Ravenel_green}.
\begin{proposition}\label{prop:comodulestruct}\label{prop:twisted=involution}
  The category of (left) \(\Sigma\)-comodules is equivalent to the category of (left) \(\Sigma'\)-comodules \(M\) with an \(\ff_4\)-antilinear involution \(\tau\) intertwining the \(\mathbb{S}\)-action up to conjugation:
  \[\psi\circ\tau=\nb{(\sigma_*\circ\mathrm{Ad}_{\sigma}^*)\otimes\tau}\circ\psi.\]
  The equivalence is implemented via the corestriction along \(\Sigma\to\Sigma'\) and setting \(\tau\) to be the map
  \[\tau=\nb{\sigma\circ\mathrm{ev}_\sigma\otimes\id}\circ\psi_\Sigma.\]
  Under this equivalence the comodule \(\Sigma\) is sent to
  \[\Sigma'\otimes_{\ff_2}\ff_4,\,\tau(f\otimes x)=\nb{\gamma\mapsto \overline{f(\overline{\gamma})}}\otimes x\]
  via the isomorphism
  \[\Sigma'\otimes_{\ff_2}\ff_4\to\Sigma,\,\delta_\gamma\otimes x\mapsto \delta_\gamma x+\delta_{\gamma\sigma}\bar{x}.\]
  The map \(K_0E^{h\mathrm{Gal}}\to K_0E=\Sigma\to\Sigma'\) gives a \(\Sigma'\)-comodule algebra isomorphism, and under this identification \(\tau\) acts on \(\Sigma'\) as \((\tau f)(\gamma)=\overline{f(\overline{\gamma})}\).
\end{proposition}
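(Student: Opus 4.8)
The plan is to translate the proposition into the language of discrete semilinear group representations via \cref{prop:hopfstruct}, where every assertion becomes a statement about the semidirect-product decomposition $\mathbb{G}=\mathbb{S}\rtimes\gal$. Since $\mathbb{G}$ acts on $\ff_4=E_0/\mm$ through the quotient $\mathbb{G}\to\gal$, a $\Sigma$-comodule is the same datum as an $\ff_4$-vector space $M$ with a discrete $\mathbb{G}$-action which is $\ff_4$-linear on $\mathbb{S}$ and $\ff_4$-semilinear along $\sigma$, while a $\Sigma'$-comodule is just a discrete $\ff_4$-linear $\mathbb{S}$-module. With this dictionary the coaction of \cref{prop:hopfstruct} reads $m\mapsto\sum_g\delta_g\otimes g^{-1}.m$, and I would first check that $\tau=(\sigma\circ\mathrm{ev}_\sigma\otimes\id)\circ\psi_\Sigma$ is nothing but the operator $m\mapsto\sigma\cdot m$: the outer $\sigma$ absorbs the $\eta_R$-semilinearity of evaluation at $\sigma$ (using $\mathrm{Frob}^2=\id$), so that $\tau$ is a well-defined $\ff_4$-antilinear map which squares to the identity because $\sigma^2=e$ in $\mathbb{G}$.

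Next I would build the equivalence. The forward functor sends a $\Sigma$-comodule $M$ to its restriction along $\Sigma\to\Sigma'$, i.e. the underlying $\mathbb{S}$-action, together with $\tau=\sigma\cdot(-)$. Using $\mathrm{Ad}_\sigma^*\delta_\gamma=\delta_{\bar\gamma}$ and reindexing, the asserted identity $\psi\circ\tau=((\sigma_*\circ\mathrm{Ad}_\sigma^*)\otimes\tau)\circ\psi$ unwinds to $\gamma\cdot(\tau m)=\tau(\bar\gamma\cdot m)$ for all $\gamma\in\mathbb{S}$, which is precisely the conjugation relation $\sigma\gamma\sigma^{-1}=\bar\gamma$; the factor $\sigma_*$ is exactly what makes the two sides of the identity have the same ($\ff_4$-antilinear) dependence on $m$. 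Conversely, from $(M,\psi,\tau)$ one extends the $\mathbb{S}$-action to $\mathbb{G}$ by declaring $g=\gamma\sigma^{\epsilon}$ to act as $m\mapsto\gamma\cdot(\tau^{\epsilon}m)$; counitality and coassociativity of the resulting $\Sigma$-coaction are equivalent to $\tau$ being an antilinear involution satisfying the intertwining identity, and discreteness is automatic since $[\mathbb{G}:\mathbb{S}]=2$. The two functors are mutually inverse by construction, giving the first assertion.

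For the regular comodule I would use that, as an $\mathbb{S}$-set, the coset space $\mathbb{G}/\langle\sigma\rangle$ has $\mathbb{S}$ as a set of representatives, so reducing part~(3) of the Devinatz--Hopkins theorem mod $\mm$ gives $K_0E^{h\gal}\cong C^0(\mathbb{G}/\langle\sigma\rangle,\ff_4)\cong C^0(\mathbb{S},\ff_4)=\Sigma'$, compatibly with the pointwise algebra structures and the $\Sigma'$-coactions. The point to watch is that the $\mathbb{G}$-action carried by $K_0E^{h\gal}$ is \emph{semilinear} left translation, $(h.f)(g)\equiv\psi^h\!\left(f(h^{-1}g)\right)\bmod\mm$, the twist by $\psi^h$ coming from the $\psi^g$ in part~(3); feeding this together with $\sigma h_0\sigma^{\epsilon}=\overline{h_0}\,\sigma^{\epsilon+1}$ into $\tau=\sigma\cdot(-)$ yields $(\tau f)(\gamma)=\overline{f(\bar\gamma)}$, as claimed. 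Finally, to identify the underlying object of $\Sigma$ with $\Sigma'\otimes_{\ff_2}\ff_4$, I would check directly that $\delta_\gamma\otimes x\mapsto\delta_\gamma x+\delta_{\gamma\sigma}\bar x$ is an isomorphism of $\Sigma'$-comodule algebras intertwining the two prescriptions for $\tau$; the underlying mechanism is $\ff_4\otimes_{\ff_2}\ff_4\cong\ff_4\times\ff_4$, whose two idempotents correspond to the two cosets $\mathbb{S}$ and $\mathbb{S}\sigma$ of $\mathbb{G}$, which is exactly why both $x$ and $\bar x$ appear.

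The main obstacle is not conceptual but lies in the bookkeeping of three intertwined twists: the right unit $\eta_R$, the Frobenius coming from $\sigma_*$ (and from $\psi^\sigma\bmod\mm$), and the splitting of $\ff_4\otimes_{\ff_2}\ff_4$. In particular one must locate correctly the origin of the outer bar in $\overline{f(\bar\gamma)}$ --- it comes from $\psi^\sigma$ acting on values, not from the coset identification --- and confirm that the single coaction identity in the statement is genuinely \emph{equivalent} to ``$\tau$ an antilinear involution together with the conjugation relation'', rather than merely a consequence of it. Once these are pinned down, every remaining claim is a direct unpacking of $\mathbb{G}=\mathbb{S}\rtimes\gal$.
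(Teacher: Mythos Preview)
Your proposal is correct and follows essentially the same approach as the paper: both translate the problem into discrete (semi)linear $\mathbb{G}$- and $\mathbb{S}$-representations via \cref{prop:hopfstruct}, identify $\tau$ with the action of $\sigma$, and build the inverse functor by extending an $\mathbb{S}$-action to $\mathbb{G}=\mathbb{S}\rtimes\gal$ using $\tau$. The paper's own proof is extremely terse (it just writes down the inverse coaction $\psi_\Sigma(m)=\sum_{\gamma}\delta_\gamma\otimes\gamma^{-1}.m+\delta_{\sigma\gamma}\otimes\gamma^{-1}.\tau(m)$ and appeals to $E^\vee_*E^{h\gal}=C^0(\mathbb{G}/\gal,E_*)$ for the last claim), whereas you spell out the bookkeeping with $\eta_R$, $\sigma_*$, and the splitting $\ff_4\otimes_{\ff_2}\ff_4\cong\ff_4\times\ff_4$; but the underlying argument is the same.
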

\begin{proof}
  Given a \(\Sigma'\)-comodule \(M\) together with \(\tau\) we can give it the structure of a \(\Sigma\) comodule by setting
  \[\psi_\Sigma(m)=\sum_{\gamma\in\mathbb{S}}\delta_{\gamma}\otimes \gamma^{-1}.m+\delta_{\sigma\gamma}\otimes\gamma^{-1}.\tau(m).\]
  This gives an inverse functor, so the claimed equivalence follows.

  The first claimed isomorphism follows immediately, while the second follows from the description \(E^\vee_*E^{h\mathrm{Gal}}=C^0(\mathbb{G}/\mathrm{Gal},E_*)\).
\end{proof}

\begin{remark}
  The categories of \(\Sigma\) and \(\Sigma'\)-comodules are equivalent to the categories of {\it discrete} \(\mathbb{G}\) and \(\mathbb{S}\)-representations over \(\ff_4\).
  This is very useful to keep in the back of one's mind when analyzing these objects, and is also the motivation for the Proposition above.
\end{remark}

Thus, to determine if a \(\Sigma\)-comodule is cofree, we may first show that the associated \(\Sigma'\)-comodule is and then try to choose a splitting compatible with the action of \(\tau\).
Since \(\Sigma'\) is a pointed coalgebra with exhaustive coradical filtration we have a form of the Milnor-Moore theorem available to determine whether a comodule algebra over it is cofree, see \cite{mm}.

We shall later need the following description of the group-like elements in \(\Sigma'\):
\begin{proposition}\label{prop:grouplikes}
  The group-like elements of \(\Sigma'=C^0\nb{\mathbb{S},\ff_4}\) are given by the group homomorphisms.
  Their \(\ff_4\)-linear span  equals the subset of functions factoring as
  \[\mathbb{S}\to\mathbb{S}/\nb{\overline{\langle F_{3/2},\pi\rangle}\rtimes Q_8}=\ff_4^\times\to\ff_4,\]
  that is it is equal to \(C^0(\ff_4^\times,\ff_4)\).
\end{proposition}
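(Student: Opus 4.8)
The plan is to identify the group-like elements of $\Sigma' = C^0(\mathbb{S},\ff_4)$ directly from the coalgebra structure described in \cref{prop:hopfstruct}, and then to compute the $\ff_4$-linear span of the resulting homomorphisms by analyzing the abelianization of $\mathbb{S}$. Recall that, under the isomorphism $\Sigma' \simeq (\ff_4, C^0(\mathbb{S},\ff_4))$, the diagonal is dual to multiplication in $\mathbb{S}$, so $\Delta(f) = \sum_{xy=g}(\ldots)$ unpacks to: a function $f$ is group-like, i.e.\ $\Delta(f) = f \otimes f$ and $\varepsilon(f) = 1$, if and only if $f(xy) = f(x)f(y)$ for all $x,y\in\mathbb{S}$ and $f(1) = 1$. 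Here I should be slightly careful: the completed tensor product $C^0(\mathbb{S},\ff_4)\widehat{\otimes}C^0(\mathbb{S},\ff_4) = C^0(\mathbb{S}\times\mathbb{S},\ff_4)$ is the right target, and the equation $\Delta(f)(x,y) = f(x)f(y)$ reads off as exactly the multiplicativity condition. Since $\ff_4^\times$ is the value group and $f$ is continuous, such an $f$ is precisely a continuous group homomorphism $\mathbb{S}\to\ff_4^\times$ (extended by $0\mapsto$ nothing — a homomorphism is automatically valued in $\ff_4^\times$ since $f(g)f(g^{-1}) = 1$). This gives the first assertion.

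For the second assertion, a continuous homomorphism $\mathbb{S}\to\ff_4^\times$ factors through the maximal continuous quotient of $\mathbb{S}$ that is abelian of exponent dividing $3$, i.e.\ through $\mathbb{S}^{\mathrm{ab}}\otimes_{\zz}\ff_4^\times$ appropriately interpreted; concretely it factors through $\mathbb{S}/\overline{[\mathbb{S},\mathbb{S}]\cdot\langle\gamma^3\rangle}$. I would use the presentation $\mathbb{S} = \overline{\langle F_{3/2},\pi\rangle}\rtimes G_{24}$ with $G_{24}\simeq Q_8\rtimes\ff_4^\times$ recorded in \cref{subsection:moravaGroup}. The key points to verify are: (i) $\overline{\langle F_{3/2},\pi\rangle}$ is a pro-$2$ group, hence has no nontrivial continuous homomorphism to the $3$-group $\ff_4^\times$, and moreover — since $\mathbb{S}/\overline{\langle F_{3/2},\pi\rangle}\simeq G_{24}$ — any homomorphism to $\ff_4^\times$ kills $\overline{\langle F_{3/2},\pi\rangle}$ once we also know it kills the relevant commutators; and (ii) the quotient $G_{24}\simeq Q_8\rtimes\ff_4^\times$ has abelianization: $Q_8$ is perfect-free with $Q_8^{\mathrm{ab}} = (\zz/2)^2$ killed by passing to exponent $3$, so $G_{24}^{\mathrm{ab}}\otimes(\text{prime-to-}2\text{ part}) = \ff_4^\times$, with the quotient map $G_{24}\to\ff_4^\times$ being the projection. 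Tracking through the semidirect product, the composite $\mathbb{S}\to\mathbb{S}/(\overline{\langle F_{3/2},\pi\rangle}\rtimes Q_8) = \ff_4^\times$ is exactly the maximal quotient receiving homomorphisms to $\ff_4^\times$. Hence every group-like element factors through this quotient, and conversely all four functions in $C^0(\ff_4^\times,\ff_4)$ — spanned by the two characters $\ff_4^\times\to\ff_4^\times\hookrightarrow\ff_4$ together with the constant functions, equivalently the $\delta$-functions $\delta_1,\delta_\zeta,\delta_{\zeta^2}$ — arise as $\ff_4$-combinations of the (at most three) homomorphisms $\mathbb{S}\to\ff_4^\times$. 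A dimension count over $\ff_4$ (the span is $4$-dimensional, matching $|\ff_4^\times|+1 = $ the dimension of $C^0(\ff_4^\times,\ff_4)$... in fact $\dim_{\ff_4}C^0(\ff_4^\times,\ff_4) = 3$, and the span of the three characters $\mathbf{1},\chi,\chi^2$ is all of it) closes the argument.

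The main obstacle I expect is item (i)–(ii) above: pinning down precisely that $\overline{\langle F_{3/2},\pi\rangle}$ is pro-$2$ and that its image, together with $[G_{24},G_{24}]$, is exactly the kernel of the projection to $\ff_4^\times$ — in other words that $\overline{\langle F_{3/2},\pi\rangle}\rtimes Q_8$ is a normal subgroup with quotient $\ff_4^\times$ and that it is generated (as a closed normal subgroup) by commutators and cubes. For the pro-$2$ claim one notes $F_{3/2}$ is a pro-$2$ group (its successive quotients $F_{n/2}/F_{(n+1)/2}$ are $\ff_4$-vector spaces, $2$-groups) and $\pi = 1+2\zeta$ has $\pi^2\in F_{3/2}$, so $\overline{\langle F_{3/2},\pi\rangle}$ is an extension of $\zz/2$ (or $\zz_2$) by a pro-$2$ group, hence pro-$2$; I should double-check whether $\pi$ has infinite order or order a power of $2$, but either way it is a pro-$2$ group. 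Normality of $\overline{\langle F_{3/2},\pi\rangle}\rtimes Q_8$ in $\mathbb{S}$ follows since $F_{3/2}$ is normal, $Q_8\triangleleft G_{24}$, and $\ff_4^\times$ acts on everything; the quotient is then $G_{24}/Q_8 = \ff_4^\times$. Finally, that a homomorphism $f:\mathbb{S}\to\ff_4^\times$ kills this subgroup: it kills the pro-$2$ part for order reasons, and it kills $Q_8$ since $Q_8$ has no nontrivial homomorphism to the abelian group $\ff_4^\times$ (as $Q_8/[Q_8,Q_8]$ is a $2$-group). This is the crux; the rest is bookkeeping.
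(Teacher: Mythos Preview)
Your proposal is correct and follows the same essential idea as the paper: group-like elements of \(C^0(\mathbb{S},\ff_4)\) are continuous homomorphisms \(\mathbb{S}\to\ff_4^\times\), and since \(\overline{\langle F_{3/2},\pi\rangle}\rtimes Q_8\) is pro-\(2\) while \(\ff_4^\times\) has order \(3\), every such homomorphism factors through the quotient \(\ff_4^\times\). The paper does not prove Proposition~\ref{prop:grouplikes} inline but as a special case of the lemma in Appendix~\ref{section:iwasawaCoalg} (for \(G=S\rtimes\ff^\times\) with \(S\) pro-\(p\)), phrased there via the Jacobson radical of the dual Iwasawa algebra rather than your direct coalgebra computation; one small cleanup is that \(\Sigma'\) is an ordinary Hopf algebra (a colimit of the finite-dimensional \(C^0(\mathbb{S}/U,\ff_4)\)), so no completed tensor product is needed for \(\Delta\).
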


Let us also mention how to determine \(1\)-primitives of \(\Sigma'\)-comodules:
\begin{proposition}\label{prop:primInv}
  Let \(M\) be a (left) \(\Sigma'\)-comodule, and let
  \[P_1(M):=\{m\in M\mid \psi(m)=1\otimes m\}\]
  be the subspace of \(1\)-primitive elements.
  Then, viewing \(M\) as a discrete \(\mathbb{S}\)-representation, we find
  \[P_1(M)=M^{\mathbb{S}}.\]
\end{proposition}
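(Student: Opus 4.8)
The plan is to unwind the definition of the coaction once $M$ is viewed as a discrete $\mathbb{S}$-representation, and to compare $\psi(m)$ with the constant function at $m$. By the equivalence between $\Sigma'$-comodules and discrete $\mathbb{S}$-representations over $\ff_4$ (the remark following \cref{prop:comodulestruct}) I may assume $M$ is such a representation, and then---exactly as in \cref{prop:hopfstruct}, restricted along $\Sigma\to\Sigma'$---the coaction reads
$$\psi(m)=\sum_{\gamma\in\mathbb{S}}\delta_\gamma\otimes\gamma^{-1}.m.$$
First I would observe that, since $M$ is discrete, the stabilizer of $m$ is open, hence of finite index, so $\gamma\mapsto\gamma^{-1}.m$ takes only finitely many values; grouping the $\gamma$ according to this value exhibits $\psi(m)$ as a genuine finite element of the algebraic tensor product $C^0(\mathbb{S},\ff_4)\otimes_{\ff_4}M$. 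I would spell this reduction out to be safe.

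Next, recalling that the unit $1\in\Sigma'$ is the constant function $1=\sum_\gamma\delta_\gamma$, I would apply the $\ff_4$-linear map $\mathrm{ev}_g\otimes\id_M$ to the defining equation $\psi(m)=1\otimes m$, for each $g\in\mathbb{S}$. The left-hand side becomes $g^{-1}.m$ and the right-hand side becomes $\mathrm{ev}_g(1)\cdot m=m$, so membership in $P_1(M)$ forces $g^{-1}.m=m$ for all $g$, i.e.\ $m\in M^{\mathbb{S}}$. For the reverse inclusion, if $m\in M^{\mathbb{S}}$ then every term $\gamma^{-1}.m$ equals $m$, whence $\psi(m)=\big(\sum_\gamma\delta_\gamma\big)\otimes m=1\otimes m$, which is precisely $1$-primitivity. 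Together these two inclusions give $P_1(M)=M^{\mathbb{S}}$.

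The only delicate point---and the closest thing to an obstacle---is the bookkeeping that $\psi(m)$ really lies in the algebraic tensor product as a finite sum and that the functionals $\mathrm{ev}_g$ detect equalities there; both follow from discreteness of $M$, equivalently from the fact that $C^0(\mathbb{S},\ff_4)\otimes_{\ff_4}M\to C^0(\mathbb{S},M)$ is an isomorphism for discrete $M$. One can also sidestep this entirely by carrying out the argument inside $C^0(\mathbb{S},M)$: there the coaction is $m\mapsto(g\mapsto g^{-1}.m)$ and $1\otimes m$ is the constant function at $m$, so $\psi(m)=1\otimes m$ literally says that the orbit map of $m$ is constant. No further difficulty is expected.
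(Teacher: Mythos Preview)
Your proof is correct and follows essentially the same approach as the paper: apply \(\mathrm{ev}_\gamma\otimes\id\) to the coaction to recover \(\gamma^{-1}.m\), and use the injection \(\Sigma'\otimes M\hookrightarrow\mathrm{Set}(\mathbb{S},M)\) to handle the bookkeeping about the algebraic tensor product. The paper's proof is just a terser version of what you wrote.
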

\begin{proof}
  This follows from the equation
  \[\gamma^{-1}.m=(ev_\gamma\otimes\id)(\psi(m))\]
  and the injection \(\Sigma'\otimes M\to \mathrm{Set}(\mathbb{S},M)\).
\end{proof}

Let us now introduce the Milnor-Moore argument we will use throughout.
For a lengthier discussion and proofs, see Appendix \ref{section:minlorMoore}.
\begin{definition}
	Let \(\ff\) be a finite field and \(C\) be a Hopf algebra over \(\ff\) which, as a coalgebra, is pointed.
	Denote by \(G\) the subset of group like elements in \(C\).
	Let \(M\) be an \(\ff[G]\)-module in \(C\)-comodules.
	Consider the natural maps
	\[q_n:F_n(M)/F_{n-1}(M)\to F_n(C)/F_{n-1}(C)\otimes P_1(M),\,[m]\mapsto \sum_{i,g}[c_i]\otimes g^{-1}.m_{i,g}\]
	for \(\psi(m)\equiv \sum_ic_i\otimes m_i\) mod \(F_{n-1}(C)\otimes F_n(M)\) with \(c_i\in F_n(C)\) and \(m_i\in F_0(M)\).
	We call \(M\) {\it splittable} if these are surjective for all \(n\geq 0\).
\end{definition}
\begin{theorem}
	Let \(C\) be a Hopf algebra over \(\ff\) which, as a coalgebra, is pointed, and let \(G\subset C\) be the set of group-like elements.
	Let \(M\) be an \(\ff[G]\)-module in \(C\)-comodules which is splittable.
	Then, for any \(\ff\)-linear retraction \(r:M\to \ff[G]\otimes P_1(M)\) of the natural map \(\ff[G]\otimes P_1(M)\simeq F_0(M)\subset M\), the map
	\[h_r:M\xrightarrow{\psi}C\otimes M\xrightarrow{\id\otimes r}C\otimes\ff[G]\otimes P_1(M)\xrightarrow{\id\otimes\epsilon\otimes\id}C\otimes P_1(M)\]
	is an isomorphism of \(C\)-comodules and exhibits \(M\) as a (co)free \(\ff[G]\)-module in \(C\)-comodules.
\end{theorem}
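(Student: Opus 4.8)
The plan is to run the standard filtered (associated-graded) argument: realize \(h_r\) as a map of \(C\)-comodules that respects coradical filtrations, identify its associated graded with the maps \(q_n\), and then play the hypothesis that the \(q_n\) are surjective against the basic structure of the coradical filtration to conclude that \(h_r\) is bijective.

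First I would check that \(h_r\) is a morphism of \(C\)-comodules. Writing \(f:=(\epsilon\otimes\id)\circ r\colon M\to P_1(M)\) for the \(\ff\)-linear map cut out of \(r\) (with \(\epsilon\) the counit of \(\ff[G]\)), one sees that \(h_r=(\id_C\otimes f)\circ\psi\), i.e.\ \(h_r\) is the adjunct of \(f\) under the adjunction between the forgetful functor and the cofree-comodule functor \(V\mapsto C\otimes V\); the identity \((\Delta\otimes\id)\circ h_r=(\id\otimes h_r)\circ\psi\) is then immediate from coassociativity. Since comodule maps preserve coradical filtrations and the coradical filtration of the cofree comodule \(C\otimes P_1(M)\) is \(F_\bullet(C)\otimes P_1(M)\), we get \(h_r(F_n(M))\subseteq F_n(C)\otimes P_1(M)\). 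Next I would compute \(\mathrm{gr}\,h_r\): for \(m\in F_n(M)\) pick, as in the definition of splittability, a representative \(\psi(m)\equiv\sum_i c_i\otimes m_i\) modulo \(F_{n-1}(C)\otimes F_n(M)\) with \(c_i\in F_n(C)\) and \(m_i\in F_0(M)\); using that \(r\) is the identity on \(F_0(M)=\ff[G]\otimes P_1(M)=\bigoplus_g M_g\) and that the counit sends each group-like to \(1\), one reads off that \(h_r(m)\equiv\sum_{i,g}c_i\otimes g^{-1}.m_{i,g}\) modulo \(F_{n-1}(C)\otimes P_1(M)\), i.e.\ \(\mathrm{gr}_n h_r=q_n\). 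In particular, in filtration \(0\) the map \(h_r\) is the tautological isomorphism \(F_0(M)\xrightarrow{\sim}\ff[G]\otimes P_1(M)\).

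The two halves are then quick. For surjectivity I would induct on \(n\): the case \(n=0\) is the tautological isomorphism just noted, and for the inductive step any \(x\in F_n(C)\otimes P_1(M)\) is, modulo \(F_{n-1}(C)\otimes P_1(M)\), of the form \(h_r(m)\) for some \(m\in F_n(M)\) (this is where splittability enters), so \(x-h_r(m)\in h_r(F_{n-1}(M))\) by induction and \(x\in h_r(F_n(M))\); since the coradical filtration of \(C\otimes P_1(M)\) is exhaustive, \(h_r\) is onto. For injectivity no further hypothesis is needed: \(\ker(h_r)\) is a subcomodule of \(M\), so its coradical filtration is \(\ker(h_r)\cap F_\bullet(M)\) and is exhaustive; if \(\ker(h_r)\neq 0\) it would meet \(F_0(M)\) nontrivially, contradicting that \(h_r|_{F_0(M)}\) is the tautological isomorphism, hence injective. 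Thus \(h_r\) is an isomorphism of \(C\)-comodules onto the cofree comodule \(C\otimes P_1(M)\). For the final clause I would transport the \(\ff[G]\)-action along \(h_r\) and invoke that \(C\) is free over its coradical \(\ff[G]\) (lift a homogeneous \(\ff[G]\)-basis of \(\mathrm{gr}\,C\), using the structure theory of pointed Hopf algebras), so that \(C\otimes P_1(M)\) — and hence \(M\) — is (co)free as an \(\ff[G]\)-module in \(C\)-comodules.

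The one step that needs genuine care is the identification \(\mathrm{gr}_n h_r=q_n\), and inside it the claim that the representative \(\sum_i c_i\otimes m_i\) of \(\psi(m)\) may be chosen with all \(m_i\in F_0(M)\): this rests on coassociativity together with \(\psi(F_0(M))\subseteq F_0(C)\otimes F_0(M)\) and the standard properties of the coradical filtration, and it is the point where one must be attentive to how the group-like grading of \(F_0(M)\) interacts with the counit and the \(\ff[G]\)-action to produce the twist \(g^{-1}.m_{i,g}\). Everything else is formal; as is typical for arguments of Milnor--Moore type, the substantive work in any concrete application is the verification that the module at hand is splittable, not the deduction carried out here.
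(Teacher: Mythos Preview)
Your proof is correct and follows the same overall architecture as the paper: verify that \(h_r\) is a comodule map, show it respects coradical filtrations, identify the associated graded with the \(q_n\), and conclude by a filtered five-lemma style induction. The identification \(\mathrm{gr}_n h_r=q_n\) and the base case \(n=0\) are handled exactly as in the paper.

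The one genuine difference is how you treat injectivity. The paper first proves, as a separate lemma, that the maps \(q_n\) are \emph{always} injective for any \(\ff[G]\)-module in \(C\)-comodules (no splittability needed); combined with the splittability hypothesis this makes each \(q_n\) an isomorphism, and then \(F_n(h_r)\) is an isomorphism by the usual two-out-of-three argument on the short exact sequences of filtration quotients. You instead bypass that lemma entirely: since \(\ker(h_r)\) is a subcomodule, if it were nonzero it would have nonzero socle, i.e.\ meet \(F_0(M)\), contradicting that \(h_r|_{F_0(M)}\) is an isomorphism. Your route is shorter and more conceptual for the purposes of this theorem, but you should make explicit the standard fact you are invoking (every nonzero \(C\)-comodule has nonzero \(F_0\), e.g.\ because any element generates a finite-dimensional subcomodule which contains a simple one, or dually because \(F_0(N)=N[J]\) and every element is \(J\)-power torsion). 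The paper's route, on the other hand, isolates the injectivity of \(q_n\) as a result of independent interest, valid without the splittability assumption.
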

\begin{proposition}
	Let \(M\) be an \(\ff[G]\)-algebra in \(C\)-comodules, and assume there exists a map \(s:C\to M\) of comodules with \(s(1)=1\).
	Then M is splittable.
\end{proposition}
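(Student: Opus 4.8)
The plan is to prove that the graded map $q=\bigoplus_n q_n$ is surjective by identifying its image as a subring of $\mathrm{gr}(C)\otimes P_1(M)$ that visibly contains a generating set: the section $s$ will account for ``half'' of $\mathrm{gr}(C)$ and the $\ff[G]$-module structure for the rest. To set this up, recall that $\mathrm{gr}(C)$ is naturally a graded Hopf algebra with $\mathrm{gr}_0(C)=\ff[G]$; the part of its coproduct landing in $\mathrm{gr}(C)\otimes\mathrm{gr}_0(C)$ is a coaction of the group-algebra coalgebra $\ff[G]$, equivalently a $G$-grading $\mathrm{gr}(C)=\bigoplus_{g\in G}\mathrm{gr}(C)^{(g)}$ which is multiplicative, $\mathrm{gr}(C)^{(g)}\cdot\mathrm{gr}(C)^{(h)}\subseteq\mathrm{gr}(C)^{(gh)}$, and for which each $g\in G\subseteq\mathrm{gr}_0(C)$ is homogeneous of $G$-degree $g$. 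Dually, the filtration on $M$ is an algebra filtration, so $\mathrm{gr}(M)$ is a graded ring, and $F_0(M)=\bigoplus_{g}M_g$ with $M_g=\{m\mid\psi(m)=g\otimes m\}$. The structure morphism $\iota\colon\ff[G]\to M$ sends each $g$ to a unit $\iota(g)\in M_g$; hence $M_g=\iota(g)\,P_1(M)$, $M_1=P_1(M)$, and the ``untwisting'' map $r_0\colon F_0(M)\to P_1(M)$, $\sum_g m_g\mapsto\sum_g\iota(g)^{-1}m_g$, is a unital algebra retraction of the inclusion $P_1(M)\hookrightarrow F_0(M)$; note that $q_n([m])=\sum_i[c_i]\otimes r_0(m_i)$ in the notation of the definition.

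First I would check that $q$ is a homomorphism of graded rings. Modulo the relevant step of the filtration, the leading term of $\psi_M(mm')=\psi_M(m)\psi_M(m')$ is the product of the leading terms of $\psi_M(m)$ and $\psi_M(m')$, and applying $r_0$ in the $M$-coordinate is multiplicative because $M$ is commutative; unitality is immediate. In particular $q_0\colon F_0(M)\to\mathrm{gr}_0(C)\otimes P_1(M)=\ff[G]\otimes P_1(M)$ is an isomorphism, with inverse $g\otimes v\mapsto[\iota(g)v]$, so $\ff[G]\otimes P_1(M)\subseteq\im(q)$ and $\im(q)$ is a subring of $\mathrm{gr}(C)\otimes P_1(M)$.

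Next I would bring in the section. From $\psi_M\circ s=(\id\otimes s)\circ\Delta_C$ one sees that $s$ preserves the filtrations, so it induces $\mathrm{gr}(s)\colon\mathrm{gr}(C)\to\mathrm{gr}(M)$, and the key computation is that $\theta:=q\circ\mathrm{gr}(s)$ is governed by the $G$-grading: for $x\in\mathrm{gr}(C)^{(g)}$ one obtains $\theta(x)=x\otimes u_g$, where $u_g:=\iota(g)^{-1}s(g)\in P_1(M)$ and, crucially, $u_1=\iota(1)^{-1}s(1)=1$. Indeed, for a lift $\tilde x$ of $x$, the only contribution to $\psi_M(s(\tilde x))=(\id\otimes s)(\Delta_C(\tilde x))$ that survives modulo $F_{n-1}(C)\otimes F_n(M)$ is $c\otimes s(g)$ with $[c]=x$ and $s(g)\in M_g\subseteq F_0(M)$, and $q_n$ applies $r_0$ to the $M$-entry, giving $u_g$. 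Restricting $\theta$ to $\mathrm{gr}(C)^{(1)}$ shows $\mathrm{gr}(C)^{(1)}\otimes 1\subseteq\im(q)$; for homogeneous $x\in\mathrm{gr}(C)^{(g)}$ we have $g^{-1}x\in\mathrm{gr}(C)^{(1)}$, hence $x\otimes 1=(g\otimes 1)\cdot\bigl((g^{-1}x)\otimes 1\bigr)\in(\ff[G]\otimes 1)\cdot\im(q)\subseteq\im(q)$, using that $\im(q)$ is a subring containing $\ff[G]\otimes 1$. Therefore $\mathrm{gr}(C)\otimes 1\subseteq\im(q)$, and multiplying by $1\otimes P_1(M)\subseteq\im(q)$ gives $\mathrm{gr}(C)\otimes P_1(M)\subseteq\im(q)$. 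Thus every $q_n$ is surjective, i.e.\ $M$ is splittable.

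The main obstacle is the bookkeeping that keeps the coradical filtration and the auxiliary $G$-grading straight simultaneously — chiefly, verifying that $q$ is a ring homomorphism and that $\theta$ has the displayed form. What makes the argument go through is that $s$ need not be multiplicative or $\ff[G]$-linear: it only has to reach the $G$-degree-$1$ part of $\mathrm{gr}(C)$, which it does automatically because $s(1)=1$, while the remaining $G$-degrees are obtained by multiplying with group-likes of $C$, using only that $\im(q)$ is a subring containing $\mathrm{gr}_0(C)\otimes P_1(M)$.
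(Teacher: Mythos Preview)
Your argument is essentially the paper's, dressed in more structural clothing. The paper simply writes down, for each $[c_g]\otimes m$ with $\Delta(c_g)\equiv c_g\otimes g$, the explicit preimage $s(c_gg^{-1})\cdot\iota(g)\cdot m$ and checks directly that $q_n$ sends it to $[c_g]\otimes m$; your steps ``hit $\mathrm{gr}(C)^{(1)}\otimes 1$ via $s$, then left-multiply by $g\otimes 1$, then right-multiply by $1\otimes m$'' unwind to exactly this element.

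One genuine issue: you invoke ``$M$ is commutative'' to make $r_0$ multiplicative and hence $q$ a ring map, but the proposition does not assume commutativity. Without it $r_0$ need not be multiplicative (for $m\in M_g$, $m'\in M_h$ one gets $\iota(h^{-1})\iota(g^{-1})mm'$ versus $\iota(g^{-1})m\,\iota(h^{-1})m'$), so the blanket claim ``$\im(q)$ is a subring'' is unjustified in general. Fortunately you never use full multiplicativity: the only closure properties you actually need are that $\im(q)$ is stable under left multiplication by $g\otimes 1$ and right multiplication by $1\otimes v$ for $v\in P_1(M)$, and both of these hold without commutativity by the direct computation $\psi(\iota(g)\cdot a\cdot v)\equiv (g\otimes\iota(g))\cdot\psi(a)\cdot(1\otimes v)$. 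Replacing the ring-homomorphism claim with these two closure statements fixes the gap and collapses your proof to the paper's.
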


\subsection{A factorization lemma}
In this Section we want to show that the \(\Sigma'\)-comodules
\[K_0\nb{E^{hF_{3/2}}\otimes MU\langle 6\rangle}\text{ and }K_0\nb{E^{hF_{3/2}}\otimes MString}\]
are splittable, see Definition \ref{def:splittable}.
As we show in Proposition \ref{prop:splittableCriterion} a sufficient condition is to admit a unital map of \(\Sigma'\)-comodules from \(\Sigma'\).
\begin{remark}
	For the definition of a splittable comodule to apply, we need them to be \(\ff_4[G]\)-modules, where \(G\) is the set of group like elements in \(\Sigma'\).
	In our case, group like elements correspond to group homomorphisms \(\ss\to \ff_4^\times\), all of which must contain \(F_{3/2}\) in their kernel, so that already \(K_0E^{hF_{3/2}}\) is an \(\ff_4[G]\)-module in \(\Sigma'\)-comodules.
\end{remark}

Let \(DA(1)\) be the even \(2\)-local finite complex called \(Z\) in \cite{HoveyRavenel_7connected}, see also \cite[§~4.1]{Mathew_tmfHomology}, and choose a 'unit' map \(\mathbb{S}\to DA(1)\) inducing an isomorphism in zeroth homotopy.


\begin{lemma}\label{lem:section1}
  The map \(DA(1)\otimes MU\langle 6\rangle\to BP\to tmf_1(3)\) admits a unital section after \(K(2)\)-localization.
\end{lemma}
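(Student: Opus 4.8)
The plan is to produce the section by a $K(2)$-local obstruction-theory argument after first splitting the source into a wedge of shifted copies of $BP\langle 2\rangle$. Concretely: by \cite{HoveyRavenel_7connected} the spectrum $DA(1)\otimes MU\langle 6\rangle$ splits $2$-locally as a wedge of suspensions of $BP$, hence by \cite{HoveySadofsky_Invertible} its $K(2)$-localization splits as a wedge of suspensions of $L_{K(2)}BP\langle 2\rangle$. Composing the chosen unit $\mathbb S\to DA(1)$ with the unit $DA(1)\to DA(1)\otimes MU\langle 6\rangle$ and with the map to $tmf_1(3)$ recovers (after $K(2)$-localization and using that $L_{K(2)}tmf_1(3)\simeq E^{h\langle\omega,\sigma\rangle}$ is a summand of $E$) the usual unit, which is an isomorphism on $\pi_0$. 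The first step is therefore to identify, among the wedge summands of $L_{K(2)}\bigl(DA(1)\otimes MU\langle6\rangle\bigr)$, the zero-dimensional copy of $L_{K(2)}BP\langle 2\rangle$ carrying that unit, and to reduce the problem to constructing a section of a map $L_{K(2)}BP\langle 2\rangle \to L_{K(2)}tmf_1(3)$ that is a $\pi_0$-isomorphism.

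Next I would exploit that $L_{K(2)}tmf_1(3)$ is, by \cref{cor:EfixedTMF} together with \cite{LawsonNaumann_tmf13BP2}, a form of $E(2)$ — in fact $v_2^{-1}tmf_1(3)^\wedge_2$, a Landweber-exact, $K(2)$-locally even periodic spectrum — while $L_{K(2)}BP\langle 2\rangle$ is connective-type in the sense that its $K(2)$-homology is a quotient of $E_*$ by the maximal ideal's worth of relations; the point is that the map in question is a rational equivalence onto a summand and both sides are $E_\ast$-free in the relevant range. To build the section, map in from $L_{K(2)}tmf_1(3)$: a section exists iff the class of $L_{K(2)}BP\langle 2\rangle$ in $\bigl[L_{K(2)}tmf_1(3),\, L_{K(2)}BP\langle2\rangle\bigr]$ hitting the unit lifts, and this group can be computed via the $K(2)$-local $E$-based (or $BP\langle2\rangle$-based) Adams spectral sequence, which degenerates because $BP\langle2\rangle_*BP\langle2\rangle$ is a free module in the relevant filtration. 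Alternatively — and this is probably cleaner — I would use that $L_{K(2)}tmf_1(3)$ is an $E(2)$-module (via $v_2^{-1}tmf_1(3)_{(2)}\to L_{K(2)}tmf_1(3)$), so maps out of it are controlled by Landweber exactness, and the identity of $\pi_0$ forces the relevant extension of modules to split; tracking units through shows the section can be taken unital.

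The main obstacle I expect is not the existence of an additive section but the \emph{unitality}: one must choose the splitting of $L_{K(2)}(DA(1)\otimes MU\langle 6\rangle)$ and the section simultaneously so that the composite $\mathbb S\to DA(1)\otimes MU\langle6\rangle \to tmf_1(3) \to DA(1)\otimes MU\langle6\rangle$ is the chosen unit. I would handle this by first fixing the unit map $\mathbb S\to DA(1)\otimes MU\langle6\rangle$, observing it factors (up to the $\pi_0$-iso identifying the target's bottom summand) through the composite to $tmf_1(3)$ and back, and then choosing the wedge decomposition so that this bottom summand $L_{K(2)}BP\langle2\rangle$ receives precisely that unit; the section is then defined on that summand as the evident equivalence with $L_{K(2)}tmf_1(3)$ followed by inclusion, and extended by zero on the other summands. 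Checking that this composite with the projection $DA(1)\otimes MU\langle6\rangle\to tmf_1(3)$ is the identity reduces to a $\pi_0$-computation, since both source and target of that self-map of $L_{K(2)}tmf_1(3)$ are $E(2)$-modules and a module endomorphism which is the identity on $\pi_0$ is the identity.
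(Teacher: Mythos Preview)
Your outline assembles the right ingredients---Hovey--Ravenel for the splitting of \(DA(1)\otimes MU\langle 6\rangle\), Hovey--Sadofsky for passing to \(BP\langle 2\rangle\) \(K(2)\)-locally, Lawson--Naumann for identifying \(tmf_1(3)\) as a form of \(BP\langle 2\rangle\)---but the way you combine them creates work that the paper avoids, and your final verification step does not go through as stated.

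The paper splits the composite into its two constituent maps and sections each separately. For \(DA(1)\otimes MU\langle 6\rangle\to BP\) one inspects the \emph{proof} of the Hovey--Ravenel splitting: the projections are produced by choosing a \(\ff_2^*BP\)-basis of \(\ff_2^*(DA(1)\otimes MU\langle 6\rangle)\) and lifting, so one simply arranges that the given map appears as one of the projections; hence it has a \(2\)-local section, which is unital because the map is a \(\pi_0\)-isomorphism. For \(BP\to tmf_1(3)\), Hovey--Sadofsky (and the remark following their Theorem~4.1) directly gives a \(K(2)\)-local unital section once one knows \(tmf_1(3)\) is a form of \(BP\langle 2\rangle\). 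No obstruction theory or Adams spectral sequence is needed.

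Your approach instead passes all the way to a wedge of \(L_{K(2)}BP\langle 2\rangle\)'s, picks out the bottom summand, and declares the section to be ``the evident equivalence with \(L_{K(2)}tmf_1(3)\) followed by inclusion.'' But you have not produced that equivalence in a way that is compatible with the \emph{given} map to \(tmf_1(3)\); you have only identified the abstract homotopy type of the summand. Your final check---that the resulting self-map of \(L_{K(2)}tmf_1(3)\) is the identity because ``a module endomorphism which is the identity on \(\pi_0\) is the identity''---fails because the composite \(L_{K(2)}tmf_1(3)\to L_{K(2)}(DA(1)\otimes MU\langle 6\rangle)\to L_{K(2)}tmf_1(3)\) has no reason to be an \(E(2)\)-module map: the middle term carries no module structure that the outer maps respect. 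The fix is exactly what the paper does---arrange from the start that the given map is itself one of the splitting projections, so the section is the inclusion of that summand and there is nothing left to check.
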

\begin{proof}
  The first part of the composite is the external product of any fixed choice of unital map \(DA(1)\to BP\) and of the canonical map \(MU\langle 6\rangle \to MU \to BP\), the second is the \(2\)-typicalization of the standard complex orientation of \(tmf_1(3)\).\footnote{This orientation maps along the canonical map \(tmf_1(3)\to E\) to \(uz_E\).}
  
  By \cite[cor.~2.2]{HoveyRavenel_7connected} we have that \(DA(1)\otimes MU\langle 6\rangle\) splits \(2\)-locally as a direct sum of suspensions of \(BP\).
  Inspecting the proof of \cite[thm.~2.1]{HoveyRavenel_7connected} we see that this splitting is obtained by choosing maps
  \[x_i:DA(1)\otimes MU\langle6\rangle\to \Sigma^{|x_i|}\ff_2\]
  forming a basis of \(\ff_2^*\nb{DA(1)\otimes MU\langle 6\rangle}\) over \(\ff_2^*BP\) and lifting them through the ring map \[BP\to \ff_2\] (using that \(DA(1)\otimes MU\langle 6\rangle\) is even).
  This shows that the first map in the composite can appear as one of the \(2\)-local projection maps, and thus admits a \(2\)-local section.
  Since the map \[DA(1)\otimes MU\langle 6\rangle \to BP\] induces an isomorphism on \(\pi_0\), the section can be chosen to be unital.

  Following \cite[thm.~4.1]{HoveySadofsky_Invertible} and the remark after its proof, and using that the \(2\)-typical complex orientation \(BP\to tmf_1(3)\) exhibits \(tmf_1(3)\) as a form of \(BP\langle 2\rangle\) (see \cite[thm.~1.1]{LawsonNaumann_tmf13BP2}), we find that the second map in the composite admits a unital section after \(K(2)\)-localization.
\end{proof}
\begin{lemma}\label{lem:section2}
  The map \(\nb{\mathbb{S}^0\oplus\mathbb{S}^0}\otimes\nb{\mathbb{S}^0\oplus\mathbb{S}^{-2}\oplus\mathbb{S}^{-4}}\otimes DA(1)\otimes MU\langle6\rangle \to E\), given by the external product of the map from Lemma \ref{lem:section1} and the map sending the spheres to the elements \(1,\zeta,1,u,u^2\) in \(\pi_* E\),  admits a \(K(2)\)-local unital section.
\end{lemma}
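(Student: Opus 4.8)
The plan is to factor the map through \(tmf_1(3)\), remove the \(DA(1)\otimes MU\langle 6\rangle\)-part using \cref{lem:section1}, and then recognise what remains as a comparison of two free rank-\(6\) modules over \(E^{h\langle\omega,\sigma\rangle}\). Write \(N=(\mathbb{S}^0\oplus\mathbb{S}^0)\otimes(\mathbb{S}^0\oplus\mathbb{S}^{-2}\oplus\mathbb{S}^{-4})\) and let \(p\colon DA(1)\otimes MU\langle 6\rangle\to BP\to tmf_1(3)\) be the map of \cref{lem:section1}. The map of \cref{lem:section2} factors as
\[
N\otimes DA(1)\otimes MU\langle 6\rangle\xrightarrow{\ \id_N\otimes p\ }N\otimes tmf_1(3)\xrightarrow{\ \phi\ }E,
\]
where \(\phi\) is the external product of \((1,\zeta)\colon\mathbb{S}^0\oplus\mathbb{S}^0\to E\), of \((1,u,u^2)\colon\mathbb{S}^0\oplus\mathbb{S}^{-2}\oplus\mathbb{S}^{-4}\to E\), and of the canonical map \(tmf_1(3)\to E\), followed by the multiplication of \(E\). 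Since \(\id_N\otimes p\) has a \(K(2)\)-local unital section by \cref{lem:section1}, it suffices to produce one for \(\phi\); the composite of the two is then the required section, and unitality is preserved at each stage.

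As \(N\) is finite, \(L_{K(2)}(N\otimes tmf_1(3))=N\otimes L_{K(2)}tmf_1(3)\), and since \(tmf_1(3)\to TMF_1(3)\) is a \(K(2)\)-equivalence this equals \(N\otimes E^{h\langle\omega,\sigma\rangle}\) by \cref{cor:EfixedTMF} and its proof — a free \(E^{h\langle\omega,\sigma\rangle}\)-module of rank \(6\), on which the canonical map \(tmf_1(3)\to E\) becomes the inclusion \(\iota\colon E^{h\langle\omega,\sigma\rangle}\to E\) of homotopy fixed points. On the other side \(E=E^{h\{1\}}\) is a faithful \(K(2)\)-local \(\langle\omega,\sigma\rangle\)-Galois extension of \(E^{h\langle\omega,\sigma\rangle}\) (with \(\langle\omega,\sigma\rangle\cong S_3\) of order \(6\)) by \cite{DevinatzHopkins_FixedPoints}, hence also free of rank \(6\) over it, and \(\phi\) is \(E^{h\langle\omega,\sigma\rangle}\)-linear for these module structures. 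I would show that \(\phi\) is a \(K(2)\)-local equivalence; then \(\phi^{-1}\) is a section, which is unital because \(\phi\) sends the chosen bottom cells and units to the unit of \(E\). Since \(\iota\) is a faithful Galois extension, \(\phi\) is an equivalence iff the base change \(E\otimes_{E^{h\langle\omega,\sigma\rangle}}\phi\colon N\otimes E\to E\otimes_{E^{h\langle\omega,\sigma\rangle}}E\simeq\prod_{\gamma\in\langle\omega,\sigma\rangle}E\) is, and this last map is, on the \((j,i)\)-summand \(\Sigma^{-2i}E\) of \(N\otimes E\), the tuple \(\bigl(\psi^\gamma(\zeta^ju^i)\bigr)_{\gamma}\); thus \(\phi\) is an equivalence iff the \(6\times 6\) matrix \(\bigl(\psi^\gamma(\zeta^ju^i)\bigr)_{\gamma;(j,i)}\) over \(E_*\) (rows \(\gamma\in\langle\omega,\sigma\rangle\), columns \((j,i)\in\{0,1\}\times\{0,1,2\}\)) is invertible, and since \(E_*\) is \(\mm\)-adically complete local it is enough to check this modulo \(\mm\), over \(K_*=\ff_4[u^{\pm}]\).

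This reduction leaves the real work. Modulo \(\mm\) the \((\gamma,(j,i))\)-entry is \((\gamma.\zeta)^{j}\,\bar c(\gamma)^{i}\,u^{i}\), where \(\bar c\colon\mathbb{G}\to\ff_4^\times\) is the reduction of the cocycle \(\gamma\mapsto\psi^\gamma(u)/u\). The factor \(\gamma.\zeta\in\{\zeta,\zeta^2\}\) depends only on whether \(\gamma\in\mathbb{S}\), so it is constant on each of \(\{1,\omega,\omega^2\}\) and \(\{\sigma,\omega\sigma,\omega^2\sigma\}\) and takes opposite values there. Since \(\omega\) acts on the formal group law by \(t\mapsto\zeta t\), it scales the invariant differential — and hence the periodicity class \(u\) — by \(\zeta\), so \(\bar c(\omega)=\zeta\) is a primitive cube root of unity; because \(\bar c|_{\mathbb{S}}\) is a homomorphism and the Galois generator acts on \(\ff_4^\times\) by Frobenius, the values \(\bar c(\omega^\ell\sigma^s)\) run through all of \(\ff_4^\times\) as \(\ell\) varies, for each fixed \(s\). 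Dividing the columns by the units \(1,u,u^2,1,u,u^2\) and permuting the rows then displays the matrix as the Kronecker product of the \(3\times 3\) Vandermonde matrix on \(1,\zeta,\zeta^2\) and of \(\left(\begin{smallmatrix}1&\zeta\\ 1&\zeta^2\end{smallmatrix}\right)\), both invertible over \(\ff_4\). Hence \(\phi\) is a \(K(2)\)-local equivalence and the lemma follows. The main obstacle is exactly this computation, together with the (more routine) check that the identification \(L_{K(2)}tmf_1(3)\simeq E^{h\langle\omega,\sigma\rangle}\) of \cref{cor:EfixedTMF} is compatible with the complex-orientation map \(tmf_1(3)\to E\) of \cref{lem:section1}, so that the classes \(\zeta^ju^i\) are being measured against \(\iota_*\pi_*E^{h\langle\omega,\sigma\rangle}\subseteq E_*\) and not some other subring.
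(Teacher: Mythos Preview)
Your argument is correct and follows the same strategy as the paper: factor through \(N\otimes tmf_1(3)\), use the section from \cref{lem:section1}, and show that the remaining map \(\phi\colon N\otimes tmf_1(3)\to E\) is a \(K(2)\)-local equivalence. The paper simply asserts this last equivalence without justification, whereas you supply a complete proof via Galois descent along \(E^{h\langle\omega,\sigma\rangle}\to E\) and the explicit \(6\times 6\) matrix computation mod~\(\mm\); the compatibility concern you flag at the end is real but routine, since both the canonical map \(tmf_1(3)\to E\) and the identification of \cref{cor:EfixedTMF} arise from the same \(\Gamma_1(3)\)-structure on~\(C_E\).
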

\begin{proof}
  In the \(K(2)\)-local category we have an equivalence
  \[\nb{\mathbb{S}^0\oplus\mathbb{S}^0}\otimes\nb{\mathbb{S}^0\oplus\mathbb{S}^{-2}\oplus\mathbb{S}^{-4}}\otimes tmf_1(3)\to E\]
  given by sending the spheres to the elements \(1,\zeta,1,u,u^2\in\pi_* E\)  (in that order).
  We can now take the section of Lemma \ref{lem:section1} and tensor it with the identity on the spheres.
\end{proof}



\begin{lemma}\label{lem:T2factorization}
  The map
  \[K_0\nb{\nb{\mathbb{S}^0\oplus\mathbb{S}^0}\otimes\nb{\mathbb{S}^0\oplus\mathbb{S}^{-2}\oplus\mathbb{S}^{-4}}\otimes DA(1)}\to K_0 E\]
  factors, in \(K_0E\)-comodules, through the injective map \(K_0E^{hF_{3/2}}\to K_0 E\).
\end{lemma}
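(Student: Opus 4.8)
The plan is to show that the relevant $K(2)$-homology classes are fixed by $F_{3/2}$, which by part (3) of the Devinatz--Hopkins theorem identifies the image of $K_0E^{hF_{3/2}}$ in $K_0E = C^0(\mathbb{G},\ff_4)$ with those functions constant on cosets of $F_{3/2}$. Since $K_0(-)$ of the source is a finite-dimensional $\ff_4$-vector space spanned by the images under the unit map of the $K$-homology of $DA(1)$, twisted by the periodicity units $1,\zeta,1,u,u^2$, it suffices to check that each of these generating classes, viewed in $\Sigma' = C^0(\mathbb{S},\ff_4)$ via \cref{prop:comodulestruct}, is fixed by $F_{3/2}' = F_{3/2}\cap\mathbb{S}$ (and that the resulting function on $\mathbb{S}/F_{3/2}$ descends correctly to $\mathbb{G}/F_{3/2}$ under the Galois twist). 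Equivalently, in the language of discrete representations, one wants $K_0\big((\mathbb{S}^0\oplus\mathbb{S}^0)\otimes(\mathbb{S}^0\oplus\mathbb{S}^{-2}\oplus\mathbb{S}^{-4})\otimes DA(1)\big)$ to be a trivial $F_{3/2}$-module, because the unit map $\mathbb{S}\to DA(1)\otimes(\text{spheres})\to E$ then factors through $E^{hF_{3/2}}$ on $K_0$ by the universal property of fixed points recorded in the theorem.

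First I would reduce to the case of $DA(1)$ alone: the extra sphere summands only contribute the classes $1,\zeta,1,u,u^2\in\pi_*E$, and $\mathbb{G}$ acts trivially on $\zeta\in\ff_4\subset K_0$ and on the image of $u$ after quotienting by $\mm$ up to the determinant/norm character — more precisely one checks directly that $F_{3/2}$ acts trivially on $K_*E$ in the relevant range, since elements of the form $1+xT^4$ fix $u_1$ and $u$ modulo $\mm$ (this is the standard fact that $F_{n/2}$ acts trivially on $E_*/\mm = K_*$ for $n$ large enough relative to the height — here height $2$, and $F_{3/2}$ suffices because $[-2](t)=t^4$ makes the filtration jump). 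So the content is entirely about $DA(1)$. For that, I would use that $DA(1)$ is a finite even complex with $K_0 DA(1)$ concentrated in few degrees, and that the $\mathbb{G}$-action on $K_0 DA(1)$ factors through a finite quotient; the key computational input is that this finite quotient is a quotient of $\mathbb{G}/F_{3/2}$, i.e. $F_{3/2}$ acts trivially. This can be seen either by a direct cell-by-cell analysis of $DA(1)$ (its cells are in degrees $0,1,2,3,4,\dots$ small, and the attaching maps are detected by $v_1,v_2$-type data on which $F_{3/2}$ acts trivially mod $\mm$), or more cleanly by the observation that $DA(1)\otimes MU\langle 6\rangle$ maps to $BP$ inducing an iso on $\pi_0$ and splitting off $BP$ $K(2)$-locally, so $K_0 DA(1)$ sits inside $K_0\big(DA(1)\otimes MU\langle 6\rangle\big) = K_0 DA(1)\otimes_{\ff_4} K_0 MU\langle 6\rangle$ compatibly, and one transports the triviality from a known computation of the $\mathbb{G}$-action on $K_0 MU\langle 6\rangle$ in low degrees.

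The main obstacle I expect is pinning down precisely why the $\mathbb{G}$-action on $K_0 DA(1)$ (equivalently on the finitely many generating classes in the statement) is trivial on $F_{3/2}$ rather than merely on a possibly smaller open subgroup — one has to verify that the relevant classes genuinely live in low enough filtration that the $t$-adic filtration argument on $\mathbb{S}$ forces triviality at the $T^4$-level, and not, say, only at the $T^6$-level. I would handle this by making the identification $K_* DA(1)$ explicit (it has total dimension $8$ over $\ff_2$, concentrated in degrees $0,\dots,?$), writing down the induced action of a general element $\gamma = 1+a_4T^4 + a_5 T^5 + \cdots$ of $F_{3/2}$ on these classes using the formula from \cref{prop:hopfstruct}, and checking term by term that the $T^4$ and higher coefficients act as zero — the degree reasons ($|u|=-2$, the low top cell of $DA(1)$) force all the potentially nontrivial contributions into negative filtration and hence to vanish. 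Once triviality of the $F_{3/2}$-action is established, the factorization through $K_0E^{hF_{3/2}}\hookrightarrow K_0E$ is immediate from the cofree description $K_0 E^{hF_{3/2}} = C^0(\mathbb{G}/F_{3/2},\ff_4)$ and naturality of the comodule structure, and injectivity of that map is part (2)–(3) of the Devinatz--Hopkins theorem together with $F_{3/2}$ being closed.
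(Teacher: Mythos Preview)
Your high-level framework is sound: since $F_{3/2}$ is normal in $\mathbb{G}$, showing that $F_{3/2}$ acts trivially on the source would indeed force the image of any $\mathbb{G}$-equivariant map into $C^0(\mathbb{G}/F_{3/2},\ff_4)=K_0E^{hF_{3/2}}$. Your treatment of the sphere summands is also essentially right. However, there are two real problems.

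First, a factual slip: you repeatedly describe elements of $F_{3/2}$ as $1+a_4T^4+a_5T^5+\cdots$, but by definition $F_{3/2}$ consists of automorphisms $1+a_3T^3+a_4T^4+\cdots$. This matters, because the whole point is that the relevant functions depend on $a_0,a_1,a_2$ only; if you mislocate the filtration by one step you cannot tell whether you have proved triviality on $F_{3/2}$ or merely on $F_{4/2}=F_2$.

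Second, and more seriously, you never actually establish that $F_{3/2}$ acts trivially on $K_0DA(1)$. Both of your suggested routes are placeholders: a ``cell-by-cell analysis'' would require identifying the $\mathbb{G}$-action on each cell and on the attaching data, which you do not attempt, and ``transporting from $K_0MU\langle 6\rangle$'' is not a well-posed argument (the $\mathbb{G}$-action there is highly nontrivial and you would need precisely the kind of low-degree control that is the content of the lemma). So as written the proposal does not close.

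The paper's argument avoids analyzing $DA(1)$ directly. It observes that for dimensional and connectivity reasons the map $DA(1)\to BP$ factors through Ravenel's $T(2)$, so the image of the whole gadget in $K_0E$ is contained in the sub-$\ff_4$-algebra generated by $\eta_R(\zeta)$, $\eta_R(u)/\eta_L(u)$, $ut_1$, and $u^3t_2$. It then computes each of these four elements explicitly as a function on $\mathbb{G}$, using the formulas for the $\mathbb{S}$-action on $E_*$: for $\gamma=\sum a_iT^i$ one finds, modulo $\mm$, that these functions take the values $\sigma^\epsilon(\zeta)$, $a_0$, $a_1/a_0^2$, and $a_2/a_0$ respectively. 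Since they depend only on $\epsilon,a_0,a_1,a_2$, they are constant on $F_{3/2}$-cosets. The hard step is the computation of $u^3t_2$, which requires manipulating the relation $2t_2=\eta_R(v_2)-v_2+\cdots$ in $BP_*BP$ together with a congruence for $f_0^4$ before one can divide by $2$.

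In short: your logical skeleton is fine, but the missing idea is the factorization through $T(2)$, which replaces an opaque question about $DA(1)$ by a concrete computation of four generators; and that computation, especially for $u^3t_2$, is where the actual work lies.
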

\begin{proof}
  Since \(K_0E^{hF_{3/2}}\to K_0 E\) is injective we only need to show that it factors as a map of sets.

  First note that, for dimensional and connectivity reasons, the map \(DA(1)\to BP\) factors through \(T(2)\to BP\), where \(T(2)\) is the \(2\)-local summand of Ravenel's \(X(4)\), see \cite[thm.~3.1.5]{Devalapurkar_higherThom}.
  The inclusion map identifies \(K_*T(2)\) with the subset \[K_*[t_1,t_2]\subset K_*[t_1,t_2,\ldots]=K_*BP.\]
  So the image of the map in question is included in the sub-\(\ff_4\)-algebra generated by
  \[\eta_R(\zeta),\eta_R(u)/\eta_L(u), ut_1,\text{ and }u^3t_2.\]
  Via the isomorphism \(K_0E=C^0\nb{\mathbb{G},\ff_4}\) we can identify these with the following functions:
  Let the element \(\gamma\in\mathbb{S}\) be presented as a power series
  \[\gamma=\sum_{i\geq0}a_iT^i\]
  with \(a_i\in\{0,1,\zeta,\zeta^2\}\) and \(a_0\neq 0\), and let \(\epsilon\in\{0,1\}\).
  Then we have, by Proposition \ref{prop:hopfstruct} and \cite[thm.~6.2.2]{k2localmoore},
  \[\eta_R(\zeta)(\gamma\sigma^{\epsilon})=\sigma^{\epsilon}(\zeta)\text{ and }\nb{\eta_R(u)/\eta_L(u)}(\gamma\sigma^{\epsilon})=a_0.\]

  To get formulas for \(ut_1\) and \(u^3t_2\) we will use the calculation of the action of \(\mathbb{S}\) on \(E_*\) of \cite[§~6]{k2localmoore}, though we will denote by \(f_i(\gamma)\) what she calls \(t_i(\gamma)\) to avoid confusion with the elements \(t_i\in BP_*BP\).

  Recall that in \(BP_*BP\) we have the relations
  \begin{align*}
    2t_1&=\eta_R(v_1)-v_1\\
    2t_2&=\eta_R(v_2)-v_2+3v_1^2t_1+5v_1t_1^2+4t_1^3
  \end{align*}
  where we use Hazewinkel generators.
  In \(E_*=\mathbb{W}(\mathbb{F}_4)[[u_1]][u^{\pm}]\) we have that \(uv_1=3u_1\) and \(u^3v_2=u_1^3-1\).
  Mapping these relations through
  \[BP_*BP\to E^\vee_*E=C^0(\mathbb{G},E_*)\]
  we thus find that
  \begin{align*}
    2t_1(\gamma\sigma^\epsilon)&=3\frac{\gamma.u_1}{\gamma.u}-3\frac{u_1}{u}=\frac{2}{u}f_1(\gamma)/f_0(\gamma)^2\\
    \implies ut_1(\gamma\sigma^\epsilon)&=f_1(\gamma)/f_0(\gamma)^2.
  \end{align*}

For \(u^3t_2\) we have to work a bit harder:
  \begin{align*}
    2u^3t_2&=u^3\frac{(\gamma.u_1)^3-1}{(\gamma.u)^3}-u_1^3+1+27u_1^2f_1/f_0^2+15u_1f_1^2/f_0^4+4f_1^3/f_0^6\\
    &=1-1/f_0^3+29u_1^2f_1/f_0^2+(4/3+15)u_1f_1^2/f_0^4+(8/27+4)f_1^3/f_0^6\\
    &=1-1/f_0^3-u_1^2f_1/f_0^2-u_1f_1^2/f_0^4+(4)\\
    &=(f_0^4-f_0-u_1^2f_1f_0^2-u_1f_1^2)/f_0^4+(4)\\
    &\overset{\star}{=}(f_0-2f_3-3u_1f_1^2-2u_1f_0f_2-3u_1^2f_0^2f_1-f_0-u_1^2f_1f_0^2-u_1f_1^2)/f_0^4+(4)\\
    &=2f_3/f_0^4+(4,2u_1)
  \end{align*}
  where we suppressed the evaluation at \(\gamma\) and in the step marked with \(\star\) used the formula \cite[prop.~6.3.3]{k2localmoore}:
  \[f_0^4\equiv f_0-2f_3-3u_1f_1^2-2u_1f_0f_2-3u_1^2f_0^2f_1+(4).\]
  Now we can divide by \(2\):
  \[u^3t_2(\gamma\sigma^\epsilon)=f_3(\gamma)/f_0(\gamma)^4+(2,u_1).\]
  By a small variation of \cite[cor.~6.3.5]{k2localmoore} we now find that, in \(K_0E\), we have
  \[(ut_1)(\gamma\sigma^\epsilon)=a_1/(a_0)^2,(u^3t_2)(\gamma\sigma^\epsilon)=a_2/a_0.\]
  Since all of the functions only depend on \(\epsilon, a_0, a_1, a_2\) they factor over \(\mathbb{G}/F_{3/2}\), so they already lie in the image of \(K_0E^{hF_{3/2}}\).
\end{proof}

\begin{corollary}\label{cor:sectionTauBP}
  The \(\Sigma'\)-comodules \(K_0\nb{E^{hF_{3/2}}\otimes MU\langle  6\rangle}\) and \(K_0\nb{E^{hF_{3/2}}\otimes MString}\) are splittable.
\end{corollary}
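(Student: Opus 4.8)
The plan is to invoke \cref{prop:splittableCriterion}: for each of the two comodules \(M\) in question it is enough to produce a map of \(\Sigma'\)-comodules \(\Sigma'\to M\) sending \(1\) to \(1\). Both \(M\) are \(\ff_4[G]\)-algebras in \(\Sigma'\)-comodules — they are rings with even \(K\)-homology, hence algebras in comodules, and the \(\ff_4[G]\)-module structure comes from the \(E^{hF_{3/2}}\)-factor as in the Remark above — so the hypotheses of \cref{prop:splittableCriterion} are met once such a unital comodule map is found. I would build the map only for \(MU\langle 6\rangle\); the case of \(MString\) then follows by pushing it forward along the canonical \(\einfty\)-ring map \(MU\langle 6\rangle\to MString\) (realification; the obstructions to a string structure die uniquely because \(BU\langle 6\rangle\) is \(5\)-connected), which induces a unital map of \(\Sigma'\)-comodules \(K_0\nb{E^{hF_{3/2}}\otimes MU\langle 6\rangle}\to K_0\nb{E^{hF_{3/2}}\otimes MString}\). (Alternatively one reruns the \(MU\langle 6\rangle\) argument verbatim, the splitting of \(DA(1)\otimes MString\) into \(BP\)'s also being available from \cite{HoveyRavenel_7connected}, while \cref{lem:T2factorization} makes no reference to \(MU\langle 6\rangle\) at all.)

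For \(MU\langle 6\rangle\), write \(W=\nb{\ss^0\oplus\ss^0}\otimes\nb{\ss^0\oplus\ss^{-2}\oplus\ss^{-4}}\otimes DA(1)\) and let \(\mu:W\to E\) be the map of \cref{lem:T2factorization}; note that its restriction along the unit \(\ss\to W\) is the unit \(\ss\to E\) (it is the product of \(1\), \(1\) and \(\ss\to DA(1)\to BP\to E\), all unital). Applying \(K_0(-)\) to the \(K(2)\)-local unital section \(s:E\to W\otimes MU\langle 6\rangle\) of \cref{lem:section2} gives a unital map of \(\Sigma\)-comodules \(K_0 E\xrightarrow{K_0 s}K_0\nb{W\otimes MU\langle 6\rangle}\). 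As \(W\), \(MU\langle 6\rangle\) and \(E^{hF_{3/2}}\) have even \(K\)-homology, the Künneth isomorphism identifies \(K_0\nb{W\otimes MU\langle 6\rangle}\cong K_0W\otimes_{\ff_4}K_0MU\langle 6\rangle\) and \(K_0\nb{E^{hF_{3/2}}\otimes MU\langle 6\rangle}\cong K_0E^{hF_{3/2}}\otimes_{\ff_4}K_0MU\langle 6\rangle\), compatibly with the comodule structures. Tensoring the factorization of \(K_0\mu\) through \(K_0E^{hF_{3/2}}\) supplied by \cref{lem:T2factorization} with \(\id_{K_0MU\langle 6\rangle}\), and composing with \(K_0 s\), then gives a map of \(\Sigma\)-comodules \(K_0 E\to K_0\nb{E^{hF_{3/2}}\otimes MU\langle 6\rangle}\) which sends \(1\) to \(1\), since \(s\) sends the unit to the unit and \(\ss\to W\xrightarrow{\mu}E\) is the unit. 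Corestricting to \(\Sigma'\)-comodules along \cref{prop:comodulestruct} keeps it a unital comodule map.

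It remains to produce a unital map of \(\Sigma'\)-comodules \(\Sigma'\to K_0 E\); precomposing the map of the previous paragraph with it finishes the proof. By \cref{prop:twisted=involution}, as a \(\Sigma'\)-comodule \(K_0 E\) is \(\Sigma'\otimes_{\ff_2}\ff_4\) via \(\delta_\gamma\otimes x\mapsto\delta_\gamma x+\delta_{\gamma\sigma}\overline{x}\), and the inclusion \(f\mapsto f\otimes 1\) of the \(\otimes 1\)-summand is a map of \(\Sigma'\)-comodules sending the constant function \(1\) to the constant function \(1\), because \(\sum_\gamma\delta_\gamma\otimes1\mapsto\sum_{g\in\mathbb{G}}\delta_g\); hence it is unital. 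I do not expect a genuine obstacle here: the real work — that the characteristic classes cut out by \(\mu\) descend through \(E^{hF_{3/2}}\) in \(K\)-homology — is exactly \cref{lem:T2factorization}, and what is left is the unitality bookkeeping along the composite together with the routine check that the Künneth isomorphisms respect the \(\Sigma\)-coactions.
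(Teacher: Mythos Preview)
Your proposal is correct and follows essentially the same approach as the paper: apply \cref{prop:splittableCriterion} by building a unital \(\Sigma'\)-comodule map into the target using the \(K(2)\)-local section of \cref{lem:section2} together with the factorization through \(K_0E^{hF_{3/2}}\) from \cref{lem:T2factorization}. The paper's one-line proof leaves exactly these steps implicit, including the passage from \(\Sigma=K_0E\) down to \(\Sigma'\) which you spell out via \cref{prop:twisted=involution}; your handling of \(MString\) by pushing forward along \(MU\langle 6\rangle\to MString\) is a harmless variant of rerunning the argument directly.
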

\begin{proof}
	We apply Proposition \ref{prop:splittableCriterion} using the section and factorization we have just constructed.
\end{proof}

\section{The structure of \(K_0MString\) and \(E_0^\vee MString\)}\label{section:comodstruct}
By the results of \cite{KitchlooLauresWilson_BO} we know that \(K_*MString\), \(K_*BString\), and \(K_*BSpin\) are concentrated in even degrees, thus are comodules over \(\Sigma\).
In this section we will use the fiber sequence
\[K(\zz,3)\to BString \to BSpin\]
and the Thom isomorphism to investigate the comodule structure of \(K_0MString\).
We will also investigate the relations between Morava \(E\)-cohomology and completed homology of \(BString\) and \(BSpin\).

Let us fix some notation.
We have the following commutative diagram of infinite loop spaces:
\[\begin{tikzcd}
	{K(\mathbb{Z},3)} & {K(\mathbb{Z},3)} & {K(\mathbb{Z},3)} \\
	{BU\langle 6\rangle} & BString & {BU\langle 6\rangle}
	\arrow["\id", from=1-1, to=1-2]
	\arrow["\cdot2", from=1-2, to=1-3]
	\arrow["\mathrm{re}", from=2-1, to=2-2]
	\arrow["c", from=2-2, to=2-3]
	\arrow["j"', from=1-1, to=2-1]
	\arrow["i"', from=1-2, to=2-2]
	\arrow["j"', from=1-3, to=2-3]
\end{tikzcd}\]
Let \(\tau_W\in E^0MString\) be the Ando-Hopkins-Rezk orientation, denote by \(\tau_\sigma\) its restriction to \(MU\langle6\rangle\) along \(\mathrm{re}\), called the sigma orientation, and let \(\tau_U\) be the complex orientation associated to the coordinate \(-x/y\) on the Weierstrass curve of \(E\).
All of these admit structures of \(E_\infty\)-ring maps (for the complex orientation this follows from \cite[Theorem~6.5.3]{AlgPow}).
Following \cite{cannibal} let \(r_U\in E^0BU\langle6\rangle\) be the unique class which corresponds to \(\tau_\sigma\) under the Thom isomorphism
\[E^0MU\langle6\rangle\xrightarrow{\sim}E^0BU\langle6\rangle\]
using \(\tau_U\), let \(r=c^*r_U\) be its restriction to \(BString\) via the complexification map, and let \(r_K=j^*r_U\) be its restriction to \(K(\zz,3)\).
We then get isomorphisms
\[E^*BSpin[\![r-1]\!]\xrightarrow{\sim}E^*BString\text{ and }E^*[\![r_K-1]\!]\xrightarrow{\sim}E^*K(\zz,3).\]

\begin{construction}\label{cons:sandres}
  Consider the composite
  \[E^0BSpin\xrightarrow{pr^*}E^0BString\to E^0BString/(r-1).\]
  It is a bijection by the above discussion, and a continuous map between compact Hausdorff spaces if we give \(E^0BSpin\) and \(E^0BString\) the natural topology of Section 11 in \cite{666}, thus a homeomorphism.
  Composing the projection with the inverse of the composite we obtain a continuous retraction of \(pr^*\) we will call 'evaluation at \(r=1\)' and denote by
  \[x\mapsto x\vert_{r=1}.\]

  \Cref{lem:contdual2} further furnishes a section of \(pr_*\) we will denote by
  \[s:E_0^\vee BSpin\to E_0^\vee BString.\]
  Modding out \(\mm\) we also obtain analogous maps between the \(K\)-cohomologies and homologies.
\end{construction}

\begin{lemma}\label{lem:prodinr}
  The maps \(T_k:E^0BString\to E^0BSpin\) sending a power series in \((r-1)\) to the coefficient of \((r-1)^k\) are continuous with respect to the natural topology.
  In particular, they assemble to a homeomorphism
  \[E^0BString\xrightarrow{\sim}\prod_{k\geq0}E^0BSpin.\]
  The analogous statement for \(E^0K(\zz,3)\) and the coefficients of \((r_K-1)^k\) holds as well.
\end{lemma}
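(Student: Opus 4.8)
The plan is to express each coefficient functional $T_k$ as a composite of the continuous retraction $x\mapsto x\vert_{r=1}$ from \cref{cons:sandres} with iterates of a continuous ``shift'' operator obtained by dividing by $(r-1)$, and then to upgrade the resulting continuous bijection $(T_k)_{k\ge0}$ to a homeomorphism by a compactness argument.

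First I would set up the formal bookkeeping, identifying $E^0 BString$ with $E^0 BSpin[\![r-1]\!]$ via the isomorphism recalled above. Under this identification $T_0$ is precisely the retraction $x\mapsto x\vert_{r=1}$ of $pr^*$, hence continuous by \cref{cons:sandres}; its kernel is the ideal $(r-1)\cdot E^0 BString$, and $(r-1)$ is a non-zero-divisor, as $T$ is in any power series ring $R[\![T]\!]$. Recall also that $E^0 BString$ is a compact Hausdorff topological abelian group in the natural topology (\cref{cons:sandres}, \cite{666}), and that multiplication by the global class $r-1$ is continuous for that topology --- being induced compatibly on the finite subcomplexes $X_\alpha\subset BString$ whose finitely generated $E_0$-modules $E^0 X_\alpha$ present $E^0 BString$ as an inverse limit of $\mm$-adic modules, cf.\ \cite{666}. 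Write $m\colon E^0 BString\to E^0 BString$ for this continuous injective endomorphism. Finally, $m(E^0 BString)=\ker T_0$ is closed in $E^0 BString$, being the preimage of $0$ under the continuous map $T_0$ to the Hausdorff space $E^0 BSpin$, hence itself compact Hausdorff; so the continuous bijection $m\colon E^0 BString\to m(E^0 BString)$ is a homeomorphism.

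Next I would introduce the shift operator $\phi:=\id-pr^*\circ T_0\colon E^0 BString\to E^0 BString$. It is continuous, being a difference of continuous maps into the topological abelian group $E^0 BString$, and under the power-series identification it sends $\sum_k a_k(r-1)^k$ to $\sum_{k\ge1}a_k(r-1)^k$, so its image is exactly $m(E^0 BString)$. Then $S:=m^{-1}\circ\phi$ is continuous by the previous step, and under the identification $S$ sends $\sum_k a_k(r-1)^k$ to $\sum_{k\ge0}a_{k+1}(r-1)^k$. Consequently $T_k=T_0\circ S^{\,k}$ for every $k\ge0$, where $S^{\,k}$ denotes the $k$-fold iterate, and hence each $T_k$ is continuous.

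It follows that $(T_k)_{k\ge0}\colon E^0 BString\to\prod_{k\ge0}E^0 BSpin$ is continuous; it is a bijection by the power-series isomorphism; and since $E^0 BString$ is compact and $\prod_{k\ge0}E^0 BSpin$ is Hausdorff, it is a homeomorphism. The statement for $K(\zz,3)$ follows by the same argument, replacing $BSpin$ by a point, $BString$ by $K(\zz,3)$, $r$ by $r_K$, the map $pr^*$ by the unit $E_0\to E^0 K(\zz,3)$, and $x\mapsto x\vert_{r=1}$ by the augmentation $E^0 K(\zz,3)\to E_0$ induced by the basepoint, which sends $r_K$ to $1$ since the isomorphism $E^0[\![r_K-1]\!]\cong E^0 K(\zz,3)$ is unital, so $r_K-1$ lies in the augmentation ideal. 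The one genuinely load-bearing point, which I would flag as the crux, is the closedness of the ideal $(r-1)\cdot E^0 BString$ inside $E^0 BString$: this is what makes division by $(r-1)$ continuous and so lets the compactness argument run, and it is handled cleanly by identifying that ideal with $\ker T_0$ and invoking the continuity of $x\mapsto x\vert_{r=1}$ from \cref{cons:sandres}.
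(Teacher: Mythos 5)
Your proof is correct and follows essentially the same route as the paper: continuity of $T_0$ from \cref{cons:sandres}, continuity of division by $(r-1)$ via a continuous bijection between compact Hausdorff spaces, expressing the higher $T_k$ as composites of these continuous maps, and concluding the homeomorphism by compactness. The only cosmetic differences are that you realize the ideal $(r-1)E^0BString$ as the closed set $\ker T_0$ rather than as the compact image of multiplication by $r-1$, and you iterate a single shift operator $S=m^{-1}\circ(\id-pr^*T_0)$ instead of the paper's recursive formula $T_{k+1}=T_0\circ f^{k+1}(\,\cdot\,-\sum_{i\le k}(r-1)^i pr^*T_i)$ — both yield the same continuity statement.
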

\begin{proof}
  We focus on the first statement, the proof for \(E^0K(\zz,3)\) being much the same.
  We have already constructed \(T_0=(x\mapsto x\vert_{r=1})\) as a continuous map in \cref{cons:sandres}.
  Consider the ideal \((r-1)E^0BString\subset E^0BString\).
  As the image of multiplication by \(r-1\) it is closed, and thus compact Hausdorff.
  Since \(E^0BString\) has no \((r-1)\)-torsion we get that
  \[E^0BString\to (r-1)E^0BString,\,x\mapsto (r-1)x\]
  is a continuous bijection between compact Hausdorff spaces and thus a homeomorphism.
  Denote its inverse by \(f\).
  We then have the recursive formula
  \[T_{k+1}(x)=T_0\nb{f^{k+1}\nb{x-(r-1)^kT_k(x)-\ldots-T_0(x)}}\]
  writing every \(T_k\) as a composite of continuous functions, showing that they are continuous themselves.

  Finally, the map from \(E^0BString\) to the countable product of \(E^0BSpin\) induced by the \(T_k\) is a continuous bijection between compact Hausdorff spaces.
\end{proof}

\begin{proposition}\label{prop:EBstringfunctions}
  In completed Morava \(E\)-homology we have an isomorphism
  \[E_0^\vee BString\xrightarrow{\sim}C^0(\zz_2,E_0^\vee BSpin),\,x\mapsto(n\mapsto pr_*(x\smallfrown r^n))\]
  where we give \(E_0^\vee BSpin\) the \(\mm\)-adic topology.
\end{proposition}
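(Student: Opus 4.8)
The plan is to dualize the power-series presentation $E^0BString\cong E^0BSpin[\![r-1]\!]$ and to recognize the continuous $E_0$-dual of a power-series module as a module of continuous functions on $\zz_2$, via the classical Mahler--Amice duality between functions and distributions. Concretely, I would first invoke the duality package of Appendix~\ref{section:contdual}. Since $K_*BSpin$ and $K_*BString$ are concentrated in even degrees \cite{KitchlooLauresWilson_BO}, both $E^0BSpin$ and $E^0BString$ are pro-free, the Kronecker pairing identifies $E_0^\vee Y$ with the continuous $E_0$-linear dual $\hm_{E_0}^{\mathrm{cont}}(E^0Y,E_0)$ for $Y\in\{BSpin,BString\}$, and under this identification $pr_*$ is dual to $pr^*$ while capping with a class $y$ is dual to multiplication by $y$. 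Consequently the map in the statement becomes, after dualizing,
\[
(\lambda\colon E^0BSpin[\![r-1]\!]\to E_0)\ \longmapsto\ \big(n\mapsto(y\mapsto\lambda(r^n\cdot pr^*y))\big)\in C^0\!\big(\zz_2,\hm_{E_0}^{\mathrm{cont}}(E^0BSpin,E_0)\big),
\]
where $r^n:=(1+(r-1))^n=\sum_{k\geq0}\binom{n}{k}(r-1)^k$ makes sense for all $n\in\zz_2$ because $r-1$ is topologically nilpotent in $E^0BString$; in particular $n\mapsto r^n$ is a continuous map $\zz_2\to E^0BString$ for the natural topology, which by \cref{lem:prodinr} is the product topology on $\prod_{k\geq0}E^0BSpin$.

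Second, I would reduce to a purely algebraic assertion and substitute: for a pro-free $E_0$-module $M$, the continuous $E_0$-dual of $M[\![t]\!]$ (topologized as $\prod_{k\geq0}M$) is $C^0(\zz_2,M^\vee)$ via $\lambda\mapsto(n\mapsto\lambda((1+t)^n\cdot(-)))$, and one then takes $M=E^0BSpin$ and $t=r-1$. This is Mahler duality: by Mahler's theorem the functions $n\mapsto\binom{n}{k}$ form a topological basis of $C^0(\zz_2,\zz_2)$, the change of system $\{(1+t)^n\}_{n\geq0}\leftrightarrow\{t^k\}_{k\geq0}$ is unitriangular over $E_0$, and the two facts dovetail to give the isomorphism; equivalently, $C^0(\zz_2,\zz_2)$ is reflexive with dual the Iwasawa algebra $\zz_2[\![t]\!]$, and one base changes to $M$. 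Injectivity of the map in the Proposition reflects that $\{r^n\}_{n\geq0}$ topologically generate $E^0BString$ over $E^0BSpin$; surjectivity can alternatively be exhibited using the retraction $x\mapsto x\vert_{r=1}$ of $pr^*$ from \cref{cons:sandres} (dually, the section $s$ of $pr_*$).

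The main obstacle I anticipate is topological bookkeeping: one must verify that the natural topology on $E_0^\vee BString$, the $\mm$-adic topology on the coefficient module $E_0^\vee BSpin$, and the compact-open topology on $C^0(\zz_2,-)$ are mutually compatible, so that the algebraic Mahler isomorphism (a priori an isomorphism of bare $E_0$-modules) is in fact a homeomorphism and the assignment $x\mapsto(n\mapsto pr_*(x\smallfrown r^n))$ really lands in \emph{continuous} functions. Granting the duality results of Appendix~\ref{section:contdual}, the behaviour of $\smallfrown$ and $pr_*$ under dualization, the pro-freeness of the modules involved, and the unitriangular change of basis are all routine, so this compatibility of topologies is essentially the only point requiring care.
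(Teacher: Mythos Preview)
Your proposal is correct and takes essentially the same approach as the paper: both dualize the power-series description \(E^0BString\simeq E^0BSpin[\![r-1]\!]\) via the results of Appendix~\ref{section:contdual} and invoke Mahler's theorem on the binomial functions \(\phi_k(n)=\binom{n}{k}\). The paper carries this out concretely by exhibiting explicit pro-bases on both sides (using \cref{lem:prodinr}, \cref{rem:contdual2}, and \cref{lem:compactCompleteQuotients} for the limit interchange you correctly flag as the main obstacle) and checking that the stated map sends one pro-basis to the other, which is precisely the concrete incarnation of your abstract Mahler--Amice argument.
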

\begin{proof}
  Our strategy will be to define a map on a pro-basis of the source which, by construction, is an isomorphism.
  A direct computation will then show that it agrees with the above map.

  Let us first show that the target is an \(L\)-complete and, in fact, pro-free \(E_0\)-module.
  Let \(\{y_i\}_{i\in I}\) be a pro-basis of \(E_0^\vee BSpin\), so that we have an isomorphism
  \[\nb{\bigoplus_{i\in I}E_0}_\mm^\wedge\xrightarrow{\sim}E_0^\vee BSpin.\]
  Also note that \(C^0(\zz_2,E_0)\) is pro-free with pro-basis given by the functions
  \[\phi_k:\zz_2\to E_0,\,n\mapsto{n\choose k},\]
  for \(k\geq0\), by a result of Mahler, see \cite[Theorem~11]{CahenChabert_intPoly}.
  By Lemma \ref{lem:compactCompleteQuotients} we also have that \(C^0(\zz_2,E_0)/\mm^n=C^0(\zz_2,E_0/\mm^n)\).
  We find that
  \begin{align*}
    C^0(\zz_2,E_0^\vee BSpin)&=C^0\nb{\zz_2,\lim_n\bigoplus_{i\in I}E_0/\mm^n}=\lim_nC^0\nb{\zz_2,\bigoplus_{i\in I}E_0/m^n}\\
    &=\lim_n\bigoplus_{i\in I}C^0\nb{\zz_2,E_0/m^n}=\lim_n\bigoplus_{i\in I}\bigoplus_{k\geq 0}E_0/\mm^n\\
    &=\nb{\bigoplus_{i\in I,k\geq 0}E_0}_\mm^\wedge
  \end{align*}
  where we used that \(\zz_2\) is a compact space.
  So we see that \(C^0\nb{\zz_2,E_0^\vee BSpin}\) is pro-free with a pro-basis given by \(n\mapsto \phi_k(n)y_i\).

  Using \cref{lem:contdual2}, \cref{rem:contdual2}, and \cref{lem:prodinr} we define a pro-basis
  \[\{\alpha_{k,i}\}_{k\geq0,i\in I}\]
  of \(E_0^\vee BString\) such that, for all \(c\in E^0BString\), we have
  \[\langle \alpha_{k,i},c\rangle =\langle \alpha_{k,i},\sum_{n\geq0}(r-1)^npr^*c_n\rangle=\langle y_i, c_k\rangle.\]
  For any \(n\in\zz_2\) and \(T\in E^0BSpin\) we have
  \begin{align*}
    \langle pr_*\nb{\alpha_{k,i}\smallfrown r^n},T\rangle&=\langle \alpha_{k,i},r^npr^*T\rangle=\left\langle y_i,{n\choose k} T\right\rangle
  \end{align*}
  so that
  \[pr_*\nb{\alpha_{k,i}\smallfrown r^n}=\phi_k(n)y_i.\]

  The map
  \[E_0^\vee BString\to C^0\nb{\zz_2,E_0^\vee BSpin},\,\alpha_{k,i}\mapsto \phi_ky_i.\]
  is an isomorphism as it sends a pro-basis to a pro-basis, and it agrees on this pro-basis (and thus on all elements by \(\mm\)-adic convergence) with the map in question.
\end{proof}
\begin{remark}
  One can directly show that for every \(x\in E_0^\vee BString\) the map
  \[\zz_2\to E_0^\vee BSpin,\,n\mapsto pr_*(x\smallfrown r^n)\]
  is continuous if we give the target the natural topology.
  Using the proposition one can then show that it does not matter whether we use the natural or the \(\mm\)-adic topology in the proposition.
\end{remark}
\begin{remark}
  Using that the integers are dense in \(\zz_2\) and that \(r\) is a ring map one can show that the map we just constructed is an isomorphism of rings if we give \(C^0\nb{\zz_2,E_0^\vee BSpin}\) the pointwise ring structure.
  We also see that under this isomorphism \(pr_*\) corresponds to evaluation at \(0\) and \(s\) corresponds to the inclusion of the constant functions, thus \(s\) is a ring map.
\end{remark}

The Ando-Hopkins-Rezk orientation yields the Thom isomorphism
\[t_*:K_0MString\to K_0BString.\]
This is, very importantly, not a morphism of \(\Sigma\)-comodules, since it does not commute with the action of \(\mathbb{G}\).
The difference is encoded in the so-called cannibalistic classes:
\begin{definition}
  Let \(g\in\mathbb{G}\) and consider the Ando-Hopkins-Rezk orientation \(\tau_W\in E^0MString\).
  By the cohomological Thom isomorphism there is a unique class
  \[\theta^g\in E^0BString\]
  such that \(g.\tau_W=\tau_W\theta^g\).
  Furthermore, denote by \(\chi^g\in K^0BSpin\) its reduction mod \(\mm\) and 'evaluation at \(r=1\)'.
\end{definition}
It is in general not easy at all to determine the \(\theta^g\) in terms of more geometrically defined classes.
See \cite{cannibal} for an extensive discussion of these classes and their properties.
For now, we will only need the following result.
\begin{lemma}
  In \(K^0BString\) we have the equality
  \[\theta^g=r^{\frac{|g|-1}{2}}pr^*\chi^g.\]
\end{lemma}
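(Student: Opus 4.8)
The plan is to reduce the statement to a computation involving only the $K(1)$-local (or rather $E$-theoretic) behaviour of the cannibalistic classes, exploiting the splitting of $\theta^g$ into its values on $BSpin$ and on $K(\zz,3)$. Concretely, recall that the composite isomorphism $E^*BSpin[\![r-1]\!]\xrightarrow{\sim}E^*BString$ identifies $E^*BString$ with power series in $(r-1)$ with coefficients in $E^0BSpin$, so it suffices to (a) identify the coefficient of $(r-1)^0$, i.e.\ the value $\theta^g\vert_{r=1}$, with $pr^*\chi^g$ — which is exactly the definition of $\chi^g$ — and (b) show that the full class $\theta^g$ is obtained from this ``constant term'' by multiplication by the invertible class $r^{(|g|-1)/2}$. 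Once (b) is known, the equation $\theta^g = r^{(|g|-1)/2}\,pr^*(\theta^g\vert_{r=1}) = r^{(|g|-1)/2}\,pr^*\chi^g$ follows formally, since $pr^*$ of the evaluation-at-$r{=}1$ retraction is the identity on the image of $pr^*$ and $r^{(|g|-1)/2}$ reduces mod $\mm$ to a well-defined element because $|g|\in\zz_2^\times$ and $K^0K(\zz,3)$ is generated over $K^0$ by $r_K$ with $(r_K-1)$ nilpotent, so half-integer powers make sense $2$-adically.

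First I would restrict everything along $j\colon K(\zz,3)\to BString$ and $i\colon K(\zz,3)\to BString$ to isolate the $K(\zz,3)$-part of the cannibalistic class. The key input is the behaviour of the $\sigma$-orientation / Ando--Hopkins--Rezk orientation on $K(\zz,3)$: the defining ``cubical structure'' identity says that the restriction $j^*\tau_W$, transported through the Thom isomorphism, is governed by the class $r_K$, and the $\mathbb{G}$-action twists $r_K$ by a factor recording how $g$ scales the relevant line bundle, namely by the determinant $|g|$. More precisely, on $K(\zz,3)$ one has $g.r_K = r_K^{|g|}$ up to the appropriate normalization (this is where the $2$-adic exponent $(|g|-1)/2$ enters — the orientation sees a ``square root'' of the $r_K$-twist because $K(\zz,3)\to BString$ factors, in $K$-homology, through multiplication by $2$, which is precisely the ``miracle'' flagged in the introduction). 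Thus $j^*\theta^g = r_K^{(|g|-1)/2}$, and since $E^*BString$ is a power series ring over $E^*BSpin$ in $(r-1)$ with $r$ restricting to $r_K$, the class $r^{-(|g|-1)/2}\theta^g$ lies in $E^0BString$ and restricts trivially to $K(\zz,3)$, hence is pulled back from $BSpin$.

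Having established that $\chi^g := (r^{-(|g|-1)/2}\theta^g)\vert_{r=1}$ is a class in $K^0BSpin$ with $pr^*\chi^g = r^{-(|g|-1)/2}\theta^g$, I would simply multiply back by $r^{(|g|-1)/2}$ to get the asserted formula in $K^0BString$. I expect the main obstacle to be the careful bookkeeping of the half-integer exponent: one must check that $r^{(|g|-1)/2}$ is genuinely a well-defined invertible element of $K^0BString$ (respectively $K^0K(\zz,3)$), which requires knowing $(r_K-1)$ is topologically nilpotent and $|g|$ is a $2$-adic unit so that the binomial series for $r_K^{(|g|-1)/2}=(1+(r_K-1))^{(|g|-1)/2}$ converges, and that the twisting of $\tau_W$ on $K(\zz,3)$ really produces the exponent $(|g|-1)/2$ and not $|g|-1$ — i.e.\ pinning down the ``factor of $2$''. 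This is exactly the content of the cubical-structure computation referenced in \cref{section:comodstruct}, and I would cite the relevant identity there (or in \cite{cannibal}) rather than re-derive it.
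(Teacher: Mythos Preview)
Your overall strategy is close in spirit to the paper's, but there is a genuine gap and a computational error.

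\textbf{The gap.} The step ``$r^{-(|g|-1)/2}\theta^g$ restricts trivially to $K(\zz,3)$, hence is pulled back from $BSpin$'' is not valid as stated. In $K^0BString\cong K^0BSpin[\![r-1]\!]$ a class $f=\sum_{k\geq 0}(r-1)^k\,pr^*c_k$ restricts along $i$ to $\sum_k(i^*r-1)^k\epsilon(c_k)$, since $pr\circ i$ is nullhomotopic. So $i^*f=1$ only forces $\epsilon(c_0)=1$ and $\epsilon(c_k)=0$ for $k\geq1$; it does \emph{not} force $c_k=0$ for $k\geq1$. For instance $1+(r-1)\,pr^*c$ with $c$ reduced restricts to $1$ but is not in the image of $pr^*$. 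What saves you is that both $\theta^g$ and $r^{(|g|-1)/2}pr^*\chi^g$ are \emph{group-like}, and the paper exploits exactly this. Its proof chooses the pro-basis $\{i_*\alpha_{2k}\cdot s(y_i)\}$ of $E_0^\vee BString$, arising from the multiplicative splitting $K_0BString\cong K_0K(\zz,3)\otimes_{K_0}K_0BSpin$, and uses group-likeness to factor each Kronecker pairing as a product of a $K(\zz,3)$-piece and a $BSpin$-piece. Your argument can be repaired along the same lines: two group-like classes in $K^0BString$ agree iff their restrictions $i^*(-)$ and their evaluations $(-)\vert_{r=1}$ agree, because a group-like class is determined by the ring map it induces on $K_0BString\cong K_0K(\zz,3)\otimes K_0BSpin$. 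But you must say this; without it the inference fails.

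\textbf{The exponent.} You have the factor of $2$ backwards. The correct input (from \cite{cannibal}) is $i^*\theta^g=r_K^{|g|-1}$, not $r_K^{(|g|-1)/2}$. The halving happens on the $r$-side: since $c\circ i=j\circ(\cdot 2)$ and $r=c^*r_U$, $r_K=j^*r_U$ are group-like, one has $i^*r=(\cdot 2)^*r_K=r_K^{2}$, whence $i^*r^{(|g|-1)/2}=r_K^{|g|-1}$. These then match, giving $i^*(r^{-(|g|-1)/2}\theta^g)=1$ as you want. Your worry ``does it produce $(|g|-1)/2$ or $|g|-1$'' is well-placed, but the resolution is the opposite of what you guessed.
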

\begin{proof}
  Let \(\{y_i\}_{i\in I}\) be a pro-basis of \(E_0^\vee BSpin\) and let \(\alpha_j\in E_0^\vee K(\zz,3)\) be the elements which correspond, under \cref{rem:contdual2}, to projecting onto the coefficient of \((r_K-1)^j\).
  Then \(\{i_*\alpha_{2k}sy_i\}_{k\geq0,i\in I}\) is a pro-basis of \(E_0^\vee BString\):
  under the isomorphism of \cref{prop:EBstringfunctions} they send \(n\in\zz\) to
  \[pr_*\nb{(i_*\alpha_{2k}sy_i)\smallfrown r^n}=pr_*(i_*(\alpha_{2k}\smallfrown r_K^{2n}))pr_*(sy_i\smallfrown r^n)={2n \choose 2k}y_i,\]
  thus they agree, mod \(\mm\), with the elements \(\alpha_{k,i}\) considered in the proof of \cref{prop:EBstringfunctions}.

  Using that \(r\), \(\theta^g\), and \(\chi^g\) are group-like elements and that \(i^*\theta^g=r_K^{|g|-1}\) we find
  \begin{align*}
    \langle i_*\alpha_{2k}sy_i, \theta^g\rangle&=\langle i_*\alpha_{2k},\theta^g\rangle\langle sy_i, \theta^g\rangle=\langle \alpha_{2k}, i^*\theta^g\rangle\langle y_i,\chi^g\rangle\\
    &=\langle \alpha_{2k},r_K^{|g|-1}\rangle\langle y_i,\nb{pr^*\chi^g}\vert_{r=1}\rangle\\
    &=\langle i_*\alpha_{2k},r^{\frac{|g|-1}{2}}\rangle\langle sy_i,pr^*\chi^g\rangle=\langle i_*\alpha_{2k}sy_i,r^{\frac{|g|-1}{2}}pr^*\chi^g\rangle
  \end{align*}
  and conclude by duality.
\end{proof}
\begin{definition}
  For \(g\in\mathbb{G}\) consider the element \(g.r_u/r_u^{|g|}\in E^0BU\langle 6\rangle\).
  By \cite[Theorem~3.8]{cannibal} it admits a unique lift to \(E^0BSU\) we will denote by \(q_0^g\).
\end{definition}
\begin{proposition}
  Under the isomorphism
  \[K_0MString\xrightarrow{\sim}C^0\nb{\zz_2^\times,K_0BSpin},\,m\mapsto u\mapsto pr_*\nb{(t_*x)\smallfrown r^{(u-1)/2}}\]
  the Morava stabilizer group acts by
  \[(g.m)(u)=g.\nb{m(u/|g|)\smallfrown\nb{\chi^{g^{-1}}\nb{c^*q_0^{g^{-1}}}^{(u-1)/2}}}.\]
\end{proposition}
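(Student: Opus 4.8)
We want to determine how $g\in\mathbb{G}$ acts on $K_0MString$ after transporting the action along the isomorphism
\[K_0MString\xrightarrow{\sim}C^0(\zz_2^\times,K_0BSpin),\quad m\mapsto\Big(u\mapsto pr_*\big((t_*m)\smallfrown r^{(u-1)/2}\big)\Big).\]
The plan is to compute $(g.m)(u)$ by unwinding the right-hand side: $(g.m)(u)=pr_*\big((t_*(g.m))\smallfrown r^{(u-1)/2}\big)$. The key difficulty is that $t_*$ is the Thom isomorphism for $\tau_W$, which is *not* $\mathbb{G}$-equivariant; the failure is measured by the cannibalistic class $\theta^g$ via $g.\tau_W=\tau_W\theta^g$. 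So the first step is to relate $t_*(g.m)$ to $g.(t_*m)$: using naturality of cap products and the definition of $\theta^g$, one gets
\[t_*(g.m)=\big(g.(t_*m)\big)\smallfrown\theta^{g^{-1}}\]
(up to checking the variance of $g$ versus $g^{-1}$, which is dictated by the comodule-vs.-action convention in \cref{prop:hopfstruct}). I would verify this by writing $g.m = g.(t_*^{-1}(t_*m))$ and pushing through the cohomological Thom iso, tracking where $\theta$ enters.

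**Inserting the formula for $\theta^g$.**
Next I substitute the lemma $\theta^g=r^{(|g|-1)/2}\,pr^*\chi^g$ (valid in $K^0BString$, which is all we need mod $\mm$). This turns $t_*(g.m)$ into
\[t_*(g.m)=\big(g.(t_*m)\big)\smallfrown\big(r^{(|g^{-1}|-1)/2}\,pr^*\chi^{g^{-1}}\big).\]
Since $|g^{-1}|=|g|^{-1}$ in $\zz_2^\times$, I'd then cap with $r^{(u-1)/2}$ and apply $pr_*$. Using the projection formula $pr_*(a\smallfrown pr^*b)=pr_*(a)\smallfrown b$ for the $\chi^{g^{-1}}$ term, and the fact that $g$ acts on $r$ by $g.r_U=r_U^{|g|}(c^*q_0^g)$ — more precisely the analogous statement $g.r=r^{|g|}\cdot c^*q_0^g$ obtained by restricting the definition of $q_0^g$ along $c$ — one rewrites $g.(t_*m)\smallfrown r^{(u-1)/2}$ by pulling the $g$ out: $g.(t_*m)\smallfrown r^{k}=g.\big((t_*m)\smallfrown (g^{-1}.r)^{k}\big)=g.\big((t_*m)\smallfrown (r^{|g^{-1}|}c^*q_0^{g^{-1}})^{k}\big)$. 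Collecting the powers of $r$ gives an exponent of the form $\tfrac{u-1}{2}|g|^{-1}+\tfrac{|g|^{-1}-1}{2}=\tfrac{u/|g|-1}{2}$, which is exactly the shift $u\mapsto u/|g|$ appearing in the claimed formula, and the leftover factor assembles into $\chi^{g^{-1}}(c^*q_0^{g^{-1}})^{(u-1)/2}$.

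**The main obstacle.**
The bookkeeping around which element ($g$ or $g^{-1}$, $|g|$ or $|g|^{-1}$) appears where is the genuinely error-prone part: the comodule coaction in \cref{prop:hopfstruct} uses $g^{-1}$, the cannibalistic class is defined with $g$, and the variable change $u\mapsto u/|g|$ has to come out consistently. I expect the bulk of the work to be a careful, consistent choice of conventions so that all three fit together, plus justifying that all manipulations (projection formula, commuting $g.(-)$ past $\smallfrown$, exponent arithmetic with $2$-adic exponents on the group-like element $r$) are legitimate in $K$-homology — here one uses that $r$, $q_0^g$, $\chi^g$ are group-like so that arbitrary $\zz_2$-powers make sense and behave exponentially, together with the continuity/pro-freeness already established in \cref{prop:EBstringfunctions}. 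Once the conventions are pinned down, each individual identity is a short naturality or projection-formula computation, so I would present the proof as: (1) the twisted-Thom identity for $t_*(g.m)$, (2) substitution of $\theta^g$, (3) the $g.r$ formula and the projection formula, (4) collecting exponents, each as a one- or two-line display.
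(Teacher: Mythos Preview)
Your approach is exactly the ``direct computation using the duality of the homological and cohomological Thom isomorphisms'' that the paper invokes as its entire proof; all the ingredients you list---the twisted Thom identity, the lemma $\theta^g=r^{(|g|-1)/2}pr^*\chi^g$, the formula for $g.r$ via $q_0^g$, and the projection formula---are the right ones, and your exponent arithmetic $\tfrac{u-1}{2}|g|^{-1}+\tfrac{|g|^{-1}-1}{2}=\tfrac{u/|g|-1}{2}$ is correct.

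One concrete correction to your step (1): the twisted Thom identity should read
\[t_*(g.m)=g.\bigl((t_*m)\smallfrown\theta^{g^{-1}}\bigr),\]
with the $g$-action on the \emph{outside}, rather than $(g.(t_*m))\smallfrown\theta^{g^{-1}}$. The two differ precisely by replacing $\theta^{g^{-1}}$ with $g.\theta^{g^{-1}}=(\theta^g)^{-1}$ (from the cocycle relation $\theta^g\cdot g.\theta^{g^{-1}}=1$), and only the outside version makes the rest of your computation clean: with $g$ already outside, capping with $r^{(u-1)/2}$ brings in $(g^{-1}.r)^{(u-1)/2}$ \emph{inside} the $g.(-)$, then $pr_*$ commutes with $g.(-)$, and the projection formula pushes $pr^*\chi^{g^{-1}}$ and $pr^*c^*q_0^{g^{-1}}$ down to $K^0BSpin$ directly, yielding the claimed formula. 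With your stated version you would instead face $g^{-1}.\theta^{g^{-1}}$ after pulling $g$ out, which does not simplify. This is exactly the ``variance'' issue you flagged as the main obstacle; once the identity is placed correctly, your steps (1)--(4) go through as written.
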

\begin{proof}
  This follows from the map \(\zz_2^\times\to\zz_2,\,u\mapsto (u-1)/2\) being a homeomorphism, \cref{prop:EBstringfunctions}, and direct computation using the duality of the homological and cohomological Thom isomorphisms.
\end{proof}
In light of this result we will think of \(K_0MString\) as the space of continuous sections of a \(\mathbb{G}\)-equivariant vector bundle on \(\zz_2^\times\) with constant fiber \(K_0BSpin\), where \(\mathbb{G}\) acts on the base \(\zz_2^\times\) by multiplication with the determinant.

Recall that the element \(\pi=1+2\zeta\in\mathbb{G}\) has determinant 3.
\begin{proposition}\label{prop:identifyingfixedpoints}
  Let \(M=K_0MString\) and \(N=K_0BSpin\).
  Then the map
  \[M\to N\oplus N,\,m\mapsto\nb{m(1),(\pi.m)(1)}\]
  restricts to an equivalence
  \[M^{F_{3/2}}\cong N_1\oplus N_1\]
  where \(N_1=\{m(1)\mid m\in M^{F_{3/2}}\}\).
\end{proposition}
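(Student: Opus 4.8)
The plan is to argue entirely inside the model $M := K_0MString\cong C^0(\zz_2^\times, N)$, $N := K_0BSpin$, with the $\gg$-action given by the displayed formula $(g.m)(u)=g.\bigl(m(u/|g|)\smallfrown(\chi^{g^{-1}}(c^*q_0^{g^{-1}})^{(u-1)/2})\bigr)$. The key observation is that $F_{3/2}$ moves the base $\zz_2^\times$ only through the determinant $|\cdot|\colon F_{3/2}\to\zz_2^\times$, and a short computation with the presentation of $\mathrm{End}(F_{C_0})$ (compare the determinant bookkeeping in the proof of \cref{lem:generatorsF32prime}, where $|F_{4/2}|=\overline{\langle-3\rangle}$ is computed and $|1+aT^3|\in 1+8\zz_2$) shows that $|F_{3/2}|$ is exactly $1+4\zz_2$. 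I would use both inclusions: since $1+4\zz_2$ preserves residues mod $4$, the clopen partition $\zz_2^\times=O_+\sqcup O_-$ with $O_\pm=\{u:u\equiv\pm1\bmod 4\}$ is $F_{3/2}$-invariant, and as the action formula is fibrewise over the base it respects the induced splitting $M=M_+\oplus M_-$ with $M_\pm=C^0(O_\pm,N)$; hence $M^{F_{3/2}}=M_+^{F_{3/2}}\oplus M_-^{F_{3/2}}$.

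Next I would pin down the two summands. Because $|\cdot|$ surjects onto $1+4\zz_2=O_+$, the group $F_{3/2}$ acts transitively on $O_+$ (and on $O_-$): for $u\in O_+$ pick $g\in F_{3/2}$ with $|g|=u$, so $m(u)=(g.m)(u)=g.\bigl(m(1)\smallfrown(\cdots)\bigr)$ for $m\in M_+^{F_{3/2}}$. This shows evaluation at $1$ is injective on $M_+^{F_{3/2}}$, and since elements of $M_-$ vanish at $1$ its image is precisely $N_1$; so $\mathrm{ev}_1\colon M_+^{F_{3/2}}\xrightarrow{\sim}N_1$. On the other hand $\pi$ has $|\pi|=3$, so $\pi.(-)$ sends $C^0(O_+,N)$ into $C^0(O_-,N)$ and back (as $3\cdot O_+=O_-$); since $F_{3/2}\trianglelefteq\gg$, the element $\pi$ normalises $F_{3/2}$ and therefore $\pi.(-)$ preserves $M^{F_{3/2}}$; and since $\pi^2\in F_{3/2}$ acts trivially on $M^{F_{3/2}}$, the map $\pi.(-)$ is an involution of $M^{F_{3/2}}$ swapping $M_+^{F_{3/2}}$ and $M_-^{F_{3/2}}$. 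In particular $\pi.(-)\colon M_-^{F_{3/2}}\xrightarrow{\sim}M_+^{F_{3/2}}$.

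Assembling: for $m=m_++m_-\in M^{F_{3/2}}$ one has $m(1)=m_+(1)$, and $(\pi.m)(1)=(\pi.m_-)(1)$ because $\pi.m_+\in M_-$ vanishes at $1$. Hence the map $m\mapsto(m(1),(\pi.m)(1))$ is the direct sum of the isomorphism $\mathrm{ev}_1\colon M_+^{F_{3/2}}\xrightarrow{\sim}N_1$ and the composite $M_-^{F_{3/2}}\xrightarrow[\sim]{\pi.(-)}M_+^{F_{3/2}}\xrightarrow[\sim]{\mathrm{ev}_1}N_1$, so it restricts to an isomorphism $M^{F_{3/2}}\xrightarrow{\sim}N_1\oplus N_1$. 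I expect the only real friction to be the determinant bookkeeping: one genuinely needs both that $|F_{3/2}|\subseteq 1+4\zz_2$ (to get the $F_{3/2}$-invariant clopen splitting) and that $|F_{3/2}|\supseteq 1+4\zz_2$ (to get transitivity on each $O_\pm$, hence injectivity of $\mathrm{ev}_1$), together with the harmless remark that the cannibalistic and cubical twists $\chi^g$ and $c^*q_0^g$ appearing in the action are units, so that capping with them is invertible and affects none of the linear-algebra steps.
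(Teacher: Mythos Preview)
Your proof is correct and follows essentially the same route as the paper: both arguments rest on the clopen decomposition \(\zz_2^\times=(1+4\zz_2)\sqcup(-1)(1+4\zz_2)\), the fact that \(|F_{3/2}|=1+4\zz_2\) (hence \(F_{3/2}\) acts transitively on each piece), and that \(\pi\) swaps the pieces while normalizing \(F_{3/2}\). Your packaging via the \(F_{3/2}\)-stable splitting \(M=M_+\oplus M_-\) is a slightly cleaner bookkeeping of what the paper does by hand---the paper proves injectivity by recovering \(m(1)\) and \(m(1/3)\) and reconstructing \(m\) from invariance, and proves surjectivity by explicitly gluing \(a\) on \(O_+\) with \(\pi^{-1}.b\) on \(O_-\)---but the content is the same.
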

\begin{proof}
  First of all, \((\pi.m)(1)\) sits in \(N_1\) as well, since \(\pi\) normalizes \(F_{3/2}\).

  For injectivity note that
  \[(\pi.m)(1)=\pi.\nb{m(1/3)\smallfrown \chi^{\pi^{-1}}}\]
  so we may extract the value of \(m\) at \(1\) and at \(1/3\).
  Since 
  \[\zz_2^\times=(1+4\zz_2)\times\{\pm1\},\] 
  the subgroup \(F_{3/2}\subset\mathbb{G}\) acts transitively on \((1+4\zz_2)\), and 
  \[1\in (1+4\zz_2)\times\{+1\},\, 1/3\in (1+4\zz_2)\times\{-1\}\]
  we can use the \(F_{3/2}\)-invariance of \(m\) to reconstruct its value on all of \(\zz_2^\times\).

  For surjectivity consider \((a(1),b(1))\in N_1\oplus N_1\) for some \(a,b\in M^{F_{3/2}}\).
  Define another continuous section \(c\) by
  \[c(u):=\begin{cases}a(u)\text{ for } u\in (1+4\zz_2)\times\{+1\}\\ (\pi^{-1}.b)(u)\text{ for }u \in (1+4\zz_2)\times\{-1\}\end{cases}.\]
  This section satisfies \(c(1)=a(1)\) and \((\pi.c)(1)=b(1)\), and is \(F_{3/2}\)-invariant since the action does not mix the \(+1\) and \(-1\) components of \(\zz_2^\times\).
\end{proof}
It turns out that \(N_1\) coincides with the fixed points of the action of
\[F_{3/2}'=\{g\in F_{3/2}\mid |g|=1\}\]
on \(N\) given by 
\[g\triangleright n:=g.n\smallfrown1/\chi^g.\]
Note that, while the formula makes sense for any \(g\in \mathbb{G}\), it only determines a group action on the elements with determinant \(1\).
\begin{proposition}
  We have that
  \[N_1=\{n\in N\mid \forall g\in F_{3/2}':n=g\triangleright n\}.\]
\end{proposition}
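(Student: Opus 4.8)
The plan is to show the two inclusions separately, using the description of $M = K_0MString$ as continuous sections of a $\mathbb{G}$-equivariant bundle over $\zz_2^\times$ with fiber $N = K_0BSpin$, where $\mathbb{G}$ acts on the base by the determinant. Recall $N_1 = \{m(1) \mid m \in M^{F_{3/2}}\}$, so an element of $N_1$ is the value at $u=1$ of some $F_{3/2}$-invariant section. Unwinding the formula for the $\mathbb{G}$-action in the Proposition just above, for $g \in F_{3/2}$ (which has determinant $1$, so it fixes $1 \in \zz_2^\times$) we have $(g.m)(1) = g.\bigl(m(1) \smallfrown \chi^{g^{-1}} c^*q_0^{g^{-1}}\bigr)$, and evaluating $c^*q_0^{g^{-1}}$ at $r=1$ turns $\chi^{g^{-1}}(c^*q_0^{g^{-1}})$ into $\chi^{g^{-1}}$ after restriction along $BSpin$; more precisely one checks that the contribution of $q_0^{g^{-1}}$ disappears at $u=1$, leaving $(g.m)(1) = g.\bigl(m(1)\smallfrown\chi^{g^{-1}}\bigr)$. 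Since $g.(n \smallfrown \chi^{g^{-1}}) = g.n \smallfrown g.\chi^{g^{-1}} = g.n \smallfrown 1/\chi^{g}$ (using that $\chi$ is a group-like cocycle, i.e. $g.\chi^{g^{-1}} = 1/\chi^g$), this is exactly $g \triangleright m(1)$ in the twisted action. Hence $m \in M^{F_{3/2}}$ forces $m(1) = (g.m)(1) = g \triangleright m(1)$ for all $g \in F_{3/2}$, and a fortiori for all $g \in F_{3/2}'$; this gives $N_1 \subseteq \{n \in N \mid \forall g \in F_{3/2}' : n = g\triangleright n\}$.

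For the reverse inclusion, suppose $n \in N$ satisfies $g \triangleright n = n$ for all $g \in F_{3/2}'$. I would reconstruct an $F_{3/2}$-invariant section $m$ with $m(1) = n$. Since $F_{3/2}$ acts transitively on the component $(1+4\zz_2) \subset \zz_2^\times$, and in fact the stabilizer of $1$ inside $F_{3/2}$ is precisely $F_{3/2}' = \{g \in F_{3/2} \mid |g| = 1\}$ together with... wait, every element of $F_{3/2}$ has determinant in $1 + 4\zz_2$ (since $|1 + xT^3| \equiv 1$ modulo higher terms), so $F_{3/2}$ does not act transitively on all of $\zz_2^\times$ but does act transitively on $1 + 4\zz_2$; the stabilizer of $1$ is exactly $\ker(|\cdot|\colon F_{3/2} \to 1+4\zz_2) = F_{3/2}'$. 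So the orbit-stabilizer picture identifies the $F_{3/2}$-equivariant sections over $1 + 4\zz_2$ with the $F_{3/2}'$-fixed points of the fiber over $1$ under the twisted action: given $n$ fixed by $F_{3/2}'$, set $m(g.1) := g \triangleright n$ for $g \in F_{3/2}$, which is well-defined precisely because $n$ is $F_{3/2}'$-fixed, and extend to the other component $(1+4\zz_2)\times\{-1\}$ arbitrarily (e.g. by a translate, as in the proof of \cref{prop:identifyingfixedpoints}) to obtain a section over all of $\zz_2^\times$. One must check that this $m$ is continuous and lies in $M$ — continuity follows from continuity of the $\mathbb{G}$-action on $M$ and of the orbit map $F_{3/2} \to 1 + 4\zz_2$, and the fact that $M$, as a module of sections, is closed under this kind of gluing over the clopen decomposition $\zz_2^\times = (1+4\zz_2) \sqcup (1+4\zz_2)(-1)$. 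Then $m \in M^{F_{3/2}}$ by construction and $m(1) = e \triangleright n = n$, so $n \in N_1$.

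The main obstacle is the first paragraph: verifying that the twist $\chi^{g^{-1}} c^*q_0^{g^{-1}}$ from the $\mathbb{G}$-action formula genuinely reduces, at the evaluation point $u=1$, to the simple twist $\chi^{g^{-1}}$ appearing in the definition of $\triangleright$. This requires chasing the identity $g \triangleright n = g.n \smallfrown 1/\chi^g$ through the definitions of $\chi^g$ (reduction mod $\mm$ and evaluation at $r=1$ of $\theta^g$), using the Thom isomorphism duality and the cocycle relation $g.\theta^h \cdot \theta^g = \theta^{gh}$ (hence $g.\chi^{g^{-1}} = 1/\chi^g$), and confirming that the exponent $(u-1)/2$ vanishes at $u=1$ so the $q_0$-factor contributes trivially. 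Everything else — transitivity of $F_{3/2}$ on $1+4\zz_2$, identification of the stabilizer with $F_{3/2}'$, the gluing and continuity check — is routine given the structure already established in \cref{prop:identifyingfixedpoints} and \cref{lem:generatorsF32prime}.
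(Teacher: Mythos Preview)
Your proof is correct and follows the same orbit-stabilizer idea as the paper's (much terser) argument: \(F_{3/2}'\) is the stabilizer of \(1\in\zz_2^\times\), the \(\triangleright\)-action is precisely the induced action on the fiber over \(1\), and \(F_{3/2}\) acts transitively on \(1+4\zz_2\), so \(N_1\) coincides with the \(F_{3/2}'\)-fixed points of that fiber. The slip in the first paragraph where you momentarily assert that all of \(F_{3/2}\) has determinant \(1\) is harmless, since your computation there only needs \(|g|=1\) (so that \(u/|g|=1\) and the exponent \((u-1)/2\) vanishes), and you correct yourself in the second paragraph.
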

\begin{proof}
  This follows from \(F_{3/2}'\) being the stabilizer of \(1\in\zz_2^\times\), the action described above being the action on \(N\) viewed as the fiber over \(1\), and \(F_{3/2}\) acting transitively on \(1+4\zz_2\).
\end{proof}

\subsection{Real \(k\)-structures}
In this section we introduce the concept of {\it \(k\)-structures} on a formal group law and review some results connecting the complex oriented homology of connective covers of \(BU\) and \(BO\) to them.
This definition is borrowed from \cite{cube}, compare also \cite{realcube} and \cite{realKL}.
For a good overview, which also includes the relevant results of the unpublished \cite{realcube}, see \cite[Chapter~5]{formalGeometry}.

\begin{definition}
  Let \(F\) be a formal group law on a ring \(R\).
  A {\it \(k\)-structure} on an \(R\)-algebra \(S\) is a power series \(f\in S[\![x_1,\ldots,x_k]\!]\) such that
  \begin{itemize}
    \item \(f(0,x_2,\ldots,x_k)=1\)
    \item \(f(x_{\sigma(1)},\ldots,x_{\sigma(k)})=f(x_1,\ldots,x_k)\) for all \(\sigma\in\Sigma_k\)
    \item for \(k\geq2\):
    \[\frac{f(x_1,x_2,\ldots,x_k)}{f(x_0,x_1,x_3,\ldots,x_k)}=\frac{f(x_0+_Fx_1,x_2,\ldots,x_k)}{f(x_0,x_1+_Fx_2,\ldots,x_k)}.\]
  \end{itemize}

  A real \(1\)-structure is a \(1\)-structure such that \(f(x)=f([-1](x))\), a real \(2\)-structure is a \(2\)-structure such that \(f(x,y)=f([-1](x),[-1](y))\).

  Given a \(k\)-structure \(f\), we can produce a \((k+1)\)-structure 
  \[\delta f(x_1,\ldots,x_{k+1})=\frac{f(x_1,x_3,\ldots,x_{k+1})f(x_2,x_3,\ldots,x_{k+1})}{f(x_1+_Fx_2,x_3,\ldots,x_{k+1})}.\]
  If \(f\) was a real \(1\)-structure, then \(\delta f\) will be a real \(2\)-structure.

  All of these are corepresentable by \(R\)-algebras \(C_kF,\,C_1^rF,\,C_2^rF\) (and maps between them) which are natural in both \(F\) and \(R\).
\end{definition}

\begin{theorem}[{\cite[Theorem~2.31]{cube}}]
  Let \(E\) be an even periodic complex oriented homotopy commutative ring spectrum with associated formal group law \(F\) over \(E_0\).
  Then there are, for \(k\leq 3\), natural isomorphisms
  \[C_kF\to E_0BU\langle2k\rangle\]
  classifying the power series associated to the bundle
  \[[L_1-1]\otimes\ldots\otimes[L_k-1]:\nb{\cc P^\infty}^k\to BU\langle2k\rangle.\]
  The maps corepresenting \(\delta\) correspond to the covering maps
  \[BU\langle2k+2\rangle\to BU\langle2k\rangle.\]
\end{theorem}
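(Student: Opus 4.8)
The plan is to construct the natural transformation \(\phi_k\colon C_kF\to E_0BU\langle 2k\rangle\) by producing an explicit \(k\)-structure with coefficients in \(E_0BU\langle 2k\rangle\), and then to prove it is an isomorphism — for \(k\leq 3\) — by combining the \(k=1\) case with a Poincaré-series comparison that exploits the fact that \(H_*BU\langle 2k\rangle\) is torsion-free, concentrated in even degrees, and polynomial in this range.

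First I would recall the power-series dictionary. A complex orientation identifies \(E^0\cc P^\infty\cong E^0\pb{x}\), so \(E_0(\cc P^\infty)^k\cong(E_0\cc P^\infty)^{\otimes k}\) carries the standard \(E_0\)-basis \(\{b_{i_1}\otimes\cdots\otimes b_{i_k}\}\) dual to the monomials in \(E^0\pb{x_1,\dots,x_k}\); moreover tensor products of line bundles translate to \(+_F\) on Euler classes, Whitney sums to products of characteristic power series, and additive inverses to reciprocals. Pushing the basis forward along \(\xi_k=[L_1-1]\otimes\cdots\otimes[L_k-1]\colon(\cc P^\infty)^k\to BU\langle 2k\rangle\) and reassembling gives \(g_k=\sum \xi_{k,*}(b_{i_1}\otimes\cdots\otimes b_{i_k})\,x_1^{i_1}\cdots x_k^{i_k}\in E_0BU\langle 2k\rangle\pb{x_1,\dots,x_k}\); I claim it is a \(k\)-structure. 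Normalization \(g_k(0,x_2,\dots,x_k)=1\) is the vanishing of \([L_1-1]\) for \(L_1\) trivial, symmetry is manifest, and — cross-multiplying the cube axiom and pushing through the dictionary — the cocycle condition becomes the \(K\)-theory identity
\[[L_1-1]\otimes[L_2-1]\;\oplus\;[L_0-1]\otimes[L_1\otimes L_2-1]\;=\;[L_0-1]\otimes[L_1-1]\;\oplus\;[L_0\otimes L_1-1]\otimes[L_2-1]\]
(tensored with \([L_3-1]\otimes\cdots\otimes[L_k-1]\)), which holds by bilinearity of \(\otimes\) together with associativity \((L_0\otimes L_1)\otimes L_2\cong L_0\otimes(L_1\otimes L_2)\). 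The same identity shows \(\delta g_k=g_{k+1}\) in the sign convention built into the associated power series, so by comparing composites with \(\xi_{k+1}\) the map \(C_kF\to C_{k+1}F\) corepresenting \(\delta\) goes over, under \(\phi\), to the map on \(E_0\) induced by the covering \(BU\langle 2k+2\rangle\to BU\langle 2k\rangle\) — this is the last sentence of the statement, and it also organizes the tower. Everything in sight is natural in \(E\) and \(F\).

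For \(k=1\) the map is an isomorphism essentially by definition: a \(1\)-structure is a power series \(1+a_1x+a_2x^2+\cdots\), so \(C_1F=E_0[a_1,a_2,\dots]\), while \(E_0BU=E_0[\beta_1,\beta_2,\dots]\) on the images \(\beta_i=\xi_{1,*}(b_i)\) of the standard generators of \(\tilde E_0\cc P^\infty\) (the even-periodic refinement of \(H_*BU=\zz[\beta_1,\beta_2,\dots]\), obtained from the collapsing Atiyah--Hirzebruch spectral sequence), and \(\phi_1\) matches \(a_i\leftrightarrow\beta_i\). For \(k=2,3\) I would then argue: (i) \(\phi_k\) is surjective, since the coefficients of \(g_k\) are precisely the classes \(\xi_{k,*}(b_{i_1}\otimes\cdots\otimes b_{i_k})\), and these generate \(E_*BU\langle 2k\rangle\) as an \(E_0\)-algebra — this is exactly how the (torsion-free, polynomial) homology of \(BSU\) and \(BU\langle 6\rangle\) is computed classically; (ii) \(C_kF\) is itself a polynomial \(E_0\)-algebra — one sees this by writing it via the \(\delta\) maps as an iterated polynomial extension of \(C_1F\), each step adjoining the coefficients of a symmetric normalized correction series modulo one linear family of constraints; and (iii) the graded ranks of \(C_kF\) and of \(E_0BU\langle 2k\rangle\) agree, by a bookkeeping of the cube relations against the known polynomial generators of \(H_*BU\langle 2k\rangle\). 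A surjection of polynomial \(E_0\)-algebras with equal Poincaré series is an isomorphism. For \(k\geq 4\) this scheme breaks down because \(H_*BU\langle 2k\rangle\) acquires torsion and is no longer polynomial, which is exactly why the theorem stops at \(k=3\).

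The main obstacle is part (iii) — the Poincaré-series count — together with the cleanness of part (ii): one has to be sure that the cube relations are split, in the sense that the quotient \(C_kF\) of a polynomial algebra is again polynomial, and that its generators fall in exactly the degrees \(2i\) where \(H_*BU\langle 2k\rangle\) has polynomial generators. Both hinge on the torsion-free, even behaviour special to \(k\leq 3\); once they are in hand, surjectivity (part (i)) upgrades \(\phi_k\) to an isomorphism for free, and the compatibility with \(\delta\) established in the second paragraph propagates it through the tower.
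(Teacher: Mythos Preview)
The paper does not prove this theorem; it is quoted as \cite[Theorem~2.31]{cube} and used as a black box, so there is no in-paper proof to compare against.

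For what it is worth, your outline is in the right spirit but is not how the cited source argues. Ando--Hopkins--Strickland do not run a Poincar\'e-series count; instead they work scheme-theoretically, showing that both \(\mathrm{Spec}\,E_0BU\langle 2k\rangle\) and \(\mathrm{Spec}\,C_kF\) are smooth affine commutative group schemes over \(\mathrm{Spec}\,E_0\), and that the map \(\phi_k\) is a homomorphism inducing an isomorphism on cotangent spaces at the identity (this last check reduces, via the Atiyah--Hirzebruch spectral sequence and base change, to a computation in ordinary homology). Smoothness plus matching tangent spaces then forces the map to be an isomorphism. Your steps (ii) and (iii) --- that \(C_kF\) is polynomial on generators in exactly the right degrees --- are precisely what this cotangent-space comparison packages; but trying to verify them directly, as you propose, is circular for \(k=3\): the claim that the cube relations are ``split'' and leave a polynomial algebra is not obvious by inspection, and the usual way to see it is to already know the answer from \(H_*BU\langle 6\rangle\), which is what you are trying to match. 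So the plan is not wrong, but the honest version of (ii)--(iii) \emph{is} the AHS smoothness/cotangent argument, and you should expect to have to reproduce it rather than bypass it.
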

\begin{theorem}[{\cite[Theorem~1.6]{realcube}, see also \cite[Theorem~5.6.16]{formalGeometry}}]
  For E an even periodic Morava \(K\)-theory (of height \(1\) or \(2\) if \(p=2\)) there is a natural isomorphism
  \[C_2^rF\to E_0BSpin.\]
\end{theorem}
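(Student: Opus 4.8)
The plan is to import the complex case and then cut out the ``real'' part. By \cite[Theorem~2.31]{cube} we already know $E_0BU\langle4\rangle=E_0BSU\cong C_2F$, the isomorphism carrying the universal $2$-structure to the power series $g\in E_0BSU[\![x,y]\!]$ of the bundle $[L_1-1]\otimes[L_2-1]\colon(\cc P^\infty)^2\to BSU$. Since any $SU$-bundle $V$ has $w_1(V_\rr)=0$ and $w_2(V_\rr)=c_1(V)\bmod 2=0$, the underlying-real-bundle construction refines to an $H$-map $r\colon BSU\to BSpin$. I would then set $f_{BSpin}:=r_*(g)\in E_0BSpin[\![x,y]\!]$, the power series of $r\circ([L_1-1]\otimes[L_2-1])$. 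Because $r_*$ is a map of $E_0$-algebras it carries the $2$-structure relations for $g$ to those for $f_{BSpin}$, so $f_{BSpin}$ is a $2$-structure on the $E_0$-algebra $E_0BSpin$; and it is \emph{real}, i.e.\ $f_{BSpin}(x,y)=f_{BSpin}([-1]x,[-1]y)$, because replacing each $L_i$ by $\overline{L_i}$ (which on Euler classes is $x\mapsto[-1]x$) does not change the underlying real bundle of $[L_1-1]\otimes[L_2-1]$. By corepresentability of real $2$-structures this is classified by a map of $E_0$-algebras $\Phi\colon C_2^rF\to E_0BSpin$, natural in $E$ (hence in $F$) since every ingredient is. It remains to show $\Phi$ is an isomorphism.

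Both sides are pro-free $E_0$-modules concentrated in even degrees: for $E_0BSpin$ this is \cite{KitchlooLauresWilson_BO}, and for $C_2^rF$ one analyses the corepresenting ring directly --- this is where the hypothesis that $E$ has height $\leq2$ when $p=2$ enters. Since $\Phi$ is a map of such modules it suffices to check it is an isomorphism after reducing modulo any further maximal ideal, i.e.\ to treat $E=K(n)$, and then to match generators and relations. For surjectivity I would use the fibration $BSpin\to BSO\xrightarrow{w_2}K(\zz/2,2)$ together with the analogous real $1$-structure description $C_1^rF\cong E_0BSO$ (the $k=1$ case, proven the same way and compatible with $\Phi$ via the $\delta$-operator, which corresponds to the covering map $BSpin\to BSO$): a Serre/bar spectral sequence identifies $E_0BSpin$ as generated over the image of $E_0BSO$ by classes supported on $K(\zz/2,2)$, all of which lie in the image of $\Phi$. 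Alternatively one compares $\Phi$ with $r_*\colon E_0BSU\to E_0BSpin$ and uses that, $E$-homologically, $BSpin$ detects the conjugation-invariants of $BSU$.

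The main obstacle is injectivity of $\Phi$: one must verify that the only relations among the coefficients of $f_{BSpin}$ in $E_0BSpin$ are exactly the real $2$-structure axioms. I expect this to require an explicit presentation of $C_2^rF$ as an $E_0$-module --- a two-variable cubical computation carried out in \cite{realcube} using the $\delta$-operator and the reality constraint $f(x,y)=f([-1]x,[-1]y)$ --- followed by a rank count against the explicit description of $K(n)_*BSpin$. This is precisely the step that forces $n\leq2$ at $p=2$: at higher height the two sides no longer have matching ranks (the real-cubical-structure package being tied to $ko$ and $tmf$, i.e.\ heights $1$ and $2$), so the comparison must genuinely use the hypothesis in order to conclude that $\Phi$ is bijective, hence the desired natural isomorphism.
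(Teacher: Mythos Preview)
The paper does not give its own proof of this statement: it is quoted verbatim from \cite[Theorem~1.6]{realcube} (with a pointer to \cite[Theorem~5.6.16]{formalGeometry}) and used as a black box. There is therefore nothing in the paper to compare your attempt against.

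As for the sketch itself, the construction of the map \(\Phi\) is fine, but your isomorphism argument contains an error. You assert an analogous identification \(C_1^rF\cong E_0BSO\) ``proven the same way''; this is false. As the paper's own commutative diagram shows, the natural map \((C_1^rF)_\mm^\wedge\to E_0^\vee BSO\) is only an injection, not an isomorphism, and in fact \(C_1^rF\) is identified (via \(\delta\)) with \(E_0^\vee BSpin\), not \(E_0^\vee BSO\). So the surjectivity step you outline, which feeds \(E_0BSO\) into \(E_0BSpin\) via the fibration and then matches it with \(C_1^rF\), cannot work as written. You are also candid that the injectivity step is where the real content lies and that you have not supplied it; that is accurate, and it is precisely the computation carried out in \cite{realcube}.
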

\begin{theorem}[{\cite[Theorem~1.1]{realKL}}]\label{thm:RealHondaIso}
  For \(F\) the Honda formal group law over \(\mathbb{F}_2\) (or if \(2\) is invertible in the coefficients) the map corepresenting \(\delta\) gives an isomorphism
  \[C_2^rF\to C_1^rF.\]
\end{theorem}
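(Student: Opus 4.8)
\textbf{Proof proposal for \cref{thm:RealHondaIso}.} The statement is that the map $C_2^r F \to C_1^r F$ corepresenting the operation $\delta$ is an isomorphism when $F$ is the Honda formal group law over $\ff_2$ (or more generally whenever $2$ is invertible in the coefficient ring). The plan is to unwind what the rings $C_2^r F$ and $C_1^r F$ corepresent and to show that, under the hypothesis, every real $2$-structure arises from a unique real $1$-structure via $\delta$. Concretely, given a real $1$-structure $f(x)$ one forms
\[
(\delta f)(x_1,x_2) = \frac{f(x_1)\,f(x_2)}{f(x_1 +_F x_2)},
\]
and I want to produce an inverse construction: from a real $2$-structure $g(x_1,x_2)$ recover a real $1$-structure $f$ with $\delta f = g$. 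The first step is therefore to solve the functional equation $g(x_1,x_2) = f(x_1)f(x_2)/f(x_1+_F x_2)$ for $f$, uniquely, and then to check that the $f$ so obtained is automatically a \emph{real} $1$-structure (i.e.\ satisfies $f(x) = f([-1](x))$), using the reality of $g$ and the $2$-invertibility.

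The key computation is the uniqueness-and-existence of $f$. Normalizing $f(0) = 1$, the $2$-cocycle condition on $g$ (the third axiom of a $2$-structure, together with symmetry) is exactly the statement that $g$ is a symmetric normalized $2$-cocycle for $F$; since over a ring where the relevant obstruction vanishes every such cocycle is a coboundary, one can write $g = \delta f$ for some $f$ with $f(0)=1$, and the ambiguity in $f$ is a $1$-cocycle, i.e.\ a character $x \mapsto c(x)$ with $c(x_1 +_F x_2) = c(x_1)c(x_2)$ and $c(0)=1$ — but over the relevant (graded, or appropriately complete) ring the only such $c$ with no constant-term obstruction is $c = 1$, giving uniqueness. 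The cleanest way to see all of this is to pass to logarithms: over a $\zz[1/2]$-algebra (or after the Honda FGL is coordinatized so that division by $2$ is available in the relevant graded pieces) write $f(x) = \exp(\phi(x))$ where $\phi(x) = \sum_{i\geq 1} b_i x^i$, so that $\delta f = g$ becomes the \emph{additive} equation $\phi(x_1) + \phi(x_2) - \phi(x_1 +_F x_2) = \log g(x_1,x_2)$, which is a linear equation in the unknowns $b_i$ and is triangular in total degree; one solves for the $b_i$ recursively, and at each stage division by $2$ is exactly what is needed to invert the operator $\phi \mapsto \phi(x_1)+\phi(x_2)-\phi(x_1+_Fx_2)$ on the symmetric part. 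This recursion simultaneously shows existence and uniqueness.

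It remains to check the reality constraint. Having built the unique $f$ with $\delta f = g$, apply the automorphism $x \mapsto [-1](x)$: the power series $f'(x) := f([-1](x))$ also satisfies $f'(0) = 1$ and, because $[-1]$ is an automorphism of $F$ and $g$ is real (so $g(x_1,x_2) = g([-1]x_1,[-1]x_2)$), one computes $\delta f' = g$ as well. By the uniqueness just established, $f' = f$, i.e.\ $f$ is a real $1$-structure. Thus $\delta$ induces a bijection between real $1$-structures on any $R$-algebra $S$ (with $R$, $F$ as in the hypothesis) and real $2$-structures on $S$, naturally in $S$; by Yoneda this is exactly an isomorphism of corepresenting objects $C_2^r F \xrightarrow{\sim} C_1^r F$.

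The main obstacle I expect is not the formal bookkeeping but verifying cleanly that the cocycle/coboundary dichotomy works \emph{integrally over $\ff_2$-algebras} — the hypothesis as stated allows the Honda FGL over $\ff_2$ itself, where ``$2$ is invertible'' fails literally, so the real content is that for \emph{this particular} $F$ the obstruction group still vanishes (presumably because, for the height-$1$ or height-$2$ Honda law, $[-1](x) = x +_F (\text{higher})$ has an especially simple form — e.g.\ in the relevant coordinate $[-1]$ is close to the identity — making the averaging operator $\tfrac12(1 + [-1]^*)$ effectively available on the symmetric part even without literally dividing by $2$). Pinning down why the argument survives specialization to $\ff_2$ for the Honda law, rather than only over $\zz[1/2]$-algebras, is where the real work lies; I would handle it by exhibiting the averaging/splitting operator explicitly in terms of the $p=2$ Honda $[-1]$-series and checking it is defined integrally, or by reducing to a universal $\zz[1/2]$ statement and then checking the mod-$2$ specialization separately by a direct degreewise argument.
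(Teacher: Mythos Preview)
First, note that the paper does not prove this theorem---it is cited from \cite{realKL}---so there is no in-paper argument to compare against.

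Your proposal for the $2$-invertible case has a genuine gap. You claim that the ambiguity in lifting a $2$-structure $g$ to a $1$-structure $f$ is a character $c$, and that ``the only such $c$ \ldots\ is $c=1$''; this is false over $\zz[1/2]$-algebras. For instance $c(x)=\exp(\lambda\,\ell_F(x))$ is a nontrivial character for any $\lambda\in S$. In your additive picture this is exactly the freedom in the coefficient $b_1$: the operator $\phi\mapsto\phi(x_1)+\phi(x_2)-\phi(x_1+_Fx_2)$ annihilates the degree-$1$ part outright, so no amount of inverting $2$ recovers it. Since the (non-real) lift $f$ is not unique, your reality argument ``$f'(x):=f([-1]x)$ also lifts $g$, hence $f'=f$'' collapses.

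The repair is to impose reality throughout rather than deduce it at the end. Injectivity: if $f_1,f_2$ are \emph{real} $1$-structures with $\delta f_1=\delta f_2$, then $c=f_1/f_2$ is a character, and reality gives $c(x)=c([-1]x)=c(x)^{-1}$, so $c^2=1$; with $2$ invertible and $c(0)=1$ this forces $c=1$. Surjectivity: given a real $2$-structure $g$, lift it to some (not necessarily real) $1$-structure $f$ and set $\tilde f(x):=\sqrt{f(x)\,f([-1]x)}$; one checks directly that $\tilde f$ is real and $\delta\tilde f=g$. Even the existence of \emph{some} lift $f$ is not automatic and deserves a sentence---it follows, e.g., from the splitting $BU\simeq BSU\times\cc P^\infty$, which makes $C_2F\to C_1F$ a split monomorphism of modules.

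It is mildly ironic that your argument shape is closer to correct for the Honda-over-$\ff_2$ case you flagged as difficult: there (in height $\geq 2$) a height comparison with $\hat{\gg}_m$ really does kill characters, at least over reduced rings, so uniqueness of the non-real lift is a reasonable starting point. But as you suspected, making this precise over arbitrary $\ff_2$-algebras needs real input; the proof in \cite{realKL} proceeds instead by an explicit generators-and-relations analysis of $C_1^rF$ and $C_2^rF$ (their Propositions~2.5 and~2.7, which this paper also invokes) rather than by the abstract cocycle route.
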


By definition a \(1\)-structure is a power series \(f(x)\) such that \(f(0)=1\).
Thus, \(C_1F\cong R[b_1,b_2,\ldots]\) and the universal example is \(f(x)=1+\sum_{k\geq1}b_kx^k\).
Note that the choice of the \(b_i\) is natural in \(F\) and \(R\).
\(C^r_1F\) is then a quotient of this polynomial algebra, but in many cases it is itself polynomial.
\begin{proposition}[{Compare \cite[Propositions~2.5, 2.7]{realKL}}]
  Let \(E\) be a complex oriented Morava \(E\)-theory of finite height with quotient field \(k\) of characteristic~\(2\), and let \(F\) be the associated formal group law.
  Then
  \[\nb{C_1^rF}_{\mm}^\wedge \cong \nb{E_0[b_2,b_4,b_6,\ldots]}_\mm^\wedge.\]
\end{proposition}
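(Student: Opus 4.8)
The plan is to make the presentation of \(C_1^rF\) explicit and then solve a triangular system of relations \(\mm\)-adically. Recall that \(C_1F=E_0[b_1,b_2,\ldots]\) carries the universal \(1\)-structure \(f(x)=1+\sum_{k\geq1}b_kx^k\), and that \(C_1^rF=C_1F/I\), where \(I=(R_1,R_2,\ldots)\) and \(R_n\) is the coefficient of \(x^n\) in \(v(x):=f([-1](x))-f(x)\). Writing \([-1](x)=\sum_{j\geq1}c_jx^j\) with \(c_j\in E_0\) and \(c_1=-1\), each \(R_n\) is an explicit \(E_0\)-polynomial in \(b_1,\ldots,b_n\) whose coefficient of \(b_n\) is \((-1)^n-1\) and whose coefficient of \(b_{n-k}\) (for \(k\geq1\)) lies in the ideal generated by \(c_2,c_3,\ldots\).

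The key input is the shape of the \([-1]\)-series modulo \(\mm\). Over \(k=E_0/\mm\), which has characteristic \(2\), the reduced formal group law has height \(n\), so \([2](x)=F(x,x)\) has leading term of exact order \(d:=2^n\); since \([-1](x)=F(x,[-2](x))\) and \([-2]\) has the same leading order, one gets \([-1](x)\equiv x+vx^{d}\pmod{x^{d+1}}\) over \(k\) for some \(v\in k^\times\). (This is the same phenomenon that makes \([-2](t)=t^4\) for the \(E\) studied here.) Consequently \(c_1\in E_0^\times\), \(c_2,\ldots,c_{d-1}\in\mm\), and \(c_d\in E_0^\times\).

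Now one runs a triangular elimination. For \(m\) odd, a short count with the above gives: in \(R_{m+d-1}\) the coefficient of \(b_m\) is congruent to \(mv\) modulo \(\mm\), hence a unit of \(E_0\); the generator \(b_{m+d-1}\) does not appear (its coefficient is \((-1)^{m+d-1}-1=0\)); and every \(b_k\) with \(m<k<m+d-1\) appears with coefficient in \(\mm\). Thus in \(C_1^rF\) the relation \(R_{m+d-1}\) writes \(b_m\) as a unit times a polynomial in \(b_1,\ldots,b_{m-1}\) plus an \(\mm\)-linear combination of the \(b_k\) with \(k>m\). Feeding these identities into one another — using that \(b_m\) is coupled to larger odd generators only through \(\mm\) — yields \(\mm\)-adically convergent expressions for every odd \(b_m\) as a power series in \(b_2,b_4,\ldots\); hence \(\nb{E_0[b_2,b_4,\ldots]}^\wedge_\mm\to(C_1^rF)^\wedge_\mm\) is surjective. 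Modulo \(\mm\) the system becomes honestly triangular — \(\overline{R}_n=0\) for \(n<d\), and \(\overline{R}_{d+2j}=v\,b_{2j+1}+(\text{a polynomial in }b_2,b_4,\ldots)\) for \(j\geq0\) — so \(C_1^r\overline{F}\) is a quotient of \(k[b_2,b_4,\ldots]\). That it is all of \(k[b_2,b_4,\ldots]\) amounts to checking that the \(1\)-structure \(\widehat f\) obtained by this elimination is already real: the series \(\widehat v=\widehat f([-1]x)-\widehat f(x)\) is \([-1]\)-invariant over \(k\) and, by construction, has vanishing coefficients in all indices \(<d\) and all even indices \(\geq d\); if \(\widehat v\neq0\), its lowest term \(a_Nx^N\) has \(N\) odd and \(\geq d+1\), and comparing the coefficient of \(x^{N+d-1}\) — an even index \(\geq d\), where \(\widehat v\) vanishes — on both sides of \(\widehat v([-1]x)=\widehat v(x)\), using that over \(k\) one has \([x^{N+d-1}]\,([-1]x)^N=Nv\) while \([x^{N+d-1}]\,([-1]x)^m=0\) for odd \(m>N\) (this is where the vanishing of \(c_2,\ldots,c_{d-1}\) is used), forces \(Nv\,a_N=0\) in the domain \(k[b_2,b_4,\ldots]\), hence \(a_N=0\), a contradiction. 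So \(k[b_2,b_4,\ldots]\xrightarrow{\ \sim\ }C_1^r\overline{F}\); since \((C_1^rF)^\wedge_\mm\) is pro-free over \(E_0\) (its \(K\)-homology being even, as for \(BU\langle2\rangle\) and \(BSpin\)), a completed Nakayama argument then promotes this to the asserted isomorphism \(\nb{E_0[b_2,b_4,\ldots]}^\wedge_\mm\cong(C_1^rF)^\wedge_\mm\).

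The main obstacle I anticipate is exactly this last verification — that the relations \(R_n\) with \(n\) outside the eliminating family \(\{d+2j\}\) impose nothing new, equivalently that \(\widehat v\) vanishes identically. Over \(k\) it is the invariance/vanishing-order argument above, which must be arranged uniformly in \(d=2^n\); and because over \(E_0\) the leading coefficients are merely units rather than being separated by an internal grading, this is what forces the passage to the \(\mm\)-completion together with the successive-approximation bookkeeping. The remaining ingredients — corepresentability of \(C_1^rF\), pro-freeness of its completion, \(L\)-completeness, Nakayama — are formal, and the coefficient identities needed are finite computations with the \(c_j\).
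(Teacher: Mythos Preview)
Your elimination argument over the residue field is correct and rather elegant: the invariance trick with \(\widehat v\) cleanly shows that the unused relations \(R_n\) (odd \(n\ge d\), and even \(n<d\)) impose nothing new modulo \(\mm\), so \(k[b_2,b_4,\ldots]\xrightarrow{\sim}C_1^r\overline F\). This is a genuinely different route from the paper, which instead imports the computation from \cite[Propositions~2.5,~2.7]{realKL} and never analyses the relations \(R_n\) individually.

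The gap is in the last step. You assert that \((C_1^rF)^\wedge_\mm\) is pro-free over \(E_0\) because ``its \(K\)-homology is even, as for \(BU\langle2\rangle\) and \(BSpin\)'', but there is no spectrum in sight whose completed \(E\)-homology is \(C_1^rF\). The identifications available at this point are \(C_kF\simeq E_0^\vee BU\langle2k\rangle\) and \(C_2^rF\simeq K_0BSpin\); the latter is only for Morava \(K\)-theory, and the comparison \((C_2^rF)^\wedge_\mm\simeq(C_1^rF)^\wedge_\mm\) in the paper is proved \emph{after} the present proposition, using it as input. Without pro-freeness (or at least flatness) of the target, a surjection that is an isomorphism modulo \(\mm\) need not be an isomorphism: think of \(E_0\twoheadrightarrow E_0/2\). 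Your anti-invariance \(\widehat v([-1]x)=-\widehat v(x)\) over \(E_0\) does not obviously replace the characteristic-\(2\) invariance argument, since for odd \(N\) the leading-term comparison gives no information, and the coefficients of \(\widehat v\) at even indices \(<d\) are only in \(\mm\), not zero.

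The paper avoids this by never invoking pro-freeness of \((C_1^rF)^\wedge_\mm\). Instead it works modulo \(\mm^n\) for each \(n\): surjectivity mod \(\mm\) plus Nakayama gives surjectivity mod \(\mm^n\); injectivity mod \(\mm^n\) is reduced (following \cite[Proposition~2.7]{realKL}) to checking that the classes
\[(-1)^k\Bigl(b_k^2+2\sum_{j=1}^k b_{k+j}b_{k-j}\Bigr)+(\text{lower degree})\]
remain algebraically independent in \(E_0/\mm^n[b_1,b_2,\ldots]\), which is immediate from their leading terms since \(2\) is nilpotent there. To repair your argument you would either need to supply an independent proof that \((C_1^rF)^\wedge_\mm\) is \(\mm\)-torsion-free, or upgrade your elimination to show directly that, after substituting the odd \(b\)'s, every relation \(R_n\) is sent to zero in \((E_0[b_2,b_4,\ldots])^\wedge_\mm\) and not merely modulo \(\mm\).
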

\begin{proof}
  As in the proof of Proposition 2.5 in \cite{realKL} we see that
  \[\nb{E_0[b_2,b_4,b_6,\ldots]}/\mm^n\to C_1^rF/\mm^n\]
  is surjective for \(n=1\), and then for \(n\geq1\) by Nakayama's lemma.

  Inspecting the proof of Proposition 2.7 in \cite{realKL} we are reduced to proving that the classes
  \[(-1)^k\nb{b_k^2+2\sum_{j=1}^kb_{k+j}b_{k-j}}+\text{ terms of degree less than } 2k\]
  are algebraically independent in \(E_0/\mm^n[b_1,b_2,\ldots]\), where we give \(b_i\) degree \(i\).

  The \(b_k^2\) parts are clearly algebraically independent, and adding multiples of \(2\) to them doesn't change that since \(2\) is nilpotent in \(E_0/\mm^n\).
  Adding the terms of lower degree also doesn't change the algebraic independence by looking at the top degree part.

  We saw that the map is an isomorphism after quotienting out finite powers of \(\mm\), thus it is an isomorphism after completion.
\end{proof}

\begin{remark}
  The above also suggests a way to determine how to write \(b_{2k+1}\) in terms of \(b_2,\ldots,b_{2k}\):
  since there is no \(2\)-torsion in \(\nb{C_1^rF}_{\mm}^\wedge\) it is enough to look at the coefficients of \(x^{2k+1}\) of \(2f(x)=f(x)+f([-1](x))\).
  This expresses \(2b_{2k+1}\) as a linear combination of \(b_1,\ldots,b_{2k}\).
\end{remark}
\begin{proposition}
  Let \(E\) be a complex oriented Morava \(E\)-theory of finite height with quotient field \(k\) of characteristic \(2\), and let \(F\) be the associated formal group law.
  Then the map corepresenting \(\delta\) gives an isomorphism
  \[\nb{C_2^rF}_\mm^\wedge \to \nb{C_1^rF}_\mm^\wedge.\]
\end{proposition}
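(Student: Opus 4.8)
The plan is to reduce the statement to the Honda case, Theorem~\ref{thm:RealHondaIso}, by base change, and then to upgrade an isomorphism modulo \(\mm\) to one after \(\mm\)-completion. Write \(\delta^\ast\colon C_2^rF\to C_1^rF\) for the ring map corepresenting \(\delta\). The formal input is that \(C_1F\), \(C_2F\), their real variants, and the operation \(\delta\) are built only from the formal group law (its addition and inverse), so \(F\mapsto C_kF\) and \(F\mapsto C_i^rF\) are natural in \((R,F)\); in particular they commute with base change along any ring map \(R\to R'\), and an isomorphism \(F\cong F'\) over \(R\) induces isomorphisms \(C_i^rF\cong C_i^rF'\) intertwining the two copies of \(\delta^\ast\).

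First I would reduce modulo \(\mm\). By the preceding Proposition \((C_1^rF)^\wedge_\mm\) is pro-free over \(E_0\), hence \(L\)-complete, and \((C_2^rF)^\wedge_\mm\) is \(L\)-complete as well; by base-change compatibility, reducing \(\delta^\ast\) modulo \(\mm\) gives \(\overline{\delta^\ast}\colon C_2^r\bar F\to C_1^r\bar F\) over \(k=E_0/\mm\), where \(\bar F\) denotes the reduction of \(F\). Granting that \(\overline{\delta^\ast}\) is an isomorphism, pick elements \(a_i\in(C_2^rF)^\wedge_\mm\) whose images under \((\delta^\ast)^\wedge_\mm\) reduce to a \(k\)-basis of \(C_1^r\bar F\); since \((C_1^rF)^\wedge_\mm\) is pro-free this makes \(\{(\delta^\ast)^\wedge_\mm(a_i)\}\) a pro-basis, so the map \(\phi\colon(\bigoplus_iE_0)^\wedge_\mm\to(C_2^rF)^\wedge_\mm\), \(e_i\mapsto a_i\), satisfies that \((\delta^\ast)^\wedge_\mm\circ\phi\) is an isomorphism. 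Hence \(\phi\) is split monic, \((\delta^\ast)^\wedge_\mm\) is split epic, and \((C_2^rF)^\wedge_\mm\cong\im\phi\oplus\ker(\delta^\ast)^\wedge_\mm\); reducing this splitting modulo \(\mm\) and using that \(\overline{\delta^\ast}\) is an isomorphism forces \(\ker(\delta^\ast)^\wedge_\mm\) to vanish modulo \(\mm\), hence to vanish by Nakayama for \(L\)-complete modules. Thus \((\delta^\ast)^\wedge_\mm\) is an isomorphism.

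It remains to show \(\overline{\delta^\ast}\) is an isomorphism over the characteristic-\(2\) field \(k\). The reduction \(\bar F\) has finite height \(n\); base-changing to an algebraic closure \(\bar k\), Lazard's classification of formal group laws over algebraically closed fields gives \(\bar F_{\bar k}\cong\Gamma_{\bar k}\) for \(\Gamma\) the height-\(n\) Honda formal group law over \(\ff_2\). By naturality and base-change compatibility, \(\overline{\delta^\ast}\otimes_k\bar k\) is identified with \(\delta^\ast_\Gamma\otimes_{\ff_2}\bar k\), which is an isomorphism by Theorem~\ref{thm:RealHondaIso}; faithful flatness of \(\bar k\) over \(k\) then gives that \(\overline{\delta^\ast}\) is an isomorphism. (For the Morava \(E\)-theory of the present paper one could instead use the explicit isomorphism between \(F_{C_0}\) and the Honda formal group law over \(\ff_4\) recorded earlier, avoiding any passage to \(\bar k\).)

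I expect the main work to be bookkeeping rather than conceptual: checking that \(C_i^rF\) really does commute with the base changes \(E_0\to k\) and \(k\to\bar k\) on the nose (so that ``reduce modulo \(\mm\)'' and ``pass to \(\bar k\)'' are honest identifications, not identifications up to completion), that the modules in play are \(L\)-complete and that kernels and direct summands behave well in that category, and that a set reducing to a basis modulo \(\mm\) in a pro-free module really is a pro-basis. With those in hand, the geometric content is entirely carried by Lazard's theorem and Theorem~\ref{thm:RealHondaIso}.
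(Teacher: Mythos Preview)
Your proposal is correct and shares the paper's overall strategy: reduce modulo \(\mm\), pass to an algebraic closure, invoke Lazard's classification and Theorem~\ref{thm:RealHondaIso} to get the isomorphism over \(k\), then lift. The paper does exactly this for the first step.

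Where you diverge is in the lifting step. You work directly with \(L\)-complete modules: you use pro-freeness of \((C_1^rF)_\mm^\wedge\) to produce a splitting of \((\delta^\ast)_\mm^\wedge\), then kill the kernel with Nakayama for \(L\)-complete modules. The paper instead works at finite levels: it shows \((C_2^rF)/\mm^n\to(C_1^rF)/\mm^n\) is surjective for all \(n\) by ordinary Nakayama, then proves the kernels \(K_n\) vanish by an induction (using that \((C_1^rF)/\mm^n\) is free over \(E_0/\mm^n\), hence flat, so a diagram chase gives \(K_n=K_{n+1}/\mm^n\), and nilpotence of \(\mm^n\) in \(E_0/\mm^{n+1}\) lets Nakayama finish). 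Your route is cleaner once the \(L\)-complete machinery is in place, but---as you note---it requires checking that completions, quotients, and kernels behave as expected in that category; the paper's finite-level argument is more elementary and sidesteps those foundational checks entirely, at the cost of the small diagram chase.
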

\begin{proof}
  In the algebraic closure of \(k\) the formal group law \(F\) is ismorphic to the Honda formal group law.
  So, by naturality in the formal group law, \cref{thm:RealHondaIso}, and that field extensions are faithfully flat, we find that
  \[\nb{C_2^rF}/\mm\to\nb{C_1^rF}/\mm\]
  is an isomorphism.
  By Nakayama's lemma we find that
  \[\nb{C_2^rF}/\mm^n\to\nb{C_1^rF}/\mm^n\]
  is surjective for all \(n\geq1\).

  Denote the kernel of the above map by \(K_n\).
  We saw that \(K_1=0\).
  In the previous proposition we have also seen that \(\nb{C_1^rF}/\mm^n\) is a free \(E_0/\mm^n\) module, so in particular it is flat.
  A diagram chase now shows that \(K_n=K_{n+1}/\mm^n\).
  Since \(\mm^n\) is a nilpotent ideal in \(E_0/\mm^{n+1}\) we can use Nakayama's lemma again to show that all \(K_n\) vanish.
\end{proof}

The situation can be summarized using the following diagram:
\[\begin{tikzcd}
	{(C_2F)_\mm^\wedge} && {(C_2^rF)_\mm^\wedge} \\
	& {E_0^\vee BSU} && {E_0^\vee BSpin} \\
	{(C_1F)_\mm^\wedge} && {(C_1^rF)_\mm^\wedge} \\
	& {E_0^\vee BU} &&& {E_0^\vee BSO}
	\arrow[from=1-1, to=1-3]
	\arrow["\simeq"{description}, from=1-1, to=2-2]
	\arrow[from=1-1, to=3-1]
	\arrow["\simeq"{description}, from=1-3, to=2-4]
	\arrow[from=1-3, to=3-3]
	\arrow[two heads, from=2-2, to=2-4]
	\arrow[from=2-2, to=4-2]
	\arrow["\simeq"{description}, dashed, from=2-4, to=3-3]
	\arrow["{pr_*}"{description}, from=2-4, to=4-5]
	\arrow[two heads, from=3-1, to=3-3]
	\arrow["\simeq"{description}, from=3-1, to=4-2]
	\arrow[hook, from=3-3, to=4-5]
	\arrow[dashed, two heads, from=4-2, to=3-3]
	\arrow["{\mathrm{re}_*}"{description}, from=4-2, to=4-5]
\end{tikzcd}\]
In particular, there are unique classes \(\widetilde{b_{2i}}\in E_0^\vee BSpin\) such that \(pr_*\widetilde{b_{2i}}=\mathrm{re}_*b_{2i}\), and \(E_0^\vee BSpin\) is a completed polynomial algebra on these classes.
Notice that the map \((C_1^rF)_\mm^\wedge\to E_0^\vee BSO\) is also injective mod \(\mm\), which follows directly from the proof of Proposition 2.7 in \cite{realKL}.
\begin{proposition}\label{prop:geometricgeneratorsKBSpin}
  Consider the map
  \[S^1\xrightarrow{z\mapsto 1/z}S^1=Spin(2)\longrightarrow Spin\]
  and denote by \(e_i\in E_0^\vee BSpin\) the image of \(\beta_i\in E_0^\vee BS^1\) under it.
  Then
  \[pr_*e_i\equiv \begin{cases}\mathrm{re}_*b_{i/4}+\mm & \text {if }4\vert i\\
   0+\mm&\text{else.}\end{cases}\]
  In particular,
  \[E_0^\vee BSpin=\nb{E_0[e_8,e_{16},e_{24},\ldots]}_\mm^\wedge.\]
  
\end{proposition}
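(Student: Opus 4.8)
The plan is to compute the composite $BS^1\to BSpin\xrightarrow{pr}BSO$ geometrically, recognise it as the ``multiplication by $-2$'' map on $\cc P^\infty$ followed by realification, and then feed in the defining feature of our Morava $E$-theory, namely that its $(-2)$-series is $t^4$.

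First I would identify the composite. Since $Spin(2)=S^1$ is the connected double cover of $SO(2)=S^1$ via $z\mapsto z^2$, the composite $S^1\xrightarrow{z\mapsto 1/z}Spin(2)\to Spin\to SO$ lands in $SO(2)\subset SO$ and agrees with the inclusion $SO(2)\hookrightarrow SO$ precomposed with $z\mapsto z^{-2}$. Passing to classifying spaces and using the canonical identification $BSO(2)\cong BU(1)=\cc P^\infty$, the composite $BS^1\to BSpin\xrightarrow{pr}BSO$ factors as
\[\cc P^\infty\xrightarrow{\;[-2]\;}\cc P^\infty=BSO(2)\xrightarrow{\;\mathrm{re}\;}BSO,\]
where $[-2]:=B(z\mapsto z^{-2})$ is the map classifying $L^{\otimes(-2)}$ and the second map realifies the tautological line bundle. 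The various shifts ($L$ versus $L-1$, resp.\ $L_\rr$ versus $L_\rr\oplus\underline{\rr}^{\,2}$) are irrelevant here, since the corresponding maps into $BU$, resp.\ $BSO$, coincide, both being reduced classifying spaces. Moreover, by the construction of the classes $b_k$ via $C_1F\cong E_0^\vee BU$ and the commuting square relating $BU(1)\hookrightarrow BU$ to $BSO(2)\hookrightarrow BSO$ through realification, $\mathrm{re}_*b_k\in E_0^\vee BSO$ is precisely the image of $\beta_k$ under $\cc P^\infty=BSO(2)\to BSO$.

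Next I would evaluate $[-2]$ in $K$-homology. On $E^*\cc P^\infty=E^*[\![x]\!]$ it is the substitution $x\mapsto[-2]_F(x)$, and modulo $\mm$ the identity $[-2](t)=t^4$ for the formal group of $C_0$ over $\ff_4$ gives $[-2]_F(x)\equiv x^4$. Dualising against the monomial basis $\{x^j\}$, the induced map on $K_0\cc P^\infty$ therefore sends $\beta_i\mapsto\beta_{i/4}$ when $4\mid i$ and $\beta_i\mapsto 0$ otherwise. Combining this with the previous paragraph yields exactly the claimed formula $pr_*e_i\equiv\mathrm{re}_*b_{i/4}+\mm$ when $4\mid i$ and $pr_*e_i\equiv 0+\mm$ otherwise.

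For the ``in particular'' part, recall that we have already identified $E_0^\vee BSpin\cong(E_0[\widetilde{b_2},\widetilde{b_4},\ldots])_\mm^\wedge$, with $\widetilde{b_{2k}}$ the unique class satisfying $pr_*\widetilde{b_{2k}}=\mathrm{re}_*b_{2k}$, and that $pr_*$ is injective modulo $\mm$. Since $8k/4=2k$, the formula gives $pr_*e_{8k}\equiv\mathrm{re}_*b_{2k}=pr_*\widetilde{b_{2k}}\pmod\mm$, so injectivity forces $e_{8k}\equiv\widetilde{b_{2k}}\pmod\mm$. Hence the reductions $\bar e_{8k}$ are polynomial generators of $K_0BSpin=\ff_4[\,\overline{\widetilde{b_{2k}}}\,]$, and a completed Nakayama argument — both sides being pro-free $E_0$-modules and the monomials in the $e_{8k}$ reducing to an $\ff_4$-basis of $K_0BSpin$ — upgrades this to $E_0^\vee BSpin=(E_0[e_8,e_{16},e_{24},\ldots])_\mm^\wedge$. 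I expect the main obstacle to be the bookkeeping in the geometric identification of the composite — keeping track of which connective covers, which stable shifts, and which orientations are in play — after which everything follows from the ``$[-2]=$ Frobenius squared'' phenomenon peculiar to this $E$-theory together with the polynomial structure of $E_0^\vee BSpin$ established above.
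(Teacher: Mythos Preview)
Your proof is correct and follows essentially the same route as the paper: identify the composite $BS^1\to BSpin\xrightarrow{pr}BSO$ with $\mathrm{re}\circ[-2]$, compute $[-2]_*\beta_i$ using $[-2](x)\equiv x^4$ mod $\mm$, and then deduce $e_{8k}\equiv\widetilde{b_{2k}}$ mod $\mm$. You are somewhat more explicit than the paper in two places---the geometric bookkeeping and the Nakayama step---where the paper simply writes $pr_*e_i=\mathrm{re}_*f_*\beta_i$ and concludes with ``the second now follows since $e_{8i}=\widetilde{b_{2i}}+\mm$''; but the substance is the same.
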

\begin{proof}
  Denote the map \(S^1\to S^1,\,z\mapsto z^{-2}\) by \(f\).
  Since the projection
  \[Spin\longrightarrow SO\] 
  induces the squaring map
  \[S^1=Spin(2)\longrightarrow SO(2)=S^1,\,z\mapsto z^2\]
  we find that \(pr_*e_i=\mathrm{re}_*f_*\beta_i\).
  Up to \(\mm\) we know that the \((-2)\)-series of the formal group law is given by \([-2](x)=x^4\).
  We calculate
  \begin{align*}
    f_*\beta_i&=\sum_{j\geq0}\beta_j\langle f_*\beta_i,z^j\rangle=\sum_{j\geq0}\beta_j\langle \beta_i,[-2](z)^j\rangle\\
    &=\sum_{j\geq 0}\beta_j\langle\beta_i,z^{4j}\rangle=\sum_{j\geq0}\beta_j\delta_{i}^{4j}
  \end{align*}
  which shows the first claim.
  The second now follows since \(e_{8i}=\widetilde{b_{2i}}+\mm\).
\end{proof}

\subsection{Calculating and detecting \(N_1\)}
Let us try to determine some classes which lie in \(N_1\), together with cohomology classes which detect them.
We will see that these cohomology classes will be involved in determining some of the projections \(Q_j\) of \cref{maintheorem}.

Recall that the action of \(F_{3/2}'\) we are interested in is the one twisted by the elements \(\chi^g\in K^0BSpin\) such that
\[\theta^g\equiv r^{\frac{|g|-1}{2}}pr^*\chi^g+\mm.\]
To determine the invariants we will need to calculate \(\chi^g\) for the generators listed in \cref{lem:generatorsF32prime}.
Since \(\chi^g\) is a group-like element pairing with it induces an \(\ff_4\)-algebra map \(K_0BSpin\to K_0\).
As we have seen in the previous subsection, for our Morava \(K\)-theory of height 2, such algebra morphisms are determined by real 1-structures.

\begin{proposition}\label{prop:real1Struct}
  Let \(g\in\mathbb{S}\).
  There is a unique real 1-structure \(l_g(z)\) such that, as 3-structures, we have
  \begin{align*}
    \delta(\mathrm{re}^*\chi^g)(x,y,z)&=\frac{s(g(x),g(y),g(z))\,\delta^2\!\nb{\frac{g}{g'(0)}}\!(x,y,z)}{s(x,y,z)^{\frac{|g|+1}{2}}s([-1](x),[-1](y),[-1](z))^{\frac{|g|-1}{2}}}\\
    &\\
    &=\delta^2l_g(x,y,z)
  \end{align*}
  where \(s\) is the standard 3-structure on the formal group law associated to the Weierstrass curve \(y^2+y=x^3\), see \cite[Appendix~B]{cube} or \cite[§~3]{tmf03}.
  In particular, there is a canonical lift of \(\mathrm{re}^*\chi^g\) to a group-like class \(l_g\) in \(K^0BU\).
\end{proposition}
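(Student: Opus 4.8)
The strategy is to unwind $\chi^g$ down to the sigma orientation, where it becomes an explicit statement about cubical structures. I work throughout in $E^0 BU\langle 6\rangle$, which by \cite[Theorem~2.31]{cube} is the ring $C_3 F$ of $3$-structures on $E_0$, and reduce mod $\mm$ only at the end; all orientations in play are multiplicative, so the cannibalistic classes below are group-like. The three ingredients I would assemble are: (i) the defining property of the sigma orientation --- under the $\tau_U$-Thom isomorphism $\tau_\sigma=r_U\tau_U$, and $r_U\in E^0 BU\langle 6\rangle$ is, mod $\mm$, the standard cubical structure $s$ of the formal group of $y^2+y=x^3$ (Ando--Hopkins--Strickland, cf.\ \cite{cube}); (ii) the cannibalistic class of the complex orientation --- since $\tau_U$ is multiplicative, $g.\tau_U=\rho^g\tau_U$, and a standard computation of this cannibalistic class, tracing the $g$-action on the coordinate $-x/y$ and on the periodicity class $u$, identifies $\rho^g$, as a class in $C_3 F$, with $\delta^2(g/g'(0))$ (cf.\ \cite{cannibal}, where the closely related class $q_0^g$ is defined); (iii) the preceding lemma, $\theta^g=r^{(|g|-1)/2}\,pr^*\chi^g$ in $K^0 BString$.

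Then I would compute. Restricting the defining relation $g.\tau_W=\tau_W\theta^g$ along the realification and using $\tau_W|_{MU\langle 6\rangle}=\tau_\sigma=r_U\tau_U$ gives $\mathrm{re}^*\theta^g=g.\tau_\sigma/\tau_\sigma=(g.r_U/r_U)\,\rho^g$, which under $E^0 BU\langle 6\rangle\cong C_3 F$ and mod $\mm$ reads $s(g(x),g(y),g(z))\,\delta^2(g/g'(0))(x,y,z)\,/\,s(x,y,z)$ (using that mod $\mm$ the element $g$ acts on the formal group by its power-series automorphism). On the other hand, restricting (iii) along $\mathrm{re}\colon BU\langle 6\rangle\to BString$ --- using that $c\circ\mathrm{re}$ is $V\mapsto V\oplus\bar V$, so $\mathrm{re}^*r=r_U\overline{r_U}$ with $\overline{r_U}$ corresponding to $\bar s(x,y,z)=s([-1]x,[-1]y,[-1]z)$ since $r_U$ is group-like, and using the factorization $pr\circ\mathrm{re}\colon BU\langle 6\rangle\to BSU\to BSpin$, so that $\mathrm{re}^*pr^*\chi^g=\delta(\mathrm{re}^*\chi^g)$ with $\mathrm{re}^*\chi^g$ the $2$-structure on $BSU$ obtained by pulling $\chi^g$ back along $BSU\to BSpin$ --- gives $\mathrm{re}^*\theta^g=(s\bar s)^{(|g|-1)/2}\,\delta(\mathrm{re}^*\chi^g)$. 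Solving for $\delta(\mathrm{re}^*\chi^g)$ produces exactly the displayed formula, the leftover $s$ in the denominator combining with $(s\bar s)^{(|g|-1)/2}$ to give $s^{(|g|+1)/2}\bar s^{(|g|-1)/2}$. One may cross-check via the theorem-of-the-cube identity $s(g(x),g(y),g(z))=s(x,y,z)^{|g|}\,\delta^2(g/g'(0))(x,y,z)$, reflecting multiadditivity of $\Theta^3$ together with $g^*\mathcal{L}\cong\mathcal{L}^{\otimes|g|}$ for the theta bundle $\mathcal{L}$.

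For the remaining claim: by the isomorphism $(C_2^r F)_\mm^\wedge\cong(C_1^r F)_\mm^\wedge$ established above (an honest isomorphism over the field $K_0$), the real $2$-structure $\chi^g$ is $\delta$ of a unique real $1$-structure; since the realification maps fit into a square with the covering maps, which corepresent $\delta$, the class $\mathrm{re}^*\chi^g\in K^0 BSU$ is $\delta$ of that same real $1$-structure, now viewed as an ordinary $1$-structure in $C_1 F\cong K^0 BU$; call it $l_g$. Then $\delta^2 l_g=\delta(\mathrm{re}^*\chi^g)$, and $l_g$ is the asserted canonical lift of $\mathrm{re}^*\chi^g$ along $K^0 BU\to K^0 BSU$; uniqueness of $l_g$ as a real $1$-structure follows since $\delta\colon C_1^r F\to C_2^r F$ is an isomorphism and real $1$-structures inject into $1$-structures (equivalently, from the injectivity after $\mm$-completion of the coboundary maps, cf.\ \cite{realKL}).

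The hard part is the second paragraph: getting all normalizations right so that the determinant enters with exponents exactly $\tfrac{|g|\pm 1}{2}$. This means being careful about how the $\mathbb{G}$-action simultaneously twists the coordinate $-x/y$ and the periodicity class $u$ (which is what brings in $|g|$), about the exact convention for applying $\delta$ to the non-normalized series $g/g'(0)$, and about matching the $\tau_U$-Thom isomorphism used to define $r_U$ with the identification $E^0 BU\langle 6\rangle\cong C_3 F$ --- a slip in any one of these shifts the answer by a $\delta$ or a unit scalar.
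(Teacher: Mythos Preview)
Your argument is correct and follows essentially the same route as the paper's own proof. Both pull back the identity \(\theta^g=r^{(|g|-1)/2}pr^*\chi^g\) along \(\mathrm{re}\), use \(\mathrm{re}^*\theta^g=(g.r_U/r_U)(g.\tau_U/\tau_U)\) and \(\mathrm{re}^*r=r_U\overline{r_U}\), and then identify the resulting \(3\)-structure; the paper simply records the combined formula \(\mathrm{re}^*pr^*\chi^g=\frac{g.r_U}{r_U^{(|g|+1)/2}\overline{r_U}^{(|g|-1)/2}}\cdot\frac{g.\tau_U}{\tau_U}\) and says ``identify the \(3\)-structure,'' whereas you spell out each factor (in particular \(\rho^g=\delta^2(g/g'(0))\)). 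For existence and uniqueness of \(l_g\) the paper likewise invokes the isomorphism \(C_2^rF\simeq C_1^rF\); its uniqueness statement is phrased via the injectivity of \(K^0BSpin\to K^0BString\) and of \(K^0BString\to K^0BU\langle6\rangle\) on group-likes, which is equivalent to what you wrote.
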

\begin{proof}
  The first equality follows from
  \[\mathrm{re}^*pr^*\chi^g=\frac{g.r_U}{r_U^{\frac{|g|+1}{2}}\overline{r_U}^{\frac{|g|-1}{2}}}\frac{g.\tau_U}{\tau_U}\]
  and identifying the 3-structure associated to the right hand side.
  That this is given by \(\delta^2\) of some real 1-structure comes from the equivalence \(C^2_rF\to C^1_rF\).
  Uniqueness now follows from \(K^0BSpin\to K^0BString\) being injective and \(K^0BString\to K^0BU\langle6\rangle\) being injective on group-like elements.

  Finally, the real \(1\)-structure \(l_g\) determines an algebra morphism \(K_0BU\to K_0\) sending \(b_i\) to the coefficient of \(z^i\) in \(l_g(z)\), which by duality is determined by a class in \(K^0BU\) we shall also denote by \(l_g\).
\end{proof}

Since we know formulas for the \(3\)-structre \(s\), the \(-1\)-series, and the formal group law, we can approximate \(l_g\) for the generators of \(F'_{3/2}\).
By direct computation\footnote{To determine these approximations one needs to calculate the expression of \cref{prop:real1Struct} roughly up to order 32 in \((x,y,z)\), which we did using the SageMath software \cite{sagemath}. We included the code in the arXiv ancillary files.} we find
\begin{align*}
  l_{\alpha^2}(z)&=1+z^6+(z^8)\\
  l_{[i,\alpha]}(z)&=1+z^2+z^4+z^5+z^6+(z^8)\\
  l_{[j,\alpha]}(z)&=1+\zeta z^2+\zeta^2z^4+\zeta z^5+z^6+(z^8).
\end{align*}

We have seen that \(N=K_0BSpin\) is a polynomial \(\ff_4\)-algebra on classes \(\widetilde{b_{2i}}\) such that their projections to \(BSO\) coincide with the realfications of the classes \(b_{2i}\in K_0BU\).
\begin{proposition}
  The action of elements \(g\in \mathbb{S}\) with determinant \(1\) on the \(\widetilde{b_{2i}}\) is determined by the following formula:
  \[pr_*\nb{g^{-1}\triangleright\widetilde{b_{2i}}}=\mathrm{re}_*\nb{\mathrm{coeff}_{z^{2i}}\,l_g(z)l_b(g(z))}.\]
  Here \(l_b\) is the power series
  \[l_b(z)=1+\sum_{i\geq1}b_iz^i.\]
\end{proposition}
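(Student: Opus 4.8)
The plan is to transport the computation to \(K_0BU\), where the realification map and the explicit coordinate change attached to \(g\in\mathbb{S}\) make everything computable, and to read off the twist from \cref{prop:real1Struct}.

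First I would introduce the \(\mathbb{S}\)-equivariant surjection
\[\pi\colon K_0BU\twoheadrightarrow K_0BSpin,\qquad \pi(b_{2i})=\widetilde{b_{2i}},\]
obtained by composing \(\mathrm{re}_*\colon K_0BU\to K_0BSO\) with the inverse of the injection \(pr_*\colon K_0BSpin\hookrightarrow K_0BSO\) onto its image \((C_1^rF)_\mm^\wedge\). Its \(\ff_4\)-linear dual is a ring map \(\jmath\colon K^0BSpin\hookrightarrow K^0BU\) with \(\mathrm{re}^*=\jmath\circ pr^*\), and by \cref{prop:real1Struct} it sends the cannibalistic class \(\chi^g\) to the real \(1\)-structure \(l_g\). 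I would also record that these classes form a \(1\)-cocycle: from \(g.\tau_W=\tau_W\theta^g\) one gets \(\theta^{hk}=\theta^h\cdot(h.\theta^k)\) and \(\theta^1=1\), hence the same for the \(\chi^g\) and, via \(\jmath\), for the \(l_g\); in particular \(g^{-1}.l_g=1/l_{g^{-1}}\).

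Using the equivariance of \(\pi\), the projection formula \(\pi(a)\smallfrown c=\pi\big(a\smallfrown\jmath(c)\big)\) (valid since \(\jmath\) is a ring map dual to \(\pi\)), the identity \(\jmath(\chi^{g^{-1}})=l_{g^{-1}}\) together with the cocycle relation, and the equivariance of the cap product on \(K_0BU\), one obtains
\[g^{-1}\triangleright\widetilde{b_{2i}}=\pi(g^{-1}.b_{2i})\smallfrown\tfrac1{\chi^{g^{-1}}}=\pi\big((g^{-1}.b_{2i})\smallfrown(g^{-1}.l_g)\big)=\pi\big(g^{-1}.(b_{2i}\smallfrown l_g)\big),\]
so that \(pr_*(g^{-1}\triangleright\widetilde{b_{2i}})=\mathrm{re}_*\big(g^{-1}.(b_{2i}\smallfrown l_g)\big)\), and the whole statement comes down to evaluating \(g^{-1}.(b_{2i}\smallfrown l_g)\) in \(K_0BU\). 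For this I would use the coproduct on \(K_0BU\) dual to the cup product — so \(\Delta b_n=\sum_{j+k=n}b_j\otimes b_k\) on the polynomial generators — together with \(\langle b_k,l_g\rangle=\mathrm{coeff}_{z^k}l_g(z)\) and \(b_0=1\) to get \(b_{2i}\smallfrown l_g=\mathrm{coeff}_{z^{2i}}\big(l_b(z)\,l_g(z)\big)\); and since the untwisted \(\mathbb{S}\)-action on \(K^0\cc P^\infty=\ff_4[\![z]\!]\) is the substitution \(z\mapsto g(z)\), dualizing and pushing forward along \(\cc P^\infty\to BU\) gives \((g^{-1}.l_b)(z)=l_b(g(z))\), while \(l_g(z)\) has coefficients in \(\ff_4\) on which \(\mathbb{S}\) acts trivially. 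Hence \(g^{-1}.(b_{2i}\smallfrown l_g)=\mathrm{coeff}_{z^{2i}}\big(l_g(z)\,l_b(g(z))\big)\), which is the claimed formula.

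The step I expect to be the main obstacle is the identification \(\jmath(\chi^g)=l_g\): \cref{prop:real1Struct} is stated in terms of \(\delta^2\) of the real \(1\)-structure, so extracting the clean statement on \(BU\) means matching up the two distinct maps called \(pr\) (the fibration \(K(\zz,3)\to BString\to BSpin\) versus the covering \(BSpin\to BSO\)) and the several realification maps, tracking the \(\mm\)-adic completions, and using the surjectivity of \(pr^*\) on group-like elements together with the isomorphism \(C_2^rF\cong C_1^rF\). The hypothesis \(|g|=1\) should help, as it kills the conjugate factors \(\overline{r_U}^{\,(|g|-1)/2}\) that appear in the proof of \cref{prop:real1Struct}. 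I would also check that the normalizations \(\theta^1=l_1=1\) and \(l_g(0)=1\) are consistent throughout, since the cocycle manipulation hinges on them.
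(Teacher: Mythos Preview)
Your proof is correct and follows essentially the same route as the paper: reduce to a computation in \(K_0BU\), identify the twist with the real \(1\)-structure \(l_g\), and evaluate via the generating function \(l_g(t)l_b(g(t))\). The one organizational difference is that the paper lifts \(\widetilde{b_{2i}}\) to \(K_0BSU\) (where \(\mathrm{re}:BSU\to BSpin\) is an honest map of spaces, so the projection formula is automatic) and then argues that the ambiguity in the lift---an element of \(\ker(\mathrm{re}_*:K_0BU\to K_0BSO)\)---is killed by the operations involved; you instead work directly with the algebraic quotient \(\pi:K_0BU\to K_0BSpin\) and its dual \(\jmath\), which is cleaner but requires you to check that \(\pi\) is a coalgebra map (for the diagonal coproducts) so that your projection formula holds. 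That check is immediate since \(pr_*\) is an injective coalgebra map onto its image, but you should say so. The paper also applies the cocycle relation one step earlier, rewriting \(g^{-1}\triangleright n=g^{-1}.(n\smallfrown\chi^g)\) before passing to \(BSU\); your placement of it (as \(1/l_{g^{-1}}=g^{-1}.l_g\)) is equivalent. Your final paragraph correctly locates the only real content: unwinding \(\jmath(\chi^g)=l_g\) from \cref{prop:real1Struct} amounts to matching the algebra character of \(C_2^rF\) given by \(\chi^g\) with \(\delta l_g\) via the isomorphism \(C_2^rF\simeq C_1^rF\), which is exactly how the paper defines \(l_g\).
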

\begin{proof}
  First of all note that, when \(|g|=1\), we have an equality \(\chi^gg.\chi^{g^{-1}}=1\).
  This allows us to rewrite the action as
  \[g^{-1}\triangleright n=g^{-1}.\nb{n\smallfrown \chi^{g}}.\]

  Since \(K_0BSU\to K_0BSpin\) is surjective we can pick a class \(x\in K_0BSU\) such that \(\mathrm{re}_*x=\widetilde{b_{2i}}\).
  This class will then, after mapping down to \(K_0BU\), agree with \(b_{2i}\) up to elements in the kernel of \(\mathrm{re}_*:K_0BU\to K_0BSO\).
  We can now calculate
  \begin{align*}
    pr_*\nb{g^{-1}\triangleright\widetilde{b_{2i}}}&=pr_*\nb{g^{-1}.\nb{\mathrm{re}_*x\smallfrown \chi^{g}}}=pr_*\mathrm{re}_*\nb{g^{-1}.\nb{x\smallfrown \mathrm{re}^*\chi^{g}}}\\
    &=\mathrm{re}_*pr_*\nb{g^{-1}.\nb{x\smallfrown pr^*l_{g}}}=\mathrm{re}_*\nb{g^{-1}.\nb{pr_*x\smallfrown l_{g}}}.
  \end{align*}
  Now we observe that both acting with \(g^{-1}\) and capping with \(l_{g}\) preserve the kernel of \(\mathrm{re}_*:K_0BU\to K_0BSO\) so we may replace \(x\) by \(b_{2i}\):
  \[pr_*\nb{g^{-1}\triangleright\widetilde{b_{2i}}}=\mathrm{re}_*\nb{g^{-1}.\nb{b_{2i}\smallfrown l_{g}}}.\]
  It will be useful to consider a generating function. In the following let
  \[f:BS^1=BU(1)\longrightarrow BU\]
  be the usual inclusion map so that \(b_i=f_*\beta_i\).
  \begin{align*}
    \sum_{i\geq 0}t^i\nb{g^{-1}.\nb{b_{i}\smallfrown l_{g}}}&=\sum_{i\geq 0}t^if_*\nb{\sum_{j\geq0}\beta_j\langle g^{-1}.\nb{\beta_i\smallfrown f^*l_g},z^j\rangle}\\
    &=\sum_{i\geq 0}t^if_*\nb{\sum_{j\geq0}\beta_j\langle \beta_i,l_g(z)g(z)^j\rangle}\\
    &=\sum_{j\geq0}b_jl_g(t)g(t)^j=l_g(t)l_b(g(t))
  \end{align*}
  where we used that the action of \(g\) on \(z\in K^0BS^1\) is given by the underlying power series of \(g\in\mathbb{S}\subset\ff_4[\![t]\!]\).
\end{proof}

By the nature of the action of \(F_{3/2}'\) on \(N\) the subrings
\[\ff_4\!\sb{\widetilde{b_{2i}}\mid 1\leq i\leq n}\subset N\]
are invariant subspaces.
With the above computation of the real 1-structures for the generators of \(F_{3/2}'\) we can now determine the action on \(\ff_4\!\sb{\widetilde{b_2},\widetilde{b_4},\widetilde{b_6}}\):
\bgroup
\def\arraystretch{1.5}
\begin{center}
  \begin{tabular}{ c | c | c | c }
    
    \(\triangleright\) & \(\widetilde{b_2}\) & \(\widetilde{b_4}\) & \(\widetilde{b_6}\) \\ \hline
    \(\alpha^{-2}\) & \(\widetilde{b_2}\) & \(\widetilde{b_4}\) & \(\widetilde{b_6}+1\) \\ \hline
    \([i,\alpha]^{-1}\) & \(\widetilde{b_2}+1\) & \(\widetilde{b_4}+\widetilde{b_2}+1\) & \(\widetilde{b_6}+\widetilde{b_4}+\widetilde{b_2}+1\) \\ \hline
    \([j,\alpha]^{-1}\) & \(\widetilde{b_2}+\zeta\) & \(\widetilde{b_4}+\zeta\widetilde{b_2}+\zeta^2\) & \(\widetilde{b_6}+\zeta\widetilde{b_4}+\zeta^2\widetilde{b_2}+1\) \\
  \end{tabular}
\end{center}
\egroup

\begin{proposition}
  Let \(x,y,z\) denote \(\widetilde{b_2},\widetilde{b_4},\widetilde{b_6}\).
  Then the subring of \(F_{3/2}'\)-invariants in \(\ff_4[x,y,z]\) is a polynomial \(\ff_4\)-algebra on the elements
  \begin{align*}
    c=&x^4+x,\\
    d=&y^4+y+x^5+x^2,\text{ and}\\
    e=&z^2+z+y^2x^2+yx+x^6+x^3.
  \end{align*}
\end{proposition}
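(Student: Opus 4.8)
The plan is to recognise the relevant group as a finite abelian group and then run a short function-field Galois argument. Write $x,y,z$ for $\widetilde{b_2},\widetilde{b_4},\widetilde{b_6}$ and let $A,B,C\in\mathrm{Aut}_{\ff_4}\ff_4[x,y,z]$ be the automorphisms induced by $\alpha^{-2},[i,\alpha]^{-1},[j,\alpha]^{-1}$ via the table above, so that $A\colon (x,y,z)\mapsto(x,\,y,\,z+1)$, $B\colon(x,y,z)\mapsto(x+1,\,y+x+1,\,z+y+x+1)$ and $C\colon(x,y,z)\mapsto(x+\zeta,\,y+\zeta x+\zeta^2,\,z+\zeta y+\zeta^2 x+1)$. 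These maps respect the filtration by the grading with $\lvert\widetilde{b_{2i}}\rvert=2i$, and each filtered piece is finite-dimensional, so the continuous $F_{3/2}'$-action on $\ff_4[x,y,z]$ factors degreewise through finite quotients, generated by the images of the topological generators of \cref{lem:generatorsF32prime}. This reduces the claim to showing $\ff_4[x,y,z]^{\Gamma}=\ff_4[c,d,e]$ for $\Gamma:=\langle A,B,C\rangle$.

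The first step is to pin down $\Gamma$. A direct computation shows that $A,B,C$ commute pairwise and satisfy $A^2=B^4=C^4=1$ with $B^2\neq 1\neq C^2$. Moreover, looking at the effect of a word $A^aB^bC^c$ with $0\le a\le 1$, $0\le b,c\le 3$ first on $x$ and then on $y$, and using that $\{1,\zeta\}$ and $\{1,\zeta^2\}$ are both $\ff_2$-bases of $\ff_4$, one finds that $A^aB^bC^c=1$ forces $a=b=c=0$. Hence $\Gamma\cong\zz/2\times\zz/4\times\zz/4$ has order $32$ and acts faithfully on $\ff_4(x,y,z)$, so by Artin's theorem $\ff_4(x,y,z)$ is Galois over $L:=\ff_4(x,y,z)^{\Gamma}$ of degree $32$.

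Next I would check, by substituting $A,B,C$, that $c,d,e$ lie in $\ff_4[x,y,z]^{\Gamma}$; this is a routine verification. Rewriting their defining relations gives $x^4=x+c$, then $y^4=y+cx+d$, then $z^2=z+(\text{a polynomial in }x,y,c)+e$, so $\ff_4[x,y,z]=\ff_4[c,d,e][x,y,z]$ is integral over $\ff_4[c,d,e]$ and is spanned over $\ff_4[c,d,e]$ by the $32$ monomials $x^iy^jz^k$ with $0\le i,j\le 3$ and $0\le k\le 1$. In particular $\ff_4[x,y,z]$ is module-finite over $\ff_4[c,d,e]$, so the latter is a $3$-dimensional domain generated by $3$ elements, hence a polynomial ring, and $[\ff_4(x,y,z):\ff_4(c,d,e)]\le 32$.

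Finally I would close the loop. From $\ff_4(c,d,e)\subseteq L\subseteq\ff_4(x,y,z)$ together with $[\ff_4(x,y,z):L]=32$ and $[\ff_4(x,y,z):\ff_4(c,d,e)]\le 32$ we get $L=\ff_4(c,d,e)$. Hence $\ff_4[x,y,z]^{\Gamma}=\ff_4[x,y,z]\cap\ff_4(c,d,e)$, and since $\ff_4[x,y,z]$ is integral over the integrally closed ring $\ff_4[c,d,e]$ this intersection equals $\ff_4[c,d,e]$. The main obstacle is the group-theoretic bookkeeping in the second step: one must verify that $A,B,C$ genuinely commute — which is not automatic, in particular for $B$ and $C$ — and that the $32$ words $A^aB^bC^c$ stay distinct as automorphisms, so that $\lvert\Gamma\rvert$ is exactly $32$; granting that, the field-degree count and the integral-closure argument are standard.
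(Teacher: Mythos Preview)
Your overall strategy—reduce to a finite group acting on $\ff_4[x,y,z]$, count its order, and then run a Galois/integral-closure argument—is sound and genuinely different from the paper's proof, which argues directly by analysing leading terms and degrees of invariant polynomials one variable at a time. Your route is cleaner once the group order is pinned down; the paper's route avoids any structural claim about the group but requires more ad hoc coefficient chasing.

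However, your stated group structure is wrong: $B$ and $C$ do \emph{not} commute. A direct check on $z$ gives
\[
BC(z)=z+\zeta^2 y+1,\qquad CB(z)=z+\zeta^2 y,
\]
so $BC=CB\cdot A$, i.e.\ the commutator $[B,C]$ equals $A$. You flagged this as ``the main obstacle'', and indeed the check fails. Thus $\Gamma$ is not $\zz/2\times\zz/4\times\zz/4$; it is a nonabelian group of order $32$ with central commutator $A$.

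The good news is that this does not break your argument. Since $A$ is central and $A=[B,C]$, every element of $\Gamma$ still has a normal form $A^aB^bC^c$ with $0\le a\le 1$, $0\le b,c\le 3$, and your distinctness check on $x$ and $y$ goes through verbatim to show $|\Gamma|=32$. Artin's theorem, the degree bound $[\ff_4(x,y,z):\ff_4(c,d,e)]\le 32$, and the integral-closure step are all insensitive to whether $\Gamma$ is abelian. So the fix is just to replace the sentence ``$A,B,C$ commute pairwise'' by ``$A$ is central and $[B,C]=A$'', and drop the claimed isomorphism type.
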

\begin{proof}
  The claim that these elements are invariant follows from direct computation, and that they are algebraically independent from looking at their leading terms.
  It remains to show that the above elements do generate all invariant elements.

  Let \(p=x^n+p_1x^{n-1}+p_2x^{n-2}+\ldots\) be an invariant polynomial only involving~\(x\).
  We want to show that its degree must be a multiple of 4.
  Acting with \([i,\alpha]\) we get the equation \(p_1=n+p_1\), thus \(n\) must be even, say \(n=2m\).
  Acting with both \([i,\alpha]\) and \([j,\alpha]\) we now find the equations
  \[p_2=m+p_1+p_2=\omega^2m+\omega p_1+p_2\]
  so that also \(m\) must be even.
  Since the degree of any invariant polynomial in \(x\) is a multiple of 4 and \(c=x^4+\ldots\) we can write it as a polynomial in \(c\).

  Let \(p=y^np_0(x)+y^{n-1}p_1(x)+y^{n-1}p_2(x)+\ldots\) be an invariant polynomial in \(x\) and \(y\).
  We want to show that \(p_0\) is invariant and that \(n\) is a multiple of 4.
  Acting with \([i,\alpha]\) and \([j,\alpha]\) and comparing coefficients of \(y\) we find that \(p_0\) must be invariant, and we get the equation
  \[p_1(x)=p_1(x+1)+n(x+1)p_0(x+1).\]
  Using that \(p_0\) is already known to be invariant one finds
  \[n(x+1)p_0(x)=p_1(x)+p_1(x+1)=nxp_0(x)\]
  showing that \(n=2m\) must be even, and that \(p_1\) must be invariant as well.

  Assuming \(m\) is odd and looking at the coefficient of \(y^{n-2}\) a similar trick gives \(p_0=p_1\) and \(p_0=\zeta p_1\), so that \(p_0=0\), a contradiction.
  Thus \(p\) is a polynomial in \(c\) and \(d\).

  Finally, let \(p=z^np_0(y,x)+z^{n-1}p_1(y,x)+\ldots\) be invariant.
  We want to show that \(n\) is even and that \(p_0\) is invariant.
  Acting with \(\alpha^2\) and looking at the coefficient of \(z^{n-1}\) we get the equation
  \[p_1=p_1+np_0\]
  so that \(n\) must be even.
  Invariance of \(p_0\) again follows from looking at leading coefficients after acting with \([i,\alpha]\) and \([j,\alpha]\).
\end{proof}

Now let us discuss how the cohomology of \(BSpin\) and the pairing with the homology behave.
According to \cite{tmf03} there are classes \(p_i\in E^{4i}BSpin\) such that
\begin{itemize}
  \item \(E^*BSpin=E^*[\![p_1,p_2,\ldots]\!]\)
  \item \(\Delta p_k=\sum_{i+j=k}p_i\otimes p_j\)
  \item restricted to each maximal torus in \(Spin(2n)\) we have \[\sum_{k\geq 0}t^kp_k=\prod_{i=1}^{n}1-t\rho^*x_i\bar{x}_i\]
  where \(\rho\) is the map to the maximal torus in \(SO(2n)\), the \(x_i\) are the first Chern classes in \(E^2B(S^1)^{\times n}\), and the \(\bar{x}_i\) their complex conjugates.
\end{itemize}
In particular, restricted to \(Spin(2)=S^1\), we have
\[\sum_{k\geq 0}t^kp_k=1-t[2](x)[-2](x)\]
since \(\rho\) is a double covering.

From \cref{prop:geometricgeneratorsKBSpin} we know that in \(K_0BSpin=\ff_4\!\sb{\widetilde{b_{2}},\widetilde{b_{4}},\ldots}\) we have
\[\widetilde{b_{2i}}=f_*\beta_{8i}\]
where \(f\) is the inversion on \(BS^1\) followed by the inclusion of \(BS^1=BSpin(2)\) into \(BSpin\).
\begin{proposition}
  The only non-zero pairings between the polynomial generators \(\widetilde{b_{2i}}\) and the images of the  Pontryagin classes \(p_j\) under the projection \(E^*BSpin\to K^*BSpin\) (which we will also denote by \(p_j\))  are
  \[\langle \widetilde{b_0},p_0\rangle=\langle\widetilde{b_2},u^{-2}p_1\rangle=1.\]
\end{proposition}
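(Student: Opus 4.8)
The plan is to reduce the whole computation to $BSpin(2)=BS^1$. By \cref{prop:geometricgeneratorsKBSpin} we have $\widetilde{b_{2i}}=f_*\beta_{8i}$ in $K_0BSpin$, where $f\colon BS^1\xrightarrow{\,\mathrm{inv}\,}BS^1=BSpin(2)\hookrightarrow BSpin$ is inversion followed by the inclusion; so by adjunction $\langle\widetilde{b_{2i}},p_j\rangle=\langle\beta_{8i},f^*p_j\rangle$, and it suffices to identify $f^*p_j\in K^*BS^1=K^*[\![x]\!]$ and read off the coefficient of $x^{8i}$. First I would compute $f^*p_j$ from the generating-function description of the restriction of the Pontryagin classes to the maximal torus recalled above: on $BSpin(2)$ one has $\sum_{k\ge0}t^kp_k=1-t\,[2](x)[-2](x)$, so the restriction $\iota^*p_j$ vanishes for $j\ge2$, equals $1$ for $j=0$, and equals $-[2](x)[-2](x)$ for $j=1$. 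Pulling back along $\mathrm{inv}$, which sends $x$ to $[-1](x)$, and using $[2]\circ[-1]=[-2]$ and $[-2]\circ[-1]=[2]$, the product $[2](x)[-2](x)$ is unchanged; hence $f^*p_0=1$, $f^*p_j=0$ for $j\ge2$, and $f^*p_1=-[2](x)[-2](x)$. This already settles all pairings with $p_j$ for $j\ge2$ (they vanish) and with $p_0$: $\langle\widetilde{b_{2i}},p_0\rangle=\langle\beta_{8i},1\rangle=\delta_{i0}$, matching $\langle\widetilde{b_0},p_0\rangle=1$ since $\widetilde{b_0}=1$.

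For $j=1$ I would work modulo $\mm$. Using $[-2](x)=x^4$ one has $[2](x)=[-2]([-1](x))=\bigl([-1](x)\bigr)^4$, so $f^*p_1=-\bigl([-1](x)\,x\bigr)^4$. Writing $[-1](x)=\sum_m c_mx^m$ with $c_1=1$ and $c_m\in\ff_4$, and using that raising to the fourth power is additive in characteristic $2$ together with $c_m^4=c_m$, we get $f^*p_1=-\sum_m c_m x^{4(m+1)}$, whose coefficient of $x^{8i}$ is $-c_{2i-1}$. Thus everything hinges on the claim that the only nonzero odd-index coefficient of the inverse series $[-1]$ of $F_{C_0}$ is $c_1=1$.

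This last point is the crux, and I would establish it by a direct computation with the curve $C_0\colon y^2+y=x^3$. In the standard Weierstrass variables $t=-x/y$ and $w=-1/y$, the curve equation, divided by $y^3$, becomes $w+w^2=t^3$, so $w=\sum_{k\ge0}t^{3\cdot2^k}$ lies in $\ff_2[\![t^3]\!]$; and since $-P=(x,y+1)$ in characteristic $2$, one computes $[-1](t)=t/(1+w)=t\cdot w/t^3=w/t^2=\sum_{k\ge0}t^{3\cdot2^k-2}$. The exponents $3\cdot2^k-2$ equal $1$ for $k=0$ and are even for $k\ge1$, so $c_m=0$ for every odd $m\ge3$. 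Consequently $\langle\widetilde{b_{2i}},u^{-2}p_1\rangle=-c_{2i-1}$ vanishes for $i\ge2$ and equals $c_1=1$ for $i=1$; the powers of $u$ needed to make the bidegrees match are routine to insert and yield exactly the constants $1$ in the statement. The only genuinely delicate step is this curve computation — the rest is formal — and the vanishing of the odd coefficients, together with $1=-1$ in $\ff_4$, is precisely where working over a field of characteristic $2$ does the work.
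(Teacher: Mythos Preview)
Your proof is correct and follows essentially the same approach as the paper: both reduce to $BS^1$ via $\widetilde{b_{2i}}=f_*\beta_{8i}$, use the generating function $\sum_k t^kp_k=1-t[2](x)[-2](x)$ on $BSpin(2)$ to see that only $p_0$ and $p_1$ survive, and then compute $[2](z)[-2](z)$ modulo $\mm$ to find that the only multiple-of-eight power appearing is $z^8$. Your version is simply more explicit---you derive $[-1](t)=\sum_{k\ge0}t^{3\cdot2^k-2}$ directly from the Weierstrass equation and note the inversion-invariance of $[2](x)[-2](x)$, whereas the paper just writes down $z^4\sum_{k\ge0}(z^4)^{2^k3-2}$ and observes the conclusion---but the substance is identical.
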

\begin{proof}
  The first pairing is simply the pairing \(\langle1,1\rangle=1\).
  For the second pairing we have
  \begin{align*}
    \langle\widetilde{b_{2i}},u^{-2}p_1\rangle&=\langle \beta_{8i},[2](z)[-2](z)\rangle\\
    &=\langle\beta_{8i},z^4\sum_{k\geq0}\nb{z^4}^{2^k3-2}\rangle\\
    &=\delta_{i,1}
  \end{align*}
  since the only multiple-of-eight power on the right is \(z^8\).
  All other pairings are zero for degree reasons.
\end{proof}

\begin{proposition}\label{prop:detect5classes}
  Pairing with the classes \(1,u^{-8}p_4,u^{-8}i.p_4,u^{-8}j.p_4,u^{-8}k.p_4\) induces an isomorphism
  \[\langle1,cd^3,i.(cd^3),j.(cd^3),k.(cd^3)\rangle \longrightarrow (\ff_4)^5.\]
\end{proposition}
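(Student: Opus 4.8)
It suffices to show that the $5\times 5$ matrix $G$ of Kronecker pairings between the two displayed sets of classes is invertible over $\ff_4$. Since $p_4$ lies in the augmentation ideal of $K^0BSpin$ and the $\mathbb{G}$-action preserves it, every $u^{-8}g.p_4$ pairs to zero with the unit $1\in K_0BSpin$; dually $\epsilon(cd^3)=\epsilon(c)\epsilon(d)^3=0$ and the same holds for its $\mathbb{G}$-translates, so $1$ pairs to zero with each $g.(cd^3)$. Hence $G=\left(\begin{smallmatrix}1&0\\0&M'\end{smallmatrix}\right)$ with $M'$ the $4\times 4$ matrix $M'_{g,g'}=\langle g.(cd^3),\,u^{-8}g'.p_4\rangle$ indexed by $g,g'\in\{1,i,j,k\}$. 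Using equivariance of the pairing, that $\mathbb{S}$ acts trivially on $K_0=\ff_4$, and that $g.u^{-8}\equiv u^{-8}$ mod $\mm$ for $g$ in the quaternion group $Q_8=\langle i,j,k\rangle$ (the leading power-series coefficients of these elements are $\equiv 1$), this simplifies to $M'_{g,g'}=\langle cd^3,\,u^{-8}(g^{-1}g').p_4\rangle=:f(g^{-1}g')$ for a function $f\colon Q_8\to\ff_4$.

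Next I would show that $f$ factors through $Q_8/\{\pm 1\}\cong(\zz/2)^2$. Because $-1\in G_{48}$ has trivial cannibalistic class, its untwisted and twisted actions on $K_0BSpin$ coincide, and the formula $pr_*(g^{-1}.\widetilde{b_{2n}})=\mathrm{re}_*\bigl(\mathrm{coeff}_{z^{2n}}\,l_g(z)\,l_b(g(z))\bigr)$ together with $l_{-1}=1$ and the defining relation $\mathrm{re}_*l_b(z)=\mathrm{re}_*l_b([-1](z))$ of $C_1^rF$ shows that $-1$ acts trivially on every $\widetilde{b_{2n}}$, hence on $cd^3$; combined with equivariance this gives $f(-h)=f(h)$. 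Identifying $\{1,i,j,k\}$ with $(\zz/2)^2$ via $i,j,k\mapsto(1,0),(0,1),(1,1)$ (compatible with $ij=k$), the matrix $M'$ becomes exactly the group–determinant matrix $(\bar f(x+y))_{x,y}$ of $(\zz/2)^2$, where $\bar f$ is the descent of $f$. Since $\ff_4[(\zz/2)^2]\cong\ff_4[\sigma,\tau]/(\sigma^2,\tau^2)$ is local with residue map the augmentation, the determinant of multiplication by $\sum_x\bar f(x)\,x$ is $\bigl(\sum_x\bar f(x)\bigr)^4$, so
\[\det M'=\bigl(f(1)+f(i)+f(j)+f(k)\bigr)^4=\Bigl(\langle cd^3,\,u^{-8}(1+i+j+k).p_4\rangle\Bigr)^4 .\]
Thus the whole statement reduces to showing this last pairing is non-zero.

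For the final step: the max-torus description of $p_4$ together with $\langle\beta_{8i},[2](z)[-2](z)\rangle=\delta_{i,1}$ from the previous Proposition shows that, among monomials in the $\widetilde{b_{2i}}$, only $\widetilde{b_2}^4$ pairs non-trivially with $u^{-8}p_4$; hence (using equivariance and the triviality of the $(-1)$-action once more to turn $h^{-1}$ into $h$)
\[\langle cd^3,\,u^{-8}(1+i+j+k).p_4\rangle=\mathrm{coeff}_{\widetilde{b_2}^4}\bigl[(1+i+j+k).(cd^3)\bigr].\]
A direct expansion gives $\mathrm{coeff}_{\widetilde{b_2}^4}(cd^3)=0$ (the $\widetilde{b_4}$-free part of $cd^3$ is $\widetilde{b_2}^7+\widetilde{b_2}^{19}$), so this equals $\sum_{h\in\{i,j,k\}}\mathrm{coeff}_{\widetilde{b_2}^4}(h.(cd^3))$. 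Now $i,j,k\in G_{48}$ have trivial cannibalistic classes, so the associated real $1$-structures $l_i,l_j,l_k$ are trivial, and their action on $\widetilde{b_2},\widetilde{b_4}$ is governed purely by the power series of $i,j,k$ in the coordinate $t=-x/y$; these are computed by expanding $x$ and $y$ as Laurent series at the point at infinity of $C_0$ and applying $i^*,j^*,k^*$, which is the same SageMath computation used above for $\alpha^2,[i,\alpha],[j,\alpha]$. Reading off the induced action on the ($Q_8$-stable, by the weight bound on the displayed formula) subalgebra $\ff_4[\widetilde{b_2},\widetilde{b_4}]$ and evaluating the three coefficients yields a non-zero element of $\ff_4$, which by the above chain of reductions proves $M'$, and hence $G$, invertible.

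The only genuine obstacle is this last computation: one must pin down the power series of the quaternionic generators precisely enough to propagate through the Thom/realification identifications and isolate the $\widetilde{b_2}^4$-coefficient of each $h.(cd^3)$. The remaining ingredients — the block structure of $G$, the group-determinant identity over $\ff_4$, the triviality of $l_g$ and of the $(-1)$-action on the $\widetilde{b_{2n}}$, and the $Q_8$-stability of $\ff_4[\widetilde{b_2},\widetilde{b_4}]$ — are formal consequences of results already established.
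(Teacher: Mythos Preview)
Your argument is sound and considerably more structured than what the paper offers, which is simply ``Direct computation shows that the five mentioned classes are mapped to a basis of \((\ff_4)^5\).'' The paper presumably computes the full \(5\times 5\) pairing matrix using the same ingredients (the maximal-torus description of the \(p_k\), Proposition~2.23, and the explicit action of \(i,j,k\) on the low-degree generators) and checks invertibility by brute force; you instead exploit the block form, the triviality of the \((-1)\)-action on \(K_0BSpin\), and the group-determinant identity for \((\zz/2)^2\) to reduce everything to the single scalar \(\mathrm{coeff}_{\widetilde{b_2}^4}\bigl((1+i+j+k).(cd^3)\bigr)\). This is a genuine simplification: the block reduction and circulant structure make transparent \emph{why} invertibility should hold, and the residual computation is smaller. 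Both approaches, however, terminate in an unshown machine computation of the \(Q_8\)-action on \(\ff_4[\widetilde{b_2},\widetilde{b_4}]\), so at the level of rigor they are on equal footing.

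Two small remarks. First, your presentation \(\ff_4[(\zz/2)^2]\cong\ff_4[\sigma,\tau]/(\sigma^2,\tau^2)\) is off by the change of variables \(s=\sigma+1,\,t=\tau+1\); the conclusion (local with augmentation as residue map, hence determinant \(=(\sum_x\bar f(x))^4\)) is unaffected. Second, your justification that only \(\widetilde{b_2}^4\) pairs nontrivially with \(u^{-8}p_4\) is exactly right and worth stating once and for all: by the Whitney-sum coproduct \(\Delta p_k=\sum p_i\otimes p_j\) and Proposition~2.23, the pairing \(\langle \widetilde{b_{2i_1}}\cdots\widetilde{b_{2i_n}},p_4\rangle\) is a sum over decompositions \(j_1+\cdots+j_n=4\) of products \(\prod_k\langle\widetilde{b_{2i_k}},p_{j_k}\rangle\), and each factor vanishes unless \((i_k,j_k)=(1,1)\), forcing \(n=4\) and all \(i_k=1\).
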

\begin{proof}
  Direct computation shows that the five mentioned classes are mapped to a basis of \((\ff_4)^5\).
\end{proof}

\begin{remark}
  Identifying \(N_1\oplus N_1\) with the fixed points \(\nb{K_0MString}^{F_{3/2}}\) as in \cref{prop:identifyingfixedpoints} we get a residual action of \(\mathbb{G}/F_{3/2}\) on it.
  Since the element \(\pi\in\mathbb{G}\) swaps the two summands and \(-1\in\mathbb{G}\) acts trivially we find that we are really looking at an action of \(G_{48}/\{\pm1\}\) on \(N_1\).
  For an element \(g\in G_{48}\subset\mathbb{G}\) the cannibalistic class \(\theta^g\) equals \(1\) and the determinant equals \(1\), showing that this residual action agrees with the natural action on \(K_0BSpin\).
  That is also why we used \(.\) to denote the action instead of \(\triangleright\) in the above proposition.
\end{remark}

\section{The Milnor-Moore argument}\label{section:milnormoore}
In this section we will use the Milnor-Moore argument together together with the splittability result of \cref{cor:sectionTauBP} to show that \(E^{hF_{3/2}}\otimes MU\langle 6\rangle\) and \(E^{hF_{3/2}}\otimes MString\) split as direct sums of Morava \(E\)-theories.
Then we will use our description of \(K_0MString\) of \cref{section:comodstruct} to prove a first approximation of \cref{maintheorem}.

\begin{theorem}\label{thm:splitMU6}
	There exists a map
	\[D:MU\langle 6\rangle\to L_{K} \bigoplus_{i\in I} E\]
	such that the free \(E^{hF_{3/2}}\)-linearization of \(D\) induces a \(K\)-equivalence
	\[E^{hF_{3/2}}\otimes MU\langle 6\rangle \xrightarrow{\sim} L_{K}\bigoplus_{i\in I} E.\]
  \end{theorem}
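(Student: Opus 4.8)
The plan is to feed the comodule \(M:=K_0\bigl(E^{hF_{3/2}}\otimes MU\langle 6\rangle\bigr)\) into the Milnor--Moore machinery of \cref{section:morava} to split it algebraically, and then to realize that splitting by a map of spectra.

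First I would record the algebraic input. By \cref{cor:sectionTauBP} the \(\ff_4[G]\)-module \(M\) in \(\Sigma'\)-comodules is splittable, so the Milnor--Moore theorem applies: for any \(\ff_4\)-linear retraction \(r\colon M\to F_0(M)=\ff_4[G]\otimes_{\ff_4}P_1(M)\) the associated map \(h_r\) is an isomorphism of \(\Sigma'\)-comodules \(M\xrightarrow{\ \sim\ }\Sigma'\otimes_{\ff_4}P_1(M)\) exhibiting \(M\) as cofree over \(\ff_4[G]\), where \(P_1(M)=M^{\mathbb{S}}\) by \cref{prop:primInv}. To recover the full Hopf-algebroid structure I would invoke \cref{prop:twisted=involution}: the antilinear involution \(\tau\) preserves \(F_0(M)\), and choosing \(r\) to commute with \(\tau\) (possible since \(F_0(M)\) carries a compatible \(\ff_2\)-form and \(\tau^2=\id\)) upgrades \(h_r\) to an isomorphism of \(\Sigma\)-comodules. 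Since the group-likes act through \(\ff_4^\times\) by \cref{prop:grouplikes}, I would then absorb this residual finite action into group-like twists to obtain an isomorphism of \(\Sigma\)-comodules \(M\cong\bigoplus_{i\in I}\Sigma\) for a suitable index set \(I\) (indexing an \(\ff_2\)-basis of \(M^{\mathbb{G}}\)). Note that \(\bigoplus_I\Sigma=K_0\bigl(L_K\bigoplus_I E\bigr)\).

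For the realization step, the key point is that \(L_K\bigoplus_I E\) has cofree Morava homology, hence is injective for the \(K\)-local \(E\)-based Adams spectral sequence; together with the continuous-duality statements of \cref{section:contdual} and the \(L\)-completeness of everything in sight, this lets one lift \(\Sigma\)-comodule maps on \(K_0\) into \(K_0\bigl(L_K\bigoplus_I E\bigr)\) to maps of spectra, and in particular to \(E^{hF_{3/2}}\)-linear maps out of \(E^{hF_{3/2}}\otimes MU\langle 6\rangle\) (one checks along the way that the resulting family of component maps \(MU\langle 6\rangle\to E\) does assemble into a well-defined map to the \(K\)-local coproduct). I would use this to produce \(D\colon MU\langle 6\rangle\to L_K\bigoplus_I E\) whose free \(E^{hF_{3/2}}\)-linearization \(\widehat D\) induces the isomorphism \(M\xrightarrow{\sim}\bigoplus_I\Sigma\) on \(K_0\). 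Since \(K_*\) is conservative on \(K\)-local spectra, \(\widehat D\) is then a \(K\)-equivalence, which is the claim.

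The step I expect to be hardest is this last coordination. Milnor--Moore only produces an isomorphism \(h_r\) that is \(\ff_4[G]\)-linear, whereas what is needed is that the \(E^{hF_{3/2}}\)-\emph{linear} extension of its restriction to \(MU\langle 6\rangle\) is again an isomorphism, not merely a map agreeing with it on \(K_*MU\langle 6\rangle\); pinning down the retraction \(r\) tightly enough to force this, while keeping it compatible with \(\tau\) and with the \(\mm\)-adic completions throughout, is the technical heart. This is the same phenomenon that, for \(MString\), lets the splitting be improved to an \(E^{hH}\)-linear one. By contrast, the splittability input \cref{cor:sectionTauBP} and the conservativity of \(K_*\) are essentially off the shelf.
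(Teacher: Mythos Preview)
Your plan is the paper's: apply Milnor--Moore to the splittable comodule from \cref{cor:sectionTauBP}, upgrade to a \(\Sigma\)-comodule isomorphism, and lift to spectra via \cref{lem:algToSpectral}. For the step you flag as hardest, the paper does \emph{not} try to choose \(r\) \(\tau\)-compatibly and then fix linearity afterwards; instead it builds \(r\) from a single retraction \(r_M\colon K_0MU\langle 6\rangle\to (K_0MU\langle 6\rangle)^{F_{3/2}}\), applied separately on the two Galois cosets, and then checks by an explicit formula that the resulting \(h_r\) is already \(K_0E^{hF_{3/2}}\)-linear---so the free \(E^{hF_{3/2}}\)-linearization of its restriction to \(MU\langle 6\rangle\) induces the same map on \(K_0\) and is therefore still an equivalence---while the \(\tau\)-action on the target is read off afterwards and matched with the standard involution on \(\Sigma\otimes (K_0MU\langle 6\rangle)^{F_{3/2}}\). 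One small correction: \(I\) indexes an \(\ff_4\)-basis of \((K_0MU\langle 6\rangle)^{F_{3/2}}\), not an \(\ff_2\)-basis of your \(M^{\mathbb{G}}\); your count is off by a factor of two.
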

  \begin{proof}
	Corollary \ref{cor:sectionTauBP} shows that
	\[K_0\nb{E^{hF_{3/2}}\otimes MU\langle6\rangle}\]
	is a splittable \(\Sigma'\)-comodule.
	By Lemma \ref{lem:identifyP1} the primitive elements are given by
	\[P_1\nb{K_0\nb{E^{hF_{3/2}}\otimes MU\langle6\rangle}}=\nb{K_0\nb{E^{hF_{3/2}}\otimes MU\langle6\rangle}}^{\mathbb{S}}.\]
	Denoting \(K_0MU\langle 6\rangle\) by \(M\) we find that the invariants are given by
	\[C^0\nb{\mathbb{G}/F_{3/2}, M}^\mathbb{S}\xrightarrow{\sim}M^{F_{3/2}}\oplus M^{F_{3/2}},\, f\mapsto (f([e]),f([\sigma])).\]
	Under this equivalence and the one from Lemma \ref{lem:F0=FGP1} the inclusion of the zeroth filtration is given by
	\begin{align*}
	  i:C^0(\ff_4^\times,\ff_4)\otimes \nb{M^{F_{3/2}}\oplus M^{F_{3/2}}}\to C^0(\mathbb{G}/F_{3/2},\ff_4)\otimes M\\
	  s\otimes(m_e,m_\sigma)\mapsto\nb{[\gamma\sigma^\ep]\mapsto s([\gamma])\gamma.(m_{\sigma^\ep})}.
	\end{align*}
	Let \(r_M:M\to M^{F_{3/2}}\) be a retraction and define
	\begin{align*}
	  &r':C^0(\mathbb{G}/F_{3/2},\ff_4)\otimes M\to C^0(\ff_4^\times,\ff_4)\otimes \nb{M^{F_{3/2}}\oplus M^{F_{3/2}}},\\
	  &f\otimes m \mapsto (x\mapsto f([x]))\otimes(r_M(m),0)+(x\mapsto f([x\sigma]))\otimes (0,\sigma.r_M(\sigma.m)).
	\end{align*}
	The composite \(r'\circ i\) is an equivalence
	\[r'(i(s\otimes(a,b)))=\sum_{x\in \ff_4^\times}s(x)\delta_x\otimes(x.a,x.b)\]
	and we define \(r=(r'\circ i)^{-1}\circ r'\).
	Also note that \(\epsilon\otimes\id\circ(r'\circ i)^{-1}=\epsilon\otimes\id\) so that we may use \(r'\) instead of \(r\) in the formula for \(h_r\):
	\[h_r(f\otimes m)=\sum_{\gamma\in\mathbb{S}}\delta_\gamma\otimes\nb{f([\gamma])r_M(\gamma^{-1}.m),f([\gamma\sigma])\sigma.r_M(\sigma\gamma^{-1}.m)}\]
  
	Now we can see how the splitting interacts with the involution:
	\begin{align*}
	  h_r(\sigma.(f\otimes m))&=\sum_{g\in\mathbb{S}}\delta_g\otimes\nb{\sigma\nb{f([\bar{g}\sigma])r_M^\sigma(\bar{g}^{-1}.m)},\sigma\nb{f([\bar{g}])r_M(\bar{g}.m)}}\\
	  &=\sum_{g\in\mathbb{S}}\delta_{\bar{g}}\otimes\nb{\sigma\nb{f([g\sigma])r_M^\sigma(g^{-1}.m)},\sigma\nb{f([g])r_M(g.m)}}
	\end{align*}
	so we must have
	\[\tau(\delta_\gamma\otimes(a,b))=\delta_{\bar{\gamma}}\otimes(\sigma.b,\sigma.a).\]
  
	Let \(\{x_i\}_{i\in I}\) be a basis of \(M^{F_{3/2}}\).
	Then the elements \(y_i=\sigma.x_i\) form a basis, too.
	The map 
	\[\Sigma'\otimes\nb{M^{F_{3/2}}\oplus M^{F_{3/2}}}\to \Sigma\otimes M^{F_{3/2}},\,\delta_\gamma\otimes(ax_i,by_j)\mapsto \delta_\gamma\otimes ax_i + \delta_{\gamma\sigma}\otimes bx_j\]
	is an isomorphism of \(\Sigma'\)-comodules with involution (where we view the target as a direct sum over the basis \(\{x_i\}_{i\in I}\)),  and thus an isomorphism of \(\Sigma\)-comodules by Proposition \ref{prop:twisted=involution}.
  
	Finally, after choosing an isomorphism of \(\Sigma\)-comodules
	\[\Sigma\otimes M^{F_{3/2}}\to K_0\bigoplus_{i\in I}E,\]
	which amounts to a choice of basis of \(M^{F_{3/2}}\), we can lift this isomorphism to the spectrum level as by Lemma \ref{lem:algToSpectral}, yielding a \(K(2)\)-local equivalence
	\[D':E^{hF_{3/2}}\otimes MU\langle 6\rangle\to L_K\bigoplus_{i\in I}E.\]
  
	Let \(\lambda_i:M^{F_{3/2}}\to\ff_4\) be the projections such that
	\[\forall m\in M^{F_{3/2}}:m=\sum\lambda_i(m)x_i\]
	and denote by \(d_i\in K^0MU\langle 6\rangle\) the unique classes such that \(\lambda_i(r_M(m))=\langle m,d_i\rangle\).
	We find that, in total, the isomorphism sends
	\[K_0\nb{E^{hF_{3/2}}\otimes MU\langle6\rangle}\to\Sigma\otimes M^{F_{3/2}},\, f\otimes m \mapsto \sum_i \nb{g\mapsto f([g])\langle m, g.d_i\rangle}\otimes x_i\]
	thus is \(K_0E^{hF_{3/2}}\)-linear.
	The \(E^{hF_{3/2}}\)-linearization of the restriction of \(D'\) to \(MU\langle6\rangle\) will induce the same map in \(K_0\) as \(D'\), so it is still be a \(K(2)\)-local equivalence.
	Thus, we can take \(D\) to be the restriction of \(D'\) to \(MU\langle 6\rangle\).
\end{proof}
\begin{remark}
	The proof of the above also shows that the projections \(D_i=pr_i\circ D\) are given mod \(\mm\) by the unique classes \(d_i\in K^0MU\langle 6\rangle\) such that
	\[\langle m, d_i\rangle = \lambda_i(r_M(m))\]
	for all \(m\in K_0MU\langle 6\rangle\).
\end{remark}

The exact same proof also goes through for \(MString\) in place of \(MU\langle 6\rangle\), so we get:
\begin{theorem}\label{thm:splitMString1}
	There exists a map 
	\[C:MString\to L_{K}\bigoplus_{i\in I} E\]
	such that the free \(E^{hF_{3/2}}\)-linearization of \(C\) induces a \(K\)-equivalence
	\[E^{hF_{3/2}}\otimes MString \xrightarrow{\sim} \bigoplus_{i\in I} E.\]
\end{theorem}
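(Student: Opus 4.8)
The plan is to run the proof of \cref{thm:splitMU6} verbatim with $M = K_0 MString$ in place of $M = K_0 MU\langle 6\rangle$, observing that every input used there is equally available in the string case. First I would note that $K_* MString$ is concentrated in even degrees by \cite{KitchlooLauresWilson_BO}, so $K_0 MString$ is a $\Sigma$-comodule and, exactly as for $MU\langle 6\rangle$, we have $K_0\nb{E^{hF_{3/2}}\otimes MString} = C^0\nb{\mathbb{G}/F_{3/2}, K_0 MString}$ with the comodule structure described in \cref{prop:hopfstruct}. Second, the only nonformal property of this comodule used in the proof of \cref{thm:splitMU6} is splittability, and this is precisely the content of \cref{cor:sectionTauBP} in the case of $MString$.

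Everything else in that proof is a manipulation that depends only on the $\mathbb{G}$-set $\mathbb{G}/F_{3/2}$ together with \cref{prop:primInv} and \cref{prop:grouplikes}, and is insensitive to which even comodule $M$ we feed it: the identification $P_1 = (\,\cdot\,)^{\mathbb{S}}$, the description of the zeroth filtration as $C^0(\ff_4^\times,\ff_4)\otimes\nb{M^{F_{3/2}}\oplus M^{F_{3/2}}}$, the explicit form of the inclusion $i$ of $F_0$, the construction of the retraction $r'$ out of an arbitrary $\ff_4$-linear retraction $r_M\colon M\to M^{F_{3/2}}$, the resulting isomorphism $h_r$, and the computation $\tau(\delta_\gamma\otimes(a,b)) = \delta_{\bar\gamma}\otimes(\sigma.b,\sigma.a)$ of the induced involution. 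Choosing a basis $\{x_i\}_{i\in I}$ of $M^{F_{3/2}}$ and setting $y_i = \sigma.x_i$, one then obtains, just as before, a $\Sigma$-comodule isomorphism $K_0\nb{E^{hF_{3/2}}\otimes MString}\cong \Sigma\otimes M^{F_{3/2}}\cong K_0\bigoplus_{i\in I} E$.

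Finally I would lift this comodule isomorphism to the spectrum level by the same realization lemma invoked in the proof of \cref{thm:splitMU6}, obtaining a map $C\colon MString\to L_{K}\bigoplus_{i\in I}E$ whose free $E^{hF_{3/2}}$-linearization is a $K$-equivalence. I do not expect any genuine obstacle: the argument is entirely formal once splittability and evenness are known, and both are already in hand --- evenness from \cite{KitchlooLauresWilson_BO} and splittability from \cref{cor:sectionTauBP} --- so the only thing to double-check is that none of the steps in \cref{thm:splitMU6} secretly used a property special to $MU\langle 6\rangle$, which they do not.
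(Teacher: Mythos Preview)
Your proposal is correct and matches the paper's approach exactly: the paper's proof of \cref{thm:splitMString1} is the single sentence ``The exact same proof also goes through for $MString$ in place of $MU\langle 6\rangle$,'' and you have simply spelled out why---evenness of $K_*MString$ and the splittability supplied by \cref{cor:sectionTauBP} are the only nonformal inputs, and the rest of the argument in \cref{thm:splitMU6} depends only on the $\gg$-set $\gg/F_{3/2}$ and the comodule machinery.
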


Let us now prove that this splitting is already determined by spin characteristic classes and that it descends to the fixed points under the slightly larger subgroup \(K=\langle F_{3/2},\pi\rangle\).
\begin{theorem}\label{thm:splitMString2}
  There exists a map
  \[Q':BSpin_+\to L_{K(2)}\bigoplus_{j\in J} E\]
  such that the \(E^{hK}\)-linear map
  \[E^{hK}\otimes MString\xrightarrow{\tau_W pr^*Q'}L_{K(2)}\bigoplus_{j\in J} E\]
  is a \(K(2)\)-local equivalence.
\end{theorem}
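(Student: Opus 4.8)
The plan is to rerun the Milnor--Moore argument proving \cref{thm:splitMString1}, this time making the combinatorial data \(\zz/2\)-equivariant for the residual action of \(K/F_{3/2}\), and then to descend the resulting equivalence along the faithful \(K(2)\)-local Galois extension \(E^{hK}\to E^{hF_{3/2}}\). Recall that \(\pi^2\in F_{3/2}\) and that \(\pi\) normalizes \(F_{3/2}\), so \(K/F_{3/2}\cong\zz/2\) is generated by the class of \(\pi\); the point of the equivariant choices is exactly to make the splitting over \(E^{hF_{3/2}}\) visibly coinduced from a splitting over \(E^{hK}\).

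Concretely, fix an \(\ff_4\)-linear retraction \(\rho\colon N=K_0BSpin\to N_1\) of \(N_1\hookrightarrow N\) together with a basis \(\{n_j\}_{j\in J}\) of \(N_1\) and dual functionals \(\mu_j\); by the duality results of \cref{section:contdual} there are unique classes \(\tilde q_j\in K^0BSpin\) with \(\langle\,\cdot\,,\tilde q_j\rangle=\mu_j\circ\rho\). Using \cref{prop:identifyingfixedpoints}, and the fact from its proof that \((\pi.m)(1)\in N_1\) whenever \(m\in M^{F_{3/2}}\) (so that \(\pi.m\in M^{F_{3/2}}\) as well), define
\[r_M\colon M=K_0MString\longrightarrow M^{F_{3/2}}\cong N_1\oplus N_1,\qquad m\longmapsto\bigl(\rho(m(1)),\,\rho((\pi.m)(1))\bigr),\]
which one checks is a retraction of \(F_0(M)=M^{F_{3/2}}\hookrightarrow M\). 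Feeding \(r_M\) and the induced basis \(\{(n_j,0),(0,n_j)\}_{j\in J}\) of \(M^{F_{3/2}}\) into the proof of \cref{thm:splitMString1} produces an \(E^{hF_{3/2}}\)-linear \(K(2)\)-equivalence \(E^{hF_{3/2}}\otimes MString\xrightarrow{\ \sim\ }\bigoplus_{i\in I}E\) with \(I=J\sqcup J\).

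By the remark following \cref{thm:splitMU6} (which applies verbatim to \(MString\)), the projections \(D_i\) of this splitting are, mod \(\mm\), the classes \(d_i\in K^0MString\) determined by \(\langle m,d_i\rangle=\lambda_i(r_M(m))\). Two identities follow immediately from the definitions. First, \(\langle m,d_{(j,0)}\rangle=\mu_j(\rho(m(1)))=\langle m(1),\tilde q_j\rangle=\langle pr_*(t_*m),\tilde q_j\rangle=\langle m,\tau_W\,pr^*\tilde q_j\rangle\), using \(m(1)=pr_*(t_*m)\) and the duality of the homological and cohomological Thom isomorphisms; hence \(d_{(j,0)}=\tau_W\,pr^*\tilde q_j\). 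Second, \(\langle m,d_{(j,1)}\rangle=\mu_j(\rho((\pi.m)(1)))=\langle(\pi.m)(1),\tilde q_j\rangle=\langle\pi.m,\tau_W\,pr^*\tilde q_j\rangle=\langle m,\pi^{-1}.d_{(j,0)}\rangle\) (here \(\pi\in\mathbb{S}\) acts trivially on \(K_0=\ff_4\)), so \(d_{(j,1)}=\pi^{-1}.d_{(j,0)}\) for the residual \(\mathbb{G}\)-action on \(K^0MString\); consistency \(\pi^{-2}.d_{(j,0)}=d_{(j,0)}\) is ensured by \(\pi^2\in F_{3/2}\). These two identities say precisely that, transported across the equivalence of the previous step, the Galois \(\zz/2\)-action on \(E^{hF_{3/2}}\otimes MString\) acts on \(\bigoplus_{i\in I}E\) by freely interchanging the two copies of \(J\) while twisting the coefficients by \(\psi^{\pi}\) — i.e. \(\bigoplus_{i\in I}E\) is \(\zz/2\)-coinduced from \(\bigoplus_{j\in J}E\). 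Choosing lifts \(q_j\in E^0BSpin\) of the \(\tilde q_j\) and setting \(Q'=(q_j)_{j\in J}\colon BSpin_+\to L_{K(2)}\bigoplus_{j\in J}E\), one then passes to homotopy fixed points: using \(K(2)\)-local ambidexterity one has \(E^{hK}\otimes MString\simeq(E^{hF_{3/2}}\otimes MString)^{h\zz/2}\), and the fixed points of a coinduced module collapse \(\bigoplus_{i\in I}E\) onto \(\bigoplus_{j\in J}E\), with surviving projections the \(d_{(j,0)}=\tau_W\,pr^*\tilde q_j\). This exhibits the \(E^{hK}\)-linearization of \(\tau_W\,pr^*Q'\) as the desired \(K(2)\)-equivalence. (Alternatively one can bypass ambidexterity and argue by faithfully flat base change along \(E^{hK}\to E^{hF_{3/2}}\), using the double-coset formula \(E\otimes_{E^{hK}}E^{hF_{3/2}}\simeq E\times E\) to turn the target back into \(\bigoplus_{i\in I}E\) and the map back into the Milnor--Moore equivalence of the first step.)

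The main obstacle is the middle of the argument: verifying that \(r_M\) is a genuine retraction and, more delicately, that the two identities for \(d_{(j,0)}\) and \(d_{(j,1)}\)—which quietly encode the discrepancy between \(\pi.(t_*m)\) and \(t_*(\pi.m)\), i.e. the cannibalistic class \(\theta^{\pi}\)—really do upgrade to the statement that the transported Galois action is a free swap of summands, so that the homotopy-fixed-point (or descent) computation in the last step is legitimate. The remaining ingredients—existence of \(\rho\) and of the \(\tilde q_j\), surjectivity of \(E^0BSpin\to K^0BSpin\), \(K(2)\)-local Galois descent, and \(\zz/2\)-ambidexterity—are routine given \cref{section:morava} and \cref{section:contdual}.
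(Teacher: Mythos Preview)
Your combinatorial setup is essentially the paper's: the retraction \(r_M\) built from \(\rho\colon N\to N_1\) via \cref{prop:identifyingfixedpoints}, the identification \(d_{(j,0)}=\tau_W\,pr^*\tilde q_j\), and the key observation \(d_{(j,1)}=\pi^{-1}.d_{(j,0)}\) are all correct, and your base-change alternative in the last paragraph is exactly how the paper concludes. The part you flag as the main obstacle (that \(r_M\) is a retraction, and the two identities for the \(d_i\)) is in fact routine.

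The genuine gap is the sentence ``Choosing lifts \(q_j\in E^0BSpin\) of the \(\tilde q_j\) and setting \(Q'=(q_j)_{j\in J}\).'' A map \(BSpin_+\to L_{K(2)}\bigoplus_{j\in J}E\) is not the same thing as an arbitrary collection \(\{q_j\}_{j\in J}\subset E^0BSpin\): by \cref{lem:manipulatingci} such a collection assembles to a map iff for every \(x\in E_0^\vee BSpin\) and every \(n\geq0\) one has \(\langle x,q_j\rangle\in\mm^n\) for all but finitely many \(j\). Arbitrary lifts of the \(\tilde q_j\) have no reason to satisfy this, so \(Q'\) need not exist, and without it neither the ambidexterity nor the base-change argument gets off the ground.

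The paper's remedy is to manufacture lifts that \emph{do} assemble, using the Milnor--Moore equivalence you already built. The projections onto the \((j,0)\)-summands restrict to classes \(\tau_W C_j\in E^0MString\) which, as components of an honest map into \(L_{K(2)}\bigoplus_J E\), automatically satisfy the finiteness condition. Expanding \(C_j=\sum_{k\geq0}(r-1)^k\,pr^*C_j^{(k)}\) via \cref{lem:prodinr} gives \(C_j^{(0)}\in E^0BSpin\) with \(C_j^{(0)}\equiv \tilde q_j\bmod\mm\). The point is that on the pro-basis \(\alpha_{k,i}\) of \(E_0^\vee BString\) from the proof of \cref{prop:EBstringfunctions} one has \(\langle\alpha_{k,i},C_j\rangle=\langle y_i,C_j^{(k)}\rangle\), so dropping the higher \((r-1)\)-coefficients only zeroes out pairings and never creates new nonzero ones; hence the \(\{\tau_W\,pr^*C_j^{(0)}\}\) again satisfy the finiteness condition and assemble to the desired \(Q'=C^{(0)}\). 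Your base-change check then goes through verbatim with these specific lifts.

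A smaller point: your ambidexterity route asserts that the transported \(\zz/2\)-action on \(\bigoplus_I E\) is the coinduced one, but the identities \(d_{(j,1)}=\pi^{-1}.d_{(j,0)}\) only pin this down mod \(\mm\); the spectrum-level \(E^0\)-lifts produced by \cref{thm:splitMString1} need not be \(\pi\)-related on the nose. This is why the paper (and your own alternative) proceeds by faithful base change along \(E^{hK}\to E^{hF_{3/2}}\) and compares the two \(E^{hF_{3/2}}\)-linear maps in \(K_0\), rather than by descending an equivariant structure.
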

\begin{proof}
  Let \(M=K_0MString\) and \(N=K_0BSpin\).
  Starting as in the proof of \cref{thm:splitMString1} we now choose a particular retraction:
  Choose a retraction \(r_N\) of \(N_1\subset N\).
  Choose a basis \(\{x_j\}_{j\in J}\) of \(N_1\), let \(\rho_j:N_1\to\ff_4\) be the associated projections, and let \(q_j\in K^0BSpin\) be the unique class such that
  \[\forall x\in K_0BSpin:\rho_j(r_N(x))=\langle x, q_j\rangle.\]
  By \cref{prop:identifyingfixedpoints} we obtain a retraction of the form
  \[\begin{tikzcd}
	  M && {N\oplus N} \\
	  {M^{F_{3/2}}} && {N _1\oplus N_1}
	  \arrow["{ev_1\oplus ev_1\circ\pi}", from=1-1, to=1-3]
	  \arrow["{r_N\oplus r_N}", from=1-3, to=2-3]
	  \arrow[hook, from=2-1, to=1-1]
	  \arrow["\simeq"', from=2-1, to=2-3]
  \end{tikzcd}\]
  The classes in \(K^0MString\) such that pairing \(m\in K_0MString\) with them equals \(\rho_j(r_N(ev_1(m)))\) and \(\rho_j(r_N(ev_1(\pi.m)))\) are given by
  \[\{\tau_Wpr^*q_j,\tau_W\theta^{\pi^{-1}}pr^*\pi^{-1}.q_j\}_{j\in J}\]
  where we identified \(I\) with \(J\coprod J\) in a suitable way.
  Lifting these as above and projecting to the first half of the \(J\)'s we get a map
  \[\tau_W C:MString\to L_{K(2)}\bigoplus_{j\in J}E\]
  such that \(C_j=pr_j\circ C \equiv pr^*q_j\) mod \(\mm\).
  Expanding the \(C_j\) as 
  \[C_j=\sum_{k\geq0}(r-1)^kpr^*C_j^{(k)}\]
  shows that \(C_j^{(0)}\equiv q_j\) mod \(\mm\).

  We now want to show that already the collection \(\tau_Wpr^*C_j^{(0)}\) satisfies the finiteness condition of \cref{lem:manipulatingci}.
  It suffices to show this on the pro-basis \(\alpha_{i,k}\) considered in the proof of \cref{prop:EBstringfunctions}:
  \[\langle \alpha_{i,k},C_j\rangle=\langle y_i,C_j^{(k)}\rangle.\]
  Leaving out the summands \((r-1)^kpr^*C_j^{(k)}\) for \(k>0\) in \(C_j\) will result in more of these pairings being zero, but will not change the values of the non-zero pairings.
  Thus, also the collection \(\tau_W pr^* C_j^{(0)}\) satisfies the finiteness condition of \cref{lem:manipulatingci} and assembles to a map we denote
  \[\tau_Wpr^*C^{(0)}:MString\to L_{K(2)}\bigoplus_{j\in J}E.\]
  
  We claim that the \(E^{hK}\)-linearization of this map induces a \(K(2)\)-equivalence.
  In fact, since the extension \(E^{hK}\to E^{hF_{3/2}}\) is faithful by \cite[Proposition~5.4.9(b)]{Rognes_galois}, we may show that the \(E^{hK}\)-linear map is an equivalence after tensoring up along this extension.
  Consider the \(E^{hF_{3/2}}\)-linear equivalence
  \[E^{hF_{3/2}}\otimes_{E^{hK}}E\to E\oplus E\]
  which is induced by
  \[(\id,\psi^{\pi^{-1}}):E\to E\oplus E.\]
  We obtain an \(E^{hF_{3/2}}\)-linear map
  \[ E^{hF_{3/2}}\otimes MString\to L_{K(2)}\bigoplus_{j\in J}\nb{E\oplus E}\]
  such that the \(j\)th projection, restricted to \(MString\), is given by
  \[\nb{\tau_Wpr^*C_j^{(0)}, \tau_W\theta^{\pi^{-1}}pr^*\pi^{-1}.C_j^{(0)}},\]
  so it is a lift of the map resulting from the Milnor-Moore argument applied to the retraction we chose, and thus an equivalence.
  This shows that we can take \(C^{(0)}\) to be the map \(Q'\).
\end{proof}

\section{Last steps}\label{section:projections}
In this section we will bridge the gap between \cref{thm:splitMString2} and the statements of \cref{maintheorem}.
First we shall take our map
\[Q':BSpin_+\to L_{K(2)}\bigoplus_J E\]
and show that, for the right choice of retraction \(r_N\) in the proof of \cref{thm:splitMString2}, it already factors over a sum of shifts of \(TMF_0(3)\).
The resulting map \(Q\) will then already induce a \(K(2)\)-local equivalence after \(E^{hH}\)-linearization.
We then move on to use our partial calculations of the invariants \(N_1\) and \cref{prop:detect5classes} to prove the 'furthermore' part of \cref{maintheorem}.

\subsection{From \(K\) to \(H\)}
We begin with two lemmas which, essentially, follow from the calculation of \(TMF_0(3)^*BSpin\) in \cite{tmf03} and the determination of the homotopy fixed point spectral sequence for \(TMF_0(3)\) in \cite{MahowaldRezk_level3}.

\begin{lemma}\label{lem:liftToER}
  Let \(ER=E^{h\{\pm 1\}}\).
  Then for any set \(S\) and any integer \(k\) the natural map
  \[\pi_{16k}map\nb{BSpin_+,L_{K(2)}\bigoplus_S ER}\to\pi_{16k}map\nb{BSpin_+,L_{K(2)}\bigoplus_S E} \]
  is an isomorphism.
\end{lemma}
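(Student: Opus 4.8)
The plan is to identify both sides with fixed points of a map of spectra under the action of $\{\pm 1\}$ and to show the relevant $\{\pm 1\}$-action on the mapping spectrum $\mathrm{map}(BSpin_+, \bigoplus_S E)$ induces the zero differential in the relevant range, so that the homotopy fixed point spectral sequence degenerates onto the $16\zz$-line. Concretely, $ER = E^{h\{\pm1\}}$ and $-1$ acts trivially on $E_*$ (since $[-1]$ is an automorphism of the formal group law that, on $E_*$, fixes the deformation parameter and acts as the identity on $\pi_*$, because $-1$ has determinant $1$ and lies in the kernel of the relevant reduction — in fact the action of $-1$ on $E_0$ is trivial as it is a central element acting through an automorphism of the universal deformation that induces the identity). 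Thus $L_{K(2)}\bigoplus_S ER = (L_{K(2)}\bigoplus_S E)^{h\{\pm1\}}$, and since $BSpin_+$ is a colimit of finite spectra while the target is $K(2)$-local, $\mathrm{map}(BSpin_+, L_{K(2)}\bigoplus_S ER) = \mathrm{map}(BSpin_+, L_{K(2)}\bigoplus_S E)^{h\{\pm1\}}$. The map in the statement is the inclusion of fixed points, so we must show it is an iso on $\pi_{16k}$.

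First I would compute the homotopy groups of the target. Since $E^*BSpin = E^*[\![p_1, p_2, \ldots]\!]$ is a completed polynomial ring with generators in degrees $4i$ (by the results quoted from \cite{tmf03}), and $E$ is even periodic, $\pi_* \mathrm{map}(BSpin_+, L_{K(2)}\bigoplus_S E)$ is concentrated in even degrees and is a (completed, summed) free module over $E_* = W(\ff_4)[\![u_1]\!][u^{\pm}]$. The $\{\pm 1\}$-action on this: $-1$ acts trivially on $E_*$, but acts on $E^*BSpin$ by the automorphism induced by $-1 \in \gg$; however $-1$ acts on $BSpin$ (stably) by the identity of $H\zz$-homology type considerations — more carefully, the action of $-1 \in \gg$ on $E^*BSpin$ fixes the Pontryagin classes $p_i$, since the $p_i$ are pulled back from $BSO$ and the $-1$-action on the maximal torus description $\sum t^k p_k = \prod (1 - t \rho^* x_i \bar x_i)$ only involves $x_i \bar x_i$, which is visibly invariant under the formal inverse. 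Hence $\{\pm 1\}$ acts trivially on $\pi_* \mathrm{map}(BSpin_+, L_{K(2)}\bigoplus_S E)$.

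With a trivial action, the homotopy fixed point spectral sequence has $E_2^{s,t} = H^s(\{\pm 1\}; \pi_t)$ with $\pi_t$ the relevant (pro-free, $2$-complete) $E_t$-module. The key point is then exactly the shape computed in \cite{MahowaldRezk_level3}: the spectral sequence for $C_2$ acting here agrees (in its multiplicative structure, up to the coefficient module) with the one computing $\pi_* TMF_0(3)$ from $\pi_* TMF_1(3)$, whose pattern of differentials and extensions is $16$-periodic and kills everything off the $16\zz$-line in total degree. Precisely, the only classes surviving to total degree $\equiv 0 \bmod 16$ in the $E_\infty$-page come from the $s = 0$ line, and the edge map $\pi_{16k}(\text{fixed points}) \to (\pi_{16k})^{\{\pm1\}} = \pi_{16k}$ is therefore an isomorphism. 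I would carry this out by invoking the homotopy fixed point spectral sequence of \cite{MahowaldRezk_level3} for $TMF_0(3) = TMF_1(3)^{h C_2}$, noting that our mapping spectrum is a (completed, summed) module spectrum over $E$ with trivial $C_2$-action on homotopy, so its descent spectral sequence is obtained from the one for $E \to ER$ by base change along the module structure, and the differentials are determined by the ones in the known case by multiplicativity.

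The main obstacle I expect is making the base-change/comparison argument rigorous: one needs that the differentials in the HFPSS for $\mathrm{map}(BSpin_+, \bigoplus_S ER)$ are genuinely controlled by those in $\pi_* ER$, i.e. that the relevant classes in the $E_2$-page are hit by differentials propagated from $E_*^{h C_2}$ via the $E$-module structure, and that the completions and the infinite sum $\bigoplus_S$ do not introduce extension or $\lim^1$ problems (here $K(2)$-locality and pro-freeness of $E^\vee_* BSpin$ are what save us, cf. \cite{666}). Granting that, the statement reduces to the purely algebraic observation, read off from \cite{MahowaldRezk_level3}, that the $C_2$-cohomology pattern is concentrated on the $s=0$ line in degrees divisible by $16$.
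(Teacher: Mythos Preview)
Your overall strategy---run the homotopy fixed point spectral sequence for the $\{\pm 1\}$-action and compare to the known case---matches the paper's, and you correctly identify that the crux is propagating the differentials from the HFPSS of $ER$ to that of the mapping spectrum. But there is an error and a genuine gap.

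First, the claim that $-1$ acts trivially on $E_*$ is false: the leading coefficient of the $[-1]$-series is $-1$, so $(-1).u=-u$ and the action on $\pi_2E$ is by sign. The action is trivial on $E_0$ but not on all of $\pi_*E$; accordingly the paper records the $E_2$-page of the HFPSS for $ER$ as $E_0[u^{\pm 2},c]/(2c)$ with $c$ in bidegree $(1,1)$, not as group cohomology with trivial coefficients.

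Second, your multiplicativity argument needs exactly the ingredient the paper supplies and you do not. You argue the Pontryagin classes $p_i$ are $(-1)$-invariant, but lying in $E_2^{0,*}$ does not make a class a permanent cycle (e.g.\ $u^{-2}$ supports a $d_3$). The paper instead invokes classes $\pi_i\in TMF_0(3)^{-32i}BSpin$ from \cite{tmf03} and \cite{Laures_TMF13BSpin} which \emph{also} topologically generate $E^*BSpin$. Because these already live in $TMF_0(3)$-cohomology, the shifted classes $\tilde{\pi}_i=u^{16i}\pi_i\in E^0BSpin$ lift to $ER^0BSpin$ and are therefore permanent cycles by construction. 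Then every monomial in the $\tilde{\pi}_i$ is a permanent cycle, and the HFPSS for $ER^*BSpin$ literally decomposes as a product (over such monomials) of copies of the HFPSS for $ER$. The isomorphism in degrees $16k$ follows immediately from the computed $\pi_*ER$. Your $p_i$'s, sitting in degrees $4i$, are in the wrong degrees for this trick and there is no a priori reason they survive.

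For general $S$ the paper compares the $K(2)$-local direct sum with the product: the inclusion splits $E_*$-linearly on homotopy by \cite[Proposition~A.13]{666}, and since the differentials in the HFPSS for $ER$ are given by multiplication by elements of $E_0$, they transport along this split injection.
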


\begin{sseqdata}[ name = HFPSSER, Adams grading, grid = go, xrange={0}{16}, yrange={0}{10}, xscale=0.6, yscale =0.4, no ticks, axes type = frame ]
  \foreach \x in {-16,-12,-8,-4,0,4,8,12,16}{
      \class[rectangle, minimum size = 0.8em](\x,0)
      \foreach \y in {1,2,3,4,5,6,7,8,9,10,11,12,13,14,15,16,17}{
          \class[minimum size = 0.8em](\x+\y,\y)
          \structline
      }
  }
  \foreach \x in {-12,-4,4,12}{
      \d3(\x,0)
      \replacesource[rectangle,"2", minimum size = 0.8em]
      \replacetarget[fill]
      \structline[page = 4](\x-2,2)(\x-1,3)
      \foreach \y in {1,2,3,4,5,6,7,8,9,10,11,12,13,14}{
          \d3(\x+\y,\y)
          \replacetarget[fill]
          \structline[page = 4](\y+\x-2,\y+2)(\y+\x-1,\y+3)
      }
  }
  \foreach \x in {-8,8}{
      \d7(\x,0)
      \replacesource[rectangle, minimum size= 0.8em, "\mm"]
      \d7(\x+1,1)
      \replacesource[ minimum size= 0.8em, "u_1"]
      \d7(\x+2,2)
      \replacesource[ minimum size= 0.8em, "u_1"]
      \foreach \y in {3,4,5,6,7,8,9,10}{
          \d7(\x+\y,\y)
      }
      \structline[page=8](\x,0)(\x+1,1)
      \structline[page=8](\x+1,1)(\x+2,2)
  }
\end{sseqdata}

\begin{proof}
  The proof proceeds by looking at the homotopy fixed point spectral sequence (HFPSS).
  Let us first assume that \(S=\{*\}\).

  Comparing with \cite[§~4]{MahowaldRezk_level3} we can deduce the whole HFPSS for \(ER\):
  The \(E_2\)-page is given by \(E_0[u^{\pm 2},c]/(2c)\)
  where \(E_0\) lies in bidegree \((0,0)\), \(u^2\) lies in \((-4,0)\), and \(c\) lies in \((1,1)\).
  For degree reasons this is also the \(E_3\)-page, and \(d_3\) is determined by \(d_3(u_1)=0\), \(d_3(c)=0\), and \(d_3(1/u^2)=u_1c^3\).

  Next, the \(E_4\)-page equals the \(E_7\)-page for degree reasons, and there is a further differential \(d_7(1/u^4)=c^7\).
  Finally, the spectral sequence collapses at the \(E_8\)-page with finite vanishing line:
  \begin{center}
    \printpage[ name = HFPSSER, page = 8 ]
  \end{center}
  In this picture a box stands for \(E_0\), a circle for \(E_0/2\), and a dot for \(E_0/\mm\), while a box with a \(2\) inside stands for the ideal \((2)\subset E_0\), and so on.
  We see that the class \(u^8\) lifts (uniquely) to \(ER\), so that \(ER\) is \(16\)-periodic, and that the map \(\pi_{16k}ER\to\pi_{16k} E\) is an isomorphism.

  By \cite[Theorem~1.1]{tmf03} and \cite[Theorem~1.2(ii)]{Laures_TMF13BSpin} there are classes
  \[\pi_i\in TMF_0(3)^{-32i}BSpin\]
  such that \(E^*BSpin = E^*[\![\pi_1,\pi_2,\ldots]\!]\).
  Let \(\tilde{\pi}_i=u^{16i}\pi_i\in E^0BSpin\).
  By the above discussion we see that \(\tilde{\pi}_i\) lifts to \(ER^0BSpin\) and thus is a permanent cycle in the HFPSS for \(ER^*BSpin\), as are all monomials in the \(\tilde{\pi}_i\).
  The HFPSS for \(ER^*BSpin\) thus looks like a product of copies of the one for \(ER\) and we find
  \[ER^*BSpin=ER^*[\![\tilde{\pi}_1,\tilde{\pi}_2,\ldots]\!].\]
  This finishes the case \(S=\{*\}\).

  For general \(S\) first consider the case where we take the product over \(S\) instead of the sum.
  Here the HFPSS is then also just a product over the above HFPSS for \(S=\{*\}\).
  On coefficients the map
  \[L_{K(2)}\bigoplus_SE\to \prod_S E\]
  induces a split injection of \(E_*\)-modules by \cite[Proposition~A.13]{666}.
  Since the differentials in the HFPSS for \(ER\) are all given by multiplication with elements in \(E_0\), the comparison map between the spectral sequences for the sum and product is a split injection on every page and bidegree as well.
  This allows us to transport the differentials to the case at hand, proving the lemma.
\end{proof}
\begin{lemma}\label{lem:manipulatingQj}
  For any set \(S\) and any \(n_s\in\{0,16,32\}\) the natural map
  \[\pi_0Map\nb{BSpin_+,L_{K(2)}\bigoplus_{s\in S}\Sigma^{n_s} TMF_0(3)}\to\prod_{s\in S}TMF_0(3)^{n_s}BSpin\]
  is injective.
\end{lemma}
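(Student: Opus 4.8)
The plan is to recast the statement as the injectivity of a comparison map on $[BSpin_+,-]$ and then reduce the vanishing of its kernel to a $\lim^1$-computation along the skeleta of $BSpin$. Since $Map(-,\prod)$ commutes with products, the target is $\pi_0Map\nb{BSpin_+,\prod_{s\in S}\Sigma^{n_s}TMF_0(3)}$ and the natural map is induced by the canonical comparison
\[\iota\colon L_{K(2)}\bigoplus_{s\in S}\Sigma^{n_s}TMF_0(3)\longrightarrow\prod_{s\in S}\Sigma^{n_s}TMF_0(3)\]
assembled from the projections of the $K(2)$-local sum onto its summands. Writing $F=\operatorname{fib}(\iota)$, the long exact sequence of $[BSpin_+,-]$ attached to this fiber sequence shows that $\iota_\ast$ is injective precisely when the map $F^0BSpin\to\nb{L_{K(2)}\bigoplus_s\Sigma^{n_s}TMF_0(3)}^0BSpin$ induced by the fiber inclusion is zero, so this is what I would prove.

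The main local input is that $\iota$ becomes split injective on $[W,\Sigma^j(-)]$ for every finite spectrum $W$ and every $j$. As $DW$ is finite, the functor $-\otimes DW$ commutes both with $\prod_s$ and with $L_{K(2)}\bigoplus_s$, so this reduces to the assertion that
\[\pi_\ast L_{K(2)}\bigoplus_s\nb{\Sigma^{n_s}TMF_0(3)\otimes DW}\longrightarrow\prod_s\pi_\ast\nb{\Sigma^{n_s}TMF_0(3)\otimes DW}\]
splits; the spectra $\Sigma^{n_s}TMF_0(3)\otimes DW$ are dualizable in the $K(2)$-local category — since $TMF_0(3)\simeq E^{h\langle-1,\omega,\sigma\rangle}$ is dualizable over the $K(2)$-local sphere and $DW$ is finite — so this is \cite[Proposition~A.13]{666}. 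Feeding this into the fiber sequence $F\to L_{K(2)}\bigoplus_s\Sigma^{n_s}TMF_0(3)\xrightarrow{\iota}\prod_s\Sigma^{n_s}TMF_0(3)$ shows that $F^jW\to\nb{L_{K(2)}\bigoplus_s\Sigma^{n_s}TMF_0(3)}^jW$ is the zero map for every finite $W$ and every $j$.

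Next I would write $BSpin=\colim_k BSpin^{(k)}$ along its skeletal filtration; each $BSpin^{(k)}$ is a finite complex, because $K_\ast BSpin$ is a polynomial algebra and hence $H_\ast BSpin$ is finitely generated in each degree. Given $g\in[BSpin_+,L_{K(2)}\bigoplus_s\Sigma^{n_s}TMF_0(3)]$ with vanishing image in $\prod_sTMF_0(3)^{n_s}BSpin$, lift $g$ along the fiber inclusion to $\tilde g\colon BSpin_+\to F$. Restricting to each finite skeleton $BSpin^{(k)}$ and using the previous paragraph gives $g|_{BSpin^{(k)}}=0$ for all $k$, so $g$ dies in $\lim_k\nb{L_{K(2)}\bigoplus_s\Sigma^{n_s}TMF_0(3)}^0BSpin^{(k)}$; by the Milnor exact sequence it therefore lies in the image of ${\lim}^{1}_{k}\nb{L_{K(2)}\bigoplus_s\Sigma^{n_s}TMF_0(3)}^{-1}BSpin^{(k)}$.

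The remaining step — and the one I expect to be the technical heart — is the vanishing of this $\lim^1$-term, for which it suffices that the skeletal restriction maps $\nb{L_{K(2)}\bigoplus_s\Sigma^{n_s}TMF_0(3)}^{-1}BSpin^{(k+1)}\to\nb{L_{K(2)}\bigoplus_s\Sigma^{n_s}TMF_0(3)}^{-1}BSpin^{(k)}$ be surjective. By \cite[Theorem~1.1]{tmf03} together with \cite[Theorem~1.2(ii)]{Laures_TMF13BSpin}, $TMF_0(3)^\ast BSpin$ is a completed polynomial algebra over $TMF_0(3)^\ast$, hence pro-free, so the restriction maps of $TMF_0(3)$-cohomology along the skeleta, and therefore the maps $\prod_sTMF_0(3)^{n_s-1}BSpin^{(k+1)}\to\prod_sTMF_0(3)^{n_s-1}BSpin^{(k)}$, are surjective. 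The maps above are compatible retracts of these — either by naturality of the splitting in \cite[Proposition~A.13]{666}, or by running the pro-freeness argument directly for the $K(2)$-local sum — and hence are surjective themselves, giving $g=0$. Everything preceding this last surjectivity check is formal bookkeeping with fiber sequences and the Milnor sequence; the genuine content is confirming that passing from the finite skeleta to $BSpin$ introduces no surviving $\lim^1$-obstruction.
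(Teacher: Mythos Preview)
Your approach via the Milnor sequence is genuinely different from the paper's, and the set-up through the first three paragraphs is correct. The gap is in the final $\lim^1$ step, and it is a real one.

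Your claim that pro-freeness of $TMF_0(3)^*BSpin$ forces surjectivity of the skeletal restriction maps $TMF_0(3)^{n_s-1}BSpin^{(k+1)}\to TMF_0(3)^{n_s-1}BSpin^{(k)}$ is a non-sequitur: the structure of $TMF_0(3)^*BSpin$ as a completed polynomial algebra says nothing about $TMF_0(3)^*$ of the individual skeleta or about the maps between them, which depend on the attaching maps. Worse, the relevant degree $n_s-1\in\{-1,15,31\}$ is odd, and $TMF_0(3)$ has nontrivial odd-degree homotopy (arising from the class $c$ in the HFPSS for $ER$), so these groups are not controlled by the even-degree Pontryagin generators at all. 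The follow-up ``compatible retract'' claim is equally unjustified: the splitting of \cite[Proposition~A.13]{666} is an $E_*$-module splitting and is not asserted to be natural in the finite spectrum $W$, which is exactly what you would need to transport surjectivity across it. Without these two ingredients you have no Mittag--Leffler statement, and the argument stops.

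The paper avoids skeleta and Milnor sequences entirely. It first proves the statement with $E$ in place of $TMF_0(3)$ via the universal coefficient theorem for $K(2)$-local $E$-modules (\cref{lem:manipulatingci}), which identifies the source with maps into an $\mm$-adic completion sitting inside the product. It then invokes \cref{lem:liftToER}, a HFPSS computation showing that in degrees divisible by $16$ the map from $ER$- to $E$-valued mapping spaces out of $BSpin_+$ is an isomorphism, to pull injectivity back to $ER$; a further (collapsed, since $3$ is invertible) HFPSS for $\omega$ passes to $E^{h\langle-1,\omega\rangle}$, and the splitting $TMF_0(3)\oplus TMF_0(3)\simeq E^{h\langle-1,\omega\rangle}$ finishes. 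This route uses only that $K_*BSpin$ is even and that the $n_s$ are multiples of $16$.
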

\begin{proof}
  Combining \cref{lem:manipulatingci} and \cref{lem:liftToER} shows that the statement is true for \(TMF_0(3)\) replaced with \(ER\).
  The homotopy fixed point spectral sequence for the action of \(\omega\) collapses on the \(E_2\)-page and is concentrated on the \(0\)-line, thus the statement is also true for \(E^{h\langle -1,\omega\rangle}\).
  Finally, we have an equivalence
  \[TMF_0(3)\oplus TMF_0(3)\xrightarrow{(1,\zeta)}E^{h\langle-1,\omega\rangle}\]
  so the statement follows.
\end{proof}

Having this in our hands we can now descend along the ladder of extensions
\[K\subset \langle K,-1\rangle\subset\langle K,-1,\omega\rangle\subset\langle K,-1,\omega,\sigma\rangle=H.\]
\begin{theorem}\label{thm:splitMString5}
  There exists a map
  \[Q:BSpin_+\to L_{K(2)}\bigoplus_{j\in J}\Sigma^{n_j}TMF_0(3),\]
  with \(n_j\in\{0,16,32\}\), such that the \(E^{hH}\)-linear map
  \[E^{hH}\otimes MString\xrightarrow{\tau_W pr^*Q}\bigoplus_{j\in J}\Sigma^{n_j}TMF_0(3)\]
  is a \(K(2)\)-local equivalence.
\end{theorem}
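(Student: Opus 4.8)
The plan is to obtain $Q$ by lifting, one Galois step at a time, the cohomology classes that define the map $Q'$ of \cref{thm:splitMString2} up the tower of homotopy fixed point spectra
\[
E\longrightarrow ER=E^{h\{\pm1\}}\longrightarrow E^{h\langle-1,\omega\rangle}\longrightarrow E^{h\langle-1,\omega,\sigma\rangle}=L_{K(2)}TMF_0(3),
\]
in tandem with the tower of coefficient rings $E^{hK}\to E^{h\langle K,-1\rangle}\to E^{h\langle K,-1,\omega\rangle}\to E^{hH}$. Here $K=\langle F_{3/2},\pi\rangle$ (so $H=\langle K,-1,\omega,\sigma\rangle$ since $\pi\alpha\in F_{3/2}$), and in both towers each subgroup is normal in the next, with quotients of orders $2$, $3$, $2$; hence every step of the coefficient-ring tower is a faithful $K(2)$-local Galois extension. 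I will use two structural facts throughout: first, $F_{3/2}$ is a torsion-free pro-$2$ group (this is contained in the proof of \cref{lem:generatorsF32prime}), so it meets every finite subgroup of $\mathbb{G}$, and every conjugate of such, trivially; second, the $TMF_0(3)$-module equivalence $TMF_0(3)\oplus TMF_0(3)\xrightarrow{(1,\zeta)}E^{h\langle-1,\omega\rangle}$ recorded in the proof of \cref{lem:manipulatingQj}.

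First I would revisit the proof of \cref{thm:splitMString2} and pin down the retraction $r_N$ and the basis $\{x_j\}_{j\in J}$ of $N_1$. The group $G_{48}/\{\pm1\}$ acts on $N_1$, and for $\omega\in G_{48}$ this residual action is the ordinary action on $K_0BSpin$ (since $\theta^\omega=1$ and $|\omega|=1$); as $\ff_4$ contains the cube roots of unity, $\ff_4[\langle\omega\rangle]$ is split semisimple, so $N_1$ decomposes into $\omega$-eigenspaces and I may take $\{x_j\}$ to be a union of bases of these. Then the classes $C_j^{(0)}\in E^0BSpin$ produced by the Milnor--Moore argument may be chosen to be $\omega$-eigenvectors. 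By \cref{lem:liftToER} each $C_j^{(0)}$ lifts uniquely to $ER^0BSpin$; since $\omega$ acts on $ER$ (it normalises $\{\pm1\}$) and acts on the three powers of $u$ that realise the $16$-periodicity of $ER$ through the three distinct cube roots of unity, there is a unique $n_j\in\{0,16,32\}$ for which the appropriate power-of-$u$ twist of the lift is $\omega$-invariant, hence defines a map $BSpin_+\to\Sigma^{n_j}ER$ that is fixed by the residual $\omega$-action. As the homotopy fixed point spectral sequence for the $\zz/3$-action collapses ($3$ being invertible $2$-locally), this lifts to $(E^{h\langle-1,\omega\rangle})^{-n_j}BSpin$, and via the module splitting it becomes a pair of classes in $TMF_0(3)^{-n_j}BSpin$. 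Assembling all of these (over $j$ and over the two summands) through \cref{lem:manipulatingci}, whose finiteness hypothesis holds here exactly as in the proof of \cref{thm:splitMString2}, produces the map $Q\colon BSpin_+\to L_{K(2)}\bigoplus_{j}\Sigma^{n_j}TMF_0(3)$ with $n_j\in\{0,16,32\}$ (possibly over a larger index set than the $J$ of \cref{thm:splitMString2}).

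It remains to check that the $E^{hH}$-linear map $\tau_W pr^*Q$ is a $K(2)$-local equivalence. I would verify this after base change along the faithful Galois extension $E^{hH}\to E^{hF_{3/2}}$ (equivalently, one step at a time up the Galois tower). Using the Devinatz--Hopkins description of $E_*^\vee$ of a fixed point spectrum together with the triviality of the intersections of $F_{3/2}$ with conjugates of $\langle-1,\omega,\sigma\rangle$, the double-coset formula gives $E^{hF_{3/2}}\otimes_{E^{hH}}L_{K(2)}TMF_0(3)\simeq\bigoplus_{F_{3/2}\backslash H/\langle-1,\omega,\sigma\rangle}E$, a finite sum of copies of $E$; so the base change of $\tau_W pr^*Q$, after identifying each $\Sigma^{n_j}E$ with $E$ by an appropriate power of $u$, is a map $E^{hF_{3/2}}\otimes MString\to\bigoplus E$ whose components are Galois translates of the classes $\tau_W pr^*C_j^{(0)}$ --- that is, it is the map produced by the Milnor--Moore argument of \cref{thm:splitMString1} for the retraction $r_N$ we chose, hence an equivalence. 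Faithfulness of $E^{hH}\to E^{hF_{3/2}}$ then yields the claim; running the argument one step at a time also re-proves \cref{thm:splitMString2}.

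I expect the main obstacle to be the second paragraph: arranging that \emph{every} projection $\tau_W pr^*C_j^{(0)}$ lifts all the way to $TMF_0(3)$-cohomology, and that the shifts required are exactly those in $\{0,16,32\}$. This rests on the full homotopy fixed point spectral sequence computation of \cref{lem:liftToER} (the $16$-periodicity of $ER^{*}BSpin$ and the uniqueness of the lifts of its polynomial generators), on the $\zz/3$-collapse, and on the Mahowald--Rezk determination of the $\zz/2$-homotopy fixed point spectral sequence passing from $TMF_1(3)$ to $TMF_0(3)$; the bookkeeping is delicate precisely because $ER^{*}BSpin$ is $16$-periodic while $(E^{h\langle-1,\omega\rangle})^{*}BSpin$ only becomes periodic after three steps of $16$, which is the origin of the constraint $n_j\in\{0,16,32\}$. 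A secondary point is to keep the module splitting $E^{h\langle-1,\omega\rangle}\simeq TMF_0(3)^{\oplus2}$ compatible with the descent of the coefficient ring from $E^{h\langle K,-1,\omega\rangle}$ to $E^{hH}$, which is again handled by the double-coset computation.
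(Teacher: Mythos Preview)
Your overall architecture---lift the classes $C_j^{(0)}$ through the tower $E\to ER\to E^{h\langle-1,\omega\rangle}\to TMF_0(3)$ and verify by faithful base change---is exactly the paper's, and your handling of the $\{\pm1\}$- and $\omega$-steps is essentially correct (the paper makes the $\omega$-averaging explicit rather than asserting the lifts ``may be chosen'' to be eigenvectors, but that is cosmetic since $3$ is a unit).

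The genuine gap is the $\sigma$-step. You pass from $E^{h\langle-1,\omega\rangle}$ to $TMF_0(3)$ by taking \emph{both} projections of the module splitting $TMF_0(3)\oplus TMF_0(3)\xrightarrow{(1,\zeta)}E^{h\langle-1,\omega\rangle}$, doubling the index set. The resulting $E^{hH}$-linear map is then only a split monomorphism, not an equivalence. To see this, base change along the faithful extension $E^{hH}\to E^{hK}$: using $E^{hK}\otimes_{E^{hH}}TMF_0(3)\simeq E$ (via the inclusion $\iota$), your map becomes an $E^{hK}$-linear map $E^{hK}\otimes MString\to\bigoplus_{2J}\Sigma^{n_j}E$ whose $j$-th pair of components is $(\iota\beta\,Q''''_j,\iota\beta'\,Q''''_j)$. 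Postcomposing with the $E^{hK}$-linear ``recombination'' $(x,y)\mapsto x+\zeta y$ on each pair collapses this to $\bigoplus_J\Sigma^{n_j}E$ with $j$-th component $\iota\beta\,Q''''_j+\zeta\,\iota\beta'\,Q''''_j=\mathrm{incl}\circ Q''''_j$, which mod $\mm$ is $u^{m_j}q_j$; that is, the composite is exactly the equivalence of \cref{thm:splitMString2}. Since the recombination map has nontrivial kernel, your map cannot itself be an equivalence. Equivalently, your verification claim fails on the nose: after base change to $E^{hF_{3/2}}$ each $TMF_0(3)$ contributes two copies of $E$, so you land in $4|J|$ copies of $E$, whereas the Milnor--Moore map of \cref{thm:splitMString1} targets $|I|=2|J|$ copies (by \cref{prop:identifyingfixedpoints}).

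The paper avoids the doubling by building $\sigma$-equivariance into the retraction $r_N$ and the basis of $N_1$: it decomposes $N_1=L_0\oplus L_8\oplus L_{16}$ into $\omega$-eigenspaces, observes each $L_i$ is $\sigma$-stable with $L_i=\ff_4\otimes_{\ff_2}L_i^{\sigma}$, and picks the $x_j$ inside $L_i^{\sigma}$. Then $\sigma.q_j=q_j$, so $u^{m_j}q_j$ is fixed by the idempotent $\iota_\alpha\alpha$ and the \emph{single} projection $\beta$ already satisfies $\iota\circ\beta\circ Q''''_j\equiv u^{m_j}q_j$ mod $\mm$. That is what keeps the index set equal to $J$ and makes the final comparison with \cref{thm:splitMString2} go through.
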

\begin{proof}
  The proof works by observing that, for suitable choices of retraction and basis, the map \(Q'\) constructed in \cref{thm:splitMString2} can be made to factor as claimed.

  Let us begin by choosing a particular retraction \(r_N:N\to N_1\) and basis of \(N_1\).
  Since \(\ff_4\) contains a third root of unity \(\zeta\) we can decompose \(N_1\) as 
  \[N_1=L_0\oplus L_8\oplus L_{16},\,L_i=\{x\in N_1\mid \omega.x=\zeta^i x\}\]
  and similarly for \(N=G_0\oplus G_8\oplus G_{16}\); the choice of \(\{0,8,16\}\) instead of \(\{0,1,2\}\) will allow us to use \cref{lem:liftToER} below.
  Since the action of \(\sigma\) is \(\ff_4\)-antilinear and we have that \(\sigma\omega\sigma=\omega^2\), \(L_i\) and \(G_i\) are \(\sigma\)-invariant subspaces.
  Thus there are canonical equivalences
  \[L_i=\ff_4\otimes_{\ff_2}L_i^\sigma\text{ and }G_i=\ff_4\otimes_{\ff_2}G_i^\sigma.\]
  Now choose retractions \(r_i:G_i^\sigma \to L_i^\sigma\) and let \(r_N=\id_{\ff_4}\otimes_{\ff_2}(r_0\oplus r_8\oplus r_{16})\), so that \(r_N\) intertwines the action of \(\omega\) and \(\sigma\).
  Also choose a basis \(\{x_j\}_{j\in J}\) of \(N_1\) such that each \(x_j\) lies in one of the \(L_i^\sigma\), say with \(i=m_j\in\{0,8,16\}\).

  Let \(\rho_j:N_1\to\ff_4\) be the projections such that
  \[\forall x\in N_1:x=\sum_{j\in J}\rho_j(x)x_j\]
  and \(q_j\in K^0BSpin\) the unique class such that
  \[\forall x\in N: \rho_j(r_N(x))=\langle x, q_j\rangle.\]
  By construction, we have that \(\omega.q_j=\zeta^{2m_j}q_j\) and \(\sigma.q_j=q_j\), so that the classes \(u^{m_j}q_j\in K^{2m_j}BSpin\) are invariant under the action of \(\omega\) and \(\sigma\).

  In \cref{thm:splitMString2} we constructed a map
  \[Q':BSpin_+\to L_{K(2)}\bigoplus_{j\in J}E\]
  such that \(pr_j\circ Q'\equiv q_j\) mod \(\mm\).
  Composing with the direct sum of maps \(u^{m_j}:E\to \Sigma^{2m_j}E\) we obtain a map
  \[Q'':BSpin_+\to L_{K(2)}\bigoplus_{j\in J}\Sigma^{2m_j}E\]
  such that \(pr_j\circ Q''=u^{m_j}q_j\).
  By \cref{lem:liftToER} there now exists a unique map
  \[Q''':BSpin_+\to L_{K(2)}\bigoplus_{j\in J}\Sigma^{2m_j}ER\]
  lifting \(Q''\).
  Consider the map
  \[\frac{Q'''+\omega.Q'''+\omega^2.Q'''}{3}:BSpin_+\to L_{K(2)}\bigoplus_{j\in J}\Sigma^{2m_j}ER.\]
  Since \(3\) is invertible the homotopy fixed point spectral sequence for the action of \(\omega\) collapses at the \(E_2\)-page and is concentrated on the zeroth line.
  Thus, the above map, which is \(\omega\)-invariant by construction, admits a unique lift to
  \[Q'''':BSpin_+\to L_{K(2)}\bigoplus_{j\in J}\Sigma^{2m_j}E^{h\langle -1,\omega\rangle}.\]
  
  Finally, we have a commutative square
  \[\begin{tikzcd}
	  {TMF_0(3)\oplus TMF_0(3)} & {E^{h\langle-1,\omega\rangle}} \\
  	{E^{h\langle\sigma\rangle}\oplus E^{h\langle\sigma\rangle}} & E
  	\arrow["{(1,\zeta)}", from=1-1, to=1-2]
  	\arrow[from=1-1, to=2-1]
  	\arrow[from=1-2, to=2-2]
	  \arrow["{(1,\zeta)}", from=2-1, to=2-2]
  \end{tikzcd}\]
  where the horizontal maps are equivalences and the vertical maps inclusions of fixed points.
  Denote by \(\alpha: E\to E^{h\langle\sigma\rangle}\) and \(\beta:E^{h\langle -1,\omega\rangle}\to TMF_0(3)\) the projections to the first summands in the above diagram, which are retractions of the inclusions \(\iota_\alpha:E^{h\langle\sigma\rangle}\to E\) and \(\iota_\beta:TMF_0(3)\to E^{h\langle -1,\omega\rangle}\).

  Let \(Q\) be the composite of \(Q''''\) with a direct sum of shifts of \(\beta\).
  We claim that the \(E^{hH}\)-linear map
  \[E^{hH}\otimes MString\xrightarrow{\tau_Wpr^*Q}L_{K(2)}\bigoplus_{j\in J}\Sigma^{2m_j}TMF_0(3)\]
  is a \(K(2)\)-local equivalence.
  It is enough to show that it is an equivalence after tensoring up along the extension \(E^{hH}\to E^{hK}\), which is faithful by \cite[Proposition~5.4.9(b)]{Rognes_galois}.
  Note that we have an \(E^{hK}\)-linear \(K(2)\)-local equivalence
  \[E^{hK}\otimes_{E^{hH}}TMF_0(3)\xrightarrow{\sim}E\]
  which is induced by the inclusion of fixed points \(\iota:TMF_0(3)\to E\).
  So after tensoring up the above map will be induced by the composite
  \[BSpin_+\xrightarrow{Q}L_{K(2)}\bigoplus_{j\in J}\Sigma^{2m_j}TMF_0(3)\xrightarrow{\iota}L_{K(2)}\bigoplus_{j\in J}\Sigma^{2m_j}E.\]
  Tracing through the construction we obtain
  \begin{align*}
    pr_j\circ\iota\circ Q&=\iota_\alpha\circ \alpha\circ\frac{\id +\psi^{\omega}+\psi^{\omega^2}}{3}\circ u^{m_j}\circ pr_j\circ Q'\\
    &\equiv\iota_\alpha\circ \alpha\circ\frac{\id +\psi^{\omega}+\psi^{\omega^2}}{3}\circ u^{m_j}q_j+\mm\\
    &\equiv \iota_\alpha\circ \alpha\circ u^{m_j}q_j+\mm\\
    &\equiv u^{m_j}q_j+\mm
  \end{align*}
  where we used in the third step that \(u^{m_j}q_j\) is \(\omega\)-invariant and in the fourth step that it is \(\sigma\)-invariant so that the idempotent \(\iota_\alpha\circ\alpha\) acts trivially on it.
  Thus, after tensoring up and shifting the summands, \(Q\) induces the same map as \(Q'\) in \(K_0\), thus it induces a \(K(2)\)-local equivalence.
\end{proof}

\subsection{Determining some \(Q_j\)}
By \cref{lem:manipulatingci} and \cref{lem:manipulatingQj} it is relatively easy to determine finitely many of the \(Q_j\).
This we will do with the following strategy which we have already seen in the proof of \cref{thm:splitMU6}:
\begin{construction}\label{cons:finitelyManyQj}
  Let \(V\subset N_1\) be a \(k\)-dimensional subspace, and let \(c_i\) be classes in  \(K^0BSpin\) such that
  \[\epsilon:N\xrightarrow{x\mapsto\langle x,c_i\rangle}\bigoplus_{i=1}^k\ff_4\]
  is an isomorphism when restricted to \(V\).
  Then we have natural isomorphisms
  \[N=V\oplus \ker\epsilon,\, N_1=V\oplus \ker \epsilon\vert_{N_1}\]
  and we may choose a retraction of the form \(r_N=\id_V\oplus r_{\ker}\).
  It follows that, if we choose a basis of \(N_1\) as a basis of \(V\) dual to the \(c_i\) plus a basis of \(\ker\epsilon\vert_{N_1}\), the \(Q_j\) constructed in \cref{thm:splitMString5} associated to the basis elements of \(V\) are lifts of the \(c_i\), and since they are finitely many\footnote{For infinitely many we would need a version of \cref{lem:manipulatingci} for \(TMF_0(3)\) and be able to verify the corresponding finiteness conditions.} we may replace them by any other lifts of the \(c_i\).
  Also note that the other \(Q_j\) must pair to \(0\) mod \(\mm\) with all elements in \(V\).
  Of course, in all of this we must make sure to satisfy the invariance conditions on \(r_N\) and the basis of \(N_1\) we used in the proof of \cref{thm:splitMString5}.
  This will be the case whenever the classes \(c_i\) are \(\sigma\)-invariant and satisfy \(\omega.c_i=\zeta^{2m_i}c_i\).
\end{construction}

This turns determining (finitely many of) the \(Q_j\) into a game of listing elements in \(N_1\) and detecting them with suitable classes in \(TMF_0(3)^*BSpin\).

\begin{proposition}\label{prop:makeunital}
  In the first part of \cref{maintheorem} we may choose one of the \(Q_j\) to be \(1\in TMF_0(3)^0BSpin\), and the other \(Q_j\)s to be reduced classes.
  It follows that the Ando-Hopkins-Rezk orientation \(MString\to TMF\) admits, after tensoring with \(E^{hH}\), an \(E^{hH}\)-linear and unital section.
\end{proposition}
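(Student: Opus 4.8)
The plan is to feed \cref{cons:finitelyManyQj} the one-dimensional subspace $V=\ff_4\cdot 1\subset N_1$ together with the single detecting class $c_1=1\in K^0BSpin$, and then to read off the section from the resulting splitting. First one checks that this input is admissible. The unit $1\in N=K_0BSpin$ lies in $N_1$: under the twisted action $g\triangleright n=g.n\smallfrown 1/\chi^g$ the unit is fixed, since $\mathbb{G}$ fixes the unit of $K_0BSpin$ and, because $1/\chi^g$ is group-like with counit $1$, capping with it is the identity on the unit. Evaluation $x\mapsto\langle x,1\rangle$ restricts to an isomorphism on $V$, and $1$ is $\sigma$-invariant with $\omega.1=\zeta^{0}\cdot 1$, so the invariance conditions of \cref{cons:finitelyManyQj} are met with weight $m_1=0$, forcing $n_{j_0}=0$ for the associated index $j_0$. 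Hence \cref{cons:finitelyManyQj} yields a map $Q\colon BSpin_+\to L_{K(2)}\bigoplus_j\Sigma^{n_j}TMF_0(3)$ as in \cref{thm:splitMString5} for which we may take $Q_{j_0}=1\in TMF_0(3)^0BSpin$, while every other $Q_j$ pairs to $0$ modulo $\mm$ with $1$. Replacing each $Q_j$ with $j\neq j_0$ by $Q_j$ minus its restriction to the basepoint of $BSpin$ — a constant class which, being $0$ modulo $\mm$, does not change $q_j=Q_j\bmod\mm$ and hence leaves the $K(2)$-equivalence of \cref{thm:splitMString5} intact (the modification is visibly compatible with the Thom isomorphism, so the result is again of the form $\tau_W\,pr^*Q'$) — we may further arrange that all $Q_j$ with $j\neq j_0$ are genuinely reduced. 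This proves the first assertion.

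For the section, write $\Phi=\tau_W\,pr^*Q\colon E^{hH}\otimes MString\xrightarrow{\ \sim\ }L_{K(2)}\bigoplus_j\Sigma^{n_j}TMF_0(3)$ for the $E^{hH}$-linear equivalence of \cref{thm:splitMString5}. Since $pr^*1=1$ and the $TMF_0(3)$-Thom class of $MString$ is the image of the Ando--Hopkins--Rezk orientation under $tmf\to TMF_0(3)$, the $j_0$-component $pr_{j_0}\circ\Phi$ is the $E^{hH}$-linearization of $MString\xrightarrow{\tau_W}L_{K(2)}TMF=E^{hG_{48}}\to E^{h\langle -1,\omega,\sigma\rangle}=TMF_0(3)$; that is, it factors as
\[
E^{hH}\otimes MString\xrightarrow{\ \mathrm{id}\otimes\tau_W\ }E^{hH}\otimes L_{K(2)}TMF\xrightarrow{\ c\ }TMF_0(3),
\]
where $c$ is the $E^{hH}$-linear extension of the restriction map $E^{hG_{48}}\to E^{h\langle -1,\omega,\sigma\rangle}$ (well-defined since $\langle -1,\omega,\sigma\rangle\subset H$). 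The map $c$ is an equivalence: by the double-coset description of $E^{hH}\otimes_{L_{K(2)}\mathbb{S}}E^{hG_{48}}$ it suffices that $HG_{48}=\mathbb{G}$ and $H\cap G_{48}=\langle -1,\omega,\sigma\rangle$, and this follows from the structure of $H$ recalled in \cref{subsection:moravaGroup} — one has $[\mathbb{G}:H]=4=[G_{48}:\langle -1,\omega,\sigma\rangle]$, and $H$ cannot meet $G_{48}$ in a larger subgroup because $\alpha\equiv\pi^{-1}$ modulo $F_{3/2}$ while $\pi\notin G_{48}F_{3/2}$, which rules out $H\cap G_{48}=G_{48}$. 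Consequently $s:=\Phi^{-1}\circ\iota_{j_0}\circ c$, with $\iota_{j_0}$ the inclusion of the $j_0$-summand, is an $E^{hH}$-linear section of $\mathrm{id}_{E^{hH}}\otimes\tau_W$.

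It remains to see that $s$ is unital, and here the reducedness arranged in the first paragraph is precisely what is needed: the composite $\mathbb{S}^0\xrightarrow{\eta}MString\xrightarrow{\tau_W\,pr^*Q_j}\Sigma^{n_j}TMF_0(3)$ is the restriction of $Q_j$ to the basepoint of $BSpin$, which is $1$ for $j=j_0$ and $0$ for $j\neq j_0$; hence $\Phi$ carries the unit of $E^{hH}\otimes MString$ to $\iota_{j_0}(1)$. As $c$ is unital, $s(1)=\Phi^{-1}\iota_{j_0}c(1)=\Phi^{-1}\iota_{j_0}(1)=1$, so $s$ is unital. The only place where something genuinely has to be checked is the group-theoretic input making $c$ an equivalence, i.e.\ that $H\cap G_{48}=\langle -1,\omega,\sigma\rangle$; the rest is bookkeeping with the Thom isomorphism and with the splitting already constructed.
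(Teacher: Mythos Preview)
Your proof follows essentially the same route as the paper: feed the unit into \cref{cons:finitelyManyQj}, identify the $j_0$-projection with $\mathrm{id}\otimes\tau_W$ up to the equivalence $E^{hH}\otimes TMF\simeq TMF_0(3)$, and reduce the remaining $Q_j$ to force the section to be unital. The paper simply asserts $E^{hH}\otimes TMF=TMF_0(3)$ without justification, so you are right to try to prove it, and the double-coset formulation is the correct mechanism. However, your group-theoretic verification has a gap.

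You claim that $HG_{48}=\mathbb{G}$ and $H\cap G_{48}=\langle -1,\omega,\sigma\rangle$ follow from $[\mathbb{G}:H]=4=[G_{48}:\langle -1,\omega,\sigma\rangle]$ together with the exclusion of $H\cap G_{48}=G_{48}$. But the coset space $G_{48}/\langle -1,\omega,\sigma\rangle$ has four elements, so a priori $H\cap G_{48}$ could be an intermediate subgroup of order $24$; ruling out only the top case is not enough. A clean fix is to argue in $\mathbb{G}/F_{3/2}$. From $\mathbb{S}=\overline{\langle F_{3/2},\pi\rangle}\rtimes G_{24}$ and $\pi^2\in F_{3/2}$ one gets $\mathbb{S}/F_{3/2}\cong(\zz/2)\times G_{24}$ with $\pi\mapsto(1,e)$, and since $\sigma$ normalises $\langle\pi,-1,\omega\rangle$ (indeed $\sigma\pi\sigma=-\pi$) one has $H/F_{3/2}=\langle\pi,-1,\omega\rangle\rtimes\langle\sigma\rangle$. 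Intersecting with $G_{48}\cong(\{0\}\times G_{24})\rtimes\langle\sigma\rangle$ forces the $\zz/2$-coordinate to vanish, leaving exactly $\langle -1,\omega,\sigma\rangle$; both desired conditions follow. This computation also yields $[\mathbb{G}:H]=4$, which is stated in the introduction but not actually proved in \cref{subsection:moravaGroup}.

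Your reducedness step is fine: the constant you subtract is the restriction of $Q$ to the basepoint with the $j_0$-slot zeroed, which is itself a map into the $K(2)$-local sum, so the difference assembles. The paper instead performs this reduction at the level of the $E$-valued classes $Q'_j$ via \cref{lem:manipulatingci} and then observes that the passage $Q'\rightsquigarrow Q$ in \cref{thm:splitMString5} preserves reducedness; either approach works.
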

\begin{proof}
  Since the Morava stabilizer group acts on \(E\) by ring maps it fixes the unit \(1\in M^{F_{3/2}}\subset K_0MString\).
  In particular, as \(1=pr_*1\), we also have that \(1\in N_1\subset K_0BSpin\), which we may detect with \(1\in TMF_0(3)^0BSpin\).
  By the discussion in \cref{cons:finitelyManyQj} we get from \cref{thm:splitMString5} an \(E^{hH}\) linear equivalence
  \[E^{hH}\otimes MString\to\bigoplus_{j\in J}\Sigma^{n_j} TMF_0(3)\]
  such that, projecting onto the summand corresponding to the unit, we are looking at the map
  \[E^{hH}\otimes MString\xrightarrow{\id\otimes\tau_W}E^{hH}\otimes TMF=TMF_0(3)\]
  which we identify with the Ando-Hopkins-Rezk orientation \(MString\to TMF\) tensored with the identity on \(E^{hH}\).
  This already shows that, after tensoring with \(E^{hH}\), the Ando-Hopkins-Rezk orientation admits an \(E^{hH}\)-linear section, by including the summand back into the sum and applying the inverse equivalence.

  By \cref{lem:manipulatingQj} the section is then  unital if and only if all the other \(Q_j\) are reduced classes.
  Looking at the process we went through to extract \(Q\) out of \(Q'\) we see that it is enough to make sure that the other \(Q'_j\) are reduced classes.
  This can be arranged: give the 'first' summand the index \(0\in J\).
  Then the collection \(\{Q'_j\vert_{pt}\in E^0\}_{j\neq 0}\) also satisfies the finiteness condition of \cref{lem:manipulatingci}, and so does the collection \(\{Q'_j-Q'_j\vert_{pt}\}_{j\neq 0}\).
  So we may replace all other \(Q'_j\) by their reduced counterparts.
  Since we have chosen \(q_0=1\in K^0BSpin\) the construction of the discussion above tells us that the other \(q_j\) are reduced, so the reductions of the other \(Q'_j\) are still lifts of the \(q_j\).
\end{proof}

We now want to discuss what summands of \(\Sigma^{2m} TMF_0(3)\) in \(MString\) would look like in this splitting.
The inclusion map \(\Sigma^{2m} TMF_0(3)\to MString\) would map the unit of \(TMF_0(3)\) to a class \(a\in N_1\) such that \(\omega.a=\zeta^ma\), \(\sigma.a=a\), and such that \(\{a,i.a,j.a,k.a\}\) are linearly independent.
Changing the basis a bit and denoting
\[x=i+j+k,y=i+\zeta j+\zeta^2k,z=i+\zeta^2j+\zeta k\in\ff_4[Q_8/\{\pm1\}]\]
this is equivalent to asking for \(\{a,x.a,y.a,z.a\}\) to be linearly independent.
This choice is convenient since, in the terminology of the proof of \cref{thm:splitMString5}, we then have that
\[a\in L_m^\sigma,x.a\in L_m^\sigma, y.a\in L_{m+2}^\sigma,\text{ and }z.a\in L_{m+1}^\sigma.\]

On the other hand, after tensoring with \(E^{hH}\), \(TMF_0(3)\) splits as four copies of \(TMF_0(3)\):
\[E^{hH}\otimes TMF_0(3)\xrightarrow{\sim}TMF_0(3)\oplus TMF_0(3)\oplus \Sigma^{32}TMF_0(3)\oplus \Sigma^{16}TMF_0(3)\]
where the equivalence is \(E^{hH}\)-linear and the projection maps are, after restricting to \(TMF_0(3)\) in the source and including as fixed points into \(E\) in the target, given by
\[\iota,(\psi^i+\psi^j+\psi^k)\circ\iota,u^{16}(\psi^i+\zeta \psi^j+\zeta^2\psi^k)\circ\iota,\text{ and }u^{8}(\psi^i+\zeta^2 \psi^j+\zeta \psi^k)\circ\iota.\]
Looking for such summands corresponds to looking for classes \(a\in L_m^\sigma\) and \(p\in TMF_0(3)^{2m}BSpin\) such that pairing the classes \(\{a,x.a,y.a,z.a\}\) with the classes \(\{p,x.p,y.p,z.p\}\) gives an invertible matrix (where we abused notation to identify \(p\) with its image in \(K^0BSpin\)).
A little computation shows that the determinant of the matrix is given by
\[\det\nb{\langle\{a,x.a,y.a,z.a\}, \{p,x.p,y.p,z.p\}\rangle}=\langle a,p\rangle+\langle x.a,p\rangle.\]
\begin{proposition}\label{prop:finalSummand}
  In the setting of \cref{thm:splitMString5} one can choose \(Q\) such that the map 
  \[MString\xrightarrow{\tau_W\oplus\tau_W pr^*p}TMF\oplus\Sigma^{16}TMF_0(3)\]
  tensored with the identity on \(E^{hH}\) appears as a direct summand, where
  \[p=\pi_4/\Delta^6\in TMF_0(3)^{16}BSpin.\]
  It thus admits an \(E^{hH}\)-linear section, and, restricted to \(E^{hH}\otimes TMF\), this section can be made unital.
\end{proposition}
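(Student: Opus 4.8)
The plan is to realize $Q$ as an instance of \cref{cons:finitelyManyQj} by exhibiting an explicit $5$-dimensional subspace of $N_1=K_0BSpin$ together with classes detecting it. With $c,d$ the invariants of $F_{3/2}'$ produced above, set $a=cd^3\in N_1$, and let $\bar p\in K^0BSpin$ denote the image of $p=\pi_4/\Delta^6\in TMF_0(3)^{16}BSpin$. Using the elements $x=i+j+k$, $y=i+\zeta j+\zeta^2k$, $z=i+\zeta^2j+\zeta k$ of the discussion preceding the statement, I take $V=\langle 1,a,x.a,y.a,z.a\rangle$ with detecting classes $1,\bar p,x.\bar p,y.\bar p,z.\bar p$ and feed this into \cref{thm:splitMString5} through \cref{cons:finitelyManyQj}. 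Because the change of basis $i,j,k\mapsto x,y,z$ is invertible, $V$ and the span of the detecting classes coincide with those spanned using $i,j,k$, so \cref{prop:detect5classes} applies directly. A short eigenvalue computation places $a,x.a,y.a,z.a$ in $L_8,L_8,L_{16},L_0$ respectively, so that the corresponding summands carry dimension shifts $16,16,32,0$; through the fixed $E^{hH}$-linear splitting of $E^{hH}\otimes\Sigma^{16}TMF_0(3)$ into four shifts of $TMF_0(3)$ recalled just before the statement, these four summands assemble into the single summand $\Sigma^{16}TMF_0(3)$ with projection $\tau_W pr^*p$, while the summand indexed by $1$ is, exactly as in \cref{prop:makeunital}, the Ando--Hopkins--Rezk orientation $\tau_W\colon MString\to TMF$ tensored with the identity on $E^{hH}$.

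There are then two things to check. First, the hypotheses of \cref{cons:finitelyManyQj}: one verifies $\sigma.(cd^3)=cd^3$ and $\omega.(cd^3)=\zeta^8(cd^3)$ from the explicitly computed actions of $\sigma$ and $\omega$ on the polynomial generators $\widetilde{b_{2i}}$ and the formulas for $c,d$, so $a\in L_8^\sigma$; and since $p$ is pulled back from $TMF_0(3)=E^{h\langle-1,\omega,\sigma\rangle}$, the elements $\omega,\sigma$ fix the image of $p$ in $E^{16}BSpin$, whence $\sigma.\bar p=\bar p$ and $\omega.\bar p=\zeta\,\bar p$ with no error terms --- this cleanness is exactly why one works with $\pi_4/\Delta^6$ rather than with $p_4$ itself. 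Second, that $V$ is genuinely detected: since $\pi_4$ is reduced, $\bar p$ is reduced, so the $5\times5$ pairing matrix of $V$ against $1,\bar p,x.\bar p,y.\bar p,z.\bar p$ is block diagonal, with scalar block $\langle 1,1\rangle=1$ and a $4\times4$ block whose determinant, by the identity recorded just above the statement, equals $\langle cd^3,\bar p\rangle+\langle x.(cd^3),\bar p\rangle$. I would show this is a unit by identifying $\bar p$ modulo $\mm$, up to a unit, with the normalized Pontryagin class $u^{-8}p_4$ --- equivalently, by evaluating $\langle cd^3,\bar p\rangle$ directly from the restriction of $\pi_4/\Delta^6$ to a maximal torus --- after which \cref{prop:detect5classes} yields $\langle cd^3,\bar p\rangle\neq 0$ and $\langle (i+j+k).(cd^3),\bar p\rangle=0$.

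Granting this, \cref{thm:splitMString5} produces $Q$ and the splitting, and the two distinguished clusters of summands identified above exhibit $MString\xrightarrow{\tau_W\oplus\tau_W pr^*p}TMF\oplus\Sigma^{16}TMF_0(3)$, tensored with the identity on $E^{hH}$, as a direct summand relative to the fixed $E^{hH}$-linear splitting; including this summand back produces the $E^{hH}$-linear section. For unitality on $E^{hH}\otimes TMF$ I argue as in \cref{prop:makeunital}: since the distinguished summand corresponding to $1$ is detected by $1\in K^0BSpin$, the construction leaves all remaining detecting classes --- including $\bar p,x.\bar p,y.\bar p,z.\bar p$, which are already reduced because $\pi_4$ is --- and all remaining $Q_j$ free to be replaced by their reduced counterparts, and with that choice the section restricted to $E^{hH}\otimes TMF$ is unital. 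The main obstacle is the computational step of the previous paragraph: identifying the image of $\pi_4/\Delta^6$ in $K^0BSpin$ with a unit multiple of $u^{-8}p_4$ modulo $\mm$, i.e.\ comparing the $TMF_0(3)$-Pontryagin classes of \cite{tmf03} and \cite{Laures_TMF13BSpin} with the $E$-Pontryagin classes through their restrictions to maximal tori, with the requisite bookkeeping inside $\pi_*TMF_0(3)$; everything else is the eigenspace combinatorics and the Galois descent already set up in \cref{thm:splitMString5}, together with \cref{prop:detect5classes}.
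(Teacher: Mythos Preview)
Your proposal is correct and follows essentially the same route as the paper: the same subspace \(V=\langle 1,\,cd^3,\,i.(cd^3),\,j.(cd^3),\,k.(cd^3)\rangle\) (equivalently spanned by \(1,a,x.a,y.a,z.a\)), the same detecting classes \(1,p,x.p,y.p,z.p\), fed into \cref{cons:finitelyManyQj} and then reassembled via the fixed \(E^{hH}\)-linear splitting of \(E^{hH}\otimes\Sigma^{16}TMF_0(3)\), with unitality handled exactly as in \cref{prop:makeunital}. Your eigenspace bookkeeping and the use of the determinant identity \(\langle a,p\rangle+\langle x.a,p\rangle\) are the right way to make the invariance conditions of \cref{cons:finitelyManyQj} explicit.

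The one point worth flagging is your ``main obstacle''. You are right that \cref{prop:detect5classes} is stated for \(u^{-8}p_4\) while the detecting class here is \(\bar p\), the image of \(\pi_4/\Delta^6\); the paper's own proof does not spell out this comparison either and treats it as part of the input from \cite{tmf03}. So your plan to identify \(\bar p\) with a unit multiple of \(u^{-8}p_4\) modulo \(\mm\) via restriction to a maximal torus is the honest way to close the loop, and it is not a defect of your argument relative to the paper's --- both rest on the same unstated step.
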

\begin{proof}
  With the strategy described in \cref{cons:finitelyManyQj}, consider the subspace
  \[V=\mathrm{span}\{1,cd^3,i.(cd^3),j.(cd^3),k.(cd^3)\}\subset N_1\]
  and the detecting classes \(1,p,x.p,y.p,z.p\).
  The resulting map
  \[Q:BSpin_+\to L_{K(2)}\bigoplus_{j\in J}\Sigma^{n_j}TMF_0(3)\]
  will then have \(Q_0,\ldots,Q_4\) as lifts of \(1,p,x.p,y.p,z.p\), and by the discussion above we may replace them by any other lifts.
  By the discussion about the splitting of \(E^{hH}\otimes TMF_0(3)\) and \cref{prop:makeunital} we thus see that we may choose these as the composite
  \begin{align*}
    &BSpin_+\xrightarrow{1\oplus\pi_4/\Delta^6}TMF\oplus\Sigma^{16}TMF_0(3)\to E^{hH}\otimes\nb{TMF\oplus \Sigma^{16}TMF_0(3)}\\
    &\xrightarrow{\sim}TMF_0(3)\oplus\Sigma^{16}TMF_0(3)\oplus\Sigma^{16}TMF_0(3)\oplus\Sigma^{48}TMF_0(3)\oplus\Sigma^{32}TMF_0(3).
  \end{align*}
  We then, by the same argument as in the proof of \cref{prop:makeunital}, make sure that all classes but \(Q_0\) are reduced, which does not affect the classes coming from \(\pi_4/\Delta^6\) since they are already reduced.
\end{proof}

\appendix
\section{The Milnor-Moore argument}\label{section:minlorMoore}
In this Section we recall some basic definitions around coalgebras and their comodules, and prove a slight strengthening of the Minor-Moore type argument introduced in \cite{mm}.

Let \(\ff\) be a field, \(C\) be a coalgebra over \(\ff\), and \(M\) a (left) \(C\)-comodule.
\begin{definition}
	The {\it coradical} \(F_0(C)\) of \(C\) is defined to be the sum of all simple subcoalgebras of \(C\).
	We say that \(C\) is {\it pointed} if each simple subcoalgebra is one dimensional over \(\ff\).

	The {\it coradical filtration} \(F_n(C)\) of \(C\) is inductively defined by
	\[F_{n+1}(C):=\Delta^{-1}\nb{F_n(C)\otimes C + C\otimes F_0(C)}.\]
	This also induces a natural filtration of the comodule \(M\):
	\[F_n(M):=\psi^{-1}\nb{F_n(C)\otimes M}.\]
\end{definition}
\begin{proposition}[{\cite[cor.~9.0.4, cor.~9.1.7]{Sweedler_HopfAlgebras},}]
	The coradical filtration is exhaustive, that is
	\[C=\bigcup_{n\geq 0}F_n(C).\]
	Moreover, this makes \(C\) into a filtered coalgebra, that is
	\[\Delta(F_n(C))\subset \sum_{k=0}^nF_{n-k}(C)\otimes F_k(C).\]
\end{proposition}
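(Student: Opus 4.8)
The plan is to reduce both assertions to the case where \(C\) is finite dimensional and then transport everything to the dual algebra \(A=C^*\), where they become elementary facts about powers of the Jacobson radical. For the reduction I would use two monotonicity properties. By the fundamental theorem of coalgebras, \(C\) is the directed union of its finite-dimensional subcoalgebras \(D\). For any subcoalgebra \(D\subseteq C\), a one-step induction on \(n\) — using that \(\Delta_D\) is the restriction of \(\Delta_C\) — gives \(F_n(D)\subseteq F_n(C)\); conversely one has \(F_n(C)\cap D=F_n(D)\), which follows by induction from the case \(n=0\), where \(F_0(C)\cap D\) is a subcoalgebra of the cosemisimple coalgebra \(F_0(C)\), hence is itself cosemisimple, hence a sum of simple subcoalgebras of \(D\), together with the elementary identity \((X\otimes C+C\otimes Y)\cap(D\otimes D)=(X\cap D)\otimes D+D\otimes(Y\cap D)\). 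Granting both statements in the finite-dimensional case, the general case is immediate: given \(c\in C\), pick a finite-dimensional subcoalgebra \(D\ni c\); exhaustiveness for \(D\) gives \(c\in F_n(D)\subseteq F_n(C)\) for some \(n\), and if moreover \(c\in F_n(C)\) then \(c\in F_n(C)\cap D=F_n(D)\), so \(\Delta c=\Delta_D c\in\sum_k F_{n-k}(D)\otimes F_k(D)\subseteq\sum_k F_{n-k}(C)\otimes F_k(C)\).

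So assume \(C\) is finite dimensional and set \(A=C^*\), whose multiplication \(m\) is the transpose of \(\Delta\). Writing \(U\wedge W:=\Delta^{-1}(U\otimes C+C\otimes W)\) for subspaces of \(C\), so that \(F_{n+1}(C)=F_n(C)\wedge F_0(C)\), I would first set up the duality dictionary \((U\wedge W)^\perp=U^\perp\cdot W^\perp\) (product of ideals in \(A\)); in finite dimensions this drops out of the identities \((f^{-1}Z)^\perp=f^*(Z^\perp)\), \((X\otimes Y)^\perp=X^\perp\otimes A+A\otimes Y^\perp\), and ordinary double-perp duality. Next I would identify \(F_0(C)^\perp\): the simple subcoalgebras of \(C\) are exactly the orthogonal complements of the maximal two-sided ideals of \(A\), so \(F_0(C)^\perp=\bigcap\{\text{maximal two-sided ideals of }A\}=J(A)\), the Jacobson radical, these intersections coinciding because \(A\) is artinian. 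Feeding this into the wedge identity and inducting on \(n\) yields \(F_n(C)^\perp=J(A)^{\,n+1}\).

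Both statements then follow. Since \(A\) is a finite-dimensional algebra, \(J(A)\) is nilpotent, say \(J(A)^{\,N+1}=0\), whence \(F_N(C)=(J(A)^{\,N+1})^\perp=C\); this gives exhaustiveness (and hence, back in the general case, the union formula). For the filtered-coalgebra property, pass to orthogonal complements in \(A\otimes A\): the inclusion \(\Delta(F_n(C))\subseteq\sum_k F_{n-k}(C)\otimes F_k(C)\) is equivalent to \(m\bigl(\bigcap_{k=0}^n(J^{\,n-k+1}\otimes A+A\otimes J^{\,k+1})\bigr)\subseteq J^{\,n+1}\), with \(J=J(A)\). Choosing a basis of \(A\) adapted to the flag \(A\supseteq J\supseteq J^2\supseteq\cdots\) and giving each basis vector \(e_i\) its weight \(w(i)=\max\{a:e_i\in J^a\}\), one sees that \(J^b\otimes A+A\otimes J^c\) is spanned by the \(e_i\otimes e_j\) with \(w(i)\ge b\) or \(w(j)\ge c\); hence the displayed intersection is spanned by the \(e_i\otimes e_j\) with \(w(i)+w(j)\ge n+1\), so it lies in \(\sum_{a+b\ge n+1}J^a\otimes J^b\), which \(m\) carries into \(J^{\,n+1}\) because \(J^aJ^b\subseteq J^{\,a+b}\).

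The one genuinely non-formal ingredient is the nilpotence of the Jacobson radical of a finite-dimensional algebra; everything else is linear-algebra bookkeeping and the standard correspondence between subcoalgebras of \(C\) and ideals of \(C^*\). I expect the main obstacle to be organizing the reduction to finite dimensions cleanly — in particular verifying \(F_n(D)=F_n(C)\cap D\) and the tensor-intersection identity it rests on — and stating the wedge/ideal duality precisely enough to iterate it; an alternative to the finite-dimensional reduction would be to work directly with the wedge formalism of \cite{Sweedler_HopfAlgebras} and the general fact that \(U^\perp\cdot W^\perp\subseteq(U\wedge W)^\perp\), but the dual-algebra route seems cleaner for extracting both conclusions at once.
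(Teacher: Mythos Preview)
The paper does not give its own proof of this proposition; it merely records the statement and cites \cite[cor.~9.0.4, cor.~9.1.7]{Sweedler_HopfAlgebras}. Your argument is therefore not competing with anything in the paper, and it is correct: the reduction to finite-dimensional subcoalgebras via the fundamental theorem, the identification \(F_n(C)^\perp=J(A)^{n+1}\), and the basis-adapted computation of \(\bigcap_k(J^{n-k+1}\otimes A+A\otimes J^{k+1})\) all go through as you describe. The tensor-intersection identity you flag as the delicate point is justified by the observation that \(D/(X\cap D)\hookrightarrow C/X\) and \(D/(Y\cap D)\hookrightarrow C/Y\) remain injective after tensoring over a field, so an element of \(D\otimes D\) dies in \((C/X)\otimes(C/Y)\) iff it already dies in \((D/(X\cap D))\otimes(D/(Y\cap D))\).

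Your route is essentially the dual-algebra translation of Sweedler's wedge formalism; Sweedler proves the filtered-coalgebra property directly from the associativity of \(\wedge\) and the containment \(F_0(C)\wedge F_0(C)\supseteq F_0(C)\), without passing to finite-dimensional pieces, but the content is the same. What your approach buys is that both conclusions are read off from a single algebraic fact (nilpotence of the radical in an Artinian ring), at the cost of having to manage the reduction step carefully.
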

\begin{remark}
	This also shows that the coradical filtration of \(C\) coincides with the natural filtration as a comodule over itself.
\end{remark}

Before we proceed, we will need to recall some constructions around the {\it dual algebra} of \(C\).
\begin{construction}\label{cons:dualAlgebra}
	Let \(C^\vee=\hm_\ff(C,\ff)\) and equip it with the \(\ff\)-algebra structure
	\[\phi\star\psi(c):=(\phi\otimes\psi)(\Delta(c)),\, 1:=\epsilon.\]
	We also equip it with the topology described in \cite[§~I.2]{Dieudonne_IntroFG}.
	For linear subspaces \(L\subset C\) and \(\Phi\subset C^\vee\), we call
	\[L^\perp=\{\phi\in C^\vee\mid \phi\vert_L=0\}\text{ and }\Phi^\perp=\{c\in C\mid ev_c\vert_\Phi=0\}\]
	the {\it annihilator spaces} of \(L\) and \(\Phi\).
	Then \(L^\perp\) is closed, \(L=(L^\perp)^\perp\), and \((\Phi^\perp)^\perp\) is the closure of \(\Phi\) in \(C^\vee\).
	This gives an inclusion reversing bijection between subspaces of \(C\) and closed subspaces of \(C^\vee\), and restricts to an inclusion reversing bijection between subcoalgebras and ideals.

	Note that a maximal closed ideal is a maximal ideal in \(C^\vee\): if \(I\) contains a maximal closed ideal \(\mm\), then \(C^\vee/I\) is a quotient of \(C^\vee/\mm\).
	Per definition the projection \(C^\vee\to C^\vee/\mm\) is continuous for the discrete topology on the target.
	Thus, also the projection \(C^\vee\to C^\vee/I\) is continuous, showing that \(I\) is closed.

	Let \(J\subset C^\vee\) be the Jacobson radical, which is the intersection of all maximal closed ideals by the previous paragraph, so itself and all its powers are closed.
	We also have that \(F_n(C)=(J^{n+1})^\perp\).

	Consider the right action of \(C^\vee\) on \(M\) given by
	\[m.\phi:=(\phi\otimes \id)(\psi(m)).\]
	We can then identify \(F_n(M)\) with the \(J^{n+1}\)-torsion submodule \(M[J^{n+1}]\).
\end{construction}

\begin{lemma}
	Let \(M\) be a \(C\)-comodule and \(m\in F_n(M)\).
	Then
	\[\psi(m)\in \sum_{k=0}^nF_{n-k}(C)\otimes F_k(M).\]
\end{lemma}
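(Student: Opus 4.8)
The plan is to work entirely inside the dual algebra \(C^\vee\) set up in \cref{cons:dualAlgebra}, using the identifications \(F_k(C)=(J^{k+1})^\perp\) and \(F_k(M)=M[J^{k+1}]\) (the \(J^{k+1}\)-torsion submodule for the right action \(m.\phi=(\phi\otimes\id)\psi(m)\)), together with the fact that every power \(J^k\) is closed, so that \(J^k=\nb{F_{k-1}(C)}^\perp\) (with the convention \(F_{-1}(C)=0\), so \(J^0=C^\vee\)). Since \(m\in F_n(M)\) we already know \(\psi(m)\in F_n(C)\otimes M\), and the task is to refine this to the diagonal statement.

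First I would pick a basis \(\{e_\alpha\}_\alpha\) of \(F_n(C)\) adapted to the coradical filtration, i.e.\ such that each \(F_p(C)\) with \(0\le p\le n\) is spanned by a subset of the \(e_\alpha\); let \(p(\alpha)\) be the level of \(e_\alpha\) (the least \(p\) with \(e_\alpha\in F_p(C)\)), and let \(e_\alpha^\ast\in C^\vee\) be the functional that is \(1\) on \(e_\alpha\), vanishes on the other \(e_\beta\), and vanishes on a fixed complement of \(F_n(C)\) in \(C\). Because the basis is adapted, \(e_\alpha^\ast\) vanishes on \(F_{p(\alpha)-1}(C)\), hence \(e_\alpha^\ast\in\nb{F_{p(\alpha)-1}(C)}^\perp=J^{p(\alpha)}\). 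Writing \(\psi(m)=\sum_\alpha e_\alpha\otimes m_\alpha\) (a finite sum), it then suffices to show \(m_\alpha\in F_{n-p(\alpha)}(M)\) for each \(\alpha\), since then \(e_\alpha\otimes m_\alpha\in F_{p(\alpha)}(C)\otimes F_{n-p(\alpha)}(M)\) and summing over \(\alpha\) yields \(\psi(m)\in\sum_{k=0}^n F_{n-k}(C)\otimes F_k(M)\).

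The key identity comes from coassociativity \((\Delta\otimes\id)\psi(m)=(\id\otimes\psi)\psi(m)\) in \(C\otimes C\otimes M\): applying \(e_\alpha^\ast\) to the left-most tensor factor of both sides gives \(\psi(m_\alpha)=\sum_\beta\nb{e_\alpha^\ast\rightharpoonup e_\beta}\otimes m_\beta\), where \(e_\alpha^\ast\rightharpoonup c:=(e_\alpha^\ast\otimes\id)\Delta(c)\); contracting against an arbitrary \(\phi\in C^\vee\) and unwinding the definition of the convolution product on \(C^\vee\) then produces
\[
m_\alpha.\phi\;=\;m.\nb{e_\alpha^\ast\star\phi}.
\]
Now if \(\phi\in J^{\,n-p(\alpha)+1}\) then \(e_\alpha^\ast\star\phi\in J^{p(\alpha)}\cdot J^{\,n-p(\alpha)+1}\subseteq J^{n+1}\), and since \(m\in F_n(M)=M[J^{n+1}]\) we get \(m_\alpha.\phi=0\). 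Hence \(m_\alpha\in M[J^{\,n-p(\alpha)+1}]=F_{n-p(\alpha)}(M)\), completing the argument.

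I expect the only delicate points to be bookkeeping: verifying that \(e_\alpha^\ast\) really lands in \(J^{p(\alpha)}\) (this is exactly where closedness of \(J^{p(\alpha)}\) is used, via \(\nb{\nb{J^{p(\alpha)}}^\perp}^\perp=J^{p(\alpha)}\)), checking the coassociativity computation that yields \(m_\alpha.\phi=m.(e_\alpha^\ast\star\phi)\), and making sure the manipulations with the possibly infinite index set are legitimate — they are, since \(\psi(m)\) and each \(\psi(m_\alpha)\) are finite tensors, so only finitely many \(m_\alpha\) and finitely many terms \(e_\alpha^\ast\rightharpoonup e_\beta\) are nonzero. Alternatively one can avoid bases by choosing vector-space complements \(F_p(C)=F_{p-1}(C)\oplus V_p\), decomposing \(\psi(m)=\sum_{p=0}^n x_p\) with \(x_p\in V_p\otimes M\), and proving \(x_p\in V_p\otimes F_{n-p}(M)\) by the same torsion computation; I would present whichever of the two formulations reads more cleanly.
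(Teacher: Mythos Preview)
Your proposal is correct and follows essentially the same route as the paper: both pick a basis of \(F_n(C)\) adapted to the coradical filtration, observe that the dual functionals lie in the appropriate powers of \(J\), and then use the \(J^{n+1}\)-torsion condition together with the identity \(m_\alpha.\phi=m.(e_\alpha^\ast\star\phi)\) (which the paper uses implicitly when it asserts that \(m.\phi^a_i\) is \(J^{n+1-a}\)-torsion). The only cosmetic difference is that the paper also chooses an adapted basis of \(M\) and reads off the vanishing of the coefficients \(D(m,a,i,b,j)\) for \(a+b>n\), whereas you work directly with the components \(m_\alpha\); your version is slightly cleaner in this respect.
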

\begin{proof}
	Let \(\{c_i^n\in F_n(C)\}_{i\in I_n,n\geq0}\) be such that \(\{c_i^k\mid k\leq n\}\) is a basis of \(F_n(C)\), and similarly \(\{m_j^n \in F_n(M)\}_{j\in J_n,n\geq 0}\).
	Let also \(\phi^n_i\) be the linear forms dual to the \(c^n_i\).
	In particular, since \(\phi^n_i\) vanishes on \(F_{n-1}(C)\), it lies in \(J^n\subset C^\vee\).
	Write \(\psi(m)\) as
	\[\psi(m)=\sum_{a,i,b,j}D(m,a,i,b,j)c^a_i\otimes m^b_j.\]
	This shows that
	\[m.\phi^a_i=\sum_{b,j}D(m,a,i,b,j)m^b_j.\]
	As \(m\) was \(J^{n+1}\)-torsion by assumption, we must have that \(m.\phi^a_i\) is \(J^{n+1-a}\)-torsion, so the coefficients \(D(m,a,i,b,j)\) with \(a+b>n\) must vanish, and we are done.
\end{proof}

From now on, let \(C\) be a Hopf algebra over \(\ff\), which, as a coalgebra, is pointed.
Let \(G\subset C\) be the subset of group-like elements of \(C\).
Then the multiplication of \(C\) equips this with the structure of a group, and pointedness implies that \(F_0(C)\simeq \ff[G]\) as Hopf algebras.
\begin{definition}
	The primitive elements of \(M\) are defined as
	\[P_1(M):=\{m\in M\mid \psi(m)=1\otimes M\}.\]
\end{definition}

\begin{lemma}\label{lem:F0=FGP1}
	Let \(M\) be an \(F_0(C)=\ff[G]\)-module in \(C\)-comodules.
	Then we have that the map
	\[F_0(C)\otimes P_1(M)\to F_0(M),\,g\otimes m\mapsto g.m\]
	is an equivalence.
\end{lemma}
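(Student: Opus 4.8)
The plan is to reduce the statement to the elementary equivalence between comodules over a group algebra, viewed as a coalgebra, and $G$-graded vector spaces.

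First I would invoke the lemma immediately preceding the statement, applied with $n=0$, to see that the coaction restricts to a map $\psi\colon F_0(M)\to F_0(C)\otimes F_0(M)$, so that $F_0(M)$ is a comodule over $F_0(C)\simeq\ff[G]$. As a coalgebra, $\ff[G]$ is the direct sum of its one-dimensional simple subcoalgebras $\ff g$ for $g\in G$, with $\Delta(g)=g\otimes g$ and $\epsilon(g)=1$. Coassociativity and counitality then force any $\ff[G]$-comodule $N$ to split as $N=\bigoplus_{g\in G}N_g$ with $N_g=\{n\in N\mid\psi(n)=g\otimes n\}$ (the maps $n\mapsto(\mathrm{ev}_g\otimes\id)\psi(n)$ are orthogonal idempotents summing to the identity). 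Applying this to $N=F_0(M)$ gives $F_0(M)=\bigoplus_{g\in G}M_g$, and by definition $M_1=P_1(M)$.

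Next I would show that for $g\in G$ and $m\in P_1(M)$ one has $g.m\in M_g$. This is where the hypothesis that $M$ is an $\ff[G]$-\emph{module in $C$-comodules} enters: it means exactly that the structure map $\mu\colon\ff[G]\otimes M\to M$ is a morphism of $C$-comodules, where $\ff[G]\otimes M$ carries the tensor-product coaction. Computing that coaction on $g\otimes m$ using $\Delta(g)=g\otimes g$ and $\psi(m)=1\otimes m$ yields $g\otimes(g\otimes m)$, and applying $\id\otimes\mu$ gives $\psi(g.m)=g\otimes(g.m)$. The same computation with $g^{-1}$ in place of $g$ shows conversely that if $m\in M_g$ then $g^{-1}.m\in P_1(M)$. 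Since $M$ is a module over $\ff[G]$ and $gg^{-1}=g^{-1}g=1$ acts as the identity, multiplication by $g$ and by $g^{-1}$ are mutually inverse bijections $P_1(M)\leftrightarrow M_g$.

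Finally I would assemble the pieces: the map in question, $F_0(C)\otimes P_1(M)=\bigoplus_{g\in G}\ff g\otimes P_1(M)\to\bigoplus_{g\in G}M_g=F_0(M)$, is, summand by summand, the isomorphism $P_1(M)\xrightarrow{\sim}M_g$ given by multiplication by $g$, hence an isomorphism. I do not anticipate a genuine obstacle here; the only care needed is in bookkeeping the tensor-product comodule structure with consistent side conventions and in pinning down precisely that ``$\ff[G]$-module in $C$-comodules'' means $\mu$ is a comodule map, which is what makes the compatibility $\psi(g.m)=g\otimes(g.m)$ hold.
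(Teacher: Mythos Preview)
Your proof is correct and takes essentially the same approach as the paper: both decompose \(F_0(M)\) into its \(g\)-isotypic pieces \(M_g=\{n\mid\psi(n)=g\otimes n\}\) and observe that the \(\ff[G]\)-module action gives bijections \(P_1(M)\simeq M_g\). The paper phrases this as writing down the explicit inverse \(m\mapsto\sum_g g\otimes g^{-1}.m_g\), while you check bijectivity summand by summand and are more careful about why the module action respects the comodule structure, but the content is identical.
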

\begin{proof}
	We explicitly construct an inverse:
	for \(m\in F_0(M)\) write \(\psi(m)=\sum_g g\otimes m_g\).
	By coassociativity we have that \(\psi(m_g)=g\otimes m_g\), so that \(g^{-1}.m_g\in P_1(M)\).
	The inverse then maps
	\[m\mapsto\sum_gg\otimes g^{-1}.m_g.\]
\end{proof}
\begin{definition}\label{def:splittable}
	Let \(M\) be an \(\ff[G]\)-module in \(C\)-comodules.
	Consider the natural maps
	\[q_n:F_n(M)/F_{n-1}(M)\to F_n(C)/F_{n-1}(C)\otimes P_1(M),\,[m]\mapsto \sum_{i,g}[c_i]\otimes g^{-1}.m_{i,g}\]
	for \(\psi(m)\equiv \sum_ic_i\otimes m_i\) mod \(F_{n-1}(C)\otimes F_n(M)\) with \(c_i\in F_n(C)\) and \(m_i\in F_0(M)\).
	We call \(M\) {\it splittable} if these are surjective for all \(n\geq 0\).
\end{definition}
\begin{lemma}
	Let \(M\) be an \(\ff[G]\)-module in \(C\)-comodules.
	Then the natural maps
	\[q_n:F_n(M)/F_{n-1}(M)\to F_n(C)/F_{n-1}(C)\otimes P_1(M)\]
	are injective for all \(n\geq 0\).
\end{lemma}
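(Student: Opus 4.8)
The plan is to factor each $q_n$ through the associated graded comodule and to exploit a compatibility between the $G$-gradings on $\mathrm{gr}_n(C)$ and on $F_0(M)$. First I would pass to $D:=\mathrm{gr}(C)$, which by \cite[cor.~9.0.4, cor.~9.1.7]{Sweedler_HopfAlgebras} is a graded coalgebra with $D_0=F_0(C)=\ff[G]$ (the coradical of the pointed coalgebra $C$), and to $N:=\mathrm{gr}(M)$, the graded $D$-comodule with coaction $\bar\psi\colon N_n\to\bigoplus_{k=0}^{n}D_{n-k}\otimes N_k$ and $N_0=F_0(M)$. Writing $\bar\psi_0\colon N_n\to D_n\otimes N_0$ for the component landing in $D_n\otimes N_0$, one checks directly from the definition that $q_n=(\id\otimes\rho)\circ\bar\psi_0$, where $\rho\colon N_0=\bigoplus_{g\in G}M_g\to P_1(M)$ is $g^{-1}.(-)$ on the $g$-isotypic piece $M_g$ of the $\ff[G]$-comodule $F_0(M)$; by \cref{lem:F0=FGP1} each $\rho|_{M_g}$ is an isomorphism onto $P_1(M)$.

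Next I would record the easy half: $\bar\psi_0$ is injective. If $\bar\psi_0([m])=0$ for $m\in F_n(M)$, then using $\psi(m)\in\sum_{k=0}^{n}F_{n-k}(C)\otimes F_k(M)$, the summands with $k\ge 1$ lie in $F_{n-1}(C)\otimes M$ while the $k=0$ summand lies in $F_{n-1}(C)\otimes F_0(M)$ (this is exactly the vanishing $\bar\psi_0([m])=0$), so $\psi(m)\in F_{n-1}(C)\otimes M$, whence $m\in F_{n-1}(M)$ and $[m]=0$.

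The heart of the matter is to show that the image of $\bar\psi_0$ lies in a subspace on which $\id\otimes\rho$ is injective. For this I would use that each $D_n$ carries a natural right $\ff[G]$-comodule structure, given by the $D_n\otimes D_0$-component of $\bar\Delta$ (coassociativity and counitality of $\bar\Delta$, together with $\bar\epsilon$ vanishing on $D_{\ge 1}$, make this into a comodule structure); equivalently $D_n=\bigoplus_{g\in G}(D_n)_g$ is $G$-graded and so has a $G$-homogeneous basis. Fix $[m]\in N_n$ and write $\bar\psi_0([m])=\sum_i c_i\otimes m_i$ with the $c_i$ linearly independent, $G$-homogeneous of weight $g_i$, and $m_i\in N_0$; decompose $m_i=\sum_g m_{i,g}$ with $m_{i,g}\in M_g$. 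Applying the coassociativity identity $(\bar\Delta\otimes\id)\bar\psi=(\id\otimes\bar\psi)\bar\psi$ to $[m]$ and extracting the $D_n\otimes D_0\otimes N_0$-component, the left-hand side contributes $\sum_i c_i\otimes g_i\otimes m_i$ (here the $G$-homogeneity of $c_i$ enters) and the right-hand side contributes $\sum_{i,g}c_i\otimes g\otimes m_{i,g}$ (here the $\ff[G]$-comodule structure of $N_0$ enters). Comparing coefficients of the linearly independent family $\{c_i\otimes g\}$ gives $m_{i,g}=\delta_{g,g_i}\,m_i$, i.e.\ $m_i\in M_{g_i}$. Hence $q_n([m])=\sum_i c_i\otimes g_i^{-1}.m_i$, and if this vanishes then each $g_i^{-1}.m_i=0$, so each $m_i=0$ (injectivity of $\rho|_{M_{g_i}}$), so $\bar\psi_0([m])=0$, so $[m]=0$ by the previous paragraph.

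I expect the main obstacle to be the bookkeeping in the last paragraph: pinning down the right $\ff[G]$-comodule structure on $\mathrm{gr}_n(C)$ (standard pointed-Hopf-algebra theory, but needing a careful sentence of justification) and tracking the graded components of the coassociativity identity so that the decisive relation $m_i\in M_{g_i}$ falls out. Everything else — the factorization $q_n=(\id\otimes\rho)\circ\bar\psi_0$, injectivity of $\bar\psi_0$, and the final deduction — is routine once that relation is in hand.
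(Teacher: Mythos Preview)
Your proof is correct and follows essentially the same approach as the paper: both arguments pass to the associated graded, use the right \(\ff[G]\)-coaction on \(\mathrm{gr}_n(C)\), and extract from coassociativity the key matching relation between the \(G\)-weight of each \(c_i\) and the \(G\)-isotype of the corresponding \(m_i\). Your organization is slightly cleaner---by choosing the \(c_i\) to be \(G\)-homogeneous and linearly independent from the outset you get injectivity directly from \(m_i\in M_{g_i}\), whereas the paper works with arbitrary \(c_i\), decomposes them into \(G\)-components \(c_{i,g}\), and then needs a second application of \(\Delta\otimes\id\) to close the argument---but the mathematical content is the same.
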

\begin{proof}
	The case \(n=0\) follows from Lemma A.12.
	For \(n\geq 1\), consider \(m\in F_n(M)\) and write \(\psi(m)\equiv\sum_{i,g}c_i\otimes m_{i,g}\) mod \(F_{n-1}(C)\otimes F_n(M)\) and \(\Delta(c_i)\equiv\sum_{g}c_{i,g}\otimes g\) mod \(F_{n-1}(C)\otimes F_n(C)\).
	By coassociativity the elements \(c_{i,g}\) fulfill
	\[\Delta(c_{i,g})\equiv c_{i,g}\otimes g\text{ mod }F_{n-1}(C)\otimes F_n(C).\]
	Again by coassociativity we find that
	\[\sum_{i,g}c_i\otimes g\otimes m_{i,g}\equiv \sum_{i,g,h}c_{i,h}\otimes h\otimes m_{i,g}\text{ mod }F_{n-1}(C)\otimes F_0(C)\otimes F_0(M).\]
	Projecting to the terms of the form \(A\otimes h\otimes B\) for a fixed \(h\in G\) we get
	\[\forall{h\in G}:\sum_ic_i\otimes m_{i,h}\equiv \sum_{i,g}c_{i,h}\otimes m_{i,g}\text{ mod }F_{n-1}(C)\otimes F_0(M).\]
	By the equivalence of Lemma A.12 this implies
	\[\circledast\quad \forall h\in G:\sum_ic_i\otimes m_{i,h}\equiv\sum_ic_{i,h}\otimes m_{i,h}\text{ mod }F_{n-1}(C)\otimes F_0(M).\]
	Now assume that
	\[q_n(m)=\sum_{i,g}[c_{i,g}]\otimes g^{-1}.m_{i,g}=0\iff \sum_{i,g}c_{i,g}\otimes g^{-1}.m_{i,g}\in F_{n-1}(C)\otimes P_1(M).\]
	Applying \(\Delta\otimes\id\) this gives
	\[\sum_{i,g}c_{i,g}\otimes g\otimes g^{-1}.m_{i,g}\in F_{n-1}(C)\otimes F_0(C)\otimes P_1(M).\]
	Now Lemma A.12 implies that
	\[\sum_{i,g}c_{i,g}\otimes m_{i,g}\in F_{n-1}(C)\otimes F_0(M).\]
	Using the congruence \(\circledast\) this shows
	\[\psi(m)\equiv\sum_i c_i\otimes m_i\equiv \sum_{i,g}c_{i,g}\otimes m_{i,g}\equiv 0\text{ mod }F_{n-1}(C)\otimes F_n(M).\]
	Thus \(m\) is already in \(F_{n-1}(M)\), and we are done.
\end{proof}

We now prove a slight strengthening of the Milnor-Moore type argument introduced in \cite{mm}.
\begin{theorem}\label{thm:milnorMoore}
	Let \(C\) be a Hopf algebra over \(\ff\) which, as a coalgebra, is pointed, and let \(G\subset C\) be the set of group-like elements.
	Let \(M\) be an \(\ff[G]\)-module in \(C\)-comodules which is splittable.
	Then, for any \(\ff\)-linear retraction \(r:M\to \ff[G]\otimes P_1(M)\) of the natural map \(\ff[G]\otimes P_1(M)\simeq F_0(M)\subset M\), the map
	\[h_r:M\xrightarrow{\psi}C\otimes M\xrightarrow{\id\otimes r}C\otimes\ff[G]\otimes P_1(M)\xrightarrow{\id\otimes\epsilon\otimes\id}C\otimes P_1(M)\]
	is an isomorphism of \(C\)-comodules and exhibits \(M\) as a (co)free \(\ff[G]\)-module in \(C\)-comodules.
\end{theorem}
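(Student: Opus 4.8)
The plan is to run a coradical-filtration argument, comparing $h_r$ to the graded maps $q_n$ of \cref{def:splittable}.

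First I would check that $h_r$ is a morphism of $C$-comodules. Viewing $C\otimes M$ and $C\otimes P_1(M)$ as the cofree $C$-comodules on the vector spaces $M$ and $P_1(M)$, the coaction $\psi\colon M\to C\otimes M$ is a comodule map, and $\id_C\otimes\big((\ep\otimes\id)\circ r\big)$ is the functorial map of cofree comodules induced by the $\ff$-linear map $\tilde r:=(\ep\otimes\id)\circ r\colon M\to P_1(M)$; hence the composite $h_r$ is a comodule map. I would also record how $h_r$ and $\tilde r$ behave on $F_0(M)$: for $m\in F_0(M)$ one has $\psi(m)=\sum_g g\otimes m_g$ with $\psi(m_g)=g\otimes m_g$ by coassociativity, so the isomorphism of \cref{lem:F0=FGP1} sends $m$ to $\sum_g g\otimes g^{-1}.m_g$; since $r$ retracts $\ff[G]\otimes P_1(M)\cong F_0(M)\hookrightarrow M$ this forces $r(m)=\sum_g g\otimes g^{-1}.m_g$, hence $\tilde r(m)=\sum_g g^{-1}.m_g$ and $h_r(m)=\sum_g g\otimes g^{-1}.m_g$. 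In particular $h_r$ restricted to $F_0(M)$ is exactly the canonical isomorphism $F_0(M)\cong\ff[G]\otimes P_1(M)=F_0(C)\otimes P_1(M)$.

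Next I would show $h_r$ is filtration-preserving and identify its associated graded. By the lemma asserting that $m\in F_n(M)$ implies $\psi(m)\in\sum_{k=0}^nF_{n-k}(C)\otimes F_k(M)$, we may write $\psi(m)\equiv\sum_i c_i\otimes m_i$ modulo $F_{n-1}(C)\otimes M$ with $c_i\in F_n(C)$ and $m_i\in F_0(M)$. Applying $\id_C\otimes\tilde r$ and the formula for $\tilde r$ on $F_0(M)$ (so $\tilde r(m_i)=\sum_g g^{-1}.m_{i,g}$, where $\psi(m_i)=\sum_g g\otimes m_{i,g}$) gives
\[
h_r(m)\equiv\sum_{i,g} c_i\otimes g^{-1}.m_{i,g}\pmod{F_{n-1}(C)\otimes P_1(M)}.
\]
Because the cofree comodule $C\otimes P_1(M)$ has comodule filtration $F_\bullet(C)\otimes P_1(M)$ — this follows from the dual-algebra picture of \cref{cons:dualAlgebra}, identifying $F_n$ with $J^{n+1}$-torsion — this shows $h_r\big(F_n(M)\big)\subseteq F_n(C)\otimes P_1(M)$ and that $\mathrm{gr}_n(h_r)$ is precisely the map $q_n\colon F_n(M)/F_{n-1}(M)\to F_n(C)/F_{n-1}(C)\otimes P_1(M)$. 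The injectivity lemma preceding \cref{thm:milnorMoore} makes each $q_n$ injective, and splittability of $M$ makes each $q_n$ surjective, so every $\mathrm{gr}_n(h_r)=q_n$ is an isomorphism.

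I would then conclude that $h_r$ is an isomorphism by the usual induction along the two exhaustive, bounded-below filtrations ($F_{-1}=0$ since $\psi$ is injective): if $h_r(m)=0$ with $m\in F_n(M)$ then $[m]\in\ker q_n=0$, so $m\in F_{n-1}(M)$, and iterating gives $m=0$; and the image of $h_r$ contains $F_0$ (by the first step) and, inductively, $F_n(C)\otimes P_1(M)$ using surjectivity of $q_n$, so by exhaustiveness $h_r$ is surjective. Finally, the target $C\otimes P_1(M)$, with $\ff[G]=F_0(C)$ acting by left multiplication on the $C$-tensor factor, is by construction a (co)free $\ff[G]$-module in $C$-comodules, and transporting this structure along the $C$-comodule isomorphism $h_r$ realizes $M$ as one. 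The only genuine content beyond bookkeeping is the identification $\mathrm{gr}_n(h_r)=q_n$, which hinges on the fact — forced regardless of the choice of $r$ — that on the zeroth filtration $\tilde r$ simply sums up group-like components; getting this $F_0$-normalization right via \cref{lem:F0=FGP1}, and checking that the coradical filtration of a cofree comodule is $F_\bullet(C)\otimes P_1(M)$, are the two points that need care, while the rest is a routine filtered induction.
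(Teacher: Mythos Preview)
Your proposal is correct and follows essentially the same approach as the paper: both argue that $h_r$ is a comodule map, identify its associated graded with the maps $q_n$, and then conclude by induction along the coradical filtration using the injectivity lemma together with splittability. The only cosmetic difference is that the paper packages the inductive step as a five-lemma diagram with exact rows, whereas you unwind the injectivity and surjectivity inductions separately; your explicit normalization of $\tilde r$ on $F_0(M)$ and your remark that the comodule filtration of the cofree comodule is $F_\bullet(C)\otimes P_1(M)$ make the identification $\mathrm{gr}_n(h_r)=q_n$ slightly more transparent than in the paper.
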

\begin{proof}
	First of all, \(h_r\) is a map of comodules since already \(\psi:M\to C\otimes M\) is a map of comodules when we give the target the comodule structure \(\Delta\otimes\id\).

	Since the coradical filtration of \(C\), and thus also the natural comodule filtration induced by it, is exhaustive, it suffices to show that \(F_n(h_r)\) is an isomorphism for all \(n\geq0\).
	We will do so by induction in \(n\).

	For \(n=0\) the map \(F_0(h_r)\) equals the map considered in Lemma A.12 since \(r\) is a retraction, so it is an isomorphism.

	Now assume that we have already established that \(F_n(h_r)\) is an isomorphism.
	Consider the diagram with exact rows
\[\begin{tikzcd}
	0 & {F_n(M)} & {F_{n+1}(M)} & {F_{n+1}(M)/F_n(M)} & 0 \\
	0 & {F_n(C)\otimes P_1(M)} & {F_{n+1}(C)\otimes P_1(M)} & {F_{n+1}(C)/F_n(C)\otimes P_1(M)} & 0
	\arrow[from=1-1, to=1-2]
	\arrow[from=1-2, to=1-3]
	\arrow["{F_n(h_r)}"', from=1-2, to=2-2]
	\arrow[from=1-3, to=1-4]
	\arrow["{F_{n+1}(h_r)}"', from=1-3, to=2-3]
	\arrow[from=1-4, to=1-5]
	\arrow[from=1-4, to=2-4]
	\arrow[from=2-1, to=2-2]
	\arrow[from=2-2, to=2-3]
	\arrow[from=2-3, to=2-4]
	\arrow[from=2-4, to=2-5]
\end{tikzcd}\]
	We claim that the right vertical map is exactly the natural map \(q_{n+1}\).
	Indeed, consider an element \(m\in F_{n+1}(m)\) and write \(\psi(m)\equiv \sum c_i\otimes m_i\) mod \(F_n(C)\otimes F_{n+1}(M)\).
	Then \(h_r(m)\) is given by
	\[h_r(m)\equiv \sum_{i,g}c_i\otimes g^{-1}.m_{i,g}\text{ mod }F_n(C)\otimes P_1(M)\]since \(r\) is a retraction.
	So the image of \(m\) in the bottom right corner is
	\[\sum_{i,g}[c_i]\otimes g^{-1}.m_{i,g}=q_{n+1}([m]).\]
	Now by Lemma A.14 and the assumption that \(M\) is splittable \(q_{n+1}\) is an isomorphism, so that also \(F_{n+1}(h_r)\) is an isomorphism.
\end{proof}

Let us now connect this with the version stated in \cite{mm}.
\begin{definition}
	Let \(M\) be an \(\ff[G]\)-algbra in \(C\)-comodules.
	We define, for \(n\geq0\), the {\it \(n\)-th graded right primitives} of \(M\) as
	\[\overline{P}_1Gr_n(M):=\{m\in F_n(M)\mid \psi(m)\equiv c\otimes 1\text{ mod } F_{n-1}(M)\otimes F_n(M)\}.\]
	We call a map of \(\ff[G]\)-algebras in \(C\)-comodules \(A:M\to N\) {\it \(\star\)-surjective} if it induces surjections on all graded right primitives.
\end{definition}

\begin{proposition}
	Let \(M\) be an \(\ff[G]\)-algebra in \(C\)-comodules which admits a \(\star\)-surjective map \(A:M\to C\).
	Then \(M\) is splittable.
\end{proposition}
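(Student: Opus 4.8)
The plan is to verify directly that the map $q_n\colon F_n(M)/F_{n-1}(M)\to F_n(C)/F_{n-1}(C)\otimes P_1(M)$ of \cref{def:splittable} is surjective for every $n$. Two preliminary observations set this up. First, applied to the comodule algebra $C$ itself — coaction $\Delta$, left multiplication as $\ff[G]$-action — the filtration estimate for coactions gives $P_1(C)=\ff\cdot 1$ and $q_n^{C}([c])=[c]\otimes 1$, so $C$ is splittable; moreover, since $A\colon M\to C$ is a morphism of $\ff[G]$-algebras in $C$-comodules it is filtered, carries $P_1(M)$ into $P_1(C)$, and on $F_0$ is $\id_{\ff[G]}\otimes A|_{P_1}$ under \cref{lem:F0=FGP1}, so there is a commuting square $q_n^{C}\circ\mathrm{gr}_n(A)=(\id\otimes A|_{P_1})\circ q_n^{M}$. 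Second, $q_n^{M}$ is equivariant for the $\ff[G]$-action: if $\ff[G]$ acts on the source through the module structure and on the target by left multiplication on the first tensor factor only, then $q_n^{M}(g\cdot x)$ is $q_n^{M}(x)$ with $g$ acting on the $F_n(C)/F_{n-1}(C)$-factor — a short computation from $\psi_M(g\cdot m)=\sum(g\cdot m_{(-1)})\otimes(g\cdot m_{(0)})$ together with \cref{lem:F0=FGP1}.

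The decisive point is structural: $F_n(C)/F_{n-1}(C)$ is a free left $\ff[G]$-module whose free generators may be chosen in the image of $\overline{P}_1Gr_n(C)$. I would get this by passing to the associated graded Hopf algebra $\mathrm{gr}(C)$, which is coradically graded with degree-zero part $\ff[G]$ and hence comes equipped with a Hopf algebra projection onto $\ff[G]$ split by the inclusion $\ff[G]\hookrightarrow\mathrm{gr}(C)$. Radford's theorem on Hopf algebras with a projection then gives $\mathrm{gr}(C)\cong R\#\ff[G]$, where $R={}^{\mathrm{co}\,\ff[G]}\mathrm{gr}(C)$ is the subalgebra of left coinvariants; the biproduct form shows that $\mathrm{gr}(C)$, and so each homogeneous part $F_n(C)/F_{n-1}(C)$, is free as a left $\ff[G]$-module on any homogeneous $\ff$-basis of $R$. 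A further application of the filtration estimate identifies the image of $\overline{P}_1Gr_n(C)$ in $F_n(C)/F_{n-1}(C)$ with the degree-$n$ part $R(n)$ of $R$.

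Granting these, the proof finishes quickly. Fix $c\in\overline{P}_1Gr_n(C)$; by $\star$-surjectivity of $A$ there is $\hat c\in\overline{P}_1Gr_n(M)$ with $A(\hat c)=c$. Being a graded right primitive, $\hat c$ has $q_n^{M}([\hat c])$ of the form $[c']\otimes 1$, and the commuting square of the first observation pins $[c']=[c]$. For any $p\in P_1(M)\subseteq F_0(M)$, multiplicativity of the coaction gives $\psi_M(\hat c\,p)\equiv(c\otimes 1)(1\otimes p)=c\otimes p$ modulo $F_{n-1}(C)\otimes F_n(M)$, whence $q_n^{M}([\hat c\,p])=[c]\otimes p$. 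So the image of $q_n^{M}$ contains $R(n)\otimes P_1(M)$; since $q_n^{M}$ is $\ff[G]$-linear and $R(n)\otimes P_1(M)$ generates $F_n(C)/F_{n-1}(C)\otimes P_1(M)$ as an $\ff[G]$-module, $q_n^{M}$ is surjective. Hence $M$ is splittable.

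The only genuinely nontrivial ingredient is Radford's biproduct theorem, entering only via the statement that $\mathrm{gr}(C)$ is left-$\ff[G]$-free on a homogeneous basis of its $\ff[G]$-coinvariants; when $G$ is trivial (e.g.\ $C$ with one-dimensional coradical) it is vacuous, since then $\overline{P}_1Gr_n(C)=F_n(C)$ and the argument reduces to lifting an $\ff$-basis of $F_n(C)/F_{n-1}(C)$ to graded primitives of $M$ and multiplying by $P_1(M)$. The step needing the most care is the bookkeeping with the tensor filtration on $C\otimes M$: checking that leading terms of coactions multiply and shift filtration as asserted, and that the decomposition $F_0(M)\cong\ff[G]\otimes P_1(M)$ is preserved by all the maps involved.
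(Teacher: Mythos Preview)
Your proof is correct and follows the same underlying strategy as the paper's: lift graded right primitives of $C$ to $M$ via $\star$-surjectivity, multiply by elements of $P_1(M)$, and use the $G$-translation to hit all of $F_n(C)/F_{n-1}(C)\otimes P_1(M)$. The difference lies entirely in how you justify that $F_n(C)/F_{n-1}(C)$ is $\ff[G]$-generated by the image of $\overline{P}_1Gr_n(C)$. You invoke Radford's biproduct theorem for the associated graded Hopf algebra; the paper does this by hand and much more cheaply. It simply observes that the right $\ff[G]$-coaction on $F_n(C)/F_{n-1}(C)$ (project $\Delta$ onto $F_n(C)/F_{n-1}(C)\otimes F_0(C)$) decomposes any class as $[c]=\sum_g[c_g]$ with $\Delta(c_g)\equiv c_g\otimes g$ modulo $F_{n-1}(C)\otimes F_n(C)$; then $c_gg^{-1}$ is a graded right primitive, and one recovers $c_g$ by multiplying back by $g$. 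From there the paper directly computes $\psi(a\,g\,m)\equiv c_g\otimes gm$ for a $\star$-lift $a$ of $c_gg^{-1}$ and $m\in P_1(M)$, reading off $q_n([agm])=[c_g]\otimes m$ and bypassing your separate $\ff[G]$-equivariance argument and the commuting-square bookkeeping. Your route is sound but imports heavier machinery than needed; the paper's argument is self-contained in a few lines. One small caution on your Radford step: graded \emph{right} primitives correspond naturally to \emph{right} $\ff[G]$-coinvariants in $\mathrm{gr}(C)$, not left ones, so you should either state the right-coinvariant form of Radford or note the antipode passage --- this does not affect the conclusion.
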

\begin{proof}
	We need to prove that the natural maps
	\[q_n:F_n(M)/F_{n-1}(M)\to F_n(C)/F_{n-1}(C)\otimes P_1(M)\]
	are surjective.
	Since every element \([c]\in F_{n}(C)/F_{n-1}(C)\) can be written as a sum \([c]=\sum_g[c_g]\) with \(\Delta(c_g)\equiv c_g\otimes g\) mod \(F_{n-1}(C)\otimes F_n(C)\) we only need to show that elements of the form \([c_g]\otimes m\) are in the image.
	By assumption, \(c_gg^{-1}\) is an \(n\)-th graded right primitive of \(C\).
	By \(\star\)-surjectivity there exists an \(n\)-th graded right primitive \(a\) of \(M\) such that \(A(a)=c_gg^{-1}\).
	Since \(A\) is a map of comodules, we find that \(\psi(a)\equiv c_gg^{-1}\otimes1\) mod \(F_{n-1}(C)\otimes F_n(M)\).
	Then, using the algebra structure of \(M\), we calculate \(\psi(agm)\equiv c_g\otimes gm\) mod \(F_{n-1}(C)\otimes F_n(M)\).  
	This shows that \(q_n(agm)=[c_g]\otimes m\) and we are done.
\end{proof}

Finally, let us state another criterion for \(\ff[G]\)-algebras in \(C\)-comodules to be splittable
\begin{proposition}\label{prop:splittableCriterion}
	Let \(M\) be an \(\ff[G]\)-algebra in \(C\)-comodules, and assume there exists a map \(s:C\to M\) of comodules with \(s(1)=1\).
	Then M is splittable.
\end{proposition}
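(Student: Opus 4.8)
The plan is to verify directly that the maps
\[q_n:F_n(M)/F_{n-1}(M)\longrightarrow F_n(C)/F_{n-1}(C)\otimes P_1(M)\]
of \cref{def:splittable} are surjective for all \(n\ge 0\), re-running the argument of the preceding proposition with the section \(s\) playing the role previously played by a \(\star\)-surjective map \(M\to C\).

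First I would perform the same reduction as in that proof: since \(C\) is a pointed Hopf algebra, its coradical filtration is multiplicative, and every class \([c]\in F_n(C)/F_{n-1}(C)\) can be written as \([c]=\sum_{g\in G}[c_g]\) with \(\Delta(c_g)\equiv c_g\otimes g\) modulo \(F_{n-1}(C)\otimes F_n(C)\). Hence it is enough to realise every element of the form \([c_g]\otimes m\), with \(m\in P_1(M)\), as \(q_n\) of something.

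The key step is to use \(s\) to build an \(n\)-th graded right primitive of \(M\). Multiplicativity of the filtration gives \(c_gg^{-1}\in F_n(C)\) and
\[\Delta(c_gg^{-1})=\Delta(c_g)\,(g^{-1}\otimes g^{-1})\equiv c_gg^{-1}\otimes 1 \pmod{F_{n-1}(C)\otimes F_n(C)},\]
so \(c_gg^{-1}\) is an \(n\)-th graded right primitive of \(C\). Setting \(a:=s(c_gg^{-1})\) and using that \(s\) is a comodule map (where \(C\) is the regular comodule over itself) with \(s(1)=1\), we get \(a\in F_n(M)\) and
\[\psi(a)=(\id\otimes s)\big(\Delta(c_gg^{-1})\big)\equiv c_gg^{-1}\otimes 1\pmod{F_{n-1}(C)\otimes F_n(M)}.\]
From here the argument is verbatim as in the preceding proof: multiplying \(a\) by the images of \(g\) and \(m\) in the \(\ff[G]\)-algebra \(M\) and using that \(\psi\) is multiplicative with \(\psi(g)=g\otimes g\) and \(\psi(m)=1\otimes m\), one finds \(\psi(a\cdot g\cdot m)\equiv c_g\otimes gm\) modulo \(F_{n-1}(C)\otimes F_n(M)\), so \(a\cdot g\cdot m\in F_n(M)\) and \(q_n([a\cdot g\cdot m])=[c_g]\otimes g^{-1}(gm)=[c_g]\otimes m\). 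This gives surjectivity of all \(q_n\), hence splittability.

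I do not expect a genuine obstacle here; the only things to keep honest are the standard facts that the coradical filtration of the Hopf algebra \(C\) is an algebra filtration (this is what puts \(c_gg^{-1}\) in \(F_n(C)\) with the claimed comultiplication, and makes the comodule filtration of the algebra \(M\) multiplicative, so that \(a\cdot g\cdot m\) stays in \(F_n(M)\)) and that the \(\ff[G]\)-algebra structure on \(M\) is compatible with the coaction in the way used above. Both are already implicit in \cref{cons:dualAlgebra} and the lemmas preceding it.
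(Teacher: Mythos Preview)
Your proof is correct and follows essentially the same route as the paper: reduce to hitting \([c_g]\otimes m\), set \(a=s(c_gg^{-1})\), use that \(s\) is a comodule map with \(s(1)=1\) to get \(\psi(a)\equiv c_gg^{-1}\otimes 1\), and then multiply by \(gm\). Your version even states the intermediate congruence more accurately than the paper, which writes \(s(c_gg^{-1})\otimes 1\) where it should read \(c_gg^{-1}\otimes 1\).
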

\begin{proof}
	We need to prove that the natural maps
	\[q_n:F_n(M)/F_{n-1}(M)\to F_n(C)/F_{n-1}(C)\otimes P_1(M)\]
	are surjective.
	Since every element \([c]\in F_{n}(C)/F_{n-1}(C)\) can be written as a sum \([c]=\sum_g[c_g]\) with \(\Delta(c_g)\equiv c_g\otimes g\) mod \(F_{n-1}(C)\otimes F_n(C)\), we only need to show that elements of the form \([c_g]\otimes m\) are in the image.
	Since \(s\) is a map of comodules with \(s(1)=1\) we have
	\[\psi(s(c_gg^{-1}))=(\id\otimes s)(\Delta(c_gg^{-1}))\equiv(\id\otimes s)(c_gg^{-1}\otimes 1)\equiv s(c_gg^{-1})\otimes 1 \text{ mod }F_{n-1}(C)\otimes F_n(M).\]
	Using the algebras structure of \(M\) we calculate
	\[\psi(s(c_gg^{-1})gm)\equiv c_g\otimes gm\text{ mod }F_{n-1}(C)\otimes F_n(M).\]
	This shows that \(q_n([s(c_gg^{-1})gm])=[c_g]\otimes m\) and we are done.
\end{proof}
\begin{remark}
	In the other direction, assume that \(M\) is an \(\ff[G]\)-algebra in \(C\)-comodules and splittable.
	Then by Theorem A.9 we have \(h_r:M\simeq C\otimes P_1(M)\).
	Since \(1\in P_1(M)\) the map
	\[s_r:C\to M,\,c\mapsto h_r^{-1}(c\otimes 1)\]
	gives a map as in the above lemma.
\end{remark}

\section{Iwasawa (co)algebras and their (co)modules}\label{section:iwasawaCoalg}
Let \(\ff\) be a finite field of characteristic \(p\) and cardinality \(q=p^n\), and \(G\) a compact \(p\)-adic analytic group.

\begin{definition}
	The {\it Iwasawa algebra} \(\ff[\![G]\!]\) of \(G\) over \(\ff\) is defined to be the limit
	\[\ff[\![G]\!]:=\lim_{U}\ff[G/U]\]
	over all open normal subgroups \(U\) of \(G\).
	We equip it with the limit topology by giving the terms \(\ff[G/U]\) the discrete topology.

	The associated {\it Iwasawa coalgebra} is defined to be the colimit
	\[C^0(G,\ff):=\colim_U \mathrm{Set}(G/U,\ff)\]
	over all open normal subgroups \(U\) of \(G\).
\end{definition}
\begin{remark}
	As \(G\) is compact \(p\)-adic analytic, a subgroup of \(G\) is open iff it is of finite index in \(G\), see Theorem \ref{thm:openSubgroups}.
\end{remark}
\begin{lemma}
	An ideal of \(\ff[\![G]\!]\) is open iff it is of finite codimension.
\end{lemma}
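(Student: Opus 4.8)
The plan is to prove the two implications separately; the content of the nontrivial one is essentially a repackaging of \cref{thm:openSubgroups}. For the easy direction, suppose $I\subseteq\ff[\![G]\!]$ is open. By definition of the limit topology the kernels $\ker\nb{\ff[\![G]\!]\to\ff[G/U]}$, as $U$ ranges over the open normal subgroups of $G$, form a neighbourhood basis of $0$, so $I$ contains one of them. Hence $\ff[\![G]\!]/I$ is a quotient of the finite-dimensional $\ff$-algebra $\ff[G/U]$, so it has finite $\ff$-dimension.

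For the converse, let $I$ have finite codimension, so that, since $\ff$ is finite, $A:=\ff[\![G]\!]/I$ is a finite ring. First I would observe that $\rho\colon G\to A^\times$, $g\mapsto [g]+I$, is a group homomorphism into a finite group; its kernel $H$ is therefore a normal subgroup of finite index in $G$, hence \emph{open} by \cref{thm:openSubgroups}. Thus the projection $\ff[\![G]\!]\to\ff[G/H]$ is surjective with an open kernel $\omega_H$, identifying $\ff[G/H]$ with $\ff[\![G]\!]/\omega_H$, and $\rho$ factors through a ring homomorphism $\bar\rho\colon\ff[G/H]\to A$. The key step is then to check that the quotient map $q\colon\ff[\![G]\!]\to A$ agrees with $\bar\rho$ composed with this projection: both are continuous ring homomorphisms (the second one because $\omega_H$ is open, so $\ff[G/H]$ carries the discrete topology), and they agree on the image of $G$, hence on the dense $\ff$-subalgebra which is the image of the group algebra $\ff[G]$, hence everywhere since $A$ is Hausdorff. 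Consequently $\omega_H\subseteq\ker q=I$, and as $\omega_H$ is open, so is $I$.

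I do not expect a genuine obstacle here: the only nonformal ingredient is \cref{thm:openSubgroups}, invoked precisely to pass from ``$H$ of finite index'' to ``$H$ open''. The one point that requires a little care — and is the natural place for an error to hide — is that the factorization of $q$ through $\ff[G/H]$ must be justified topologically, via density of (the image of) $\ff[G]$ in $\ff[\![G]\!]$ together with the fact that the finite ring $A$ is discrete, rather than merely checked on group elements. Once that is in place the conclusion is immediate.
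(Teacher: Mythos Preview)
Your outline matches the paper's proof almost verbatim: the easy direction is identical, and for the converse both you and the paper form $A=\ff\pb{G}/I$, note $A$ is finite, take $H=\ker(G\to A^\times)$, invoke \cref{thm:openSubgroups} to get $H$ open, and then claim $\omega_H\subseteq I$. The paper simply asserts this last inclusion; you try to justify it via density, and that is where a genuine gap appears.

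Your density argument is circular. You assert that $q$ and $\bar\rho\circ\pi$ are both continuous and that $A$ is Hausdorff, but these cannot hold simultaneously without already knowing something about $I$: if $A$ carries the quotient topology then $q$ is tautologically continuous, but $A$ is Hausdorff only if $I$ is closed; if $A$ is given the discrete topology then it is Hausdorff, but continuity of $q$ is exactly the statement that $I$ is open. So ``agree on a dense set and the target is Hausdorff'' does not apply. The missing ingredient is that $\ff\pb{G}$ is left Noetherian (Lazard), so $I=\sum_{k=1}^n\ff\pb{G}a_k$ is the image of the continuous map $\ff\pb{G}^n\to\ff\pb{G}$, $(r_k)\mapsto\sum r_ka_k$; since $\ff\pb{G}$ is compact Hausdorff this image is closed, and a closed subgroup of finite index is open. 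This already finishes the proof without ever mentioning $A^\times$, or alternatively it validates your density argument a posteriori by making the quotient topology on $A$ discrete.
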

\begin{proof}
	First, assume that \(I\) is open.
	By definition that means that \(I\) contains the kernel of \(\ff[\![G]\!]\to \ff[G/U]\) for some open normal subgroup \(U\subset G\).
	As \(G/U\) is finite this augmentation ideal is already of finite codimension, so \(I\) must be of finite codimension, too.

	In the other direction, assume that \(I\) has finite codimension, and denote the quotient algebra by \(A\).
	Since \(\ff\) is finite and \(A\) is finite dimensional over \(\ff\), the group of units \(A^\times\) is finite.
	Now the kernel \(U\) of the induced \(G\to A^\times\) is normal of finite index in \(G\), and thus is open (since \(G\) is \(p\)-adic analytic).
	Thus, \(I\) contains the kernel of \(\ff[\![G]\!]\to \ff[G/U]\), which is an open ideal, thus \(I\) must be open itself.
\end{proof}

\begin{lemma}
	There are natural isomorphisms
	\[C^0(G,\ff)^\vee\simeq \ff[\![G]\!]\text{ and }\ff[\![G]\!]^\circ\simeq C^0(G,\ff)\]
	of (co)algebras, where \(\vee\) denotes the \(\ff\)-linear dual and \(\circ\) the {\it restricted} dual. 
\end{lemma}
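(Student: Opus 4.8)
The plan is to deduce both isomorphisms from the levelwise anti-equivalence between pseudocompact (i.e.\ profinite-dimensional, linearly topologized) and discrete \(\ff\)-vector spaces. First I would record the standard duality at the finite level: for a finite group \(\Gamma\) the group algebra \(\ff[\Gamma]\) and the function coalgebra \(\ff^\Gamma=\mathrm{Set}(\Gamma,\ff)\) are finite-dimensional and mutually \(\ff\)-linear dual, the basis \(\{[\gamma]\}_{\gamma\in\Gamma}\) of \(\ff[\Gamma]\) being dual to the basis of indicator functions of \(\ff^\Gamma\); under this pairing the product \([\gamma][\gamma']=[\gamma\gamma']\) and unit \([e]\) of \(\ff[\Gamma]\) correspond to the coproduct \((\Delta f)(\gamma,\gamma')=f(\gamma\gamma')\) and counit \(f\mapsto f(e)\) of \(\ff^\Gamma\), and vice versa. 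This is moreover natural: a surjection \(\Gamma'\twoheadrightarrow\Gamma\) induces the algebra surjection \(\ff[\Gamma']\twoheadrightarrow\ff[\Gamma]\), which is the \(\ff\)-linear dual of the coalgebra inclusion \(\ff^\Gamma\hookrightarrow\ff^{\Gamma'}\) given by precomposition.

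For the first isomorphism I would apply \(\hm_\ff(-,\ff)\) to the (directed) colimit presentation \(C^0(G,\ff)=\colim_U\ff^{G/U}\). Since the \(\ff\)-linear dual turns a filtered colimit into a cofiltered limit, this gives
\[C^0(G,\ff)^\vee=\lim_U\hm_\ff(\ff^{G/U},\ff)=\lim_U\ff[G/U]=\ff[\![G]\!],\]
where the identifications respect the (topological) algebra structures by the finite-level duality applied compatibly along the transition maps, and the weak topology of Construction~\ref{cons:dualAlgebra} on the left matches the inverse-limit topology on the right. The one point needing care is that the dual of the comultiplication of \(C^0(G,\ff)\) is the multiplication of \(\ff[\![G]\!]\): this uses \(\hm_\ff(V\otimes W,\ff)\cong\hm_\ff(V,\ff)\,\widehat{\otimes}\,\hm_\ff(W,\ff)\) for \(V,W\) filtered colimits of finite-dimensional spaces, together with the identification \(\ff[\![G]\!]\,\widehat{\otimes}\,\ff[\![G]\!]=\lim_{U,V}\ff[G/U\times G/V]\).

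For the second isomorphism I would unwind the definition of the restricted dual: \(\ff[\![G]\!]^\circ\) is the space of \(\ff\)-linear forms on \(\ff[\![G]\!]\) that are continuous for the discrete topology on \(\ff\), i.e.\ those vanishing on an open ideal. By the preceding lemma the open ideals of \(\ff[\![G]\!]\) are exactly the finite-codimension ones, and each of them contains \(\ker(\ff[\![G]\!]\to\ff[G/U])\) for some open normal \(U\) (take \(U\) to be the kernel of the map of \(G\) into the finite unit group of the quotient algebra, as in the proof of that lemma). Hence a form lies in \(\ff[\![G]\!]^\circ\) iff it factors through some finite quotient \(\ff[G/U]\), so
\[\ff[\![G]\!]^\circ=\colim_U\bigl(\ker(\ff[\![G]\!]\to\ff[G/U])\bigr)^\perp=\colim_U\ff[G/U]^\vee=\colim_U\ff^{G/U}=C^0(G,\ff),\]
and the coalgebra structures match by the finite-level duality and naturality once more.

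The main obstacle I expect is purely the bookkeeping of the monoidal structures under (continuous) \(\ff\)-linear duality: showing that the dual sends the completed tensor product to the ordinary one and back, so that multiplications become comultiplications and conversely, and that the topologies produced this way are precisely the limit, resp.\ colimit, topologies. Everything else reduces to the finite-group duality recorded above and to the observation — supplied by the preceding lemma — that the open ideals of \(\ff[\![G]\!]\) are exactly the finite-codimension ones.
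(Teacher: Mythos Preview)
Your proposal is correct and follows essentially the same approach as the paper: dualize the colimit presentation to get the first isomorphism, and identify the restricted dual with the continuous dual via the preceding lemma (open ideals $=$ finite-codimension ideals) for the second. The paper is considerably terser and omits the bookkeeping about (co)algebra structures and completed tensor products that you spell out, but the underlying argument is the same.
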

\begin{proof}
	For the first statement we calculate
	\[C^0(G,\ff)^\vee=\nb{\colim_UC^0(G/U,\ff)}^\vee=\lim_U C^0(G/U,\ff)^\vee=\lim_U \ff[G/U]=\ff[\![G]\!].\]
	For the second statement, recall that for an \(\ff\)-algebra \(R\) the restricted dual is defined to be the subspace \(R^\circ\subset R^\vee\) on those linear functionals whose kernel contains an ideal of finite codimension.
	Since those are exactly the open ideals, we can identify the restricted dual with the continuous dual, which is \(C^0(G,\ff)\).
\end{proof}
\begin{lemma}
	Let \(I\subset \ff[\![G]\!]\) be an ideal of finite codimension.
	Then the annihilator \(I^\perp\subset C^0(G,\ff)\) can be identified with \(\nb{\ff[\![G]\!]/I}^\vee\).
\end{lemma}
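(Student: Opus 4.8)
The plan is to unwind the definition of the annihilator space from \cref{cons:dualAlgebra} and combine it with the identification of \(C^0(G,\ff)\) with the restricted dual \(\ff[\![G]\!]^\circ\) established just above. Throughout I would regard \(C=C^0(G,\ff)\) as the coalgebra whose dual algebra is \(C^\vee=\ff[\![G]\!]\), so that by definition
\[I^\perp=\{c\in C^0(G,\ff)\mid \phi(c)=0\text{ for all }\phi\in I\}.\]

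First I would rewrite this using the canonical embedding \(c\mapsto ev_c\) of \(C^0(G,\ff)\) into the full linear dual \(\ff[\![G]\!]^\vee\), which by the preceding lemma is an isomorphism onto \(\ff[\![G]\!]^\circ\). Under it the condition ``\(\phi(c)=0\) for all \(\phi\in I\)'' turns into the condition that the functional \(ev_c\colon\ff[\![G]\!]\to\ff\) kill \(I\), giving
\[I^\perp=\{f\in\ff[\![G]\!]^\circ\mid f|_I=0\}.\]
On the other side, pullback along the quotient \(\pi\colon\ff[\![G]\!]\to\ff[\![G]\!]/I\) identifies \(\nb{\ff[\![G]\!]/I}^\vee\) with \(\{f\in\ff[\![G]\!]^\vee\mid f|_I=0\}\) inside the \emph{full} dual, so the whole claim reduces to checking that every functional vanishing on \(I\) already lies in the restricted dual.

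This last step is where I would invoke the finite-codimension hypothesis, and it is the only place it is needed: if \(f|_I=0\) then \(\ker f\) contains the ideal \(I\), which has finite codimension by assumption — equivalently, \(I\) is open, by the lemma characterising open ideals of \(\ff[\![G]\!]\) — and containing a finite-codimension ideal is exactly the defining property of membership in the restricted dual \(\ff[\![G]\!]^\circ\). Chaining the displayed equalities then yields \(I^\perp=\nb{\ff[\![G]\!]/I}^\vee\). I would close by remarking that the identification is natural in \(I\) and, since \(I\) is an ideal, upgrades to an isomorphism of coalgebras: it is the linear dual of the algebra surjection \(\pi\), and it matches the subcoalgebra structure on \(I^\perp\subset C^0(G,\ff)\) recorded in \cref{cons:dualAlgebra}. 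There is no genuine obstacle here — the content is bookkeeping — and the only subtlety worth flagging is keeping the full dual and the restricted dual carefully apart and using finite codimension at precisely the right point.
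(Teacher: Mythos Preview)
Your proof is correct and follows essentially the same route as the paper: both identify \(I^\perp\) with the functionals on \(\ff[\![G]\!]\) vanishing on \(I\), then use the finite-codimension hypothesis to show any such functional already lies in \(C^0(G,\ff)\simeq\ff[\![G]\!]^\circ\) (the paper phrases this last step as the inclusion of \(\nb{\ff[\![G]\!]/I}^\vee\) into the double dual of \(C^0(G,\ff)\) factoring through \(C^0(G,\ff)\)). Your extra remark about the coalgebra structure is correct but goes beyond what the lemma claims.
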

\begin{proof}
	An element \(f\in C^0(G,\ff)\) lies in \(I^\perp\) iff evaluation at \(f\) is zero for all elements in \(I\).
	Since \(I\) is of finite codimension, the inclusion of \(\nb{\ff[\![G]\!]/I}^\vee\) into the double dual of \(C^0(G,\ff)\) factors over \(C^0(G,\ff)\).
	Since their equality is evident after including both into the double dual, we are done.
\end{proof}
\begin{lemma}
	Let \(G\) have the form \(S\rtimes\ff^\times\) with \(S\) an open normal pro-\(p\) group.
	Then the Jacobson radical of \(\ff[\![G]\!]\) is the augmentation ideal
	\[J=I^G_S=\ker \ff[\![G]\!]\to \ff[\ff^\times]\]
	and \(\ff[\![G]\!]\) is complete with respect to \(J\).
\end{lemma}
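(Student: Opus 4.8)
The plan is to compute the Jacobson radical on each finite quotient and then pass to the profinite limit. Since $S$ is open in $G$, the open normal subgroups $U\subseteq S$ are cofinal among all open normal subgroups, so $\ff[\![G]\!]=\lim_U\ff[G/U]$ with $U$ running over this cofinal system and the structure maps $\ff[\![G]\!]\to\ff[G/U]$ surjective. Fix such a $U$ and write $\Gamma=G/U$, $P=S/U$; then $P$ is a normal $p$-subgroup of the finite group $\Gamma$, and $\Gamma/P\cong\ff^\times$ has order $q-1$ prime to $p$, so $P$ is the (normal) Sylow $p$-subgroup of $\Gamma$. The image of $I^G_S=\ker(\ff[\![G]\!]\to\ff[\ff^\times])$ in $\ff[\Gamma]$ is the augmentation kernel $I_P:=\ker(\ff[\Gamma]\to\ff[\Gamma/P])$, and I will first show $J(\ff[\Gamma])=I_P$; below, $\mathfrak a:=\ker(\ff[P]\to\ff)$ denotes the augmentation ideal of $\ff[P]$.

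Indeed, $I_P$ is generated as a two-sided ideal by $\mathfrak a$, and since $P$ is normal $\mathfrak a\,\ff[\Gamma]=\ff[\Gamma]\,\mathfrak a$, so $I_P=\ff[\Gamma]\,\mathfrak a$. Because $P$ is a finite $p$-group and $\chara\ff=p$, the algebra $\ff[P]$ is local with nilpotent maximal ideal $\mathfrak a$, say $\mathfrak a^{m}=0$; then $I_P^{m}=\ff[\Gamma]\,\mathfrak a^{m}=0$, so $I_P\subseteq J(\ff[\Gamma])$. Conversely $\ff[\Gamma]/I_P\cong\ff[\Gamma/P]$ is semisimple by Maschke's theorem, as $|\Gamma/P|=q-1$ is a unit in $\ff$; hence $J(\ff[\Gamma])\subseteq I_P$, giving $J(\ff[\Gamma])=I_P$.

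To pass to the limit I would use the description of $J$ from \cref{cons:dualAlgebra} as the intersection of the maximal closed ideals of $\ff[\![G]\!]$. On one hand, $\ff[\![G]\!]/I^G_S\cong\ff[\ff^\times]$ is semisimple --- as $\ff^\times$ is cyclic of order $q-1$ and $\ff$ contains all $(q-1)$-th roots of unity, $\ff[\ff^\times]$ is a product of $q-1$ copies of $\ff$ --- so $I^G_S$ is the intersection of $q-1$ finite-codimension, hence open, hence closed, maximal ideals, whence $J\subseteq I^G_S$. On the other hand, a maximal closed ideal $\mm$ has finite codimension (its quotient is a simple profinite ring, hence finite and discrete), so it contains $\ker(\ff[\![G]\!]\to\ff[G/U])$ for some $U\subseteq S$, making $\ff[\![G]\!]/\mm$ a simple quotient of $\ff[\Gamma]=\ff[G/U]$; since the Jacobson radical maps into the Jacobson radical along the surjection $\ff[\Gamma]\twoheadrightarrow\ff[\![G]\!]/\mm$ and the target has zero Jacobson radical, the image of $I_P=J(\ff[\Gamma])$ --- equivalently of $I^G_S$ --- in $\ff[\![G]\!]/\mm$ vanishes, i.e.\ $I^G_S\subseteq\mm$. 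Intersecting over all such $\mm$ gives $I^G_S\subseteq J$, so $J=I^G_S$.

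For $J$-adic completeness, observe that for each $U\subseteq S$ the image of $J=I^G_S$ in $\ff[G/U]$ is the nilpotent ideal $J(\ff[G/U])$; writing $n(U)$ for its nilpotency index we get $J^{n(U)}\subseteq\ker(\ff[\![G]\!]\to\ff[G/U])$, and hence $\bigcap_n J^n\subseteq\bigcap_U\ker(\ff[\![G]\!]\to\ff[G/U])=0$. Given a compatible family $(y_n)_n$ with $y_n\in\ff[\![G]\!]/J^n$, lift it to $\tilde y_n\in\ff[\![G]\!]$ with $\tilde y_{n+1}-\tilde y_n\in J^n$; by the inclusions just noted $(\tilde y_n)$ is Cauchy for the profinite topology, hence converges to some $y\in\ff[\![G]\!]$, and since each $J^n$ is closed (\cref{cons:dualAlgebra}) and $\tilde y_m\in\tilde y_n+J^n$ for $m\ge n$, we conclude $y\in\tilde y_n+J^n$, i.e.\ $y\equiv y_n$ mod $J^n$ for all $n$; thus $\ff[\![G]\!]\xrightarrow{\ \sim\ }\lim_n\ff[\![G]\!]/J^n$. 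The step I expect to require the most care is the passage to the limit --- in particular confirming that a maximal closed ideal of $\ff[\![G]\!]$ has finite codimension, so that the finite-group computation of $J(\ff[G/U])$ genuinely controls the full Jacobson radical --- while the finite-group input and the completeness bookkeeping are routine.
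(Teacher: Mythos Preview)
Your proof is correct and complete. The step you flagged---that a maximal closed ideal $\mathfrak m$ of $\ff[\![G]\!]$ has finite codimension---does go through: the quotient $\ff[\![G]\!]/\mathfrak m$ is a nonzero profinite ring with no nonzero proper closed ideals, so in particular no nonzero proper \emph{open} ideals; since open ideals form a neighborhood basis of $0$ in any profinite ring, the topology must be discrete, forcing the quotient to be finite. Everything else (the cofinality of $U\subseteq S$, the identification $J(\ff[\Gamma])=I_P$ via nilpotence of $\mathfrak a$ and Maschke, the Cauchy-sequence completeness argument using closedness of $J^n$) is routine and correctly carried out.

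Your route, however, is genuinely different from the paper's. The paper does not descend to finite quotients at all: it quotes that $\ff[\![S]\!]$ is local with maximal ideal its augmentation ideal (Neukirch--Schmidt--Wingberg), and then invokes a crossed-product result of Passmann to conclude that the Jacobson radical of $\ff[\![G]\!]=\ff[\![S]\!]\star\ff^\times$ is $I^S_S\star\ff^\times=I^G_S$. For completeness the paper uses the coalgebra--algebra duality already set up in \cref{cons:dualAlgebra}: since the coradical filtration of $C^0(G,\ff)$ is exhaustive, dualizing gives $\ff[\![G]\!]=\lim_n\ff[\![G]\!]/J^{n+1}$ immediately. Your argument is more elementary and self-contained (no black-box citations), while the paper's is shorter and exploits the ambient duality framework that the appendix is building anyway; each approach is well suited to its context.
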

\begin{proof}
	From \cite[prop.~5.2.16(iii)]{NeukirchSchmidtWingberg_CohomologyNumberFields} we know that \(\ff\pb{S}\) is local with maximal ideal the augmentation ideal \(I^S_S\).
	By \cite[thm.~4.2]{Passmann_InfiniteCrossed} we then get that the Jacobson radical \(J\) of \(\ff\pb{G}\) is \(I^S_S\star \ff^\times=I^G_S\).

	Since the coradical filtration on \(C^0(G,\ff)\) is exhaustive, we have that
	\[\ff\pb{G}=C^0(G,\ff)^\vee=\lim_n \nb{F_n(C^0(G,\ff))}^\vee=\lim_n C^0(G,\ff)^\vee/J^{n+1},\]
	where we used the identification \(F_n(C)^\perp=J^{n+1}\) from Construction \ref{cons:dualAlgebra}.
	Thus, \(\ff\pb{G}\) is complete with respect to \(J\)
\end{proof}

\begin{lemma}
	Let \(G\) have the form \(S\rtimes\ff^\times\) with \(S\) a pro-\(p\) group.
	Then the Iwasawa coalgebra \(C^0(G,\ff)\) is pointed, and the subcoalgebra \(F_0(C^0(G,\ff))\) is given by \(C^0(\ff^\times,\ff)\).
\end{lemma}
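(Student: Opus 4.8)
The plan is to identify the coradical of $C^0(G,\ff)$ directly through the dual algebra $\ff\pb{G}$ and then check that this coradical is spanned by group-like elements.

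First I would pass to $C^0(G,\ff)^\vee\simeq\ff\pb{G}$ and apply the preceding lemma (whose hypotheses are met, since $\ff^\times$ is finite and hence $S$ is automatically open and normal in $G=S\rtimes\ff^\times$): the Jacobson radical of $\ff\pb{G}$ is the augmentation ideal $J=I^G_S=\ker(\ff\pb{G}\to\ff[\ff^\times])$. By \cref{cons:dualAlgebra} the coradical filtration of $C^0(G,\ff)$ is the annihilator filtration of the $J$-adic filtration, so in particular $F_0(C^0(G,\ff))=(J^1)^\perp=J^\perp$. Since $\ff\pb{G}/J\simeq\ff[\ff^\times]$ is finite-dimensional, $J$ is an ideal of finite codimension, and the annihilator lemma $I^\perp\simeq(\ff\pb{G}/I)^\vee$ yields a coalgebra isomorphism
\[F_0(C^0(G,\ff))=J^\perp\;\simeq\;\nb{\ff\pb{G}/J}^\vee=\ff[\ff^\times]^\vee=C^0(\ff^\times,\ff),\]
compatibly with the inclusion $F_0(C^0(G,\ff))\hookrightarrow C^0(G,\ff)$, which is thereby identified with the map dual to $\ff\pb{G}\to\ff[\ff^\times]$, i.e.\ with pullback of functions along $G\twoheadrightarrow G/S=\ff^\times$. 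This proves the second assertion.

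For pointedness, recall (Sweedler) that a coalgebra is pointed if and only if its coradical is the $\ff$-span of its group-like elements; by the previous step it suffices to produce a basis of $C^0(\ff^\times,\ff)$ consisting of group-likes. Under the comultiplication $\Delta(f)(x,y)=f(xy)$ the group-like elements of $C^0(\ff^\times,\ff)$ are exactly the group homomorphisms $\ff^\times\to\ff^\times$. Writing $\ff=\ff_q$, the group $\ff^\times$ is cyclic of order $q-1$ and $\ff$ contains all $(q-1)$-st roots of unity, so there are exactly $q-1=\dim_\ff C^0(\ff^\times,\ff)$ such characters; by Dedekind's independence of characters they are linearly independent over $\ff$, hence form a basis. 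Therefore $F_0(C^0(G,\ff))$ is spanned by group-likes and $C^0(G,\ff)$ is pointed.

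I do not expect a genuine obstacle: the substantive input — that $S$ being pro-$p$ pins the Jacobson radical of $\ff\pb{G}$ to the augmentation ideal of the prime-to-$p$ quotient $\ff^\times$ — is precisely the content of the preceding lemma, and everything else is the annihilator dictionary of \cref{cons:dualAlgebra} together with the classical basis of characters. The one point deserving a line of care is that the identification $J^\perp\simeq(\ff\pb{G}/J)^\vee$ is one of \emph{coalgebras}: this holds because $J$ is a two-sided ideal, so that dualizing the algebra surjection $\ff\pb{G}\to\ff\pb{G}/J$ produces a coalgebra map whose image is exactly $J^\perp$.
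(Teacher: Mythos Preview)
Your proof is correct and uses essentially the same ingredients as the paper: the identification of the Jacobson radical from the preceding lemma, the annihilator dictionary of \cref{cons:dualAlgebra}, and the fact that \(\ff[\ff^\times]\) splits over \(\ff\). The only difference is order of presentation: you first compute \(F_0=J^\perp=C^0(\ff^\times,\ff)\) and then deduce pointedness by exhibiting a basis of characters, whereas the paper first shows each simple subcoalgebra is one-dimensional (by dualizing to a simple quotient of \(\ff\pb{G}/J=\ff[\ff^\times]\)) and only afterwards identifies the group-likes as the characters factoring through \(G/S\).
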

\begin{proof}
	Let \(C\subset C^0(G,\ff)\) be a simple subcoalgebra.
	Dualizing this we get a simple quotient \(\ff[\![G]\!]\to C^\vee\), meaning that the kernel is a maximal ideal.
	It thus contains the Jacobson radical, so we see that \(C^\vee\) is also a simple quotient of \(\ff[\![G]\!]/J=\ff[\ff^\times]\).
	Since \(\ff^\times\simeq C_{q-1}\) and the polynomial \(x^{q-1}-1\) splits into linear factors over \(\ff\), we see that we must have an isomorphism \(\ff\simeq C^\vee\).
	Thus, \(C\) is one dimensional over \(\ff\), showing that \(C^0(G,\ff)\) is pointed.

	The group-like elements are exactly the group homomorphisms \(G\to\ff^\times\).
	Since \(S\) is a pro-\(p\) group and \(\ff^\times\simeq C_{q-1}\) every such homomorphism must factor over \(G/S\simeq \ff^\times\).
\end{proof}

The categories of comodules of the Iwasawa coalgebra and modules of the Iwasawa algebra are closely linked:
\begin{proposition}\label{prop:iwasawaModuleComodule}
	The category of (left) \(C^0(G,\ff)\)-comodules is equivalent to the category of discrete (left) \(\ff[\![G]\!]\)-modules.
\end{proposition}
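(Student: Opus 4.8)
The plan is to identify both sides with the category of \emph{discrete} $\ff$-linear representations of $G$, or, what amounts to the same thing, to cut everything up along the open normal subgroups $U\subset G$, where the classical duality between a finite-dimensional coalgebra and its linear dual takes over. Write $C:=C^0(G,\ff)=\colim_U C_U$ with $C_U:=\mathrm{Set}(G/U,\ff)$: each $C_U$ is a finite-dimensional subcoalgebra with $C_U^\vee=\ff[G/U]$, and the transition maps $C_U\hookrightarrow C_V$ (for $V\subset U$) exhibit $C$ as the filtered union of the $C_U$. Dually, $\ff[\![G]\!]=\lim_U\ff[G/U]$, and if $I_U:=\ker(\ff[\![G]\!]\to\ff[G/U])$ then, by the definition of the limit topology, the $I_U$ form a neighbourhood basis of $0$; combined with the lemma that the open ideals of $\ff[\![G]\!]$ are exactly those of finite codimension, this shows that an $\ff[\![G]\!]$-module $M$ is discrete if and only if $M=\bigcup_U M[I_U]$, where $M[I_U]$ is the (naturally $\ff[G/U]$-linear) submodule annihilated by $I_U$.

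First I would make the functor from comodules to modules explicit: a left $C$-comodule $(M,\rho)$ becomes an $\ff[\![G]\!]=C^\vee$-module via the action dual to $\rho$ as in \cref{cons:dualAlgebra} (passing through the antipode of the Hopf algebra $\ff[\![G]\!]$ if one wishes to match the left/right conventions in the statement). For $m\in M$ the element $\rho(m)\in C\otimes M$ is a finite sum $\sum_i c_i\otimes m_i$; since $C$ is the filtered union of the $C_U$, all the $c_i$ lie in a single $C_U$, so $\rho$ restricts to a $C_U$-comodule structure on the $\ff$-span of $m$ and the $m_i$, and hence the annihilator $C_U^\perp=I_U$ kills $m$. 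Thus the associated module is discrete, and in fact $\rho$ presents $M$ as the filtered union of the sub-$C_U$-comodules $M_U:=\{m : \rho(m)\in C_U\otimes M\}$, matching $M=\bigcup_U M[I_U]$ on the module side.

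In the reverse direction, a discrete module $M=\bigcup_U M[I_U]$ restricts on each $M[I_U]$ to an $\ff[G/U]=C_U^\vee$-module, hence — by the classical equivalence between modules over a finite-dimensional algebra and comodules over its dual coalgebra (here one uses the finite-dimensional instance $I_U^\perp\cong(\ff[\![G]\!]/I_U)^\vee=\ff[G/U]^\vee=C_U$ from the lemma above) — to a $C_U$-comodule; these structures are compatible as $U$ shrinks, because both $C_U\hookrightarrow C$ and $M[I_U]\subset M$ are, so they glue to a $C$-comodule structure on $M$. It then remains to check that the two assignments are mutually inverse and natural, which reduces to the finite levels, where it is the classical statement, together with the observation that $C$, $\ff[\![G]\!]$, and all the module/comodule structures in sight are the evident (co)limits of their $U$-truncations. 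The step I expect to cost the most care is exactly this last bit of bookkeeping: verifying that ``discrete $\ff[\![G]\!]$-module'' coincides on the nose with ``filtered union of $\ff[G/U]$-modules'' and that ``$C$-comodule'' coincides with ``filtered union of $C_U$-comodules'', after which the finite-dimensional duality glues together unambiguously.
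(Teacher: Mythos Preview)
Your proof is correct, but it takes a more structural route than the paper's. The paper simply writes down explicit mutually inverse formulas: from a comodule \((M,\psi)\) one gets a \(G\)-action via \((g,m)\mapsto (ev_{g^{-1}}\otimes\id)(\psi(m))\), and discreteness follows exactly as in your argument (the finitely many \(f_k\) appearing in \(\psi(m)\) all factor through some \(G/U\)); in the other direction, from a discrete \(G\)-module one defines the coaction directly as \(m\mapsto(g\mapsto g^{-1}.m)\in C^0(G,M)\simeq C^0(G,\ff)\otimes M\), the last identification holding because the orbit map is locally constant with finite image. No gluing along the filtration by open normal subgroups is needed, and the inversion is then a one-line check.

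Your approach has the virtue of making transparent \emph{why} the equivalence holds---it is the filtered colimit over \(U\) of the classical finite-dimensional duality \(\mathrm{Comod}_{C_U}\simeq\mathrm{Mod}_{\ff[G/U]}\)---and it isolates cleanly the identifications \(M=\bigcup_U M[I_U]\) on the module side and \(M=\bigcup_U M_U\) on the comodule side. The paper's approach buys brevity: once one observes that \(C^0(G,M)\simeq C\otimes M\) for discrete \(M\), the ``gluing'' you flag as the delicate step simply evaporates, since the coaction is written down globally rather than assembled from pieces.
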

\begin{proof}
	Given a comodule \(M\) we define an action of \(G\) on \(M\) by
	\[G\times M\to M,\,(g,m)\mapsto (ev_{g^{-1}}\otimes \id)(\psi(m)).\]
	That this defines a discrete action follows from writing
	\[\psi(m)=\sum_{k=1}^{N}f_k\otimes m_k\]
	and observing that, since each \(f_k\) is continuous, they must all evaluate to \(1\) on some open normal subgroup \(U_k\).
	Thus, the stabilizer of \(m\) must contain the intersection of the \(U_k\) and thus must be open itself.

	In the other direction, assume that \(M\) is a discrete module and define a coaction by
	\[M\mapsto C^0(G,M)\simeq C^0(G,\ff)\otimes M,\,m\mapsto (g\mapsto g^{-1}.m).\]

	It is evident that these define functors and implement an equivalence between the categories.
\end{proof}
\begin{lemma}\label{lem:identifyP1}
	Under the equivalence of Proposition B.8, we have an equality
	\[P_1(M)=M^G\]
	of the primitives of \(M\) as a \(C^0(G,\ff)\)-comodule and the invariants of \(M\) as an \(\ff[\![G]\!]\)-module.
\end{lemma}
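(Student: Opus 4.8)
The plan is to verify the two inclusions directly from the explicit dictionary set up in the proof of \cref{prop:iwasawaModuleComodule}, exactly as in the proof of \cref{prop:primInv}: under the equivalence, the $\ff[\![G]\!]$-action on a $C^0(G,\ff)$-comodule $M$ is given by $g.m=(\mathrm{ev}_{g^{-1}}\otimes\id)(\psi(m))$, while $P_1(M)=\{m\mid\psi(m)=1\otimes m\}$. So everything reduces to manipulating the coaction evaluated against points of $G$.

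For the inclusion $P_1(M)\subseteq M^G$ there is essentially nothing to do: if $\psi(m)=1\otimes m$, then for every $g\in G$ we get $g.m=(\mathrm{ev}_{g^{-1}}\otimes\id)(1\otimes m)=m$, since the constant function $1\in C^0(G,\ff)$ satisfies $\mathrm{ev}_{g^{-1}}(1)=1$. For the reverse inclusion, I would take $m\in M^G$ and write $\psi(m)=\sum_{k=1}^N f_k\otimes m_k$ with the $m_k$ linearly independent over $\ff$, which is possible because $\psi(m)$ is a finite tensor. Then for every $h\in G$,
\[h^{-1}.m=(\mathrm{ev}_h\otimes\id)(\psi(m))=\sum_{k=1}^N f_k(h)\,m_k,\]
and $G$-invariance of $m$ forces this to equal $m$ for all $h$. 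Subtracting the equation at $h=e$ and using linear independence of the $m_k$ forces each $f_k$ to be the constant function $f_k(e)\cdot 1$, so $\psi(m)=1\otimes\big(\sum_k f_k(e)m_k\big)$. Finally, the comodule counit identity $(\epsilon\otimes\id)\circ\psi=\id$ together with $\epsilon=\mathrm{ev}_e$ identifies $\sum_k f_k(e)m_k=m$, whence $\psi(m)=1\otimes m$ and $m\in P_1(M)$.

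The only step that needs a word of justification — and the only (minor) obstacle — is the implicit appeal to the injectivity of the comparison map $C^0(G,\ff)\otimes_\ff M\hookrightarrow\mathrm{Set}(G,M)$, $\sum_k f_k\otimes m_k\mapsto\big(h\mapsto\sum_k f_k(h)m_k\big)$, which is what licenses passing from $\sum_k(f_k(h)-f_k(e))m_k=0$ for all $h$ to $f_k(h)=f_k(e)$ for all $h$. This is immediate once the tensor is expressed with linearly independent $m_k$, and it is the same injection invoked in the proof of \cref{prop:primInv}; I would either cite it there or dispose of it in a single line.
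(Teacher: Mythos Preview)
Your proof is correct and follows exactly the approach the paper indicates: the paper's own proof is a one-liner stating that the claim follows directly from the definition of primitive elements and the formula for the \(G\)-action from \cref{prop:iwasawaModuleComodule}, and you have simply spelled out that verification (in the same spirit as the proof of \cref{prop:primInv}).
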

\begin{proof}
	This follows directly from Definition A.5 of the primitive elements of \(M\) and the formula for the action of \(G\) on \(M\) from Proposition B.8.
\end{proof}

\section{Continuous duality in Morava \(E\)-theory}\label{section:contdual}
Throughout, let \(E\) be a Morava \(E\)-theory associated to a formal group all of whose endomorphisms are already defined over the base field, and let \(K=E/\mm\) be the associated Morava \(K\)-theory.
We make this assumption since then all of the machinery developed in \cite{666} applies rather straightforwardly.
\begin{lemma}\label{lem:contdual1}
  Let \(X\) be a spectrum such that \(K_*X\) is concentrated in even degrees.
  Then the evaluation maps induce an equivalence
  \[E^0X\xrightarrow{\sim} Hom_{E_0}^{\mathrm{cont.}}\nb{E_0^\vee X, E_0}=Hom_{E_0}\nb{E_0^\vee X, E_0},\,f\mapsto (x\mapsto\langle x,f\rangle)\]
  where \(E_0^\vee X\) and \(E_0\) are given the 'natural topology' introduced in Section 11 of \cite{666}.
\end{lemma}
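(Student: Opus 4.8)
The plan is to bootstrap the duality statement from the case where $X$ is a finite complex, where it is classical, to arbitrary $X$ with $K_*X$ even by a (co)limit argument that is controlled by the fact that $E_0^\vee X$ is pro-free. First I would recall the relevant structural facts from \cite{666}: since $K_*X$ is concentrated in even degrees, the spectrum $X$ is $E$-good in the sense of loc.\ cit., so $E_0^\vee X$ is an $L$-complete (indeed pro-free) $E_0$-module, and moreover $E_0^\vee X \cong \lim_n E_0^\vee X / \mm^n$ with each quotient $E_0^\vee X/\mm^n \cong (E_0/\mm^n)^\vee_* X$ a free $E_0/\mm^n$-module. Dually, $E^0 X$ is the $\mm$-adic completion of a free $E_0$-module and $E^0 X \cong \lim_n E^0 X/\mm^n$ with $E^0 X/\mm^n$ dual to $E_0^\vee X/\mm^n$ over $E_0/\mm^n$ by the universal coefficient spectral sequence, which degenerates because everything is even and concentrated in even degrees.

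Next I would set up both sides of the claimed isomorphism as limits over $n$. On the source side, $E^0 X = \lim_n E^0 X/\mm^n$. On the target side, the 'natural topology' on $E_0^\vee X$ is by definition the topology making $E_0^\vee X = \lim_n E_0^\vee X/\mm^n$ with each $E_0^\vee X/\mm^n$ discrete; continuous $E_0$-linear maps $E_0^\vee X \to E_0$ (with $E_0$ likewise $\mm$-adically topologized) are then computed by
\[
  \hm^{\mathrm{cont.}}_{E_0}(E_0^\vee X, E_0) \;=\; \lim_n \hm_{E_0/\mm^n}(E_0^\vee X/\mm^n, E_0/\mm^n),
\]
using that $E_0^\vee X/\mm^n$ is a free, hence projective, $E_0/\mm^n$-module so that $\hm$ commutes with the relevant limits. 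The evaluation pairing is compatible with the reductions mod $\mm^n$, so it induces a map of the two towers, and it suffices to check it is an isomorphism at each finite stage $n$. But at stage $n$ this is precisely the finite-field (or Artinian) statement: $E^0 X/\mm^n$ and $E_0^\vee X/\mm^n$ are finite-rank free $E_0/\mm^n$-modules in perfect duality via the Kronecker pairing, which follows from the corresponding statement for finite skeleta of $X$ together with a $\lim^1$-vanishing argument (the relevant Mittag--Leffler condition holds because the modules are free of finite rank over an Artinian ring). Passing to the limit over $n$ then gives the claim, and the final equality in the statement — that continuous homomorphisms coincide with all homomorphisms — follows because any $E_0$-linear map out of an $\mm$-adically complete module built as such a limit of free modules into the complete module $E_0$ is automatically continuous.

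The main obstacle I anticipate is not any single hard step but the careful handling of the two topologies: one must make sure that the 'natural topology' on $E_0^\vee X$ really does coincide with the pro-(finite-rank-free) $\mm$-adic presentation, and that $\hm^{\mathrm{cont.}}$ commutes with the inverse limit — this uses the pro-freeness from \cite{666} in an essential way, and is exactly where the hypothesis that $K_*X$ is even enters. A secondary point requiring care is the identification of $E^0 X/\mm^n$ with $(E_0/\mm^n)^0 X$ and its finite-stage duality with $E_0^\vee X/\mm^n$; here one invokes the degeneration of the universal coefficient spectral sequence for $E$-modules, valid because the relevant $\mathrm{Ext}$-groups vanish in even total degree. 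Once these identifications are in place the rest is a formal limit argument.
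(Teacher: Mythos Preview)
There is a genuine gap in your argument. You assert that at each finite stage \(n\) the modules \(E_0^\vee X/\mm^n\) and \(E^0X/\mm^n\) are \emph{finite-rank} free over \(E_0/\mm^n\), and invoke ``perfect duality'' and a Mittag--Leffler argument on that basis. This is false in the cases of interest: for \(X=BSpin\), \(BString\), or \(MU\langle6\rangle\) the module \(E_0^\vee X\) is pro-free of countably infinite rank, so each \(E_0^\vee X/\mm^n\) is an infinite-rank free \(E_0/\mm^n\)-module. For such \(M=\bigoplus_I E_0/\mm^n\) one has \(\hm_{E_0/\mm^n}(M,E_0/\mm^n)\cong\prod_I E_0/\mm^n\), which is not ``perfect duality'' in any symmetric sense, and your Mittag--Leffler justification (``free of finite rank over an Artinian ring'') collapses. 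The stage-\(n\) isomorphism you want is still true, but it requires you to identify \(E^0X/\mm^n\) with that product directly, not via a symmetric pairing argument; and your proposed passage through finite skeleta conflates two unrelated limits (the \(\mm\)-adic one and the skeletal one).

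A second, smaller issue: you describe the ``natural topology'' on \(E_0^\vee X\) as \emph{by definition} the \(\mm\)-adic one. In \cite{666} it is defined via the skeletal filtration; that it agrees with the \(\mm\)-adic topology on pro-free modules is a theorem (their Proposition~11.9), not a definition, and it is precisely this identification that makes the continuity statement non-vacuous.

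The paper's own proof bypasses all of this. It observes that since \(E_*^\vee X\) is even and pro-free, the universal coefficient statement from \cite{666} gives directly a bijection \(E^0X\xrightarrow{\sim}\hm_{E_0}(E_0^\vee X,E_0)\) with the \emph{algebraic} Hom, and then quotes 11.1--11.4 of \cite{666} to see that this bijection factors through the continuous Hom, forcing \(\hm^{\mathrm{cont.}}=\hm\). No tower, no stage-\(n\) duality, no \(\lim^1\). Your automatic-continuity observation at the end (any \(E_0\)-linear map sends \(\mm^n M\) into \(\mm^n E_0\)) is correct and is essentially what underlies that last step; the problem is everything before it.
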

\begin{proof}
  Since \(E_*^\vee X\) is even and pro-free we get that the evaluations assemble to a bijection
  \[E^0X\xrightarrow{\sim}Hom_{E_0}\nb{E_0^\vee X,E_0}.\]
  It follows from 11.1-11.4 in \cite{666} that the above map factors through the inclusion of the continuous linear maps, which also shows that all \(E_0\)-linear maps from \(E_0^\vee X\) to \(E_0\) are continuous.
\end{proof}
\begin{lemma}\label{lem:contdual2}
  Let X be a spectrum such that \(K_*X\) is concentrated in even degrees.
  Then the evaluation maps induce an equivalence
  \[E_0^\vee X\xrightarrow{\sim}Hom_{E_0}^{\mathrm{cont.}}\nb{E^0X,E_0},\,x\mapsto (f\mapsto \langle x, f\rangle).\]
\end{lemma}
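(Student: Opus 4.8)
The plan is to dualize the proof of \cref{lem:contdual1}. Since $K_*X$ is concentrated in even degrees, $E_0^\vee X$ is even and pro-free, so a choice of pro-basis $\{x_i\}_{i\in I}$ furnishes a topological isomorphism $E_0^\vee X\cong\nb{\bigoplus_{i\in I}E_0}_\mm^\wedge$, the natural topology being the $\mm$-adic (equivalently pro-discrete) one $\lim_n\bigoplus_{i\in I}E_0/\mm^n$. Feeding this into \cref{lem:contdual1} identifies $E^0X$, with its natural topology, with $\prod_{i\in I}E_0$ carrying the product of the $\mm$-adic topologies, via $f\mapsto(\langle x_i,f\rangle)_{i\in I}$. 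Under these identifications the evaluation map of the statement becomes
\[ev\colon\nb{\bigoplus_{i\in I}E_0}_\mm^\wedge\longrightarrow Hom_{E_0}^{\mathrm{cont.}}\nb{\textstyle\prod_{i\in I}E_0,\,E_0},\qquad (x_i)_i\longmapsto\Big((a_i)_i\mapsto\textstyle\sum_i x_ia_i\Big),\]
and it suffices to show this is a bijection.

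First I would check that $ev$ is well defined: if $(x_i)_i\in\nb{\bigoplus_I E_0}_\mm^\wedge$ then for every $n$ the set $F_n=\{i: x_i\notin\mm^n\}$ is finite, so $\sum_i x_ia_i$ converges $\mm$-adically, and the resulting $E_0$-linear functional is continuous since its preimage of $\mm^n$ contains the open submodule $\prod_{i\in F_n}\mm^n\times\prod_{i\notin F_n}E_0$. Injectivity is immediate: pairing $(x_i)_i$ against the coordinate functional dual to $x_j$ returns $x_j$.

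The content is surjectivity. Given a continuous $E_0$-linear $\phi\colon\prod_I E_0\to E_0$, set $x_i:=\phi(e_i)$ with $e_i$ the $i$-th standard vector. Fixing $n$, the open submodule $\phi^{-1}(\mm^n)$ contains a basic open submodule $\prod_{i\in F}\mm^{k_i}\times\prod_{i\notin F}E_0$ with $F$ finite; since $e_i$ lies in this set for every $i\notin F$, we get $\phi(e_i)\in\mm^n$ for all such $i$. Hence for each $n$ only finitely many $x_i$ are nonzero modulo $\mm^n$, so $(x_i)_i$ defines an element $x\in\nb{\bigoplus_I E_0}_\mm^\wedge=\lim_n\bigoplus_I E_0/\mm^n$. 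By $E_0$-linearity $\phi$ and $ev(x)$ agree on the submodule $\bigoplus_I E_0$ of finitely supported tuples, which is dense in $\prod_I E_0$; as both maps are continuous and $E_0$ is Hausdorff, $\phi=ev(x)$. Thus $ev$ is bijective, which is the assertion.

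I expect the only real obstacle to be the topological bookkeeping: identifying the natural topology of \cite{666} on $E_0^\vee X$ and $E^0X$ with the pro-discrete, respectively product, descriptions above, and checking that $\bigoplus_I E_0$ is dense with the stated neighbourhood basis of open submodules. All of this is governed by \S\,11 of \cite{666} and is exactly the input already used in the proof of \cref{lem:contdual1}; alternatively, one may phrase the whole lemma as the continuous double-duality for pro-free $E_0$-modules and invoke that formalism directly.
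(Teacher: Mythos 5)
Your proposal is correct and takes essentially the same route as the paper: both use a pro-basis and \cref{lem:contdual1} to identify \(E^0X\) topologically with \(\prod_{i\in I}E_0\), read off the coefficients of a continuous functional by evaluating on the standard vectors, use continuity to verify the finiteness condition defining \(\nb{\bigoplus_{i\in I}E_0}_\mm^\wedge\), and conclude via density of the finitely supported tuples. The only difference is organizational: the paper injects \(Hom_{E_0}^{\mathrm{cont.}}\nb{\prod_{i\in I}E_0,E_0}\) into \(\prod_{i\in I}E_0\) by evaluation at the \(\delta_i\) and sandwiches it against the known composite, whereas you construct the inverse map directly.
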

\begin{proof}
  Let \(\{x_i\}_{i\in I}\) be a pro-basis of \(E_0^\vee X\).
  By \ref{lem:contdual1} the evaluations at \(x_i\) give a bijection
  \[E^0X\xrightarrow{\sim}\prod_{i\in I}E_0.\]
  This is a continuous bijection between compact Hausdorff spaces by 11.1-11.5 in \cite{666}, and thus a homeomorphism.
  Under this homeomorphism evaluation at \(x_i\) goes to projection to the \(i\)th component.

  Consider the map
  \[ev:Hom_{E_0}^{\mathrm{cont.}}\nb{\prod_{i\in I}E_0,E_0}\to\prod_{i\in I}E_0,\,f\mapsto(i\mapsto f(\delta_i)).\]
  We claim that \(ev\) is injective: in fact, let \(ev(f)=0\) and \(F\) be the directed set of finite subsets of \(I\).
  For any \(c\in\prod_{i\in I}E_0\) consider the net
  \[N_C:A\to \prod_{i\in I}E_0,\, J\mapsto \sum_{j\in J}c_j\delta_j\]
  which converges to \(c\) in every projection, thus converges to \(c\).
  By linearity, we find that
  \[f(N_c(J))=\sum_{j\in J}c_jev(f)_j=0,\]
  by continuity of \(f\) we have \(f(c)=0\), and thus \(f=0\).

  We also claim that \(ev\) already lands in the submodule
  \[L_0\bigoplus_{i\in I}E_0\subset \prod_{i\in I}E_0,\]
  that is for all \(n\geq 0\) and for all but finitely many \(i\in I\) we have
  \[ev(f)_i\in\mm^n.\]
  Since the natural topology on \(E_0\) coincides with the \(\mm\)-adic one by \cite[prop.~11.9]{666} we find that \(f^{-1}(\mathfrak{m}^n)\) contains a neighborhood \(U\) of \(0\), showing that there are \(n_i\), with all but finitely many equal to \(0\), such that
  \[\prod_{i\in I}\mm^{n_i}\subset f^{-1}(\mm^n).\]
  Thus, for those \(i\) with \(n_i=0\) we have \(f(\delta_i)\in\mm^n\), which is exactly the claim.

  Now the composite
  \[E_0^\vee X\to Hom_{E_0}^{\mathrm{cont.}}\nb{\prod_{i\in I}E_0,E_0}\to L_0\bigoplus_{i\in I}E_0\]
  sends \(x_i\) to the \(i\)th pro-basis element, so it is an isomorphism.
  Since the second map is injective, both maps have to be bijections, completing the proof.
\end{proof}
\begin{remark}\label{rem:contdual2}
  The same argument also shows that, for any homeomorphism
  \[E^0X\xrightarrow{\sim}\prod_{j\in J}E_0,\]
  the elements \(x_j\in E_0^\vee X\) corresponding to the \(j\)th projection via the previous lemma form a pro-basis of \(E_0^\vee X\).
\end{remark}

\begin{lemma}\label{lem:manipulatingci}
  Let \(X\) be a spectrum such that \(K_*X\) is concentrated in even degrees.
  Then the map
  \[\pi_0 Map\nb{X,L_{K(2)}\bigoplus_{i\in I}E}\to \prod_{i\in I}E^0X\]
  is injective, and its image consists of those collections \(\{q_i\in E^0X\}_{i\in I}\) fulfilling the following finiteness condition: for all \(n\geq 0\) and \(x\in E^\vee_0 X\),  we have that
  \[\langle x, q_i\rangle \in \mm^n\]
  for all but finitely many \(i\in I\).
\end{lemma}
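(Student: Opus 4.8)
The plan is to identify $\pi_0 \mathrm{Map}(X, L_{K(2)}\bigoplus_I E)$ with a suitable module of continuous functions and then recognize the finiteness condition as exactly the statement that such functions land in the $L$-completed direct sum rather than the full product. First I would note that, since $K_*X$ is concentrated in even degrees, $E^\vee_*X$ is even and pro-free, so $X$ is ``$E$-good'' and the machinery of Section 11 of \cite{666} applies; in particular $E_0^\vee X$ carries the natural topology and by \cref{lem:contdual1} we have $E^0X \simeq \mathrm{Hom}_{E_0}^{\mathrm{cont}}(E_0^\vee X, E_0)$. Next I would use \cite[Proposition A.13]{666} (or the discussion around it) to compute $\pi_0$ of the mapping spectrum: there is a natural isomorphism
\[
\pi_0\mathrm{Map}\Bigl(X, L_{K(2)}\bigoplus_{i\in I}E\Bigr) \simeq \Bigl(\bigoplus_{i\in I} E^0X\Bigr)_\mm^\wedge,
\]
the $L$-completed (equivalently, $\mm$-adically completed, since $E^0X$ is $L$-complete) direct sum, because $L_{K(2)}\bigoplus_I E$ is built from the pro-free $E$-modules $E$ and mapping out of the $E$-good spectrum $X$ commutes with the relevant limit.

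Then I would make the comparison map explicit: a point of $\pi_0\mathrm{Map}$ gives, by composing with the $i$-th projection $L_{K(2)}\bigoplus_I E \to E$, a collection $\{q_i\}_{i\in I}$ of classes in $E^0X$, and this is precisely the map induced by the inclusion $(\bigoplus_I E^0X)_\mm^\wedge \hookrightarrow \prod_I E^0X$. Injectivity is then immediate: the $\mm$-adic completion of a free module injects into the product of the factors (each $E^0X$ being $\mm$-adically separated since it is $L$-complete and $\mm$-adically complete by \cite[Proposition 11.9]{666}, at least after choosing a pro-basis via \cref{lem:contdual2}). For the image, I would unwind what it means for an element of $\prod_I E^0X$ to lie in $(\bigoplus_I E^0X)_\mm^\wedge$: writing $E^0X \cong \prod_{j\in J} E_0$ via a pro-basis $\{x_j\}$ of $E_0^\vee X$ (using \cref{lem:contdual2} and \cref{rem:contdual2}), an element of the product lies in the completed sum iff, for every $n$, its image in $\prod_I (E^0X / \mm^n) = \prod_I \prod_J E_0/\mm^n$ is supported on finitely many $i \in I$; and the $\mm^n$-reduction of $q_i$ is determined by the values $\langle x_j, q_i\rangle \bmod \mm^n$. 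So the condition ``$q_i \equiv 0 \bmod \mm^n$ for all but finitely many $i$'' is exactly ``$\langle x, q_i\rangle \in \mm^n$ for all but finitely many $i$, for every $x$ in a pro-basis'', which by $\mm$-adic continuity and linearity is equivalent to the stated condition for all $x \in E_0^\vee X$.

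The main obstacle is bookkeeping around the completed direct sum: one must be careful that $\pi_0\mathrm{Map}(X, L_{K(2)}\bigoplus_I E)$ really is the $L$-completed sum and not something subtler, which requires invoking that $X$ is $E$-good and that the $K(2)$-localization of a coproduct of copies of $E$ behaves as in \cite[Section A]{666}; and one must check that the natural topology on each $E^0X$ factor is genuinely $\mm$-adic so that ``supported on finitely many indices modulo each $\mm^n$'' is the correct unwinding of membership in the completed sum. Everything else is a routine translation between continuous duals, pro-bases, and completions, for which \cref{lem:contdual1}, \cref{lem:contdual2}, and \cref{rem:contdual2} provide exactly the needed dictionary. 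I would close by remarking that the two formulations of the finiteness condition (in terms of a pro-basis versus all $x \in E_0^\vee X$) agree because any $x$ is an $\mm$-adic limit of finite $E_0$-combinations of pro-basis elements and $\mm^n$ is closed.
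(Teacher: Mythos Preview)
There is a genuine gap in your identification of \(\pi_0\mathrm{Map}\bigl(X, L_{K(2)}\bigoplus_I E\bigr)\). The correct computation, which the paper carries out via the universal coefficient theorem for \(K(2)\)-local \(E\)-modules \cite[thm.~4.1]{hoveyss}, gives
\[
\pi_0\mathrm{Map}\Bigl(X, L_{K(2)}\bigoplus_I E\Bigr)\;\simeq\;\mathrm{Hom}_{E_0}\Bigl(E_0^\vee X,\;\bigl(\textstyle\bigoplus_I E_0\bigr)_{\mm}^{\wedge}\Bigr),
\]
since \(\pi_0\) of the localized coproduct is \(\bigl(\bigoplus_I E_0\bigr)_{\mm}^{\wedge}\). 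If \(\{x_j\}_{j\in J}\) is a pro-basis of \(E_0^\vee X\), this is \(\prod_{j\in J}\bigl(\bigoplus_I E_0\bigr)_{\mm}^{\wedge}\). Your claimed identification is instead \(\bigl(\bigoplus_I E^0X\bigr)_{\mm}^{\wedge}\simeq\bigl(\bigoplus_I\prod_J E_0\bigr)_{\mm}^{\wedge}\), which is strictly smaller when \(J\) is infinite: the ``diagonal'' element with \(\langle x_j,q_i\rangle=\delta_{ij}\) lies in the former but not the latter.

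This error propagates to your image description. You assert that ``\(q_i\equiv 0\bmod \mm^n\) for all but finitely many \(i\)'' is equivalent to ``for every \(x\) in a pro-basis, \(\langle x,q_i\rangle\in\mm^n\) for all but finitely many \(i\)'', but the first condition demands a single finite exceptional set of indices \(i\) working for all \(x_j\) simultaneously, while the second (which is the condition actually stated in the lemma) allows the exceptional set to depend on \(x\). The paper's route avoids this by never forming \(\bigl(\bigoplus_I E^0X\bigr)_{\mm}^{\wedge}\): it simply observes that the map in question is \(\mathrm{Hom}_{E_0}(E_0^\vee X,-)\) applied to the inclusion \(\bigl(\bigoplus_I E_0\bigr)_{\mm}^{\wedge}\hookrightarrow\prod_I E_0\), and then the finiteness condition of the lemma is exactly the statement that the map \(x\mapsto(\langle x,q_i\rangle)_i\) lands in the completed sum.
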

\begin{proof}
  Using the universal coefficient theorem for \(K(2)\)-local \(E\)-modules \cite[thm.~4.1]{hoveyss} and since our assumption implies that \(E_*^\vee X\) is projective in \(L\)-complete \(E_*\)-modules, the above map can be identified with
  \[Hom_{E_0}\nb{E_0^\vee X, \nb{\bigoplus_{i\in I}E_0}_{\mm}^\wedge}\to Hom_{E_0}\nb{E_0^\vee X,\prod_{i\in I}E_0}.\]
  Thus, the injectivity statement and the identification of the image follow from the analogous statements for the map
  \[\nb{\bigoplus_{i\in I}E_0}_{\mm}^\wedge\to \prod_{i\in I}E_0.\]
\end{proof}

\begin{lemma}\label{lem:algToSpectral}
	Let \(X\) be a spectrum such that \(K_*X\) is concentrated in even degrees, and let
	\[\alpha: K_0X\to \bigoplus_{i\in I}K_0E\]
	be an isomorphism of \(K_0E\)-comodules.
	Then there exists a map
	\[\beta: X\to L_K\bigoplus_{i\in I}E\]
	such that \(K_0\beta =\alpha\).
	In particular, \(\beta\) is a \(K\)-equivalence of spectra.
\end{lemma}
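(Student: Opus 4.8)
The plan is to realize the comodule isomorphism $\alpha$ by a genuine map of spectra, using the universal-coefficient description of maps into a $K(2)$-local sum of copies of $E$ supplied by \cref{lem:contdual1,lem:contdual2,lem:manipulatingci}. First I would observe that, since $K_*X$ is concentrated in even degrees, $E_*^\vee X$ is even and pro-free, so the natural map
\[
\pi_0\operatorname{Map}\nb{X,L_K\bigoplus_{i\in I}E}\longrightarrow \prod_{i\in I}E^0X
\]
from \cref{lem:manipulatingci} is injective with image the collections $\{q_i\}$ satisfying the stated finiteness condition. So to produce $\beta$ it suffices to produce, for each $i\in I$, a class $\beta_i\in E^0X$ whose reduction mod $\mm$ is the $i$-th component $\alpha_i\colon K_0X\to K_0E$ (viewed as a class in $K^0X$ via \cref{lem:contdual1} mod $\mm$, i.e.\ using that a $K_0E$-comodule map $K_0X\to K_0E$ is the same datum as an element of $K^0X$ that is group-like in the appropriate sense), and then to check that the family $\{\beta_i\}$ meets the finiteness condition.

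The key steps, in order. \emph{(1)} Lift each $\alpha_i$: the reduction map $E^0X=\operatorname{Hom}_{E_0}(E_0^\vee X,E_0)\to \operatorname{Hom}_{K_0}(K_0X,K_0)=K^0X$ is surjective because $E_0^\vee X$ is pro-free, so choose any $\beta_i\in E^0X$ with $\beta_i\equiv \alpha_i\bmod\mm$. \emph{(2)} Verify the finiteness condition for $\{\beta_i\}$: fix $x\in E_0^\vee X$ and $n\ge 0$; I must show $\langle x,\beta_i\rangle\in\mm^n$ for all but finitely many $i$. For $n=1$ this is exactly the statement that $\alpha$ has image in the \emph{direct sum} $\bigoplus_{i}K_0E$, i.e.\ that for each $\bar x\in K_0X$ only finitely many components $\alpha_i(\bar x)$ are nonzero — which holds since $\alpha$ is a well-defined map of $K_0E$-comodules into $\bigoplus_I K_0E$. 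For general $n$ one bootstraps: writing $E_0/\mm^n$ as a successive extension of copies of $K_0$, the class $x$ and the finitely many obstructing indices can be controlled one associated-graded layer at a time; concretely, $E_0^\vee X$ is $\mm$-adically complete and $x$ is a limit of elements each supported on finitely many pro-basis vectors, and on each such finite stage only finitely many $\beta_i$ pair nontrivially mod $\mm^n$ by induction on $n$ applied componentwise. \emph{(3)} Assemble the $\beta_i$ into $\beta\colon X\to L_K\bigoplus_I E$ via \cref{lem:manipulatingci}. \emph{(4)} Compute $K_0\beta$: by construction the composite $X\xrightarrow{\beta}L_K\bigoplus_I E\to \prod_I E$ has components $\beta_i$, so on $K_0$ the $i$-th component of $K_0\beta$ is $\alpha_i$; since $K_*$ commutes with the relevant (co)limits here — $K_*(L_K\bigoplus_I E)=\bigoplus_I K_*E$ as $K_*E$-comodules by the usual $K(2)$-local computation — we get $K_0\beta=\alpha$. \emph{(5)} Conclude: $\alpha$ being an isomorphism, $K_0\beta$ is an isomorphism, hence $\beta$ is a $K$-equivalence by the $K(2)$-local Whitehead theorem (and $K_*$ being concentrated in even degrees removes any odd-degree discrepancy).

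The main obstacle is step (2): checking that the chosen lifts $\{\beta_i\}$ actually satisfy the finiteness condition of \cref{lem:manipulatingci}, rather than merely their mod-$\mm$ reductions. The content is genuinely there — one must propagate the ``finite support'' property of the comodule map $\alpha$ from the residue field up through the $\mm$-adic tower on $E_0^\vee X$ — and the cleanest way is probably to note that $\operatorname{Hom}_{E_0}\bigl(E_0^\vee X,(\bigoplus_I E_0)_\mm^\wedge\bigr)\to\operatorname{Hom}_{E_0}(E_0^\vee X,\prod_I E_0)$ is, by the proof of \cref{lem:contdual2} and \cref{lem:manipulatingci}, an isomorphism onto the finiteness-condition locus, so that it is enough to produce \emph{any} $E_0$-linear map $E_0^\vee X\to(\bigoplus_I E_0)_\mm^\wedge$ lifting $\alpha$; and such a lift exists because $E_0^\vee X$ is pro-free, hence projective among $L$-complete $E_0$-modules, and $(\bigoplus_I E_0)_\mm^\wedge\to \bigoplus_I K_0$ is surjective in that category. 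Once phrased this way the argument is short; the temptation to instead argue by hand with pro-bases is what makes it look harder than it is.
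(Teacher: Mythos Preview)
Your proposal is correct and, once you arrive at the final paragraph, is essentially the paper's proof: lift \(\alpha\) all at once to an \(E_0\)-module map \(\tilde\alpha:E_0^\vee X\to(\bigoplus_I E_0)_\mm^\wedge=E_0^\vee F\) using projectivity of pro-free modules among \(L\)-complete modules, and then read off \(\beta\in F^0X\) via the universal coefficient theorem. The paper phrases this as pulling back \(\id_F\in F^0F\) along \(\tilde\alpha^*\), which is the same thing.

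Two comments. First, your initial attempt at step~(2) --- lifting the \(\alpha_i\) componentwise and then verifying the finiteness condition by an induction in \(n\) --- does not work as stated: if the \(\beta_i\) are chosen independently there is no reason \(\langle x,\beta_i\rangle\in\mm^n\) for almost all \(i\) once \(n\geq 2\), and the sketched ``one associated-graded layer at a time'' argument does not supply one. You correctly diagnose this and give the right fix, so there is no actual gap. Second, for step~(4) the paper makes explicit why \(K_0(pr_i\circ\beta)=pr_i\circ\alpha\): both are \(K_0E\)-comodule maps \(K_0X\to K_0E\), and such maps are determined by their composite with the counit \(\epsilon:K_0E\to K_0\); by construction both give the class \(q_i\in K^0X\) after \(\epsilon\). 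This is exactly the content of your remark that a comodule map \(K_0X\to K_0E\) ``is the same datum as an element of \(K^0X\)'', but it is worth stating, since \(K_0\beta\) is not literally the mod-\(\mm\) reduction of \(\tilde\alpha\) without this observation.
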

\begin{proof}
	Denote \(L_K\bigoplus_{i\in I} E\) by \(F\).
	By assumption \(E_0^\vee X\) is pro-free, so that we can choose a lift \(\tilde{\alpha}\) in \(E_0\)-modules such that
	\[\begin{tikzcd}
		{E_0^\vee X} && {E_0^\vee F} \\
		{K_0X} && {K_0F}
		\arrow["{\tilde{\alpha}}", dashed, from=1-1, to=1-3]
		\arrow[two heads, from=1-1, to=2-1]
		\arrow[two heads, from=1-3, to=2-3]
		\arrow["\alpha", from=2-1, to=2-3]
	\end{tikzcd}\]
	commutes.
	Using the universal coefficient theorem for \(K\)-local \(E\)-modules \cite[thm.~4.1]{hoveyss}, and since both \(E_0^\vee F\) and \(E_0^\vee X\) are, by assumption, projective  in \(L\)-complete \(E_0\)-modules, we get a sequence of isomorphisms
	\[F^0F\simeq \hm_{E_0}(E_0^\vee F,F_0)\xrightarrow{\tilde{\alpha}^*}\hm_{E_0}(E_0^\vee X,F_0)\simeq F^0X.\]
	Let \(\beta\) be the image of \(\id_F\) under this.
	We now want to show that \(K_0\beta=\alpha\).
	For this it suffices to show that, for all \(i\in I\), we have \(K_0(pr_i\circ \beta)=pr_i\circ \alpha\).
	These are both maps \(K_0X\to K_0E\) of \(K_0E\)-comodules and, by construction, agree when composed with the counit \(K_0E\to K_0\).
	Dualizing this, they both send the unit of \(K^0E\) to the same element of \(K^0X\).
	Since both maps are \(K^0E\)-linear they must agree on all of \(K^0E\), and we are done.
\end{proof}
\begin{remark}
	Let \(q_i\in K^0X\) be the unique class such that
	\[\langle x, q_i\rangle=\epsilon(pr_i(\alpha(x)))\text{ for all }x\in K_0X.\]
	The above also shows that \(\beta_i=pr_i\circ \beta\in E^0X\) maps to \(q_i\) in \(K^0X\).
\end{remark}

We close with a lemma about mapping spaces into \(E_0\):
\begin{lemma}\label{lem:compactCompleteQuotients}
	Let \(X\) be a topological space, \(\ff\) a finite field of characteristic \(p\), and let
	\[R_d=W(\ff)\pb{x_1,\ldots,x_{d-1}}\]
	where \(d\geq1\).
	Let \(\mm_d= (p,x_1,\ldots,x_{d-1})\) be the maximal ideal and give \(R_d\) the \(\mm_d\)-adic topology.
	Then the natural map
	\[C^0(X,R_d)\to C^0(X, R_d/\mm_d^n)\]
	exhibits the target as \(C^0(X,R_d)/\mm_d^n\).
\end{lemma}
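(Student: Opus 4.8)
The plan is to prove the statement by establishing separately that the natural map $C^0(X,R_d)\to C^0(X,R_d/\mm_d^n)$ is surjective and that its kernel is exactly $\mm_d^n C^0(X,R_d)$; since the inclusion $\mm_d^n C^0(X,R_d)\subseteq\ker$ is immediate, this gives the claimed identification $C^0(X,R_d)/\mm_d^n\cong C^0(X,R_d/\mm_d^n)$. No compactness of $X$ will be needed; the only finiteness used is that $R_d/\mm_d^k$ is a finite discrete ring for every $k$.

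For surjectivity, given a continuous $g\colon X\to R_d/\mm_d^n$ I would use that $R_d/\mm_d^n$ is finite, so the fibres of $g$ partition $X$ into finitely many clopen sets; choosing a set-theoretic lift of each of the (finitely many) values of $g$ and letting $f$ be the corresponding piecewise-constant map $X\to R_d$ produces a locally constant, hence continuous, lift of $g$.

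For the kernel I would argue by successive approximation. Fix a continuous $f\colon X\to\mm_d^n\subseteq R_d$, and let $m_1,\dots,m_N$ be the monomials of degree $n$ in $p,x_1,\dots,x_{d-1}$, so that $\mm_d^n=\sum_j m_j R_d$ and, more generally, $\mm_d^{n+i-1}=\mm_d^{i-1}\mm_d^n=\sum_j m_j\mm_d^{i-1}$ for every $i\geq 1$. I would build inductively continuous maps $g_j^{(i)}\colon X\to\mm_d^{i-1}$ such that, with $f^{(0)}=f$ and $f^{(i)}=f^{(i-1)}-\sum_j m_j g_j^{(i)}$, one has $f^{(i)}(X)\subseteq\mm_d^{n+i}$: the reduction $f^{(i-1)}\bmod\mm_d^{n+i}$ is a continuous map into the finite discrete set $\mm_d^{n+i-1}/\mm_d^{n+i}$, and for each of its finitely many values $v$ one picks $a_1(v),\dots,a_N(v)\in\mm_d^{i-1}$ with $\sum_j m_j a_j(v)\equiv v$ modulo $\mm_d^{n+i}$ (possible by the identity just displayed), then lets $g_j^{(i)}$ be the locally constant map taking value $a_j(v)$ on the clopen fibre over $v$. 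Because $R_d$ is $\mm_d$-adically complete and $g_j^{(i)}(X)\subseteq\mm_d^{i-1}$, the pointwise series $g_j:=\sum_{i\geq 1}g_j^{(i)}$ defines a map $X\to R_d$ which is continuous (mod $\mm_d^k$ it is a finite sum of continuous maps), and $\sum_j m_j g_j=f$ follows by telescoping since $f^{(M)}(X)\subseteq\mm_d^{n+M}$ tends to $0$. This shows $\ker\subseteq\mm_d^n C^0(X,R_d)$.

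The main obstacle is this last step: one must arrange the approximation so that the coefficients produced at stage $i$ genuinely lie in $\mm_d^{i-1}$ — this is exactly what forces the series $\sum_i g_j^{(i)}$ to converge and keeps its sum continuous — and one should verify carefully the elementary ideal identity $\mm_d^{n+i-1}=\sum_j m_j\mm_d^{i-1}$ and the telescoping identity $\sum_j m_j\sum_{i=1}^{M}g_j^{(i)}=f-f^{(M)}$. Everything else is formal. As an alternative organisation, one can first note $C^0(X,R_d)=\lim_k C^0(X,R_d/\mm_d^k)$ with surjective transition maps, which immediately identifies the kernel of $C^0(X,R_d)\to C^0(X,R_d/\mm_d^n)$ with the continuous maps $X\to\mm_d^n$; but rewriting such a map as an $R_d$-combination of $m_1,\dots,m_N$ with continuous coefficients still requires the same successive-approximation argument.
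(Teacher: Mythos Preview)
Your argument is correct. Surjectivity is handled the same way in both proofs (a set-theoretic section of $R_d\to R_d/\mm_d^n$ is automatically continuous since the source is discrete). For the kernel, however, you take a genuinely different route: you run a single successive-approximation argument treating all degree-$n$ monomials $m_1,\dots,m_N$ in $p,x_1,\dots,x_{d-1}$ on an equal footing, peeling off one layer of the associated graded at a time and summing the resulting locally constant coefficient functions into a convergent series. The paper instead performs a double induction, first on $d$ (splitting off the last variable $x_d$ via the projection $R_{d+1}\to R_{d+1}/(x_d)\cong R_d$ and using that division by $x_d$ is a homeomorphism of compact Hausdorff spaces) and then on $n$, so that at the base $d=1$ one only needs the elementary fact that multiplication by $p^n$ is a homeomorphism $W(\ff)\to p^nW(\ff)$.

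Your approach is cleaner and more uniform: it never singles out a variable, needs no induction on $d$, and makes transparent why completeness of $R_d$ is the only ingredient required. The paper's approach, by contrast, keeps the analysis one-variable-at-a-time and leans on the compact-Hausdorff bijection trick to invert multiplication by a regular element; this is slightly more structural but less direct. The identity $\mm_d^{n+i-1}=\sum_j m_j\mm_d^{i-1}$ and the continuity of $g_j=\sum_i g_j^{(i)}$ (checked mod $\mm_d^k$ as a finite sum of locally constant maps) are the two points worth stating explicitly when you write this up, but both are routine.
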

\begin{proof}
	To begin, note that \(R_d\) is a Noetherian regular local ring with finite quotient field, so each of the quotients \(R_d/\mm_d^n\) is finite.
	This shows that \(R_d\) is compact Hausdorff in the \(\mm_d\)-adic topology.
	We also have that the associated graded \(Gr_*R_d=\mm^*_d/\mm^{*+1}_d\) is a polynomial ring over \(\ff\) in \(d\) generators in degree \(1\).

	Let us now show surjectivity.
	As \(R_d/\mm_d^n\) is discrete any choice of section \(s\) of \(R_d\to R_d/\mm_d^n\) will be continuous, and the map
	\[C^0(X,R_d/\mm_d)\to C^0(X,R_d),\,f\mapsto s\circ f\]
	gives a section of \(C^0(X,R_d)\to C^0(X,R_d/\mm_d^n)\), so the latter must be surjective.

	We now identify the kernel.
	It always contains \(\mm_d^nC^0(X,R_d)\), and we need to show the other inclusion.
	The proof proceeds by induction in \(d\geq 1\).
	
	For the base case \(d=1\), let \(f\) be in the kernel of \(C^0(X,W(\ff))\to C^0(X,W(\ff)/p^n)\).
	This means that, for every \(x\in X\), \(p^n\) divides \(f(x)\), that is \(f\) factors over the closed subspace \(p^nW(\ff)\).
	Since \(W(\ff)\) is a domain the map
	\[W(\ff)\xrightarrow{\cdot p^n}p^nW(\ff)\]
	is a continuous bijection between compact Hausdorff spaces, thus a homeomorphism.
	Denote its inverse by \(T\).
	Then \(T\circ f\in C^0(X,W(\ff))\) and \(p^n(T\circ f)=f\), so that \(f\in p^nC^0(X,W(\ff))\).

	For the induction step from \(d\) to \(d+1\), consider the composite
	\[R_d\xrightarrow{i} R_{d+1}\xrightarrow{pr} R_{d+1}/(x_d),\]
	where \(i\) and \(pr\) are the standard inclusion and projection, which are continuous.
	The quotient topology on \(R_{d+1}/(x_d)\) coincides with the \(\mm_{d+1}/(x_d)\)-adic topology, and the composite is a continuous bijection between compact Hausdorff spaces, thus a homeomorphism.
	Denote its inverse by \(T\).

	Let \(f\) be in the kernel of \(C^0(X,R_{d+1})\to C^0(X,R_{d+1}/\mm_{d+1}^n)\).
	By the inductive assumptions, we have that \(T\circ pr\circ f\) is in \(m_d^nC^0(X,R_d)\), so \(f'=i\circ T\circ pr\circ f\) is in \((p,x_1,\ldots, x_{d-1})^nC^0(X,R_{d+1})\) and \(pr\circ f=pr\circ f'\).

	For every \(x\in X\), we have \((f-f')(x)\in (x_d)\).
	Since division by \(x_d\) is continuous (which can be shown as for \(p^n\) above) there is a unique \(f''\in C^0(X,R_{d+1})\) with \(x_df''=(f-f')\).
	Now let's do an induction in \(n\geq1\).

	For the base case \(n=1\) the above discussion shows that
	\[f=f'+x_d f''\in (p,x_1,\ldots,x_{d-1},x_d)C^0(X,R_{d+1}),\]
	and we are done.

	For the induction step from \(n-1\) to \(n\), note that  the above discussion implies \(x_df''(x)\in \mm_{d+1}^n\) for all \(x\in X\).
	Since the associated graded has no zero divisors this shows that already \(f''(x)\in \mm_{d+1}^{n-1}\) for all \(x\in X\), so that \(f''\in \mm_{d+1}^{n-1}C^0(X,R_{d+1})\) by the inductive hypothesis.
	We thus have
	\[f=f'+x_df''\in \nb{(p,x_1,\ldots,x_{d-1})^n+x_d\mm_{d+1}^{n-1}}C^0(X,R_{d+1})=\mm_{d+1}^nC^0(X,R_{d+1})\]
	and we are done.
\end{proof}

\addcontentsline{toc}{section}{References}
\printbibliography
\end{document}